\begin{document}
\begin{frontmatter}
\title{Dual Induction CLT for High-dimensional \texorpdfstring{$m$}{m}-dependent Data}
\runtitle{Dual Induction CLT for High-dim \texorpdfstring{$m$}{m}-dep Data}

\begin{aug}
\author[A]{\fnms{Heejong} \snm{Bong} \ead[label=e1]{hbong@umich.edu}},
\author[B]{\fnms{Arun Kumar} \snm{Kuchibhotla} \ead[label=e2]{arunku@stat.cmu.edu}}
\and
\author[C]{\fnms{Alessandro} \snm{Rinaldo} \ead[label=e3]{alessandro.rinaldo@austin.utexas.edu}}
\address[A]{Department of Statistics,
University of Michigan,
\printead{e1}}

\address[B]{Department of Statistics and Data Science,
Carnegie Mellon University,
\printead{e2}}

\address[C]{Department of Statistics and Data Science,
The University of Texas at Austin,
\printead{e3}}
\end{aug}

\begin{abstract}
We derive novel and sharp high-dimensional Berry--Esseen bounds for the sum of \texorpdfstring{$m$}{m}-dependent random vectors over the class of hyper-rectangles exhibiting only a poly-logarithmic dependence in the dimension. Our results hold under minimal assumptions, such as non-degenerate covariances and finite third moments, and exhibit an optimal sample complexity of order \texorpdfstring{$m^{(q-1)/(q-2)}/\sqrt{n}$}{m^((q-1)/(q-2))/n^(1/2)}. 
Aside from logarithmic terms, the resulting rates match the optimal rates established in the univariate case. When specialized to the sums of independent non-degenerate random vectors, our results produce sharp and, in some  cases, optimal rates under the weakest possible conditions. We develop a novel inductive relationship between anti-concentration inequalities and Berry--Esseen bounds inspired by the classical Lindeberg swapping method and the concentration inequality approach for dependent data that may be of independent interest.
\end{abstract}

\begin{keyword}[class=MSC]
\kwd[Primary ]{60B12}
\kwd[; secondary ]{60F05}
\end{keyword}

\begin{keyword}
\kwd{high-dimensional inference}
\kwd{central limit theorem}
\kwd{Berry--Esseen bound}
\kwd{\texorpdfstring{$m$}{m}-dependence structure}
\end{keyword}

\end{frontmatter}


\section{Introduction}

The accuracy of the central limit theorem in growing and even in infinite dimensions is a classic topic in probability theory that has been extensively studied since at least the 1950's; see, e.g., \cite{paulauskas2012approximation} and \cite{MR643968} for  historical accounts of the central limit theorem in its various forms.  In a seminal contribution, \cite{bentkus03} derived a high-dimensional Berry--Esseen bound for normalized sums of $n$ independent and isotropic $p$-dimensional random vectors over appropriate classes of subsets of $\mathbb{R}^p$. In details, if $X_1,\ldots,X_n$ are i.i.d. isotropic centered random vectors in $\mathbb{R}^p$  and $Y_1,\ldots,Y_n$ are independent $p$-dimensional  standard Gaussian random vectors, \cite{bentkus03} proved that 
\[
    \sup_{A \in \mathcal{A}} \left| \mathbb{P}\left( \frac{1}{\sqrt{n}} \sum_{i=1}^n X_i \in A \right) - \mathbb{P}\left( \frac{1}{\sqrt{n}} \sum_{i=1}^n Y_i \in A\right) \right| \leq \frac{C_p(\mathcal{A}) \mathbb{E}\| X_1\|^3}{\sqrt{n}},
\]
where $\mathcal{A}$ is a class subsets of $\mathbb{R}^p$ satisfying mild regularity conditions,  $C_p(\mathcal{A})$ is its Gaussian isoperimetric constant and $\| \cdot \|$ denotes the Euclidean norm. The bound is at least of order $p^{3/2}/\sqrt{n}$ (e.g. when $\mathcal{A}$ consists of all Euclidean balls) and is of order at least $p^{7/4}/\sqrt{n}$ for the larger class of convex sets \citep[see][]{nazarov2003maximal}. While, in principle, this result allows for applications of the central limit theorem in growing dimensions, its usefulness is limited to settings in which the dimension $p$ is of smaller order than the sample size $n$. 


The class of hyper-rectangles -- and, more generally, of polyhedra with small enough combinatorial complexity and of sparsely convex sets \citep[see][for details]{chernozhukov2017central} --  are not covered by Bentkus' result. Yet, remarkably, they admit high-dimensional Berry--Esseen bounds  with only a poly-logarithmically dependence on the dimension. 
Results of this type, initially put forward by \cite{chernozhukov2013gaussian}, demonstrate that formal statistical inference via Gaussian approximations is feasible even when the dimension is much larger than the sample size; see, e.g., \cite{belloni2018high} for a selection of notable applications in high-dimensional and non-parametric statistics. Largely motivated by the increasing need to certify the validity and effectiveness of statistical inference in high-dimensional models, over recent years, there has been a renewed impetus to derive sharp Berry--Esseen bounds for Gaussian and bootstrap approximations for hyper-rectangles with explicit dependence on the dimension. This flurry of efforts has led to novel and optimal high-dimensional Berry--Esseen bounds. See Section~\ref{sec:literature} below for a brief summary of the literature and \cite{chernozhukov2023high} for a recent, more extensive review. These bounds can be described as follows, in their most simplified and unified form. Let
\[
\mathcal{R}_p \equiv \left\{  \prod_{k=1}^p [a_k,b_k] \; \colon  -\infty \leq a_k < b_k \leq \infty, k \in [p] \right\}
\]
be the class of all $p$-dimensional hyper-rectangles, with possibly infinite sides. If $X_1,\ldots,X_n$ are independent, centered $p$-dimensional random vectors with non-degenerate (i.e. positive definite) covariances and eigenvalues bounded away from zero and $Y_1,\ldots, Y_n$ are independent, centered Gaussian random vectors such that $\mathrm{Var}[X_i] = \mathrm{Var}[Y_i]$ for all $i$, then 
\begin{equation}\label{eq:uno}
    \sup_{A \in \mathcal{R}_p} \left| \mathbb{P}\left( \frac{1}{\sqrt{n}} \sum_{i=1}^n X_i \in A \right) - \mathbb{P}\left( \frac{1}{\sqrt{n}} \sum_{i=1}^n Y_i \in A\right) \right| \leq C \frac{\mathrm{polylog}(p,n)}{\sqrt{n}} B_{q},
\end{equation}
where $C$ is a positive universal constant and $\mathrm{polylog}(p,n)$ a quantity that is  polylogarithmic in $n$ and $p$.
Above, $B_q$ is a uniform bound on the $q$-th moment of the $X_i$'s for $q \geq 3$, e.g.,
\[
\max_{i \in [n]} \mathbb{E} \left[ \| X_i \|^q_\infty \right] \leq B_q,
\]
where for a vector $x = (x_1,\ldots, x_p) \in \mathbb{R}^p$, $\| x \|_\infty \equiv \max_{k \in [p]}|x_k|$.  Assuming at least 4-moments (i.e. $q \geq 4$),  \cite{chernozhukov2020nearly} (henceforth \citetalias{chernozhukov2020nearly}) proved that the term $\mathrm{polylog}(p)$ can be taken to be  $\log^{3/2}(p)$, which is optimal when the $X_i$'s are almost surely bounded. \cite{kuchibhotla2020high} (henceforth \citetalias{kuchibhotla2020high}) showed that the bound \eqref{eq:uno} holds assuming only third-moment conditions, with a worse poly-logarithmic dependence on $p$; in analogy with the univariate Berry--Esseen bound, this is the weakest possible condition. Furthermore, the restriction that the minimal eigenvalues are bounded away from zero may be relaxed to allow for vanishing eigenvalues. Recently, \cite{koike.degenerate} showed that, under additional restrictions, the assumption of degenerate covariances can be lifted altogether.

The sharp high-dimensional Gaussian approximation bound \eqref{eq:uno} is predicated on the assumption of independent summands. In contrast, for normalized sums of {\it dependent} random vectors, fewer results are available and relatively little is known about the optimal dependence on the sample size and the dimension. See Section \ref{sec:literature} below for a review of the current literature. In this paper, we close this gap by establishing a high-dimensional Berry--Esseen bound of the form \eqref{eq:uno} for $m$-dependent $p$-dimensional random vectors. In detail, following the same settings outlined above, \Cref{thm:m_dep_berry_esseen} shows that 
\begin{equation}\label{eq:due}
    \sup_{A \in \mathcal{R}_p} \left| \mathbb{P}\left( \frac{1}{\sqrt{n}} \sum_{i=1}^n X_i \in A \right) - \mathbb{P}\left( \frac{1}{\sqrt{n}} \sum_{i=1}^n Y_i \in A\right) \right| \leq C 
    \frac{m^{(q-1)/(q-2)} }{\sqrt{n}}
    \mathrm{polylog}(p,n) B_{q}.
\end{equation}
The sequence $X_1,\ldots,X_n$ of random vectors satisfies the same conditions described above except for that of mutual independence; instead, they form an $m$-dependent sequence, i.e. 
\begin{equation}\label{eq:m.dependence}
 X_i \indep X_j \quad \text{if} \quad |i-j|>m,
\end{equation}
for some integer $m$, which is allowed to depend on $n$, where the symbol $\indep$ signifies stochastic independence. Analogously, the centered $p$-dimensional Gaussian random vectors $Y_1,\ldots,Y_n$ have the same second moments as the $X_i$'s, in the sense that, for each $i, j \in \{1,\ldots,n\}$,
\[
\mathbb{E}[X_i X_j^\top] = \mathbb{E}[Y_i Y_j^\top].
\]
In particular, $\mathrm{Var}[X_i] = \mathrm{Var}[Y_i]$ for all $i$, and  the $Y_i$'s are also $m$-dependent. 
A few remarks are in order.
\begin{itemize}
    \item The bound in \Cref{eq:due} displays the optimal sample dependence rate $m^{(q-1)/(q-2)}/\sqrt{n}$,
    originally established in the univariate case \citep{shergin1980convergence,chen2004normal}. 
    In comparison, the previously known best rate in the high-dimensional case was $m\log^{5/4}(ep)/n^{1/4}$, assuming third-moment conditions \citep[Theorem 1.4]{fang2021high}.
    \item In the most general form of our result, we only require an average third moment condition, namely $ B _3 \equiv \frac{1}{n} \sum_i \mathbb{E}[\| X_i\|^3_\infty] < \infty$. As shown in \citetalias{kuchibhotla2020high}, this assumption cannot be dispensed with to ensure a $1/\sqrt{n}$ rate. 
    \item A better dependence on the dimension can be demonstrated by allowing for higher order moments; see \Cref{thm:1_dep_berry_esseen_4P}.
    \item We further require the $Y_i$'s to satisfy minimal conditional variance restrictions, as discussed in Section~\ref{sec:theoretical_result}. To the best of our knowledge, these conditions are novel.
     \item Our results also apply to sums of independent random vectors as a special case, delivering sharp or nearly sharp rates depending on the settings. In particular, we recover and, in fact, improve the bounds that a subset of the authors obtained in the preprint \citetalias{kuchibhotla2020high}. We further elaborate on this point in Section~\ref{sec:comparison_indep}.
    \item We derive our bounds by formulating a novel, general class of symmetric stochastic dependence, which we refer to as $m$-ring dependence, of which $m$-dependence is a special case; see Section~\ref{sec:setting_assumption}.
\end{itemize}

In terms of the technical contributions, we deploy novel proof techniques generalizing the inductive arguments used in \citetalias{kuchibhotla2020high} that appear to be well suited to handle $m$-dependence settings and may be of independent interest. 
\citetalias{kuchibhotla2020high} used a Lindeberg interpolation scheme coupled with induction (over $n$) to establish a high-dimensional Berry--Esseen bound for independent random variables with rates of order (ignoring terms depending on the minimal eigenvalues of the covariances) $\log^{3/2}n\log^4(ep)/\sqrt{n}$. Though this approach is not new \citep[see, e.g.][]{SENATOV2011NORMAL,lopes2020central}, the authors managed to refine it to obtain an optimal dependence on the sample size under minimal assumptions: non-degenerate covariances and finite third moments. 
A key step in the proof of \citetalias{kuchibhotla2020high} is the use of the anti-concentration inequality for the Gaussian distribution \citep{nazarov2003maximal}. The importance of anti-concentration inequalities in obtaining Berry--Esseen bounds is well recognized in the literature; see, e.g., \citet{bentkus2000accuracy} for an elucidation of how the Berry--Esseen bound follows from combining an induction over the number of samples along with anti-concentration inequalities. However, implicit in  the arguments of \citetalias{kuchibhotla2020high} was the formulation of a backward relationship: anti-concentration bounds can, in turn, be derived from  Berry--Esseen bounds. We further leverage this duality and deploy a novel induction argument on both the Berry--Esseen bound and the anti-concentration inequalities to obtain sharper bounds than those originally established by \citetalias{kuchibhotla2020high} for normalized sums of independent random vectors while, at the same time, allowing for $m$-dependence. See~\cref{sec:pf_sketch} for a more detailed description of the dual induction.

The paper is organized as follows. In the next section, we summarize related works. We describe the settings and notation and present our main results in \cref{sec:theoretical_result}, which showcase the optimal scaling of $1/\sqrt{n}$ under non-degenerate covariance and finite third moments conditions. To establish these results, we used a combination of the Lindeberg swapping technique and a novel dual induction argument, overcoming the challenges posed by the $m$-dependence structure; we outline the proof techniques in \cref{sec:pf_sketch}. We then compare our results with those existing in the literature under independence and $m$-dependence; see \cref{sec:comparison}. In \cref{sec:m_dep_bootstrap} we introduce a new bootstrap procedure for $m$-dependent random vectors that builds on our refined Berry--Esseen bound. Finally, in \cref{sec:discussion}, we conclude the paper by summarizing the main findings and outlining avenues for future research.

\subsection{Related work}\label{sec:literature}
Below, we summarize some notable recent results about high-dimensional Berry--Esseen bounds for hyper-rectangles, for independent and dependent summands. We refer the reader to \cite{chernozhukov2023high} for a thorough review article.

As already mentioned, \cite{bentkus03} derived a general high-dimensional Berry--Esseen bound for a wide variety of classes of Euclidean sets under only third moment conditions, assuming independent and isotropic summands. When applied to the important class convex sets, the resulting bound is non trivial only when $p = o( n^{2/7})$. See \cite{bentkus2005lyapunov, raic2019, fang2021high, zhilova_22} for extensions and sharpening of this original, fundamental results. In their pathbreaking contribution, \cite{chernozhukov2013gaussian}  obtained a Berry--Esseen bound of order $(\log(np))^{7/8} / n^{1/8}$ for the class of $p$-dimensional hyper-rectangles under appropriate conditions. (It is worth noting that, though the authors develop novel non-trivial and valuable techniques, in hindsight the same bound could have also been established using more direct and standard methods; see~\citet{MR1115160}.) 
Subsequently, several authors generalized the settings of \citet{chernozhukov2013gaussian} and improved on the original rate  both in terms of dependence on the dimension $p$ and of the exponent for the sample size $n$: see, in particular, \citet{chernozhukov2017central,koike2019high,koike2019notes,deng2020beyond,kuchibhotla2021high}. For some time, it was a conjecture that the best dependence on the sample size, demonstrated by~\citet{chernozhukov2017central}, should be of order $1/n^{1/6}$, matching  the optimal sample size dependence in general Banach spaces found  by \citet{bentkus1987}. This conjecture was later on disproved by multiple authors, who managed to improve the sample complexity beyond $1/n^{1/6}$. In particular, \cite{chernozhukov2019improved} managed to achieve a better dependence on the sample size of order $1/n^{1/4}$ for sub-Gaussian vectors while only requiring $\log(ep) = o(n^{1/5})$. Next, \cite{fang2021high},~\cite{das2020central}, and~\cite{lopes2020central} succeeded in going beyond a $1/n^{1/4}$ dependence.  The most noticeable difference between these latest contributions compared to the earlier papers is that the random vectors are assumed to be non-degenerate, i.e., the minimum eigenvalue of the covariance matrix of the normalized sum is bounded away from zero.  In detail, \citet[Corollary 1.3]{fang2021high} used Stein's method to sharpen the dependence on the sample size to $1/n^{1/3}$, while at the same time  weakening the requirement on the dimension to $\log(ep) = o(n^{1/4})$. \citet[Corollary 1.1]{fang2021high} obtained an optimal $1/\sqrt{n}$ dependence on the sample size along with a dimension requirement of $\log(ep) = o(n^{1/3})$, for sums of random vectors with log-concave distributions. 
\cite{das2020central} also established a $1/\sqrt{n}$ dependence when the random vectors have independent and sub-Gaussian coordinates (among other assumptions); they also investigated the optimal dependence on the dimension. \cite{lopes2020central} finally succeeded in showing the optimal $1/\sqrt{n}$ rate in general settings, assuming only sub-Gaussian vectors. A subset of the authors of this paper weakened considerably  Lopes' assumptions and demonstrated  a bound of order $\log^{3/2}n \log^4(ep)/ \sqrt{n}$ under only third-moment conditions; see \citetalias{kuchibhotla2020high}. Though the moment requirements are shown to be the weakest possible, the rate suffers from a suboptimal dependence on the dimension. Finally, \citetalias{chernozhukov2020nearly} obtained the sharper rate  $\log^{3/2}(ep)/\sqrt{n}$, which is, in fact, optimal for distributions with bounded support under fourth-moment conditions. \cite{koike.degenerate} further shows that the same rate was shown to hold also in the degenerate case under appropriate restrictions. 


Overall, the aforementioned contributions offer a near-complete picture of optimal Gaussian and bootstrap approximations for independent normalized sums of random vectors with respect to the class of hyper-rectangles, with rates exhibiting optimal sample and dimension complexity. 
On the other hand, the literature on high-dimensional central limit theorems -- over hyper-rectangles or other classes of sets in $\mathbb{R}^p$ -- for sums of {\it dependent random vectors} is still scarce, with relatively few results available and limited understanding of rate optimality. 
Furthermore, some of the best-known results at this time have been established by adapting arguments and techniques from \cite{chernozhukov2013gaussian, chernozhukov2017central}, which yield sub-optimal sample complexity. In turn, this is likely affecting the sharpness of current rates derived under dependence settings. Below we provide a (necessarily) short summary of the existing contributions, which are all very recent. 
\cite{zhang2017gaussian} derived asymptotic Gaussian approximations over hyper-rectangles for high-dimensional stationary time series under a polynomial decay of the physical dependence measure, including $m$-dependent sequences, obtaining rates with sample size dependence of order $1/n^{1/8}$. 
\cite{zhang2018gaussian} considered instead the broader class of weakly dependent, possibly non-stationary sequences as well as  $m$-dependent data  assuming a geometric moment condition under physical/functional dependence.  
\cite{chang2021central} provided high-dimensional Berry--Esseen bounds for various classes of sets, including hyper-rectangles and the larger class of convex sets, under $\alpha$ mixing, physical dependence and $m$-dependence conditions.   
\cite{fang2021high} improved the sample size dependence to $1/n^{1/4}$ under local dependence conditions, including $m$-dependent sequences, using finite third-moment conditions.
\cite{chernozhukov2019inference} investigated the validity of the  block multiplier bootstrap under $\beta$-mixing. 
\cite{chiang2021inference} derive high-dimensional Berry--Esseen bounds for sums of high-dimensional exchangeable array assuming sub-exponential conditions or $4 + \delta$ moments, for $\delta > 0$.
\cite{kojevnikov2022berry} demonstrated, under some conditions, a finite sample central limit approximation over hyper-rectangles of order $\log^{5/4}(ep)/n^{1/4}$ for martingale difference sequences. 
Finally, \citet{rinott1999some,kurisu2024gaussian} studied Gaussian approximations over high-dimensional random fields under $\beta$-mixing spatial dependence structure, obtaining a rate of $1/n^{1/12}$.

\section{High-dimensional Berry--Esseen bound for \texorpdfstring{$m$}{m}-ring dependent random samples with nondegenerate covariance matrices} \label{sec:theoretical_result}

In this section, we introduce novel high-dimensional Berry--Esseen bounds for $m$-dependent random vectors with non-degenerate covariance matrices. It is sufficient to consider the simplest case of $m=1$; the general case will follow from a standard argument: see Corollary~\ref{thm:m_dep_berry_esseen}.
Before stating our main results, we introduce some necessary notation and concepts. 

 \subsection{Setting and Assumptions}\label{sec:setting_assumption}

Throughout, $X_1,\ldots,X_n$ denotes a time series of centered random vectors in $\mathbb{R}^p$ with at least $q \geq 3$ moments, i.e. $\max_i \mathbb{E}[\| X_i \|^q_\infty] < \infty$. We assume the $X_i$'s satisfy a novel notion of stochastic dependence that we refer to as {\it $m$-ring dependence.} In detail, $m$-ring dependence is in effect if $X_i \indep X_j$ for any $i$ and $j$ satisfying $\min\{\abs{i-j}, n-\abs{i-j}\} > m$. It is easy to see that $m$-ring dependence is a generalization of $m$-dependence, as defined in~\eqref{eq:m.dependence}. Indeed, if $X_1, \ldots, X_n$ are $m$-dependent, then
\begin{equation*}
    \min\{\abs{i-j}, n-\abs{i-j}\} > m\quad
    \Rightarrow\quad \abs{i-j} > m\quad
    \Rightarrow\quad X_i \indep X_j.
\end{equation*}
Thus, if $X_1, \dots, X_n$ are $m$-dependent, they are also $m$-ring dependent. The notion of $m$-ring dependence is technically advantageous because of its higher degree of symmetry compared to $m$-dependence.

We will couple the $m$-ring dependent sequence $X_1,\ldots,X_n$ with an independent sequence of centered $p$-dimensional Gaussian random vectors  $Y_1, \dots, Y_n$  with matching covariances, in the sense that
\begin{equation*}
    \Var[(Y_1^\top, \dots, Y_n^\top)^\top] = \Var[(X_1^\top, \dots, X_n^\top)^\top].
\end{equation*}
Thus, if the $X_i$'s are $m$-ring dependent, so are the $Y_i$'s.

For any non-empty subset $I \subseteq \{1, 2, \ldots, n\}$, define
\begin{equation*}
    \mathscr{X}_I \equiv \{ X_i: i \in I \}, ~~
    \mathscr{Y}_I \equiv \{ Y_i: i \in I \}.
\end{equation*}
and
\begin{equation} \label{eq:X_I_Y_I}
    X_I \equiv \sum_{i \in I} X_i, ~~
    Y_I \equiv \sum_{i \in I} Y_i.
\end{equation}
To streamline our discussion, we introduce a special notation for integral intervals. For any pair of integers $i$ and $j$ that satisfy $1 \leq i < j \leq n$, we will denote the ordered index set $\{i, \dots, j\}$ as $[i,j]$. Accordingly, $[1,n]$ will refers to the complete time course. To align with the conventions of real intervals, we will employ parentheses to represent open-ended intervals; e.g., $(1,n]$ denotes the set $\{2, \dots, n\}$. 
With this notation in place, our goal is to derive finite sample bounds for the  quantity
\begin{equation}\label{eq:mu}
\mu\left( X_{[1,n]}, Y_{[1,n]} \right) \equiv \sup_{A \in \mathcal{R}_p} \left| \mathbb{P} \left( \frac{1}{\sqrt{n}} X_{[1,n]} \in A \right) - \mathbb{P} \left( \frac{1}{\sqrt{n}} Y_{[1,n]} \in A \right) \right|.
\end{equation}
To that effect, in order to ensure non-trivial results, we impose appropriate conditions on the conditional variances of the $Y_i$'s which, to the best of our knowledge, are novel. 
Specifically, for any $I, I' \subset [1,n]$, let 
\begin{equation*} \begin{aligned}
    {\sigma}^2_{\min,I} \equiv
    & \min_{k \in [p]} \Var[Y_{I,k}], 
    \quad\mbox{and}\quad
    \underline{\sigma}^2_{I|I'} \equiv
     \lambda_{\min}(\Var[Y_{I}|\mathscr{Y}_{I'}]),
\end{aligned} \end{equation*}
where the superscript $Y_{I,k}$ denotes each $k$-th element of $p$-dimensional random vector $Y_{I}$.
We recall that the conditional variance coincides with the Schur complement of the marginal covariance matrix, i.e.
\begin{equation*}
\begin{aligned}
    \Var[Y_{I}|\mathscr{Y}_{I'}] 
    = \Var[Y_{I}]
    - \Cov[Y_{I}, \mathrm{vec}(\mathscr{Y}_{I'})] \Var[\mathrm{vec}(\mathscr{Y}_{I'})]^{-1} \Cov[\mathrm{vec}(\mathscr{Y}_{I'}), Y_{I}],
\end{aligned}
\end{equation*}
where $\mathrm{vec}(\mathscr{Y}_{I'})$ indicates the vectorized representation by concatenation. i.e., $\mathrm{vec}(\mathscr{Y}_{I'}) \equiv (Y_i^\top: i \in I')^\top$. 
In the case $m=0$, i.e. when $X_1,\dots,X_n$ are independent, then 
$\Var[Y_{I}|\mathscr{Y}_{I'}] = \Var[Y_{I}].$
We assume that there exist constants $\sigma_{\min}, \underline{\sigma} > 0$ such that 
for all $I, I' \subset [1,n]$,
\begin{align}
    \sigma^2_{\min,I} & \geq \sigma^2_{\min} \cdot \abs{I}, 
    \label{assmp:min_var} \tag{MIN-VAR} \\
    \underline{\sigma}^2_{I|I'} & \geq \underline{\sigma}^2 \cdot \abs{I \cap I'^{\indep}}, 
    \label{assmp:min_ev} \tag{MIN-EV} \\
    \sigma_{\min} & \le \underline{\sigma} \sqrt{\log(4ep)/2},
    \label{assmp:var_ev}\tag{VAR-EV}
\end{align}
where $\abs{I}$ is the number of elements in $I$, and $I'^{\indep} \;:=\; \bigl\{\,j \in [n] : X_j \;\indep\; \mathscr{X}_{I'}\bigr\}$ is the set of indices whose corresponding variables are independent of $\mathscr{X}_{I'}$. Thus $I \cup I'^{\indep}$ is the subset of $I$ which are separated from $I'$ by at least $m$ positions. 
The assumption of strongly non-degenerate covariance (i.e., $\lambda_{\min}(\Var[X_i])$ is bounded away from zero) has been commonly used in high-dimensional CLTs under independence (\citealp{kuchibhotla2020high,fang2021high,lopes2022central}; \citetalias{chernozhukov2020nearly}). Assumptions \eqref{assmp:min_var} and \eqref{assmp:min_ev} impose analogous restrictions to $m$-(ring) dependent sequences; they are inspired by Assumption (3) in \citet{shergin1980convergence} in the univariate case. 
It is worth highlighting that Assumption~\eqref{assmp:min_ev} accommodates the possibility of complete dependence between $X_{I}$ and $\mathscr{X}_{I'}$. 
For example, consider a scenario where $X_{2j-1} = X_{2j}$ for $j \in [\ceil{\frac{n}{2}}]$, while $X_1, X_3, \dots$ are independent. In this case, $\sigma^2_{[2,3]|\{1\}\cup[4,n]} = \Var[Y_{[2,3]} | Y_1, Y_4, Y_5, \dots, Y_n] = 0$ because $Y_2$ and $Y_3$ exhibit total dependence with $Y_1$ and $Y_4$, respectively. However,  $\sum_{i=1}^n X_i$ still  convergences weakly to $\sum_{i=1}^n Y_i$. Our Berry--Esseen bounds apply to this scenario.
Finally, Assumption~\eqref{assmp:var_ev} is made out of technical convenience and may be removed, leading to a more complicated upper bound expression; see~\cref{rmk:var_ev} for details.

We will also consider two different types of moment bounds: a marginal one on the individual coordinates of the $X_i$'s and of the $Y_i$'s, and a stronger one involving their $L_\infty$ norm. Specifically, for $q \geq 1$ and $i \in [n]$, let 
\begin{equation}\label{eq:definition-L-nu}
    L_{q,i} \equiv \max_{k \in [p]} \Exp[\abs{X_{i,k}}^q] + \max_{k \in [p]} \Exp[\abs{Y_{i,k}}^q],\quad\mbox{and}\quad
    \nu_{q,i} \equiv \Exp[\norm{X_{i}}_\infty^q] + \Exp[\norm{Y_{i}}_\infty^q].
\end{equation}
Clearly, $L_{q,i} \leq \nu_{q,i}$ for all $q$ and $i$.
For any subset $I \subset [1,n]$, we denote the average marginal and joint moments over the indices in $I$ with
\begin{equation}\label{eq:definition-L-bar-I-and-nu-bar-I}
    \bar{L}_{q,I} \equiv \frac{1}{\abs{I}} \sum_{i \in I} L_{q,i}
    \textand
    \bar\nu_{q,I} \equiv \frac{1}{\abs{I}} \sum_{i \in I} \nu_{q,i}.
\end{equation}
and write $\bar{L}_q \equiv \bar{L}_{q,[1,n]}$ and $\bar{\nu}_q \equiv \bar{\nu}_{q,[1,n]}$ for the global average moments.
We note that, due to Jensen's inequality, $\max_{i \in [n]} \nu_{2,i} \leq \nu_{q,i}^{2/q}$  and $\bar\nu_2 \leq \bar\nu_q^{2/q}$ (since  $q > 2$). 

\subsection{Main Results}\label{sec:main}

With the notations and assumptions in place, we are now ready to present our first bound on the quantity $\mu\left( X_{[1,n]}, Y_{[1,n]} \right)$ (see~equation \ref{eq:mu}),  assuming only finite third moments. 


\begin{theorem} \label{thm:1_dep_berry_esseen_3P}
Suppose that Assumptions \eqref{assmp:min_var}, \eqref{assmp:min_ev} and \eqref{assmp:var_ev} hold. Then, for $m=1$ and $q \geq 3$, 
\begin{equation*}
\begin{aligned}
    & \mu\left( X_{[1,n]}, Y_{[1,n]} \right) \\
    & \leq \frac{C \log(en)}{\sigma_{\min}} \sqrt{\frac{\log(pn)}{n}}  \left[
        \frac{\bar{L}_3}{\underline{\sigma}^2} \log^{2}(ep) 
        + \left( \frac{\bar\nu_q}{\underline\sigma^2} 
        \right)^{1/(q-2)} (\log(ep))^{\max\{2/(q-2),1\}}
    \right], \\
\end{aligned}
\end{equation*}
for some universal constant $C > 0$.
\end{theorem}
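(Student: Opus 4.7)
The plan is to follow the dual-induction strategy outlined in \cref{sec:pf_sketch}: prove the Berry--Esseen bound of \cref{thm:1_dep_berry_esseen_3P} jointly with a companion anti-concentration inequality for the Gaussian partial sums, so that at each step of a Lindeberg swap the two statements can feed each other. I would treat $m=1$ in detail; the general $m$ case of \cref{thm:m_dep_berry_esseen} follows by aggregating consecutive $X_i$'s into super-blocks. The entry point is the standard smoothing of the hyper-rectangle indicator by a log-sum-exp smooth max $F_\beta$ composed with the affine encoding $z \mapsto (z - a, b - z)$; this incurs a deterministic approximation error of order $\beta^{-1}\log p$ plus a Gaussian anti-concentration remainder on slabs of width $O(\beta^{-1}\log p)$, which by Nazarov's inequality together with \eqref{assmp:min_var} is $\lesssim \beta^{-1}\log^{3/2}(p)/\sigma_{\min}$, producing the $1/\sigma_{\min}$ factor.

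Next I would run a Lindeberg telescope along the hybrids $W_k = \sum_{i\le k}Y_i + \sum_{i>k}X_i$, writing $\mathbb{E}[F_\beta(W_n/\sqrt n) - F_\beta(W_0/\sqrt n)] = \sum_k \mathbb{E}[F_\beta(W_k/\sqrt n) - F_\beta(W_{k-1}/\sqrt n)]$. In the independent case, each summand is handled by a third-order Taylor expansion around $S_k = W_{k-1} - X_k$, using $X_k \indep S_k$ so that the first two moments of $X_k$ and $Y_k$ cancel by covariance matching and only a remainder controlled by $\nu_{3,k}$ survives. Under $1$-dependence this factorization breaks: $S_k$ contains $X_{k+1}$ (correlated with $X_k$) and $Y_{k-1}$ (correlated with $Y_k$), and the asymmetric hybrid introduces mismatched cross-covariances at the $Y$-$X$ boundary (e.g., $\mathrm{Cov}(Y_{k-1},X_k)=0$ but $\mathrm{Cov}(Y_{k-1},Y_k)=\mathrm{Cov}(X_{k-1},X_k)$). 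My remedy is to expand around the cleaner residual $R_k := W_{k-1} - Y_{k-1} - X_k - X_{k+1}$, which is independent of both the $X$-side and $Y$-side increments, and to either pair adjacent swaps so that the non-cancelling second-order boundary terms telescope, or equivalently carry out block swaps of length~$2$ and absorb the leftover end effects.

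To pass from the resulting third-order remainder to the claimed $\sqrt{\log(pn)/n}$ rate under only third moments, the naive bound $\|\nabla^3 F_\beta\| \lesssim \beta^2\log p$ balanced against the smoothing error would give only the suboptimal $n^{-1/6}$ rate. I would therefore truncate each $X_k$ at a level $\tau$: for the bounded part, the third-order remainder is integrated against the density of the intermediate residual $R_k/\sqrt{n}$ and tightened by the companion anti-concentration hypothesis — this is the step at which the dual induction is consumed, since $R_k$ is a sum of fewer than $n$ coupled $1$-dependent vectors to which the inductive anti-concentration bound applies. Assumption \eqref{assmp:min_ev} guarantees the non-degeneracy of the conditional covariance of $R_k$ and supplies the $1/\underline\sigma^2$ factor. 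The tail of the truncation is controlled by Markov's inequality with the $q$-th moment $\bar\nu_q$, producing the second additive term of the theorem; the $\log(en)$ prefactor reflects the depth of the induction over the sample size.

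The main obstacle is combining the last two steps: the boundary mismatch in the Lindeberg telescope must be absorbed without spoiling the $1/\sqrt n$ scaling, which requires the companion anti-concentration statement to be sharp enough to feed back into the Berry--Esseen bound with matching dependence on $\sigma_{\min}$, $\underline\sigma$, and $n$. Assumption \eqref{assmp:min_ev} on the conditional variance of the Gaussian sum is tailor-made for this — it is strictly stronger than the marginal non-degeneracy \eqref{assmp:min_var} and ensures that every sub-block residual $R_k$ entering the induction is itself sufficiently non-degenerate for Nazarov's inequality to yield an anti-concentration bound with the right dependence on the dimension.
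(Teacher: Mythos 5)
Your high-level plan (smooth the indicator, run a Lindeberg telescope, keep a companion anti-concentration estimate alive by induction) is in the right spirit, and your choice of residual $R_k=\sum_{i\le k-2}Y_i+\sum_{i\ge k+2}X_i$ correctly isolates an increment-independent block, matching the paper's $W^{\indep}_{[j,j]}$. But there is a genuine gap in how you treat the anti-concentration side of the dual induction, and that gap is the heart of the theorem.

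You describe the companion statement as ``an anti-concentration inequality for the Gaussian partial sums.'' Gaussian anti-concentration is given for free by Nazarov's inequality (Lemma \ref{thm:G_anti_conc}) and requires no induction. The quantity the paper must control is the \emph{conditional} anti-concentration of the \emph{non-Gaussian} partial sums,
\begin{equation*}
    \kappa_{[1,i]}(\delta)=\sup_{r\in\mathbb R^p}\Pr\bigl[X_{[1,i]}\in A_{r,\delta}\,\big|\,\mathscr X_{(i,n]}\bigr],
\end{equation*}
and under $1$-dependence the conditioning does not factor out, because $X_{[1,i]}$ is correlated with $\mathscr X_{(i,n]}$ through the boundary elements $X_i,X_{i+1}$. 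The entire novelty of the paper's argument is the $\mu\to\kappa$ direction (Lemma \ref{thm:induction_lemma_AC_N}): smoothing the indicator, Taylor-expanding to strip away $X_2$ and $X_{i-1}$ so that the remaining block $X_{[3,i-2]}$ is genuinely independent of the conditioning, and then comparing to the Gaussian and invoking $\mu_{[3,i-2]}$. Your sketch does not supply this step at all, and without it the dual induction does not close. Notice in particular that you cannot simply apply an ``inductive anti-concentration bound'' to $R_k$ directly: $R_k$ is a \emph{mixed} $X$/$Y$ sum, whereas the inductive hypothesis is a statement about $X$-only sums. The paper first splits $R_k$ into its $X$-block $X_{J_1}$ and Gaussian block $Y^o_{J_2}$, uses the Gaussian conditional variance (via \eqref{assmp:min_ev}) to contribute the $\delta'=\sqrt{\delta^2+\underline\sigma^2|J_2|}$ smoothing scale, and only then applies $\kappa_{J_1}$ to the $X$-block; your truncation-plus-density argument elides this decomposition.

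Two secondary issues. First, on the ``block swaps of length $2$'' remedy for the boundary mismatch: after swapping $X_k\to Y_k$, the residual second-order terms are $\Exp[X_{k-1}\otimes X_k]$ and $\Exp[Y_k\otimes Y_{k+1}]$, and the paper shows that, summed over $k$, these are \emph{exactly} the second-order cross-covariance terms that match by construction of the coupling — it handles them by iterated Taylor expansion and moment matching, not by pairing consecutive swaps, and it is not obvious that pairing alone telescopes to zero rather than merely shifting the mismatch to the block boundary. Second, the theorem's bound involves the \emph{averaged} moments $\bar L_3,\bar\nu_q$; a naive element-wise Lindeberg telescope produces the \emph{maximal} moments $L_{3,\max},\nu_{q,\max}$. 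The paper passes from maximal to average moments via the $1$-ring dependence symmetrization and an averaging over index rotations (\cref{sec:permutation_argument}); that step is also missing from your outline and would leave you with a weaker conclusion than the stated theorem.
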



 
The main steps of the proof of a simplified and weaker version of the above statement that depends on the {\it maximal}, as opposed to the average, third moments are described in detail in \cref{sec:pf_sketch}. The complete proof calls for a more complicated and cumbersome Lindeberg swapping argument that will be explained in \cref{sec:permutation_argument}. For the comprehensive proof, please refer to \cref{sec:pf_1_dep_3P}.

It is worth commenting on the dimension complexity (i.e. the overall dependence on $p$) of the bound in \Cref{thm:1_dep_berry_esseen_3P}. The distance $\mu(X_{[1,n]}, Y_{[1,n]})$, will vanish as $n$, $p$ and all the quantities involved vary, provided that  
\begin{equation}\label{eq:rate}
\max\left\{ \frac{\bar{L}_3^2}{\sigma_{\min}^2 \underline{\sigma}^4} \log^5 p, \frac{\bar{\nu}_q^{2/(q-2)}}{\sigma_{\min}^2 \underline{\sigma}^{4/(q-2)}} (\log p)^{\max\{(q+2)/(q-2), 3\}}\right\} = o(n).
\end{equation}
To illustrate, if $\sigma_{\min}$ and $\underline{\sigma}$ are of constant order, the dimension complexity will depend on how  the terms $\bar{L}_3^2 \log^5(p)$ and $\bar{\nu}_q^{2/(q-2)} (\log p)^{\max\{(q+2)/(q-2), 3\}}$ scale with $n$.
For instance, if the $X_i$'s are sub-Gaussian, then  \cref{eq:rate} with $q=4$ implies that the dimension complexity is dominated by the first term on the left hand side of \eqref{eq:rate}, and the requirement becomes $\log^5 p = o(n)$.
%
%
%
%
%
%
Under the same settings and assuming independent summands (a special case of the above theorem; see  \cref{sec:comparison_indep} below), the best-known dimension complexity in the literature is $\log^4 p = o(n)$; see Corollary 2.1 of \citetalias{chernozhukov2020nearly}.
Interestingly, Theorem 2.1 can be improved if $q \ge 4$ (instead of $q \ge 3$) and this improvement implies the best-known dimension complexity for the sub-Gaussian case.
This is not entirely surprising, as the existence of higher moments has been shown to improve the dimension complexity: for example, \citet{fang2020large} sharpened the the high-dimensional Berry--Esseen bound for convex sets established by \citet{bentkus2005lyapunov}  assuming finite fourth moment conditions.
In the present setting, which is concerned with hyper-rectangles, higher-order moments similarly enables a more precise control of the higher-order remainder terms from the various Taylor series expansions, resulting in the improved convergence rates, as stated in the next result.

\begin{theorem} \label{thm:1_dep_berry_esseen_4P}
Suppose that Assumptions \eqref{assmp:min_var}, \eqref{assmp:min_ev} and \eqref{assmp:var_ev} hold. Then, 
if $m=1$ and $q \geq 4$,
\begin{equation*}
\begin{aligned}
    & \mu\left( X_{[1,n]}, Y_{[1,n]} \right) \\
    & \leq \frac{C \log\left(en\right)}{\sigma_{\min}} \sqrt{\frac{\log(pn)}{n}}  \left[
        \frac{\bar{L}_3}{\underline{\sigma}^2} \log^{3/2}(ep)
        + \frac{\bar{L}_4^{1/2}}{\underline{\sigma}} \log(ep)
        + \left( \frac{\bar\nu_q}{\underline\sigma^2} 
        \right)^{1/(q-2)} \log(ep) 
    \right], \\
\end{aligned}
\end{equation*}
for some universal constant $C > 0$.
\end{theorem}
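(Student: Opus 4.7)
The plan is to adapt the proof of Theorem~\ref{thm:1_dep_berry_esseen_3P}, which is itself outlined in \cref{sec:pf_sketch}, by exploiting the additional fourth moment now available. The overall architecture --- the dual induction on both the Kolmogorov--Smirnov distance $\mu(X_{[1,n]}, Y_{[1,n]})$ and a Gaussian anti-concentration functional, the Lindeberg-style swap of consecutive $X$-blocks with Gaussian blocks $Y$, and the permutation symmetrization described in \cref{sec:permutation_argument} --- is reused essentially unchanged, with modifications confined to the local Taylor-expansion step inside each swap.

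The first main step is to carry the Taylor expansion of the smoothed hyper-rectangle indicator $f$ to one higher order than in the three-moment case. After the first two moments cancel by construction, the swap error decomposes into a cubic piece bounded by $\|f^{(3)}\|_\infty \bar L_3$ and a new quartic remainder bounded by $\|f^{(4)}\|_\infty \bar L_4$. Because $\|f^{(k)}\|_\infty$ scales polynomially in the inverse smoothing parameter $\beta$, the presence of a genuine quartic term (rather than the crude $\|f^{(3)}\|_\infty \cdot \|X\|_\infty \cdot \bar L_3$-style majorant one is forced into under only three moments) allows $\beta$ to be chosen smaller, which directly reduces the logarithmic factor multiplying $\bar L_3$. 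The second step is to optimize this trade-off against the anti-concentration estimate $\epsilon$ supplied by the dual-induction hypothesis at the previous scale. The balance point produces a cubic contribution of order $\bar L_3 \log^{3/2}(ep)/\underline{\sigma}^2$ --- better by a factor of $\sqrt{\log(ep)}$ than in Theorem~\ref{thm:1_dep_berry_esseen_3P} --- together with a new quartic contribution of order $\bar L_4^{1/2} \log(ep)/\underline{\sigma}$, where the square root appears because one is solving a quadratic balance between the cubic and quartic pieces.

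The third step is to import the truncation argument from the proof of Theorem~\ref{thm:1_dep_berry_esseen_3P} that isolates the heavy-tail part through $\bar\nu_q$, re-tuning the truncation level to the new smoothing scale. Because the optimal $\beta$ is smaller, the tail contribution collapses to the cleaner $(\bar\nu_q/\underline{\sigma}^2)^{1/(q-2)} \log(ep)$ form, replacing the $\log^{1 \vee 2/(q-2)}(ep)$ exponent of the three-moment result. Finally, the dual induction is closed as in Theorem~\ref{thm:1_dep_berry_esseen_3P}: the per-block estimates yield a self-referential inequality for $\mu$ and for the anti-concentration functional, which is solved to give the stated bound. The permutation averaging of \cref{sec:permutation_argument} is used as a black box to convert the maxima-over-$i$ moments appearing inside each swap into the averages $\bar L_3$, $\bar L_4$, and $\bar\nu_q$.

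The main obstacle is coordinating the three free parameters --- the smoothing scale $\beta$, the truncation threshold, and the anti-concentration level $\epsilon$ --- so that the cubic, quartic, and tail contributions balance at a single common point and the improved $\log^{3/2}(ep)$ power on the $\bar L_3$ term survives when the dual induction is unrolled. In particular, the conditional-variance quantity $\underline{\sigma}$ must enter each contribution at the correct power (inverse-squared for the cubic term, inverse for the quartic term) so that the stated scaling is reproduced; an incorrect balance point would either inflate the $\bar L_3$ contribution back to $\log^2(ep)$ or add an extra logarithm to the $\bar L_4^{1/2}$ contribution. The self-referential nature of the dual induction is what makes this bookkeeping delicate, but once the balance is struck, the remaining calculations closely parallel the three-moment case.
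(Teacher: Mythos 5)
Your proposal correctly identifies the macro-architecture --- the dual induction, the permutation averaging, and a Taylor expansion pushed one order higher --- but it misses the central new device of the fourth-moment argument: a \emph{second, nested} Lindeberg swap performed inside the cubic term. After expanding to fourth order, the surviving cubic contribution takes the form
\begin{equation*}
\Big\langle \Exp\!\left[\nabla^3\rho_{r,\phi}^{\delta}(X_{[1,j-1)}+Y_{(j+1,n]})\right],\ \Exp[X_j^{\otimes 3}]\Big\rangle .
\end{equation*}
Bounding this directly, as the three-moment proof does with its third-order remainder, forces the estimate through the conditional anti-concentration functional $\kappa$, because the argument of $\nabla^3\rho$ is still partly non-Gaussian; the resulting coefficient of $\bar{L}_3$ inherits the same $(\log(ep))^{5/2}/(\underline\sigma^2\sigma_{\min})$ prefactor as in the three-moment case, and no re-tuning of the smoothing scale $\phi$ or the Gaussian regularization $\delta$ removes it, since both parameters enter the $\kappa$-based bound in essentially the same way they do when $q=3$. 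Your claim that "the presence of a genuine quartic term allows $\beta$ to be chosen smaller, which directly reduces the logarithmic factor multiplying $\bar L_3$" therefore does not account for the stated improvement from $\log^{2}(ep)$ to $\log^{3/2}(ep)$.

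What the paper actually does (\cref{sec:pf_sketch_4N}) is decompose the argument $X_{[1,j-1)}+Y_{(j+1,n]}$ as $Y_{[1,j-1)}+Y_{(j+1,n]}$ plus a Lindeberg increment and swap the remaining $X$-blocks for their Gaussian twins \emph{inside the third derivative}; see \cref{eq:decompose_third_moment_N,eq:third_lindeberg_swapping_N}. Once the argument is fully Gaussian, Lemma 6.2 of \citetalias{chernozhukov2020nearly} combined with Assumptions \eqref{assmp:min_ev} and \eqref{assmp:var_ev} bounds $\Exp[\nabla^3\rho(Y_{[1,j-1)}+Y_{(j+1,n]})]$ by $C(\log(ep))^{3/2}/(n^{3/2}\underline\sigma^3)$ as in \cref{eq:third_moment_Gaussian_bound} --- and this, not a smoothing-scale optimization, is the source of the $\log^{3/2}(ep)/\underline\sigma^2$ scaling of the $\bar L_3$ term and of the correct inverse powers of $\underline\sigma$. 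The swap difference spawns sixth-order remainders (\cref{eq:third_lindeberg_result_N}), controlled by \cref{thm:remainder_lemma_4N} and absorbed into the dual induction alongside the genuinely quartic remainder $\mathfrak{R}^{(4,1)}_{W_j}$. Your intuition that the $\bar L_4^{1/2}$ term arises from a quadratic balance in the final parameter choice is consistent with the paper's selection of $\delta$, but without the nested Lindeberg swap the dual induction cannot close at the claimed rate.
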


The proof is similar to that of \Cref{thm:1_dep_berry_esseen_3P}, as outlined in \cref{sec:pf_sketch}, with an additional step that takes advantage of the extra moment condition; for details, see \cref{sec:pf_sketch_4N}. The complete proof can be found in \cref{sec:pf_1_dep_4P}.

In terms of rates, \cref{thm:1_dep_berry_esseen_4P} will require the  asymptotic scaling
$$\max\left\{ \frac{\bar{L}_3^2}{\sigma_{\min}^2 \underline{\sigma}^4} \log^4 p, \frac{\bar{L}_4}{\sigma_{\min}^2 \underline{\sigma}^2} \log^3 p, \frac{\bar{\nu}_q^{2/(q-2)}}{\sigma_{\min}^2 \underline{\sigma}^{4/(q-2)}} \log^{3} p \right\} = o(n).$$ 
When the $X_i$'s are sub-Gaussian and $\sigma_{\min}$ and $\underline{\sigma}$ are of constant order, the above scaling reduces to $\log^4 p = o(n)$, which matches that of \citetalias{chernozhukov2020nearly}. In \cref{sec:comparison_indep}, we will provide more general and in-depth comparisons of our rates with those of \citetalias{chernozhukov2020nearly}.

\begin{remark} \label{rmk:var_ev}
   Assumption \eqref{assmp:var_ev} can be dispensed of in both \cref{thm:1_dep_berry_esseen_3P,thm:1_dep_berry_esseen_4P}, though the proofs become more involved and the final bounds more cumbersome. Specifically, the factor ${C}/{\sigma_{\min}}$ can be replaced by $C/\min\{ \sigma_{\min}, \underline{\sigma} \sqrt{\log(ep)}\}$; see~\cref{sec:pf_without_var_ev} for an explanation of the changes required in the proof.
\end{remark}


Finally, Berry--Esseen bounds for general $m>1$ can be obtained as corollaries of \cref{thm:1_dep_berry_esseen_3P,thm:1_dep_berry_esseen_4P} using a standard argument that we outline next; see also \citet[Theorem 2]{shergin1980convergence} and \citet[Theorem 2.6]{chen2004normal}.
Let 
\begin{equation} \label{eq:block_setting}
    X'_i \equiv \begin{cases}
        X_{((i-1)m, im]}, & i \in [1,n') \\
        X_{((n'-1)m, n]}, & i = n',
    \end{cases}
\end{equation}
where $n' \equiv \floor{{n}/{m}}$.
We define $Y'_i$ similarly for $i \in [1,n']$. 
The newly defined random vectors satisfy the following lemma, whose proof is in \cref{sec:pf_1_to_m}.

\begin{lemma} \label{thm:1_to_m}
    Suppose that Assumptions \eqref{assmp:min_var}, \eqref{assmp:min_ev}, and \eqref{assmp:var_ev} hold and that $m > 1$. Let 
        \begin{equation*}
            L'_{q,i} \equiv \max_{k \in [p]} \Exp[\abs{X'_{i,k}}^q] + \max_{k \in [p]} \Exp[\abs{Y'_{i,k}}^q],\quad\mbox{and}\quad
            \nu'_{q,i} \equiv \Exp[\norm{X'_{i}}_\infty^q] + \Exp[\norm{Y'_{i}}_\infty^q],
        \end{equation*}
        and define $\bar{L}'_{q,I}$, $\bar{\nu}'_{q,I}$, $\sigma'_{\min,I}$ and $\underline{\sigma}'_I$ similarly. Then,
    \begin{enumerate}
        \item $(X'_1, \dots, X'_{n'})$ is a $1$-ring dependent sequence of random vectors in $\reals^p$ with
        \begin{equation*}
            \bar{L}'_q \leq (2m)^q \bar{L}_q,
            \textand
            \bar{\nu}'_q \leq (2m)^q \bar{\nu}_q.
        \end{equation*}
        
        \item Assumptions \eqref{assmp:min_var}, \eqref{assmp:min_ev}, and \eqref{assmp:var_ev} hold for $(X'_1, \dots, X'_{n'})$ with $(\sigma'_{\min,I}, \underline{\sigma}'_I, \sigma'_{\min}, \underline{\sigma}')$ instead of $(\sigma_{\min,I}, \underline{\sigma}_I, \sigma_{\min}, \underline{\sigma})$, where
        \begin{equation*}
            \sigma'^2_{\min} \equiv m \sigma^2_{\min}
            \textand
            \underline{\sigma}'^2 \equiv m \underline{\sigma}^2.
        \end{equation*}
    \end{enumerate} 
    
\end{lemma}


Applying \cref{thm:1_dep_berry_esseen_3P,thm:1_dep_berry_esseen_4P} to $\mu(X'_{[1,n']}, Y'_{[1,n']}) = \mu(X_{[1,n]}, Y_{[1,n]})$, we immediately obtain the following results. 

\begin{corollary} \label{thm:m_dep_berry_esseen}
Suppose that Assumptions \eqref{assmp:min_var}, \eqref{assmp:min_ev}, and \eqref{assmp:var_ev} hold and that $m > 1$. 
If $q \geq 3$, 
\begin{equation*}
\begin{aligned}
    & \mu\left( X_{[1,n]}, Y_{[1,n]} \right) \\
    & \leq \frac{C\log\left(en/m\right)}{\sigma_{\min}} \sqrt{\frac{\log(pn/m)}{n}} 
    \left[
        m^{2} \frac{\bar{L}_3}{\underline{\sigma}^2} \log^{2}(ep) 
        + \left( m^{q-1} \frac{\bar\nu_q}{\underline\sigma^2} 
        \log^2(ep) \right)^{1/(q-2)} 
    \right], \\
\end{aligned}
\end{equation*}
for some universal constant $C > 0$. If $q \geq 4$,
\begin{equation*}
\begin{aligned}
    & \mu\left( X_{[1,n]}, Y_{[1,n]} \right) \\
    & \leq \frac{C\log\left(en/m\right)}{\sigma_{\min}} \sqrt{\frac{\log(pn/m)}{n}}  \\ 
    & \quad \times \left[
        m^{2} \frac{\bar{L}_3}{\underline{\sigma}^2} \log^{3/2}(ep)
        + m^{3/2} \frac{\bar{L}_4^{1/2}}{\underline{\sigma}} \log(ep)
        + \left( m^{q-1} \frac{\bar\nu_q}{\underline\sigma^2} 
        \right)^{1/(q-2)} \log(ep) 
    \right], \\
\end{aligned}
\end{equation*}
for some universal constant $C > 0$.
\end{corollary}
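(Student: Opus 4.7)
The plan is to reduce the general $m > 1$ case to the already-established $m = 1$ case of \cref{thm:1_dep_berry_esseen_3P,thm:1_dep_berry_esseen_4P} via the blocking scheme indicated immediately before the corollary statement. Set $n' = \lfloor n/m \rfloor$ and form the block sums $X'_i = X_{((i-1)m,im]}$ for $i \in [1,n')$ and $X'_{n'} = X_{((n'-1)m,n]}$, and define $Y'_i$ analogously. Two consequences are immediate: first, $\sum_{i=1}^{n'} X'_i = X_{[1,n]}$ and $\sum_{i=1}^{n'} Y'_i = Y_{[1,n]}$, so $\mu(X'_{[1,n']}, Y'_{[1,n']}) = \mu(X_{[1,n]}, Y_{[1,n]})$; second, two block indices $i,j$ with $\min\{|i-j|, n' - |i-j|\} > 1$ correspond to original indices separated by more than $m$ on the cycle $[1,n]$, so $(X'_i)_{i=1}^{n'}$ and $(Y'_i)_{i=1}^{n'}$ are $1$-ring-dependent with matching second-order structure.

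The bulk of the work is to verify that \eqref{assmp:min_var}, \eqref{assmp:min_ev}, \eqref{assmp:var_ev} and the moment quantities translate cleanly to the primed sequence. For any arc $I' \subset [1,n']$ of size $k$, the union $I \subset [1,n]$ of the corresponding blocks is again an arc with $|I| \in [mk,\, mk + (m-1)]$, and $Y'_{I'} = Y_I$, so $\sigma'^2_{\min,I'} = \sigma^2_{\min,I}$ and $\underline\sigma'^2_{I'} = \underline\sigma^2_I$. Plugging these identities into the original assumptions shows that the primed sequence satisfies the $m'=1$ versions of \eqref{assmp:min_var}, \eqref{assmp:min_ev} with constants of order $\sigma'_{\min} \asymp m\sigma_{\min}$ and $\underline\sigma' \asymp m\underline\sigma$; for \eqref{assmp:min_ev} a brief case-check on small $k \in \{1,2,3\}$ is required because the term $\max\{|I| - 2m, 0\}$ is not saturated there. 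The condition \eqref{assmp:var_ev} is then inherited up to a universal constant. For the moments, since each block sums at most $2m-1$ of the $X_j$'s, the power-mean inequality yields $\nu'_{q,i} \le (2m)^{q-1} \sum_{j \in B_i} \nu_{q,j}$ where $B_i$ denotes the $i$th block, and averaging gives $\bar\nu'_q \lesssim m^q \bar\nu_q$; analogously $\bar L'_3 \lesssim m^3 \bar L_3$ and $\bar L'_4 \lesssim m^4 \bar L_4$.

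Applying \cref{thm:1_dep_berry_esseen_3P} (respectively \cref{thm:1_dep_berry_esseen_4P}) to $(X'_i)_{i=1}^{n'}$ and substituting the above bounds, the powers of $m$ telescope exactly. The factor $1/\sigma'_{\min}$ contributes $1/m$, and each bracketed term carries a compensating factor of $m$: $\bar L'_3/\underline\sigma'^2 = m \cdot \bar L_3/\underline\sigma^2$, $(\bar L'_4)^{1/2}/\underline\sigma' = m \cdot \bar L_4^{1/2}/\underline\sigma$, and $(\bar\nu'_q/\underline\sigma'^2)^{1/(q-2)} = m \cdot (\bar\nu_q/\underline\sigma^2)^{1/(q-2)}$, using $m^{q/(q-2)}/m^{2/(q-2)} = m$. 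The identity $n' = n_{\mathrm{eff}}$ rewrites the remaining $\log(en')$, $\log(pn')$, $\sqrt{1/n'}$ factors into their stated $n_{\mathrm{eff}}$-forms, yielding the corollary exactly. The only non-trivial step in the argument is the small-$k$ case check in the translation of \eqref{assmp:min_ev}; everything else reduces to direct substitution.
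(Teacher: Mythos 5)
Your argument is correct and follows the paper's own (very brief) proof exactly: form $m$-sized blocks $X'_i = X_{((i-1)m,im]}$ with the remainder absorbed into the last block, observe the blocked sequence is $1$-ring dependent, and apply \cref{thm:1_dep_berry_esseen_3P,thm:1_dep_berry_esseen_4P}. You supply the verification the paper compresses into ``we immediately obtain the following results''---the translation of \eqref{assmp:min_var}, \eqref{assmp:min_ev}, \eqref{assmp:var_ev} to the blocked constants $\sigma'_{\min}\asymp m\sigma_{\min}$, $\underline\sigma'\asymp m\underline\sigma$ (including the small-$|I'|$ case-check), the power-mean moment bounds $\bar L'_3\lesssim m^3\bar L_3$, $\bar\nu'_q\lesssim m^q\bar\nu_q$, and the resulting cancellation of powers of $m$---all of which is correct.
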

{We emphasize that, ignoring logarithmic factors, the sample complexity in the previous bounds is of order
\begin{equation*}
    \frac{m^{(q-1)/(q-2)}}{\sqrt{n}},
\end{equation*}
which matches the optimal Berry--Esseen rates for univariate $m$-dependent time series established in \cite{shergin1980convergence}; we  elaborate further below in \Cref{sec:comparison}.}


\subsection{Comparisons with the existing literature} \label{sec:comparison}


In this section, we compare our main results with existing results in the high-dimensional CLT literature. In \cref{sec:comparison_indep}, we explore the implication of our bounds under independence using as baseline the nearly optimal rates derived by \citetalias{chernozhukov2020nearly}. 
Though our results are derived for the general $m$-dependent setting, they match the dimension complexity of \citetalias{chernozhukov2020nearly} for the important case of sub-Weibull $X_i$'s (including sub-Gaussian and sub-exponential cases).
Importantly, beyond independent summands, our work delivers a significant improvement over existing results about Berry--Esseen bound under $m$-dependence; see \cref{sec:comparison_m_dep}.

\subsubsection{Independent summands} \label{sec:comparison_indep}
Let's consider the case where the random variables $X_i$ are independent. Since independence holds for all pairs of $X_i$'s, regardless of the difference between their indices, we can say that $X_i \indep X_j$ when $\abs{i-j}$ is greater than or equal to 1 or, equivalently, when $i \neq j$. Thus, according to the definition of $1$-dependence, independence is a special case of $1$-dependence. As a result, \cref{thm:1_dep_berry_esseen_3P} and \cref{thm:1_dep_berry_esseen_4P} readily hold for sums of independent random vectors. Below, we compare our bounds with the those established by \citetalias{chernozhukov2020nearly} and \citet{koike.degenerate}, which deliver the sharpest rates to date. 
We first note that, under independence, assumptions \eqref{assmp:min_ev} and \eqref{assmp:min_var} reduce to
\begin{equation*}
\begin{aligned}
\min_{k \in [p]} \Var[Y_{[i,j],k}] & \geq \sigma^2_{\min} \cdot \abs{[i,j]}, \\
\lambda_{\min}(\Var[Y_{[i,j]}]) & \geq \underline{\sigma}^2 \cdot \abs{[i,j]}, ~\forall i, j.\\
\end{aligned}
\end{equation*}

These assumptions require the covariance matrix of $X_{[i,j]}$ to be strongly non-degenerate for all pairs $(i, j)$, a condition that is commonly imposed in order to derive high-dimensional CLTs. \citetalias{chernozhukov2020nearly} do not require non-degenerate covariance matrices but assume that the covariance matrix of the scaled average is well-approximated by a positive-definite matrix. Thus, the resulting Berry--Esseen bound depends on the quality of this approximation.  
Importantly,  a minimal eigenvalue condition is required on {\it the average} of the covariance matrices, while we take the minimal eigenvalue of all covariances; thus their result is more general in this important aspect.
Recently, \citet{koike.degenerate} pursued a more direct approach and succeeded in obtaining a Berry--Esseen bound with the optimal $1/\sqrt{n}$-rate allowing for degenerate covariance matrices, provided that they satisfy additional assumptions. 
It would be interesting to explore an extension of their work in our setting.
In terms of moment assumption, our result is more general than those of   \citetalias{chernozhukov2020nearly} and \citet{koike.degenerate}, which are predicated on finite fourth moments. In contrast, \cref{thm:1_dep_berry_esseen_3P} only assumes finite average third moments, thus accommodating a broader class of models. 
For example, this scenario can occur in high-dimensional linear regression problems where $X_i = \xi_iW_i$ for heavy-tailed univariate errors $\xi_i$ and light-tailed covariates $W_i$. If $\xi_i$'s have a finite third (conditional on $W_i$) moment but an infinite fourth moment, then $\mathbb{E}\|X_i\|_{\infty}^4$ can be infinite; see~\citet[Section 4.1]{chernozhukov2023high} for an application to penalty parameter selection in lasso using bootstrap.

We now shift our focus to the overall convergence rate implied by our Berry--Esseen bounds, taking into account the dimension. Our theorems yields  rates of order $1/\sqrt{n}$ up to logarithmic factors when $p$ remains fixed. Therefore, a more meaningful comparison lies in the dimension complexities imposed by the theorems. As previously discussed, these dimension complexities are dependent on the tail behavior of the $X_i$'s. To exemplify, assume that the $X_i$'s are i.i.d. and that $\Var[X_{i,k}] = 1$ for all $k \in [p]$ and consider the  scenarios
\begin{enumerate}[label = {\bf (E\arabic*)}]
    \item $\abs{X_{i,k}} \leq B$ for all {$i \in [n]$ and $k \in [p]$,} almost surely;
    \item $\norm{X_{i,k}}_{\psi_\alpha} \leq B$ and $\frac{1}{n} \sum_{i=1}^n \Exp[\abs{X_{i,k}}^4] \leq B^2$ 
    for all $i \in [n]$ and $k \in [p]$, where $\norm{\cdot}_{\psi_\alpha}$ is the Orlicz norm with respect to $\psi_\alpha(x) \equiv \exp(x^\alpha)$ for $\alpha \leq 2$.
\end{enumerate}
These cases correspond to conditions (E1) and (E2) respectively of \citetalias{chernozhukov2020nearly}.
Comparing the Berry--Esseen bounds implied by \cref{thm:1_dep_berry_esseen_4P} with those from Corollary 2.1 of \citetalias{chernozhukov2020nearly}sc, we have, in Scenario {\bf (E1)}, that 
\begin{equation*}
    \frac{\sqrt{n}\underline{\sigma}^2}{\log^{3/2}(ep)}\mu(X_{[1,n]}, Y_{[1,n]}) \leq 
    \begin{cases}
    C B 
    \log(en), &\mbox{ from Corollary 2.1 of \citetalias{chernozhukov2020nearly}},\\
    C B
    \sqrt{\log(pn)} \log\left(en\right), &\mbox{ from \cref{thm:1_dep_berry_esseen_4P} above.}
    \end{cases}
\end{equation*}
The  dimension complexity obtained by \citetalias{chernozhukov2020nearly} is $\log ^3 p = o(n)$, which is shown to be optimal; see Remark 2.1 therein. For our result, the bound is derived using the fact that $L_{3,j} \leq B L_{2,j} = B$, $L_{4,j} \leq B^2 L_{2,j} = B^2$, and $\nu_{q,j} \leq B^q$ for all $q \geq 4$. 
The resulting dimension complexity is $\log^4 p = o(n)$, which is suboptimal when the $X_i$'s are bounded random vectors. 
Next, in Scenario {\bf (E2)} suppose that the individual elements of $X_i$ are sub-Weibull$(\alpha)$ for $\alpha \leq 2$ (we recall that the cases $\alpha = 2$ and $\alpha = 1$ correspond to sub-Gaussian and sub-Exponential coordinates, respectively). Then
\begin{equation*}
\begin{aligned}
    & \frac{\sqrt{n}\underline{\sigma}^2}{\log^{3/2}(ep)} \mu(X_{[1,n]}, Y_{[1,n]}) \\
    & \leq 
    \begin{cases}
    C B
    \left(
        \log(en) + \log^{(1/\alpha)}(ep)
    \right), 
    & \mbox{ from Corollary 2.1 of \citetalias{chernozhukov2020nearly}},\\
    C B
     \log\left(en\right) 
    \left(\sqrt{\log(pn)} + \log^{(1/\alpha)}(pn) \sqrt{\log\log(ep)} \right), 
    &\mbox{ from \cref{thm:1_dep_berry_esseen_4P} above.}
    \end{cases}
\end{aligned}
\end{equation*}
Corollary 2.1 of \citetalias{chernozhukov2020nearly} only consider  sub-Gaussian $X_i$'s, but the proof readily extends to the other cases with $\alpha < 2$.
This leads to the dimension complexity of $\log^{3 + 2/\alpha}(ep) = o(n)$.
Our bounds stem from the fact that $L_{3,j} \leq \sqrt{L_{2,j} L_{4,j}} \leq B$, $L_{4,j} \leq B^2$, and $\nu_{q,j} \leq C q^{q/2} B^{q} \log^{q/\alpha}(ep)$ for any $q \geq 4$ (see Corollary 7.4, \citealp{zhang2020concentration}). Taking $q-2 = \log\log(ep)$ yields the above bound. 
The resulting dimension complexity is the same as \citetalias{chernozhukov2020nearly} up to $\log(en)$ factors.

    



\subsubsection{\texorpdfstring{$m$}{m}-dependent summands} \label{sec:comparison_m_dep}
In this section, we compare \cref{thm:m_dep_berry_esseen} to existing CLT results for $m$-dependent random variables. 
Towards that goal, we find it helpful to recall the well-known Berry--Esseen bounds of \citet{shergin1980convergence} for $m$-dependent univariate ($p=1$) sequences. Specifically, Theorem 2 of \citet{shergin1980convergence} yields that, for $q \geq 3$,
\begin{equation} \label{eq:m_dep_berry_esseen_Shergin}
\begin{aligned}
    \mu\left( X_{[1,n]}, Y_{[1,n]} \right)
    & \leq C(q,M,n_0) \left[ (m+1)^{q-1} \frac{\sum_{i=1}^n \Exp[\abs{X_i}^q]}{(\Exp[X_{[1,n]}^2])^{q/2}} \right]^{1/(q-2)},
\end{aligned}
\end{equation}
under the assumption that $\sum_{i=1}^n \Var[X_i] \leq M \cdot \Var[\sum_{i=1}^n X_i]$ for $n$ larger than an appropriate $n_0 > 0$. 
We note that this assumption is similar to our Assumptions \eqref{assmp:min_var} and \eqref{assmp:min_ev}. 
Furthermore, the dependence on $m$ cannot be improved, as shown in \citet{berk1973central}. Next, using the fact that, in the univariate case, $\Exp[\abs{X_i}^q] = L_{q,i} = \nu_{q,i}$ for all $i$, we conclude that the bounds we obtain in \Cref{thm:m_dep_berry_esseen} match Shergin's bound \eqref{eq:m_dep_berry_esseen_Shergin}, up to universal constants and poly-logarithmic factors. What is more, the same remains true in the multivariate case, as long as $p$ is fixed: the upper bound of \cref{thm:m_dep_berry_esseen} exhibits the same scaling in $n$ and $m$ as in the expression \eqref{eq:m_dep_berry_esseen_Shergin}, up to logarithmic factors. This indicates that, in fixed dimensional settings, our bounds are essentially sharp.

 Moving on to the high-dimensional case in which the dimension has to be explicitly accounted for in the rates,
the literature on high-dimensional Berry--Esseen bounds for $m$-dependent random vectors is relatively scarce. \citet{zhang2018gaussian} used a Berry--Esseen bound for $m$-dependent series to obtain a bound for weak physical processes. Though their results do not seem to provide explicit rates for $m$-dependent cases, those rates appear to be no faster $m^{1/2}/n^{1/8}$ (ignoring the impact of the dimension), which is slower than the rates we achieve; see their Theorem 2.1. 
More recently, by employing the large-small-block approach similar to the one used in \citet{romano2000more} in the univariate cases, \citet[Section 2.1.2]{chang2021central} derived a bound of order $O({m^{2/3}\mathrm{polylog}(p,n)}/{n^{1/6}})$, under the strong assumption of 
sub-exponential random vectors. 
\citet[Theorem 1.4]{fang2021high} improved the sample size dependence to be of order $m / n^{1/4}$ under local dependence conditions, including $m$-dependent sequences, using finite third-moment conditions.

To summarize, \cref{thm:m_dep_berry_esseen} yields a  rate of order 
\[
O({\mathrm{polylog}(p,n)}{m^{(q-1)/(q-2)} / \sqrt{n}}),
\]
which is essentially optimal and is achieved under minimal assumptions.

\section{\texorpdfstring{$m$}{m}-dependent bootstrap} \label{sec:m_dep_bootstrap}

Although \cref{thm:m_dep_berry_esseen} provides a Berry--Esseen bound for the $m$-dependent sum, $X_{[1,n]}$, its direct use is limited in practice because the covariance matrix of $X_{[1,n]}$ is often times unknown. Below we present a practicable high-dimensional bootstrap scheme that overcomes this issue.
Now, there are typically two standard approaches to deploy Corollary 2.5 in practice: (1) by replacing the unknown covariance matrix of the approximating Gaussian distribution with an estimated one and then using Gaussian comparison inequalities ; or (2) by developing a bootstrap scheme to estimate directly the limiting distribution directly without an explicit covariance estimation. In classical (fixed-dimensional) setting, it is well-known that the naive covariance estimator $\hat{\Sigma}$ given below in in~\eqref{eq:sigma_hat} can fail to be positive semi-definite, even though it is consistent and unbiased \citep{newey1987simple}. Hence, we cannot replace the unknown $\Sigma$ with the naive estimator. To overcome this issue,~\cite{newey1987simple} proposed the heteroscedasticity and autocorrelation consistent (HAC) estimator that requires pooling covariances from distant observations even when the data is $1$-dependent. This yields a slower rate for HAC estimator as shown in~\cite{andrews1991heteroskedasticity,zhang2017gaussian}. Without an explicit covariance estimation, in the high-dimensional settings,~\citep{zhang2014bootstrapping} have proposed block multiplier bootstrap schemes following the idea of HAC estimator. This implies a suboptimal rate for all such bootstrap schemes as well.
%
To briefly elaborate on the existing bootstrap schemes, we describe the block-based bootstrap schemes of \citet{zhang2014bootstrapping}.
First, they partition the sample into contiguous blocks  
\begin{equation*}
  I_k \;=\; [i_{k-1}+1,\,i_k], 
  \quad k=1,\dots,K, 
  \qquad 0=i_0<i_1<\dots<i_K=n,
\end{equation*}
and apply the classical bootstrap to the block sums $\{X_{I_k}\}_{k=1}^K$.  
The construction ensures $X_{I_{k_1}} \indep X_{I_{k_2}}$ whenever $\abs{k_1-k_2} > 1$, but neighboring blocks $(X_{I_{k-1}},X_{I_k})$ remain dependent.  
Because of this residual dependence the resulting variance estimator is biased.  
Controlling the bias forces the block length, $\abs{I_k}$ to increase with $n$, even when $m$ is fixed, yielding an error rate no better than $n^{-1/8}$ in their Eq.~(40).
To overcome this issue, we introduce a new bootstrap procedure specifically tailored to $m$‑dependent random vectors.  
Leveraging the bounds of Theorem~\ref{thm:m_dep_berry_esseen}, we show that this methodology delivers non‑asymptotic coverage guarantees without any efficiency loss. 

To motivate the proposed procedure, we begin with the simplest case of $m=1$. In this case a natural unbiased estimator for the covariance matrix $\Sigma$ of $Y_{[1,n]}$ is 
\begin{equation} \label{eq:sigma_hat}
    \hat{\Sigma} 
    \equiv \sum_{i=1}^n \left( X_i X_i^\top 
    + X_i X_{i+1}^\top + X_{i+1} X_i^\top \right).
\end{equation}
A straightforward approach is to replace the infeasible distribution of $Y_{[1,n]}$ by the feasible multi-variate Gaussian distribution with mean $\vec{0}$ and covariance $\hat\Sigma$.
However, 
$\hat\Sigma$ is not guaranteed to be positive semidefinite, and hence, this Gaussian distribution is not well-defined. 
To address this, we propose projecting $\hat\Sigma$ onto the cone of positive semidefinite matrices. This approach is closely related to Remark D.1 of \citet{kock2025high}.
We begin by 
computing the closest positive semidefinite matrix to $\hat\Sigma$ in the elementwise $\ell_\infty$ norm:
\begin{equation} \label{eq:rho_tilde}
    \tilde\Sigma 
    \equiv \underset{\Sigma' \in \mathbb{S}_{+}^p}{\arg\min} \;\norm{\Sigma' - \hat\Sigma}_\infty,
\end{equation}
where $\mathbb{S}_{+}^p$ denotes the cone of $p \times p$ positive semidefinite matrices, and $\norm{\Sigma' - \hat\Sigma}_\infty \equiv \max_{i, j \in [p]} \abs{\Sigma'_{ij} - \hat\Sigma_{ij}}$ is the entrywise maximum norm.
This construction ensures that $\tilde\Sigma$ is positive semidefinite and elementwise close to $\hat\Sigma$.
Once $\tilde\Sigma$ is obtained, we define the bootstrap sample as 
\begin{equation*}
    \tilde{Y} | \mathscr{X}_{[1,n]} \sim \distNorm(\vec{0}, \tilde\Sigma).
\end{equation*}
For statistical inference over hyper-rectangles, we follow the standard approach: we replace the unknown distribution of $X_{[1,n]}$ with the computable bootstrap distribution of $\tilde{Y}$ given the observed data. Inference is then carried out using samples from the bootstrap distribution. For example, to estimate the $\alpha$-quantile of $\lVert X_{[1,n]} \rVert_\infty$, we compute
\begin{equation*}
    \hat{Q}_{\norm{X_{[1,n]}}_\infty}(\alpha) 
    \equiv \inf\left\{ t \in \reals: \Pr[\norm{\tilde{Y}}_\infty \leq t | \mathscr{X}_{[1,n]}] \geq \alpha \right\},
\end{equation*}
as in Eq. (35) of \citet{zhang2014bootstrapping}. 
The accuracy of our inference hinges on the distributional distance (evaluated over hyper-rectangles) between the sampling distribution of $X_{[1,n]}$ and the bootstrap distribution of $\tilde{Y}$ \citep[Eq. (38)]{zhang2014bootstrapping}, given by
\begin{equation*}
    \mu(X_{[1,n]}, \tilde{Y}) \equiv \sup_{A \in \mathcal{R}_p} \abs*{ \mathbb{P} \left( X_{[1,n]} \in A \right) - \mathbb{P} \left( \left. \tilde{Y} \in A  \right| \mathscr{X}_{[1,n]} \right) }.
\end{equation*}
We note that $\mu(X_{[1,n]}, \tilde{Y})$ is itself random and that the accuracy of the bootstrap procedure depends on how fast $\mu(X_{[1,n]}, \tilde{Y})$ converges to $0$. The following result provides a high-probability bound on such approximation error. 
\begin{theorem} \label{thm:1_dep_bootstrap}
    Suppose that Assumptions \eqref{assmp:min_var}, \eqref{assmp:min_ev} and \eqref{assmp:var_ev} hold. Then, for $m=1$, $q \geq 3$ and $\delta \in (0, 1)$, 
    \begin{equation} \label{eq:bootstrap_error_bound}
    \begin{aligned}
        \mu(X_{[1,n]}, \tilde{Y}) 
        & \leq \mu(X_{[1,n]}, Y_{[1,n]})
        + \frac{\Delta}{\sigma_{\min}^2} \log(ep) \left( 1 \vee \abs*{\log\frac{\Delta}{\sigma_{\min}^2}} \right),
    \end{aligned}
    \end{equation}
    with probability at least $1 - \delta$, where
    \begin{equation*}
    \begin{aligned}
        \Delta & \equiv C \bar{L}_{\min\{4, q\}}^{1/2} \left(\frac{\bar{\nu}_{q}}{\delta} \right)^{\max\{(2/q)-(1/2),0\}}
        \left(\frac{\log(2p/\delta)}{n}\right)^{\min\{1-(2/q), 1/2\}} \\
        & \quad + C \left(\frac{\bar{\nu}_{q}}{\delta} \right)^{2/q} \left(\frac{\log(2p/\delta)}{n}\right)^{1-(2/q)}
    \end{aligned}
    \end{equation*}
    and $C > 0$ is a universal constant.
\end{theorem}
\cref{thm:1_dep_bootstrap} extends Corollary 3.1 of \citetalias{chernozhukov2020nearly} to the $m$-dependent case and recovers the optimal $1/\sqrt{n}$ rate whenever $q \geq 4$.

\medskip
To consider the general case of $m > 1$, we adopt a block construction akin to that of \citet{zhang2014bootstrapping}. In detail, we consider the sequence  $(X'_1,\ldots,X'_n)$ defined in \cref{eq:block_setting}, where $n' \equiv \floor{{n}/{m}}$. Then, 
by \cref{thm:1_to_m}, this shorter sequence is $1$-ring dependent and therefore we may apply the proposed bootstrap procedure for $1$-dependence to the block sums.
Combining \cref{thm:1_to_m,thm:1_dep_bootstrap}, we obtain the following result as a corollary.
\begin{corollary} \label{thm:m_dep_bootstrap}
    Suppose that Assumptions \eqref{assmp:min_var}, \eqref{assmp:min_ev} and \eqref{assmp:var_ev} hold. Then, for $m > 1$, $q \geq 3$ and $\delta \in (0, 1)$, 
    \begin{equation*}
    \begin{aligned}
        \mu(X_{[1,n]}, \tilde{Y}) 
        & \leq \mu(X_{[1,n]}, Y_{[1,n]})
        + \frac{\Delta}{\sigma_{\min}^2} \log (ep) \left( 1 \vee \abs*{\log\frac{\Delta}{\sigma_{\min}^2}} \right),
    \end{aligned}
    \end{equation*}
    with probability at least $1 - \delta$, where
    \begin{equation*}
    \begin{aligned}
        \Delta & \equiv C m^{\min\{2-(2/q), 3/2\}} \bar{L}_{\min\{4, q\}}^{1/2} \left(\frac{\bar{\nu}_{q}}{\delta} \right)^{\max\{(2/q)-(1/2),0\}}
        \left(\frac{\log(2p/\delta)}{n}\right)^{\min\{1-(2/q), 1/2\}} \\
        & \quad + C m^{2-(2/q)} \left(\frac{\bar{\nu}_{q}}{\delta} \right)^{2/q} \left(\frac{\log(2p/\delta)}{n}\right)^{1-(2/q)}
    \end{aligned}
    \end{equation*}
    and $C > 0$ is a universal constant.
\end{corollary}
If $q \geq 4$ and $n \geq m$, the term $\Delta$ is of order $m^{3/2} / \sqrt{n}$ up to logarithmic factors in $p$, which is  smaller than the Berry--Esseen bound for $\mu(X_{[1,n]},Y_{[1,n]})$ in Theorem~\ref{thm:m_dep_berry_esseen}. Hence, the bootstrap error is dominated by the Gaussian approximation error, and the bootstrap procedure attains the same convergence rate as that of the infeasible Gaussian approximation, paralleling the analogous bootstrap guarantees for the independent case recently obtained by \citet{lopes2020central, lopes2020bootstrapping}; \citetalias{chernozhukov2020nearly}.

\medskip
\textbf{Solving the optimization problem in \cref{eq:rho_tilde}. }
We observe that the optimization problem in \cref{eq:rho_tilde} is convex and can be solved numerically with projected subgradient descent. Starting from $\tilde\Sigma^{(0)} \equiv \diag(\hat\Sigma_{11}, \dots, \hat\Sigma_{pp})$, we iterate
\begin{equation} \label{eq:psg_update}
    \tilde\Sigma^{(k+1)} \equiv \Pi_{\mathbb{S}_{+}^p}(\tilde\Sigma^{(k)} - \eta g^{(k)}),
\end{equation}
where $\eta > 0$ is a fixed step size, $g^{(k)} \in \partial\norm{\Sigma' - \hat\Sigma}_\infty |_{\Sigma' = \tilde\Sigma^{(k)}}$, and $\Pi_{\mathbb{S}_{+}^p}$ denotes projection onto the cone of $p \times p$ positive semidefinite matrices under the Frobenius norm (implemented by eigenvalue thresholding).  A convenient subgradient is obtained by choosing one index that attains the current maximal entry-wise deviation.  Let  
\begin{equation*}
    \mathcal{M}(k)
    \equiv {\arg\max}_{(i,j) \in [p] \times [p]} \norm*{\tilde\Sigma^{(k)}_{ij}-\hat\Sigma_{ij}}
\end{equation*}
and, for any $(i^*,j^*) \in \mathcal M(k)$, let
\begin{equation} \label{eq:subgradient}
    g^{(k)}_{ij}
    = \begin{cases}
      \mathrm{sgn}(\tilde\Sigma^{(k)}_{i^*j^*} - \hat\Sigma_{i^*j^*}),
      & (i,j)=(i^*,j^*) ~\text{or}~ (i,j)=(j^*,i^*),\\
      0, & \text{otherwise}.
    \end{cases}
\end{equation}
The approximation by projected subgradient descent satisfies the following finite sample bound. 

\begin{theorem} \label{thm:psg_error}
    Let $\{\tilde\Sigma^{(k)} : k \in [K]\}$ be the sequence generated by \cref{eq:psg_update} with $K \in \nats$ and $\eta = p \norm{\hat\Sigma}_\infty/\sqrt{K}$, and define $k^* \equiv {\arg\min}_{0 \leq k \leq K} \norm{\tilde\Sigma^{(k)}-\hat\Sigma}_\infty$.
    Then, with probability at least $1 - \delta'$,
    \begin{equation*}
        \norm{\tilde\Sigma^{(k^*)} - \hat\Sigma}_\infty - \norm{\tilde\Sigma - \hat\Sigma}_\infty 
        \leq C \frac{p \norm{\Sigma}_\infty}{\sqrt{K}},
    \end{equation*}
    for some universal constant $C>0$, where
    \begin{equation*}
        \delta' = C p \cdot \exp\left( - \frac{n \norm{\Sigma}_\infty^2}{C \bar{L}_4} \right) 
        + \left( \frac{C}{\norm{\Sigma}_\infty} \right)^{q/2} \left( \frac{\log(2p/\delta_\circ)}{n} \right)^{q/2 - 1} \bar\nu_q,
    \end{equation*}
    and $\delta_\circ = \left( \frac{C}{\norm{\Sigma}_\infty} \right)^{q/2} \left( \frac{1}{n} \right)^{q/2 - 1} \bar\nu_q$.
    The computational cost of each iteration is $O(p^{2})$, so the overall computational complexity is $O(p^{2} K)$.
\end{theorem}

We note that the condition for the convergence of $\delta'$ in \cref{thm:psg_error} is weaker than those required for convergence to zero of the bootstrap error bound in (the second term on the right hand side of) \cref{eq:bootstrap_error_bound}. The approximation error of $\tilde{\Sigma}^{(k^*)}$ converges to zero as $K$ increases to infinity, and for $K \geq p^2 n$, $\tilde\Sigma^{(k^*)}$
inherits the same accuracy as $\tilde\Sigma$. As a result, the bootstrap error bound in
\cref{eq:bootstrap_error_bound} remains valid (with possibly a different constant).

\begin{corollary}
    Conditional on $\mathscr{X}_{[1,n]}$, let $\tilde{Y} \sim N(\vec 0, \tilde\Sigma^{(k^{*})})$. Then with probability at least $1 - \delta'$, \cref{eq:bootstrap_error_bound} continues to hold with an adjusted universal constant, where $\delta'$ is defined as in \cref{thm:psg_error}.
\end{corollary}

\section{Discussion} \label{sec:discussion}

We derived a Berry--Esseen bound for high-dimensional $m$-dependent random vectors over hyper-rectangles under the weakest possible settings of finite third moments, achieving the optimal rate (in $n$) of order $1/\sqrt{n}$. For fixed $p$, the rate implied by our result is $m^{(q-1)/(q-2)} / \sqrt{n}$, matching the classical optimal rate for $m$-dependent (univariate) random variables. 
For applicability in statistical inference, we further propose a new bootstrap scheme to estimate the ``limiting'' Gaussian distribution. Our bootstrap scheme is new in that it does not use the classical Newey-West Heteroscedastic and autocorrelation (HAC) consistent estimator of the covariance matrix, but a projection of the naive estimator of the covariance matrix. This choice played an important role in getting faster rates for the bootstrap approximation.

Our results about the Gaussian approximations of $m$-dependent samples could be applied in existing analyses of physical dependence settings. \citet{zhang2018gaussian} introduced the $m$-approximation technique to study the Gaussian approximation of weakly dependent time series under physical dependence. The technique extends Berry--Esseen bounds for $m$-dependent samples to weaker temporal dependencies; see Theorem 2.1 and the end of Section 2.2 therein. Similarly, \citet{chang2021central} derived an $1/n^{1/6}$ rate under $m$-dependence to samples with physical dependence. The resulting rate in Theorem 3 improved the rates obtained by \citet{zhang2017gaussian}.

Another important future direction is extending our technique to samples with generalized graph dependency. Random vectors $X_1, X_2, \dots, X_n \in \reals^p$ are said to have dependency structure defined by graph $G = ([n], E)$ if $X_i \indep X_j$ if $(i,j) \in E$. Graph dependency generalizes $m$-dependence as a special case by taking $E = \{(i,j): \abs{i-j} \leq m\}$. To the best of our knowledge, the only CLT result is that of \citet{chen2004normal} for $1$-dimensional samples with graph dependency. Extending their result to high-dimensional samples has wide potential applicability in statistical network analysis.

\section{Proof techniques} \label{sec:pf_sketch}

The proofs of \cref{thm:1_dep_berry_esseen_3P,thm:1_dep_berry_esseen_4P} are  somewhat involved and contains multiple steps. For the reader's convenience, below we sketch the proof of a simplified and weaker version of our bounds, assuming  $3 \leq q$ and $1$-dependence, instead of $1$-ring dependence. In this particular case, we have $X_1 \indep X_n$. This results in a  Berry--Esseen bound similar to the one presented \cref{thm:1_dep_berry_esseen_3P}, but with
\begin{equation*}
    L_{3,\max} \equiv \max_{i \in [1,n]} L_{3,i} 
    \textand
    \nu_{q,\max} \equiv \max_{i \in [1,n]} \nu_{q,i}
\end{equation*}
in place of $\bar{L}_3$ and $\bar{\nu}_q$, respectively.
In the proof, we use the inductive relationship between anti-concentration bounds and Kolmogorov-Smirnov statistics. Anti-concentration refers to the probability of a random variable to be contained in a small subset (typically an annulus). 
An anti-concentration probability bound for the Gaussian distribution that is commonly used in the recent literature on high-dimensional central limit theorem is that established by 
\citet{nazarov2003maximal} \citep[see also][]{chernozhukov2017detailed}, who derived an upper bound for the probability that a Gaussian random vector is contained in
$A_{r,\delta} \equiv \{x \in \reals^p: x_k \leq r_k + \delta, \forall k \in [p]\}
\setminus \{x \in \reals^p: x_k \leq r_k - \delta, \forall k \in [p]\}$
for $r \in \reals^p$ and $\delta \in [0, \infty)$.

\begin{lemma}[Gaussian anti-concentration inequality; \citealp{nazarov2003maximal,chernozhukov2017detailed}]
\label{thm:G_anti_conc}
For a random vector $Y \dist N(0,\Sigma)$ in $\reals^p$, $r \in \reals^p$, and $\delta \in [0, \infty)$,
\begin{equation*}
    \Pr[Y \in A_{r,\delta}] \leq C \delta \sqrt{\frac{\log(ep)}{\min_{k \in [p]} \Sigma_{kk}}}
\end{equation*}
for an absolute constant $C > 0$.
\end{lemma}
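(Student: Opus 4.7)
The stated bound is Nazarov's Gaussian anti-concentration inequality, and I would follow the classical proof strategy. By rescaling $Y \mapsto Y/\sigma_*$ with $\sigma_*^2 := \min_i \Sigma_{ii}$, the problem reduces to the case $\min_i \Sigma_{ii} \geq 1$, in which the target simplifies to $\Pr[Y \in A_{r,\delta}] \leq C\delta \sqrt{\log(ep)}$. Setting $F(t) := \Pr[Y \preceq r + t\mathbf{1}]$, we have $\Pr[Y \in A_{r,\delta}] = F(\delta) - F(-\delta)$, reducing the task to bounding this one-variable increment of a multivariate Gaussian CDF along the diagonal direction.

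The key structural input is the Nazarov concavity lemma: the function $g(t) := \Phi^{-1}(F(t))$ is concave on $\mathbb{R}$. This follows from Ehrhard's inequality (the Gaussian refinement of Pr\'ekopa--Leindler) applied to the translation-invariant family of half-spaces $\{x : x \preceq r + t\mathbf{1}\}_t$. Combining $F = \Phi \circ g$ with the elementary estimate $F(\delta) - F(-\delta) \leq 2\delta \sup_{t \in [-\delta,\delta]} F'(t)$ and the identity $F'(t) = \phi(g(t))\, g'(t)$, the concavity of $g$ forces $g'$ to be monotone decreasing and allows one to localize the supremum of $F'$ at (or near) the left endpoint of the interval.

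The remaining quantitative step---and the main obstacle---is to bound $F'(t) \leq C\sqrt{\log(ep)}$ under $\min_i \Sigma_{ii} \geq 1$. Using the Gaussian identity
$$F'(t) = \sum_{i=1}^p \phi_{\sigma_i}(r_i + t)\,\Pr\bigl[\,Y_j \leq r_j + t \text{ for all } j \neq i \,\big|\, Y_i = r_i + t\,\bigr],$$
one splits the sum according to the magnitude of $|r_i + t|/\sigma_i$. Indices with large magnitude contribute exponentially small marginal densities $\phi_{\sigma_i}(r_i + t)$, while indices with small magnitude must be few---at most $O(\log p)$ many---whenever the conditional probability factor is non-negligible, as otherwise the joint probability $F(t)$ itself would be exponentially small and give room via a Mills'-ratio estimate. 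The delicate balancing of these two regimes, together with the coupling introduced by a general covariance $\Sigma$ through the standard Gaussian conditioning identity
$$Y_j \mid Y_i = v \sim N\!\bigl(\mathbb{E}[Y_j] + (\Sigma_{ij}/\Sigma_{ii})(v - \mathbb{E}[Y_i]),\; \Sigma_{jj} - \Sigma_{ij}^2/\Sigma_{ii}\bigr),$$
is the technical heart of Nazarov's proof. I would closely follow the detailed exposition in \citet{chernozhukov2017detailed}, which works out precisely this argument and yields the stated constant.
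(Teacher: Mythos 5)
The paper does not prove \cref{thm:G_anti_conc}: it is stated as a known result and cited directly to \citet{nazarov2003maximal} and \citet{chernozhukov2017detailed}, so there is no ``paper's own proof'' to compare against. Your sketch is therefore being evaluated on its own merits.

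Your sketch correctly identifies the standard ingredients: reduction to $\min_i\Sigma_{ii}\geq 1$ by scaling, the Gaussian differentiation identity for $F'(t)$, the Gaussian conditioning formula for off-diagonal covariances, and the pigeonhole-type split of indices by the magnitude of $(r_i+t)/\sigma_i$. These are exactly the steps in the \citet{chernozhukov2017detailed} argument, which is the right thing to follow here. There is, however, one imprecision in the middle of your sketch: you claim that the concavity of $g=\Phi^{-1}\circ F$ ``allows one to localize the supremum of $F'$ at (or near) the left endpoint of the interval.'' That step does not work as stated. Concavity gives that $g'$ is nonincreasing, but $F' = \phi(g)\,g'$ is a product of a nonincreasing factor and a nonmonotone factor $\phi(g)$ (which first rises, then falls, as $g$ passes through $0$), so there is no monotonicity of $F'$ and no endpoint localization. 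In fact, the argument in \citet{chernozhukov2017detailed} does not use Ehrhard's inequality or the concavity of $g$ at all; it proves the \emph{uniform-in-$t$} bound $F'(t)\leq C\sqrt{\log(ep)}$ directly from the differentiation identity and the index split, which then gives $F(\delta)-F(-\delta)\leq 2\delta\cdot\sup_t F'(t)\leq 2C\delta\sqrt{\log(ep)}$ without any localization. The Ehrhard/concavity route is closer in spirit to Nazarov's original proof via Gaussian isoperimetry; mixing the two strategies as in your sketch leaves an unsupported step. If you drop the concavity claim and simply run the uniform derivative bound of \citet{chernozhukov2017detailed}, the argument closes correctly.
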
 

The relevant anti-concentration probability for our problem involves the conditional probability of all the partial sums of the data $X_{I}$ given $\mathscr{X}_{I'}$ for $I, I' \subset [1,n]$. We denote the supremum of these probabilities over all possible choices of the parameter $r \in \mathbb{R}$ and the condition $\mathscr{X}_{I'}$ by
\begin{equation*}
    \kappa_{I|I'}(\delta) \equiv \sup_{r\in\reals^p} \underset{\mathscr{X}_{I'}}{\mathrm{esssup}} ~\Pr[X_{I} \in A_{r,\delta} | \mathscr{X}_{I'}].
\end{equation*}
Throughout the proof, we use various  monotonicity properties of $\kappa$, detailed in the next result. 
\begin{lemma} \label{thm:kappa_comparison} 
    Suppose that $0 \leq i_1 < i_2 \leq n$ and that $0 \leq \delta \leq \delta'$. Then under $1$-dependence or $1$-ring dependence,
    \begin{enumerate}
        \item $\kappa_{(i_1,i_2)}(\delta) \leq \kappa_{(i_1,i_2)|\{i_2\}}(\delta)$
        ~\text{and}~
        $\kappa_{(i_1,i_2)}(\delta) \leq \kappa_{(i_1,i_2)|\{i_1\}}(\delta)$;
        \item $\kappa_{(i_1,i_2)|\{i_2\}}(\delta) \leq \kappa_{(i_1,i_2)|\{i_1,i_2\}}(\delta)$
        ~\text{and}~
        $\kappa_{(i_1,i_2)|\{i_1\}}(\delta) \leq \kappa_{(i_1,i_2)|\{i_1,i_2\}}(\delta)$;
        \item $\kappa_{(i_1,i_2)|\{i_2\}}(\delta) \leq \kappa_{(i_1,i_2-1)|\{i_2-1\}}$ 
        ~\text{and}~
        $\kappa_{(i_1,i_2)|\{i_1\}}(\delta) \leq \kappa_{(i_1+1,i_2)|\{i_1+1\}}$;
        \item $\kappa_{(i_1,i_2)|\{i_1,i_2\}}(\delta) \leq \kappa_{(i_1,i_2-1)|\{i_1,i_2-1\}}$ 
        ~\text{and}~
        $\kappa_{(i_1,i_2)|\{i_1,i_2\}}(\delta) \leq \kappa_{(i_1+1,i_2)|\{i_1+1,i_2\}}$;
        \item $\kappa_{(i_1,i_2)}(\delta) \leq \kappa_{(i_1,i_2)}(\delta')$, 
        $\kappa_{(i_1,i_2)|\{i_2\}}(\delta) \leq \kappa_{(i_1,i_2)|\{i_2\}}(\delta')$, 
        and $\kappa_{(i_1,i_2)|\{i_1,i_2\}}(\delta) \leq \kappa_{(i_1,i_2)|\{i_1,i_2\}}(\delta')$.
    \end{enumerate}
\end{lemma}

We also denote the Kolmogorov-Smirnov statistics of interest by
\begin{equation*}
    \mu_{I} \equiv \mu({X}_{I}, {Y}_{I}),
\end{equation*}
keeping track explicitly in our notation of the range of summation.
We start the proof by establishing the inductive relationship from $\kappa$ to $\mu$: for $\delta > \sigma_{\min}$,
\begin{equation} \label{eq:mu_by_kappa}
\begin{aligned}
    \sqrt{n} \mu_{[1,n]}
    & \leq C(\nu_q, \sigma_{\min}, \underline{\sigma}) \delta \log(ep) 
    + C(\nu_q, \underline{\sigma}) \frac{ \log(en) (\log(ep))^{3/2}}{\delta} \sup_{i \in [1,n)} \sqrt{i} \kappa_{[1,i]|\{i+1\}}\left(\delta\right).
\end{aligned}
\end{equation}
To derive this key bound, we use the Lindeberg swapping technique, using the approach of \citetalias{kuchibhotla2020high}. Our key contribution in this aspect lies in addressing the added complexity due to the dependence structure. For details,  refer to \cref{sec:mu_by_kappa_3N}.

Using \cref{eq:mu_by_kappa}, our next step is to establish an upper bound on the anti-concentration probabilities $\kappa_{[1,i]|\{i+1\}}(\delta)$. Under independence, $\kappa_{[1,i]|\{i+1\}}(\delta)$ represents the marginal anti-concentration probability since conditioning on $X_{j+1}$ in the definition of $\kappa_{[1,i]|\{i+1\}}(\delta)$ can be omitted. Consequently, an upper bound for $\kappa_{[1,i]|\{i+1\}}(\delta)$ can be obtained in a straightforward way (see \cref{thm:G_anti_conc}); that is, for any $i \in [1,n)$,
\begin{equation} \label{eq:kappa_by_mu_indep}
    \kappa_{[1,i]|\{i+1\}}(\delta) = \kappa_{[1,i]}(\delta) \leq \mu_{[1,i]} + C \delta \sqrt{\frac{\log(ep)}{\sigma_{\min} \cdot i}}.
\end{equation} \citetalias{kuchibhotla2020high} implicitly employed a dual induction approach using \cref{eq:mu_by_kappa,eq:kappa_by_mu_indep} to establish a high-dimensional Berry--Esseen bound with the desired $1/\sqrt{n}$ rate, up to logarithms. However, \cref{eq:kappa_by_mu_indep} falls short when dealing with $1$-dependence, as $X_{[1,j]}$ becomes dependent on $X_{i+1}$ in the definition of $\kappa_{[1,i]|\{i+1\}}(\delta)$.
In the case of univariate dependent $X_i$, \citet{chen2004normal} derived a non-inductive upper bound for the conditional anti-concentration probability using a telescoping method (see Proposition 3.2 therein). Extending this method to the high-dimensional case is non-trivial. In our work, we adopt a similar intuition, but instead of aiming for a non-inductive bound, we establish an inductive relationship from $\mu$ to $\kappa$; specifically, for $i \in [1,n)$ and $\delta > \sigma_{\min}$, we show in \cref{thm:induction_lemma_AC_N} below that
\begin{equation} \label{eq:kappa_by_mu}
    \sqrt{i}\kappa_{[1,i]|\{i+1\}}(\delta) \leq C \left( \frac{\delta + \nu_1}{\sigma_{\min}} \sqrt{\log(ep)} + \max_{j \in [1,i)} \sqrt{j} \mu_{[1,j]} \right).
\end{equation}
Finally, using \cref{eq:mu_by_kappa,eq:kappa_by_mu}, we perform a dual induction to complete the proof.


The proofs of \cref{thm:1_dep_berry_esseen_3P,thm:1_dep_berry_esseen_4P} share similar steps but incorporate additional technical steps, explained in the last two subsections. In \cref{sec:pf_sketch_4N}, we present the iterated Lindeberg swapping method, which helps improve the dimension complexity when a finite fourth moment condition is satisfied. Additionally, in \cref{sec:permutation_argument}, we introduce permutation arguments to enhance the Berry--Esseen bounds by replacing the maximal moments with the average moments. For the complete proofs, please refer to \cref{sec:pf_1_dep_3P,sec:pf_1_dep_4P}.

\subsection{Induction from \texorpdfstring{$\kappa$}{κ} to \texorpdfstring{$\mu$}{μ} for \texorpdfstring{$3 \leq q$}{3 ≤ q}}
\label{sec:mu_by_kappa_3N}

Let $i$ be fixed in $[1,n]$. The quantity we want to control concerns expectations of indicator functions, which are not smooth. For this reason, most proofs of CLTs apply a smoothing to replace indicator functions by smooth functions. We use the mixed smoothing proposed by \citet{chernozhukov2020nearly}. Namely, for $r \in \reals^p$ and $\delta, \phi > 0$, let 
\begin{equation*}
    \rho_{r,\phi}^{\delta}(x) \equiv \Exp[ f_{r,\phi}(x + \delta Z)],
\end{equation*}
where 
\begin{equation*}
    f_{r,\phi}(x) \equiv \begin{cases}
        1, & \text{if} ~ \max\{x_k - r_k: k \in [p]\} < 0, \\
        1 - \phi \max\{ x_k - r_k: k \in [p]\},
        & \text {if} ~ 0 \leq \max\{ x_k - r_k: k \in [p]\} < 1/\phi, \\
        0, & \text{if} ~ 1/\phi \leq \max\{x_k - r_k: k \in [p]\}.
    \end{cases} 
\end{equation*}
This smoothing leads to a bias term , which can be controlled using  Lemma 1 of \citetalias{kuchibhotla2020high} and Lemma 2.1 of \citetalias{chernozhukov2020nearly}. For convenience, we report these two results in the next lemma.

\begin{lemma} \label{thm:smoothing}
Suppose that $X$ is a $p$-dimensional random vector, and $Y \dist N(0,\Sigma)$ is a $p$-dimensional Gaussian random vector. Then, for any $\delta > 0$,
\begin{equation*}
    \mu(X, Y) 
    \leq C \frac{\delta\log(ep) + \sqrt{\log(ep)}/\phi}{\sqrt{\min_{k \in [p]} \Sigma_{kk}}}
    + C \sup_{r \in \reals^p} \abs*{ \Exp\left[\rho_{r,\phi}^\delta(X) - \rho_{r,\phi}^\delta(Y)\right]},
\end{equation*}
where $C > 0$ is a universal constant.
\end{lemma}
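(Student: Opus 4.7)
The plan is to split the Kolmogorov--Smirnov difference into three pieces via the smoothed functional $\rho_{r,\phi}^{\delta}$ and then control the two resulting smoothing biases separately, by combining pointwise sandwich inequalities with the Nazarov bound of Lemma~\ref{thm:G_anti_conc}. For any $r\in\mathbb{R}^{p}$,
\[
\mathbb{P}(X\preceq r)-\mathbb{P}(Y\preceq r)
=\mathbb{E}[\mathbf{1}\{X\preceq r\}-\rho_{r,\phi}^{\delta}(X)]
+\bigl(\mathbb{E}[\rho_{r,\phi}^{\delta}(X)]-\mathbb{E}[\rho_{r,\phi}^{\delta}(Y)]\bigr)
+\mathbb{E}[\rho_{r,\phi}^{\delta}(Y)-\mathbf{1}\{Y\preceq r\}].
\]
The middle term is precisely the supremum appearing in the conclusion; the task is to bound the two outer smoothing biases uniformly in $r$ by $C(\delta\log(ep)+\sqrt{\log(ep)}/\phi)/\sqrt{\min_{i}\Sigma_{ii}}$.

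The engine is a pointwise sandwich for $\rho_{r,\phi}^{\delta}$. From the definition of $f_{r,\phi}$ one has $\mathbf{1}\{y\prec r\}\leq f_{r,\phi}(y)\leq\mathbf{1}\{y\prec r+\phi^{-1}\mathbf{1}\}$, so Gaussian averaging yields $\mathbb{P}(x+\delta Z\prec r)\leq \rho_{r,\phi}^{\delta}(x)\leq \mathbb{P}(x+\delta Z\prec r+\phi^{-1}\mathbf{1})$. Choosing $\beta:=\delta\sqrt{2\log(ep^{2})}$ and applying the Gaussian maximal bound $\mathbb{P}(\max_{k\leq p}Z^{(k)}\geq\beta/\delta)\leq\eta$ with $\eta\lesssim 1/p$ upgrades this to the dichotomy
\[
(1-\eta)\,\mathbf{1}\{x\preceq r-\beta\mathbf{1}\}\leq \rho_{r,\phi}^{\delta}(x)\leq \mathbf{1}\{x\preceq r+(\beta+\phi^{-1})\mathbf{1}\}+\eta.
\]
Applied at $x=Y$, this shows that $|\mathbb{E}[\rho_{r,\phi}^{\delta}(Y)]-\mathbb{P}(Y\preceq r)|$ is dominated by $\eta$ plus the $Y$-probability of an axis-aligned slab of width $2\beta+\phi^{-1}$; Nazarov's inequality bounds the slab probability by $C(2\beta+\phi^{-1})\sqrt{\log(ep)/\min_{i}\Sigma_{ii}}\leq C(\delta\log(ep)+\sqrt{\log(ep)}/\phi)/\sqrt{\min_{i}\Sigma_{ii}}$, which absorbs the $\eta$ residue.

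For the $X$-side bias no direct anti-concentration is available, so I would transfer through the Gaussian using the middle term of the decomposition: applying the lower branch of the sandwich at the shifted rectangle $r+\beta\mathbf{1}$ gives
\[
\mathbb{P}(X\preceq r)
\leq \frac{\mathbb{E}[\rho_{r+\beta\mathbf{1},\phi}^{\delta}(X)]}{1-\eta}
\leq \frac{\mathbb{E}[\rho_{r+\beta\mathbf{1},\phi}^{\delta}(Y)]+\sup_{r'}|\Delta(r')|}{1-\eta}
\leq \frac{\mathbb{P}(Y\preceq r+(2\beta+\phi^{-1})\mathbf{1})+\eta+\sup_{r'}|\Delta(r')|}{1-\eta},
\]
where $\Delta(r'):=\mathbb{E}[\rho_{r',\phi}^{\delta}(X)]-\mathbb{E}[\rho_{r',\phi}^{\delta}(Y)]$. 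Subtracting $\mathbb{P}(Y\preceq r)$ and invoking Nazarov once more on the resulting Gaussian slab produces the required one-sided estimate; the reverse direction is handled symmetrically by starting from the upper branch of the sandwich. Taking a supremum over $r$ then delivers the stated bound on $\mu(X,Y)$. The main obstacle is the bookkeeping of constants: $\beta$ must be taken large enough that the tail weight satisfies $\eta\lesssim 1/p$, yet small enough that $\beta\sqrt{\log(ep)/\min_{i}\Sigma_{ii}}$ collapses to the advertised $\delta\log(ep)/\sqrt{\min_{i}\Sigma_{ii}}$, and one must verify that the multiplicative factor $(1-\eta)^{-1}\leq 1+O(\eta)$ injects only lower-order corrections that can be absorbed into the universal constant $C$.
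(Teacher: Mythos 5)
The paper's proof is a two-step composition: it applies Lemma~\ref{thm:G_smoothing} (the Gaussian perturbation lemma, cited from \citet{kuchibhotla2020high} and used as a black box) to pass from $(X,Y)$ to $(X+\delta Z,Y+\delta Z)$ at cost $C\delta\log(ep)/\sqrt{\min_i\Sigma_{ii}}$, and then applies Lemma~\ref{thm:phi_smoothing} (proved directly from Nazarov's inequality) to the perturbed pair, using the identity $\mathbb{E}[f_{r,\phi}(X+\delta Z)]=\mathbb{E}[\rho_{r,\phi}^{\delta}(X)]$. You instead fold both smoothings into a single from-scratch argument. The piecewise-linear part of your sandwich is exactly the content of Lemma~\ref{thm:phi_smoothing}; what you are re-deriving is Lemma~\ref{thm:G_smoothing}, and that re-derivation has a gap.

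The gap is the additive residue $\eta\approx\mathbb{P}(\|Z\|_\infty>\beta/\delta)\lesssim 1/p$. You flag the multiplicative factor $(1-\eta)^{-1}$ as the bookkeeping concern to verify, and that one is indeed harmless (any $\eta\le 1/2$ gives $(1-\eta)^{-1}\le 2$), but the $\eta$ appearing \emph{additively}---in the numerator of your transfer chain and in the $Y$-side bias estimate---is not ``absorbed'' by the Nazarov slab: as written, your argument yields $\mu(X,Y)\le C\bigl(\delta\log(ep)+\sqrt{\log(ep)}/\phi\bigr)/\sqrt{\min_i\Sigma_{ii}}+C\eta+C\sup_r|\Delta(r)|$, which is strictly weaker than the claim, and there is no general reason for $1/p$ to be dominated by the other terms (take $\delta$ tiny, $\phi$ large, $\Sigma=I$, $p$ moderate). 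Enlarging $\beta$ shrinks $\eta$ but widens the Nazarov slab by the same factor, so the trade-off never delivers $\eta\lesssim\delta\log(ep)/\sqrt{\min_i\Sigma_{ii}}$ uniformly in $\delta$. The paper avoids this entirely: the $\phi$-smoothing step it actually proves involves only a \emph{deterministic} shift by $\phi^{-1}\mathbf{1}$ and therefore generates no Gaussian tail, while the delicate tail-absorption subtlety is packaged inside the cited Lemma~\ref{thm:G_smoothing}. The clean fix, keeping your sandwich, is to leave it in its Gaussian-convolved form $\mathbb{P}(x+\delta Z\prec r)\le\rho_{r,\phi}^{\delta}(x)\le\mathbb{P}(x+\delta Z\prec r+\phi^{-1}\mathbf{1})$, deduce $\mu(X+\delta Z,Y+\delta Z)\le C\sup_r|\Delta(r)|+C\sqrt{\log(ep)}/(\phi\sqrt{\min_i\Sigma_{ii}})$ from Nazarov alone (no tail cutoff needed), and then invoke Lemma~\ref{thm:G_smoothing} for the remaining perturbation step rather than re-prove it.
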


Because $\min_{k \in [p]} \Sigma_{kk} \geq n \sigma_{\min}^2$, \cref{thm:smoothing} implies
\begin{equation*}
    \mu_{[1,n]} 
    \leq \frac{C}{\sqrt{n}} \frac{\delta \log(ep)}{\sigma_{\min}}  
    + \frac{C}{\sqrt{n}} \frac{\sqrt{\log(ep)}}{\phi \sigma_{\min}}
    + C \sup_{r \in \reals^p} \abs*{ \Exp\left[\rho_{r,\phi}^\delta(X_{[1,n]}) - \rho_{r,\phi}^\delta(Y_{[1,n]})\right]}.
\end{equation*}

\medskip
\noindent{\bf Lindeberg swapping.} 
In the standard Lindeberg swapping approach, one seeks to upper bound the quantity \begin{equation*}
    \sup_{r \in \reals^p} \abs*{ \Exp\left[\rho_{r,\phi}^\delta(X_{[1,n]}) - \rho_{r,\phi}^\delta(Y_{[1,n]})\right]}
\end{equation*}
by expressing it as
\begin{equation} \label{eq:first_lindeberg_swapping_N}
\begin{aligned}
    & \sup_{r \in \reals^p} \abs*{ \Exp\left[\rho_{r,\phi}^\delta(X_{[1,n]}) - \rho_{r,\phi}^\delta(Y_{[1,n]})\right]} \\
    & = \sup_{r \in \reals^p} \abs*{ \sum_{j=1}^{n} 
    \Exp \left[
        \rho_{r,\phi}^\delta(W^\cmpl_{[j,j]} + X_j)
        - \rho_{r,\phi}^\delta(W^\cmpl_{[j,j]} + Y_j)
    \right]},
\end{aligned}
\end{equation}
where $W_{[j_1,j_2]}^\cmpl \equiv X_{[1,j_1)} + Y_{(j_2,n]}$, and then by further bounding each term in the summation via a third-order remainder terms of Taylor expansions up to order $3$, leveraging the second moment matching between $X_j$ and $Y_j$. Below, we will use the symbol $W$ as a wildcard, representing either $X$ or $Y$ depending on the context. 
\begin{equation} \label{eq:lindeberg_swapping_indep}
\begin{aligned}
    & \Exp\left[\rho_{r,\phi}^\delta(W^\cmpl_{[j,j]} + X_j)\right] \\
    & = \Exp\left[\rho_{r,\phi}^\delta(W^\cmpl_{[j,j]})\right]
    + \Exp\left[ \inner*{ \nabla \rho_{r,\phi}^\delta(W^\cmpl_{[j,j]}), X_j} \right]
    + \Exp\left[ \frac{1}{2} \inner*{ 
    \nabla^2 \rho_{r,\phi}^\delta(W^\cmpl_{[j,j]}), X_j^{\otimes 2}} \right]  \\
    & \quad + \Exp\left[ \frac{1}{2} \int_0^1 (1-t)^2 \inner*{
    \nabla^3 \rho_{r,\phi}^\delta(W^\cmpl_{[j,j]} + t X_j), X_j^{\otimes 3}} ~dt \right] \\
    & = \Exp\left[ \rho_{r,\phi}^\delta(W^\cmpl_{[j,j]}) \right]
    + \frac{1}{2} \inner*{ 
    \Exp\left[ \nabla^2 \rho_{r,\phi}^\delta(W^\cmpl_{[j,j]}) \right], \Exp[ X_j^{\otimes 2} ]} \\
    & \quad + \frac{1}{2} \int_0^1 (1-t)^2 \Exp\left[ 
    \inner*{ \nabla^3 \rho_{r,\phi}^\delta(W^\cmpl_{[j,j]} + t X_j), X_j^{\otimes 3}} \right] ~dt,
\end{aligned}
\end{equation}
so by $\Exp[X_j^{\otimes 2}] = \Exp[Y_j^{\otimes 2}]$,
\begin{equation*} 
\begin{aligned}
    & \Exp\left[\rho_{r,\phi}^\delta(W^\cmpl_{[j,j]} + X_j) - \rho_{r,\phi}^\delta(W^\cmpl_{[j,j]} + Y_j)\right]\\
    & = \frac{1}{2} \int_0^1 (1-t)^2 \Exp\left[ 
    \inner*{ \nabla^3 \rho_{r,\phi}^\delta(W^\cmpl_{[j,j]} + t X_j), X_j^{\otimes 3}} \right] ~dt \\
    & \quad - \frac{1}{2} \int_0^1 (1-t)^2 \Exp\left[ 
    \inner*{ \nabla^3 \rho_{r,\phi}^\delta(W^\cmpl_{[j,j]} + t X_j), Y_j^{\otimes 3}} \right] ~dt.
\end{aligned}
\end{equation*}
However, in the case of $1$-dependence, the second equality of \cref{eq:lindeberg_swapping_indep} no longer holds due to the dependency between $W_{[j,j]}^\cmpl$ and $X_j$. To address this issue, we introduce Taylor expansions on $\nabla\rho_{r,\phi}^\delta(W^\cmpl_{[j,j]})$ and $\nabla^2 \rho_{r,\phi}^\delta(W^\cmpl_{[j,j]})$ to break the dependency before proceeding with the second-order moment matching. This additional step involves lengthy calculations and unwieldily specifications of remainder terms. We provide the full details in \cref{sec:first_lindeberg_swapping}.
As a result,
\begin{equation} \label{eq:first_lindeberg_result_3N}
\begin{aligned}
    & \sum_{j=1}^{n} \Exp \left[
        \rho_{r,\phi}^\delta(W^\cmpl_{[j,j]} + X_j)
        - \rho_{r,\phi}^\delta(W^\cmpl_{[j,j]} + Y_j) \right] 
    = \sum_{j=1}^{n} \Exp \left[ \mathfrak{R}_{X_j}^{(3,1)} 
        - \mathfrak{R}_{Y_j}^{(3,1)} \right],
\end{aligned}
\end{equation}
where $\mathfrak{R}_{X_j}^{(3,1)}$ and $\mathfrak{R}_{Y_j}^{(3,1)}$ are remainder terms of the Taylor expansions specified in \cref{sec:first_lindeberg_swapping}.

\medskip
\noindent{\bf Remainder lemma.} Then, we upper bound the remainder terms using the upper bounds for the differentials of $\rho_{r,\phi}^{\delta}$. In particular, \citetalias{chernozhukov2020nearly} showed in Lemmas 6.1 and 6.2 that
\begin{equation*}
\begin{aligned}
    \sup_{w \in \reals^p} \sum_{k_1,\dots,k_\alpha} \sup_{\norm{z}_\infty \leq \frac{2\delta}{\sqrt{\log(ep)}}}
        \abs*{ \nabla^{(k_1,\dots,k_\alpha)} \rho_{r,\phi}^{\delta}
        ( w + z ) } 
    \leq C \frac{\phi^{\gamma} (\log(ep))^{(\alpha-\gamma)/2}}{{\delta}^{\alpha-\gamma}}
\end{aligned}
\end{equation*}
for any $\gamma \in [0,1]$. For the event that $W_{[j,j]}^\cmpl$ is in the anulus $A_{r,\delta'}$ for a $\delta'$ to be specified shortly, we use the above inequality to bound the remainder term. Out of the event, the differential is sufficiently small. Hence, the upper bounds involve with anti-concentration probabilities of $W_{[j,j]}^\cmpl$.
The following lemma is derived by applying \cref{thm:generalized_remainder_lemma} to the term $\mathfrak{R}_{W_j}^{(3,1)}$ with parameters $\alpha = 3$, $\gamma_1 = 0$, $\gamma_2 = \min\{1, q-3\}$, and $\eta = q-3$. This particular choice of $\gamma_2$ and $\eta$ ensures that the condition $\gamma_2 \in [0,1]$ is satisfied and guarantees that the resulting exponent of $\delta_{n-j}$ is at least $3$. See \cref{sec:pf_remainder_lemma}.

\begin{lemma} \label{thm:remainder_lemma_3N}
    There exist universal constants $C > 0$ and $\alpha > 0$ such that for any $n$, $1$-dependent sequence $(X_1, \dots, X_n)$ satisfying Assumption~\eqref{assmp:min_ev}, $j \in [1,n]$, $q \geq 3$, $\delta \geq \sigma_{\min}$ and $\phi \geq \frac{1}{\delta \log(ep)}$,
    \begin{equation*}
    \begin{aligned}
        \abs*{\Exp \left[ \mathfrak{R}_{W_j}^{(3,1)} \right]} 
        & \leq C \frac{(\log(ep))^{3/2}}{\delta_{n-j}^3} \left[ 
            {L}_{3,\max} 
            + \phi^{\min\{1, q-3\}} \nu_{q,\max} \frac{ {(\log(ep))}^{\max\{0, q-4\}/2}}{(\delta_{n-j}/\alpha)^{\max\{0, q-4\}}}
        \right] \\
        & \quad \times \min\{ 1, \kappa_{[1,j-4]|\{j-3\}}(\delta_{n-j}^\circ) + \kappa^\circ_{j}(\delta_{n-j}) \},
    \end{aligned}
    \end{equation*}
    where $W$ represents either $X$ or $Y$, $\delta_{n-j}^2 \equiv \delta^2 + \underline{\sigma}^2 \max\{n-j,0\}$, $\delta^\circ_{n-j} \equiv 12 \delta_{n-j} \sqrt{\log(pn)}$ and $\kappa^\circ_{j}(\delta) \equiv \frac{\delta \log(ep)}{\sigma_{\min} \sqrt{\max\{j, 1\}}}$.
\end{lemma} 
Plugging back into $\mu_{[1,n]}$, we get
\begin{equation*}
\begin{aligned}
    \mu_{[1,n]}
    & \leq\frac{C}{\sqrt{n}} \frac{\delta \log(ep)}{\sigma_{\min}} 
    + \frac{C}{\sqrt{n}} \frac{\sqrt{\log(ep)}}{\phi \sigma_{\min}}
    + C \sum_{j=1}^{n} \abs*{ \Exp\left[ \mathfrak{R}_{X_j}^{(3,1)} 
        - \mathfrak{R}_{Y_j}^{(3,1)} \right]} \\
    & \leq \frac{C}{\sqrt{n}} \frac{\delta \log(ep)}{\sigma_{\min}}  
    + \frac{C}{\sqrt{n}} \frac{\sqrt{\log(ep)}}{\phi \sigma_{\min}} \\
    & \quad + C \sum_{j=1}^i \frac{(\log(ep))^{3/2}}{\delta_{n-j}^3} \left[ 
        {L}_{3,\max} 
        + \phi^{\min\{1, q-3\}} \nu_{q,\max} \frac{ {(\log(ep))}^{\max\{0, q-4\}/2}}{(\delta_{n-j}/\alpha)^{\max\{0, q-4\}}}
    \right] \\
    & \quad \times \min\{ 1, \kappa_{[1,j-4]|\{j-3\}}(\delta_{n-j}^\circ) + \kappa^\circ_{j}(\delta_{n-j}) \}.
\end{aligned}
\end{equation*}


\medskip
\noindent{\bf Partitioning the sum.} 
%
We upperbound the last line of the previous equation by breaking down the summation over $j$ into the two parts and analyze them separately. First, for $j < n/2$, 
\begin{equation*}
\begin{aligned}
    & \sum_{j < n/2} \frac{(\log(ep))^{3/2}}{\delta_{n-j}^3} \left[ 
        {L}_{3,\max} 
        + \phi^{\min\{1, q-3\}} \nu_{q,\max} \frac{ {(\log(ep))}^{\max\{0, q-4\}/2}}{(\delta_{n-j}/\alpha)^{\max\{0, q-4\}}}
    \right] \\
    & \quad \times \min\{ 1, \kappa_{[1,j-4]|\{j-3\}}(\delta_{n-j}^\circ) + \kappa^\circ_{j}(\delta_{n-j}) \} \\
    & \leq \sum_{j < n/2} \frac{(\log(ep))^{3/2}}{\delta_{n-j}^3} \left[ 
        {L}_{3,\max} 
        + \phi^{\min\{1, q-3\}} \nu_{q,\max} \frac{ {(\log(ep))}^{\max\{0, q-4\}/2}}{(\delta_{n-j}/\alpha)^{\max\{0, q-4\}}}
    \right] \\         
    & \leq \frac{C}{\sqrt{n}} \frac{(\log(ep))^{5/2}}{\underline{\sigma}^2\sigma_{\min}} 
    \left[ 
        {L}_{3,\max} 
        + \phi^{\min\{1, q-3\}} \nu_{q,\max} \frac{ {(\log(ep))}^{\max\{0, q-4\}/2}}{(\delta_{n-j}/\alpha)^{\max\{0, q-4\}}}
    \right],
\end{aligned}
\end{equation*}
because
\begin{equation} \label{eq:sum_delta_3}
\begin{aligned}
    \sum_{j=1}^{\floor{n/2}} \frac{1}{\delta_{n-j}^3} 
    & \le \int_{n-\floor{n/2}}^{n} \frac{1}{(\delta^2 + t\underline{\sigma}^2)^{3/2}}dt 
    \le -\frac{2}{\underline{\sigma}^2\delta_{n}} 
    + \frac{2}{\underline{\sigma}^2\delta_{n-\floor{n/2}}} \\
    & \le \frac{C}{\underline{\sigma}^3\sqrt{n-n/2}} 
    \overset{(*)}{\le} \frac{C \log(ep)}{\underline{\sigma}^2\sigma_{\min}\sqrt{n}},
\end{aligned}
\end{equation}
where $(*)$ follows Assumption~\eqref{assmp:var_ev}.

\medskip
For $j \geq n/2$,
\begin{equation*}
\begin{aligned}
    & \sum_{j \geq n/2} \frac{(\log(ep))^{3/2}}{\delta_{n-j}^3} \left[ 
        {L}_{3,\max} 
        + \phi^{\min\{1, q-3\}} \nu_{q,\max} \frac{ {(\log(ep))}^{\max\{0, q-4\}/2}}{(\delta_{n-j}/\alpha)^{\max\{0, q-4\}}}
    \right] \\
    & \quad \times \min\{ 1, \kappa_{[1,j-4]|\{j-3\}}(\delta_{n-j}^\circ) + \kappa^\circ_{j}(\delta_{n-j}) \} \\
    & \leq \sum_{j \geq n/2} \frac{(\log(ep))^{3/2}}{\delta_{n-j}^3} \left[ 
        {L}_{3,\max} 
        + \phi^{\min\{1, q-3\}} \nu_{q,\max} \frac{ {(\log(ep))}^{\max\{0, q-4\}/2}}{(\delta_{n-j}/\alpha)^{\max\{0, q-4\}}}
    \right]
    \kappa_{[1,j-4]|\{j-3\}}(\delta_{n-j}^\circ) \\
    & \quad + \sum_{j \geq n/2} \frac{(\log(ep))^{3/2}}{\delta_{n-j}^3} \left[ 
        {L}_{3,\max} 
        + \phi^{\min\{1, q-3\}} \nu_{q,\max} \frac{ {(\log(ep))}^{\max\{0, q-4\}/2}}{(\delta_{n-j}/\alpha)^{\max\{0, q-4\}}}
    \right]
    \kappa^\circ_{j}(\delta_{n-j}).
\end{aligned}
\end{equation*}
The last term is upper bounded by
\begin{equation*}
\begin{aligned}
    & \sum_{j \geq n/2} \frac{(\log(ep))^{3/2}}{\delta_{n-j}^3} \left[ 
        {L}_{3,\max} 
        + \phi^{\min\{1, q-3\}} \nu_{q,\max} \frac{ {(\log(ep))}^{\max\{0, q-4\}/2}}{(\delta_{n-j}/\alpha)^{\max\{0, q-4\}}}
    \right] \kappa^\circ_{j}(\delta_{n-j}) \\
    & \leq \frac{C}{\sqrt{n}} \frac{(\log(ep))^{5/2}}{\underline{\sigma}^2\sigma_{\min}}
    \left[ 
        {L}_{3,\max} + \phi^{\min\{1, q-3\}} \nu_{q,\max} \frac{ {(\log(ep))}^{\max\{0, q-4\}/2}}{(\delta/\alpha)^{\max\{0, q-4\}}}
    \right]
    \log\left(1 + \frac{\sqrt{n}\underline{\sigma}}{\delta}\right).
\end{aligned}
\end{equation*}
because
\begin{equation} \label{eq:sum_delta_3_kappa}
    \sum_{j=\ceil{n/2}}^{n} \frac{\kappa^\circ_{j}(\delta_{n-j})}{\delta_{n-j}^3}
    = \sum_{j=\ceil{n/2}}^{n} \frac{\log(ep)}{\delta_{n-j}^2 \sigma_{\min} \sqrt{\max\{j,1\}}}
    \leq \frac{C\log(ep)}{\underline{\sigma}^2\sigma_{\min}\sqrt{n}}
    \log\left(1 + \frac{\sqrt{n}\underline{\sigma}}{\delta}\right).
\end{equation}
Putting everything together, we have established a convenient relationship between $\mu_{[1,i]} \equiv \mu(X_{[1,i]}, Y_{[1,i]})$ and the conditional anti-concentration probability $\kappa_{[1,j-4]|\{j-3\}}(\delta)$, as illustrated in the next result.

\begin{lemma} \label{thm:induction_lemma_BE_3N}
    There exist universal constants $C > 0$ and $\alpha > 0$ such that for any $n$, $1$-dependent sequence $(X_i \in \reals^p: i \in [1,n])$ satisfying Assumptions \eqref{assmp:min_var}, \eqref{assmp:min_ev} and \eqref{assmp:var_ev}, $q \geq 3$, $\delta \geq \sigma_{\min}$ and $\phi \geq \frac{1}{\delta \log(ep)}$,
    \begin{equation*}
    \begin{aligned}
        \mu_{[1,n]}
        & \leq \frac{C}{\sqrt{n}} \frac{\delta \log(ep)}{\sigma_{\min}}  
        + \frac{C}{\sqrt{n}} \frac{\sqrt{\log(ep)}}{\phi \sigma_{\min}} \\
        & \quad + \frac{C}{\sqrt{n}} \frac{(\log(ep))^{5/2}}{\underline{\sigma}^2\sigma_{\min}}
        \left[ 
            {L}_{3,\max} + \phi^{\min\{1, q-3\}} \nu_{q,\max} \frac{ {(\log(ep))}^{\max\{0, q-4\}/2}}{(\delta/\alpha)^{\max\{0, q-4\}}}
        \right] \\
        & \quad \times \log\left(1 + \frac{\sqrt{n}\underline{\sigma}}{\delta}\right) \\
        & \quad + C \sum_{j \geq n/2} \frac{(\log(ep))^{3/2}}{\delta_{n-j}^3} \left[ 
            {L}_{3,\max} + \phi^{\min\{1, q-3\}} \nu_{q,\max} \frac{ {(\log(ep))}^{\max\{0, q-4\}/2}}{(\delta_{n-j}/\alpha)^{\max\{0, q-4\}}}
        \right] \\
        & \quad \times \kappa_{[1,j-4]|\{j-3\}}(\delta_{n-j}^\circ),
    \end{aligned}
    \end{equation*}
    where $\delta_{n-j}^2 \equiv \delta^2 + \underline{\sigma}^2 \max\{n-j,0\}$ and $\delta^\circ_{n-j} \equiv 12 \delta_{n-j} \sqrt{\log(pn)}$.
\end{lemma}

\medskip
\subsection{\texorpdfstring{$\mu$}{μ} to \texorpdfstring{$\kappa$}{κ}}
\label{sec:kappa_by_mu_N}
Having obtained the induction from $\kappa$ to $\mu$ in the previous subsection, we now proceed to obtain an induction from $\mu$ to $\kappa$. This step has been  implicitly used in the proofs of high-dimensional CLTs for independent observations; see, e.g., \citetalias{kuchibhotla2020high}. However, as mentioned in \cref{sec:pf_sketch}, 
the dependence between $X_{[1,i]}$ and $X_{i+1}$ in $\kappa_{[1,i]|\{i+1\}}(\delta) \equiv \sup_{r\in\reals^p} {\mathrm{esssup}}_{X_{i+1}} \Pr[{X}_{[1,i]} \in A_{r,\delta} | X_{i+1}]$ makes the step non-trivial. We make a breakthrough using a similar approach described in \cref{sec:mu_by_kappa_3N}, where we used the Taylor expansion to eliminate the stochastic dependence. However, once again, the conditional anti-concentration probability involves a conditional expectation of an indicator function, which lacks smoothness. So we first apply a smoothing technique to the indicator function and leverage the Taylor expansion on the smoothed indicator, subsequently bounding the resulting remainder terms.

\medskip
\noindent{\bf Smoothing.} 
For the conditional anti-concentration probability, we use a standard smoothing, rather than the mixed smoothing we used in \cref{sec:mu_by_kappa_3N}.
For $r \in \reals^p$ and $\delta \in [0, \infty)$, let
\begin{equation*}
    \varphi_{r,\delta}^\vareps(x) \equiv \Exp[ \Ind\{x + \vareps Z \in A_{r,\delta}\} ],
\end{equation*}
where $Z$ is the $p$-dimensional standard Gaussian random vector.
For some $h > 0$,
\begin{equation*}
\begin{aligned}
    \varphi_{r,\delta}^\vareps(x) - \Ind\{x \in A_{r,\delta}\}
    & = \int (\Ind\{x + \vareps z \in A_{r,\delta}\} - \Ind\{x \in A_{r,\delta}\}) \phi(z) dz \\
    & = \int_{\norm{z}_\infty \leq 10\sqrt{\log(ph)}} (\Ind\{x + \vareps z \in A_{r,\delta}\} - \Ind\{x \in A_{r,\delta}\}) \phi(z) dz \\ 
    & \quad + \int_{\norm{z}_\infty > 10\sqrt{\log(ph)}} (\Ind\{x + \vareps z \in A_{r,\delta}\} - \Ind\{x \in A_{r,\delta}\}) \phi(z) dz \\
    & \geq - \Ind\{\norm{x - \partial A_{r,\delta}}_\infty \leq 10 \vareps \sqrt{\log(ph)}\}
    \Ind\{x \in A_{r,\delta}\} \\
    & \quad - \Pr[\norm{Z}_\infty > 10 \sqrt{\log(ph)}],
\end{aligned}
\end{equation*}
where $\partial A_{r,\delta}$ is the boundary of $A_{r,\delta}$, $Z$ is the $p$-dimensional standard Gaussian random vector and $\phi(z)$ is the density function of $Z$. Hence,
\begin{equation}\label{eq:varphi_lowerbound}
\begin{aligned}
    \varphi_{r,\delta}^\vareps(x)  
    & \geq \Ind\{x \in A_{r,\delta}\} 
    - \Ind\{\norm{x - \partial A_{r,\delta}}_\infty \leq 10 \vareps \sqrt{\log(ph)}\}
    \Ind\{x \in A_{r,\delta}\} \\
    & \quad - \Pr[\norm{Z}_\infty > 10 \sqrt{\log(ph)}] \\
    & = \Ind\{x \in A_{r,\delta-\vareps^\circ}\} - \frac{1}{h^4},
\end{aligned}
\end{equation}
where $\vareps^\circ = 10 \vareps \sqrt{\log(ph)}$. On the other hand,
\begin{equation}\label{eq:varphi_upperbound}
\begin{aligned}
    \varphi_{r,\delta}^\vareps(x) 
    & \leq \Ind\{x \in A_{r,\delta}\} + \Ind\{\norm{x - \partial A_{r,\delta}}_\infty \leq 10 \vareps \sqrt{\log(ph)}\}
    \Ind\{x \notin A_{r,\delta}\} \\
    & \quad + \Pr[\norm{Z}_\infty > 10 \sqrt{\log(ph)}]\\
    & = \Ind\{x \in A_{r,\delta+\vareps^\circ}\} + \frac{1}{h^4}.
\end{aligned}
\end{equation}
As a result, for any $h > 0$,
\begin{equation} \label{eq:anti_concentration_smoothing}
\begin{aligned}
    & \Pr[X_{[1,i]} \in A_{r,\delta} | X_{i+1}] \\
    & \leq \Exp[\varphi_{r,\delta+\vareps^\circ}^\vareps(X_{[1,i]}) 
    | X_{i+1}] 
    + \frac{1}{h^4}.
\end{aligned}
\end{equation}

\medskip
\noindent{\bf Taylor expansion.}
Applying the Taylor expansion to $\Exp[\varphi_{r,\delta+\vareps^\circ}^\vareps(X_{[1,i]}) 
    | X_{i+1}] $,
\begin{equation*}
\begin{aligned}
    & \Exp[\Exp[\varphi_{r,\delta+\vareps^\circ}^\vareps(X_{[1,i]}) 
    | X_{i+1}]] \\
    & \leq \Exp\left[ 
        \varphi_{r,\delta+\vareps^\circ}^\vareps(X_{[1,i]} - X_{i-1}) | X_{i+1}
    \right]
    + \Exp[ \mathfrak{R}_{X_{i-1}}^{(1)} | X_{i+1} ] \\
\end{aligned}
\end{equation*}
where 
\begin{equation*}
    \mathfrak{R}_{X_{i-1}}^{(1)}
    \equiv \int_0^1 \inner*{
        \nabla \varphi_{r,\delta+\vareps^\circ}^\vareps(X_{[1,i]} - t X_{i-1}),
        X_{i-1}
    } dt
\end{equation*}
First, using \cref{eq:varphi_upperbound}, 
\begin{equation*}
\begin{aligned}
    & \Exp[\varphi_{r,\delta+\vareps^\circ}^\vareps(X_{[1,i]} - X_{i-1}) | X_{i+1}] \\
    & \leq \Exp[\Ind\{X_{[1,i]} - X_{i-1} \in A_{r,\delta+2\vareps^\circ}\} 
    + \frac{1}{h^4} | X_{i+1}] \\
    & \leq \Exp[\Pr[X_{[1,i-2]} \in A_{r_1, \delta + 2\vareps^\circ} | \mathscr{X}_{[i, i+1]}] | X_{i+1}] + \frac{1}{h^4},
\end{aligned}
\end{equation*}
where $r_1 = r - X_{i}$ is a Borel measurable function with respect to $\mathscr{X}_{[i,i+1]}$. Because $X_{[1,i-2]} \indep \mathscr{X}_{[i, i+1]}$, 
\begin{equation} \label{eq:anti_concentration_H1}
\begin{aligned}
    & \Pr[X_{[1,i-2]} \in A_{r_1, \delta+2\vareps^\circ} | \mathscr{X}_{[i,i+1]}] \\
    & \leq \Pr[Y_{[1,i-2]} \in A_{r_1, \delta+2\vareps^\circ} ] 
    + 2\mu(X_{[1,i-2]}, Y_{[1,i-2]}) \\
    & \leq C\frac{\delta + 20\vareps\sqrt{\log(ph)}}{\sigma_{\min}} \sqrt{\frac{\log(ep)}{i-2}}
    + 2\mu_{[1,i-2]},
\end{aligned}
\end{equation}
almost surely due to the Gaussian anti-concentration inequality (\cref{thm:G_anti_conc}).
In sum, we get 
\begin{equation} \label{eq:anti_concentration_Taylor_N}
\begin{aligned}
    & \Pr[X_{[1,i]} \in A_{r,\delta} | X_{i+1}] \\
    & \leq C\frac{\delta + 20\vareps\sqrt{\log(ph)}}{\sigma_{\min}} \sqrt{\frac{\log(ep)}{i-2}}
    + 2\mu_{[1,i-2]} 
    + \frac{1}{h^4} \\
    & \quad + \Exp[ \mathfrak{R}_{X_{i-1}}^{(1)} | X_{i+1} ].
\end{aligned}
\end{equation}

\medskip
\noindent{\bf Bounding the remainder.}
Bounding the remainder term $\Exp[ \mathfrak{R}_{X_{i-1}}^{(1)} | X_{i+1} ]$ proceeds similarly to the proof of the remainder lemma (\cref{thm:remainder_lemma_3N}), resulting into an upper bound with 
a conditional anti-concentration probability bound $\kappa_{[1,i-2]|\{i-1\}}(\vareps^\circ)$. We relegate the bounding details to \cref{sec:pf_anti_concentration}. Putting the upperbound back to the previous inequality, we obtain the following lemma.

\begin{lemma} 
\label{thm:induction_lemma_AC_N}
    There exists a universal constant $C > 0$ such that for any $n$, $1$-dependent sequence $(X_i \in \reals^p: i \in [1,n])$ satisfying Assumption \eqref{assmp:min_var}, $i \in [1,n)$, $\delta > 0$ and $\vareps \geq \sigma_{\min}$,
    \begin{equation*} 
    \begin{aligned}
        & \kappa_{[1,i]|\{i+1\}}(\delta) \\
        & \leq C \left( 
            \frac{\sqrt{\log(ep)}}{\vareps} 
            ~ \nu_{1,i-1} 
            ~ \kappa_{[1,i-2]|\{i-1\}}(\vareps^\circ)
            + \mu_{[1,i-2]} 
        \right)\\
        & \quad + \min\left\{1, C\frac{\delta + 2\vareps^\circ}{\sigma_{\min}} \sqrt{\frac{\log(ep)}{i-2}} \right\} + \frac{C}{\sigma_{\min}} \nu_{1,i-1} \frac{\log(ep)}{\sqrt{i-2}},
    \end{aligned}
    \end{equation*}
    where $\vareps^\circ \equiv 20\vareps \sqrt{\log(p(i-2))}$. 
\end{lemma}

The above inequality resembles the relationship described in Eq. (3.16) of \citet{chen2004normal}. 
In their work on univariate observations, however, the $\kappa$ in the right hand side was $\kappa_{[1,I]}(\vareps^\circ)$ instead of with the reduced index set $[1, i-2]$. Hence, they could plug-in $\delta = \vareps^\circ$ and upper bound $\kappa_{[1,i]}(\vareps^\circ)$ for some suitable $\vareps$, which we referred to `a telescoping method' in the introductory part of \cref{sec:pf_sketch} 
This resulted into a non-inductive upper bound for $\kappa_{[1,i]}(\delta)$ (see Eqs. (3.17) and (3.18) therein). In our setting, the reduced index set $[1,i-2]$ makes the technique ineffective. Alternatively, we proceed to the dual induction levering on the reduced index set.

\medskip

\subsection{Dual Induction} \label{sec:dual_indcution_3N} 
In this part we use the dual induction to prove the following lemma.
\begin{lemma}
    There exist positive universal constants $\mathfrak{C}_{1,\kappa}$, $\mathfrak{C}_{2,\kappa}$, $\mathfrak{C}_{3,\kappa}$, $\mathfrak{C}_{4,\kappa}$, $\mathfrak{C}_{1,\mu}$ and $\mathfrak{C}_{2,\mu}$ such that for any $n$, $1$-dependent sequence $(X_i \in \reals^p: i \in [1,n])$ satisfying Assumptions \eqref{assmp:min_var}, \eqref{assmp:min_ev} and \eqref{assmp:var_ev} and $\delta \geq 0$,
    \begin{equation} \label{eq:hypothesis_AC_3N} \tag{HYP-AC-1}
    \begin{aligned}
        & \sqrt{i} \kappa_{[1,i]|\{i+1\}}(\delta)
        \leq 
        \tilde\kappa_{1,i} {L}_{3,\max}
        + \tilde\kappa_{2,i} \nu_{q,\max}^{1/(q-2)}
        + \tilde\kappa_{3,i} \nu_{1,\max}
        + \tilde\kappa_{4} \delta, \\
        & \forall i \in [1,n),
    \end{aligned}
    \end{equation}
    \begin{equation} \label{eq:hypothesis_BE_3N} \tag{HYP-BE-1}
        \sqrt{n} \mu_{[1,n]}
        \leq \tilde\mu_{1,n} {L}_{3,\max}
        + \tilde\mu_{2,n} \nu_{q,\max}^{1/(q-2)},
    \end{equation}
    where $\tilde\kappa_{1,i} = \mathfrak{C}_{1,\kappa} \tilde\mu_{1,i}$, 
    $\tilde\kappa_{2,i} = \mathfrak{C}_{2,\kappa} \tilde\mu_{2,i}$, 
    $\tilde\kappa_{3,i} = \mathfrak{C}_{3,\kappa} \frac{\log(ep)\sqrt{\log(pi)}}{\sigma_{\min}}$,
    $\tilde\kappa_{4} = \mathfrak{C}_{4,\kappa} \frac{\sqrt{\log(ep)}}{\sigma_{\min}}$,
    \begin{equation*}
    \begin{aligned}
        \tilde\mu_{1,n} 
        & = \mathfrak{C}_{1,\mu} \frac{(\log(ep))^{3/2}\sqrt{\log(pn)}}{\underline\sigma^2\sigma_{\min}} \log\left(e n\right), \\
        \tilde\mu_{2,n}
        & = \mathfrak{C}_{2,\mu} \frac{\log(ep)\sqrt{\log(pn)}}{\underline\sigma^{2/(q-2)} \sigma_{\min}} \log\left(e n\right).
    \end{aligned}
    \end{equation*}
\end{lemma}

If $\mathfrak{C}_{1,\kappa}$, $\mathfrak{C}_{2,\kappa}$, $\mathfrak{C}_{3,\kappa}$, $\mathfrak{C}_{4,\kappa}$, $\mathfrak{C}_{1,\mu}$, $\mathfrak{C}_{2,\mu} \geq 2$, then \eqref{eq:hypothesis_AC_3N} and \eqref{eq:hypothesis_BE_3N}, requiring $\kappa_{[1,i]|\{i+1\}}(\delta) \leq 1$ almost surely for all $i \in [1, n)$ and $\mu_{[1,n]} \leq 1$ only, trivially holds for $n \leq 16$. Now we consider the case of $n > 16$. Suppose that the induction hypotheses hold for all smaller $n$.

\medskip
We first derive \eqref{eq:hypothesis_AC_3N} for any $i \in [1,n)$. We note that $(X_1, \dots, X_{n-1})$ is a $1$-dependent sequence satisfying Assumptions \eqref{assmp:min_var}, \eqref{assmp:min_ev} and \eqref{assmp:var_ev} with the same $\sigma_{\min}$ and $\underline{\sigma}$ as the original data $(X_1, \dots, X_n)$. We formalize this fact into the following lemma:
\begin{lemma} \label{thm:assmp_induction_N}
    Suppose that $(X_1, \dots, X_n)$ is a $1$-dependent sequence satisfying Assumptions \eqref{assmp:min_var}, \eqref{assmp:min_ev} and \eqref{assmp:var_ev}. Then for any $i \in [1,n)$, $(X_{1}, \dots, X_{i})$ is $1$-ring dependent and satisfies Assumptions \eqref{assmp:min_var}, \eqref{assmp:min_ev} and \eqref{assmp:var_ev} with the same $\sigma_{\min}$ and $\underline{\sigma}$ as the original data.
\end{lemma}
As a result, \eqref{eq:hypothesis_AC_3N} applied to $(X_1, \dots, X_{n-1})$ verifies that the same conditional anti-concentration inequality holds for $X_{[1,i]}$ given $X_{i+1}$ for $ i \in [1,n-1)$. For $i = n-1$, by \cref{thm:induction_lemma_AC_N}, 
for any $\vareps \geq \sigma_{\min}$ and $\delta > 0$,
\begin{equation*} 
\begin{aligned}
    & \kappa_{[1,i]|\{i+1\}}(\delta) \\
    & \leq C
    \frac{\sqrt{\log(ep)}}{\vareps} \nu_{1,\max} 
    \min\{1, \kappa_{[1,i-2]|\{i-1\}}(\vareps^\circ)\} \\
    & \quad + C \mu_{[1,i-2]}
    + C\frac{\delta + 2\vareps^\circ}{\sigma_{\min}} \sqrt{\frac{\log(ep)}{i-2}}
    + C\frac{\nu_{1,\max}}{\sigma_{\min}} \frac{\log(ep)}{\sqrt{i-2}},
\end{aligned}
\end{equation*}
where $\vareps^\circ = 20\vareps \sqrt{\log(p(i-2))}$ and $C > 0$ is a universal constant. Following \eqref{eq:hypothesis_AC_3N}, we upperbound $\kappa_{[1,i-2]|\{i-1\}}(\vareps^\circ)$. Furthermore, since $(X_1, \dots, X_{i-2})$ is a $1$-dependent sequence satisfying Assumptions~\eqref{assmp:min_var}, \eqref{assmp:min_ev} and \eqref{assmp:var_ev} with the same $\sigma_{\min}$ and $\underline{\sigma}$ as the original data (see \cref{thm:assmp_induction_N}), \eqref{eq:hypothesis_BE_3N} holds for $\mu_{[1,i-2]}$, and we obtain that
\begin{equation*} 
\begin{aligned}
    & \kappa_{[1,i]|\{i+1\}}(\delta) \\
    & \leq \frac{C}{\sqrt{i-2}} 
    \frac{\sqrt{\log(ep)}}{\vareps} \nu_{1,\max}
    \left[ 
        \tilde\kappa_{1,i-2} {L}_{3,\max}
        + \tilde\kappa_{2,i-2} {\nu}_{q,\max}^{1/(q-2)}
        + \tilde\kappa_{3,i-2} \nu_{1,\max}
        + \tilde\kappa_{4} {\vareps^\circ}
    \right]
    \\
    & \quad + \frac{C}{\sqrt{i-2}} \tilde\mu_{1,i-2} {{L}_{3,\max}}
    + \frac{C}{\sqrt{i-2}} \tilde\mu_{2,i-2} {{\nu}_{q,\max}^{1/(q-2)}} \\
    & \quad + C\frac{\delta + 2\vareps^\circ}{\sigma_{\min}} \sqrt{\frac{\log(ep)}{i-2}}
    + C\frac{\nu_{1,\max}}{\sigma_{\min}} \frac{\log(ep)}{\sqrt{i-2}}.
\end{aligned}
\end{equation*}
As a result, the $\tilde\kappa$'s satisfy the recursive inequality
\begin{equation} \label{eq:recursion_kappa_3N}
\begin{aligned}
    & \sqrt{i} \kappa_{[1,i]|\{i+1\}}(\delta) \\
    & \leq \mathfrak{C}' \frac{\sqrt{\log(ep)}}{\vareps} \nu_{1,\max}
    \left[ 
        \tilde\kappa_{1,i-2} {L}_{3,\max}
        + \tilde\kappa_{2,i-2} \nu_{q,\max}^{1/(q-2)}
        + \tilde\kappa_{3,i-2} \nu_{1,\max}
        + \tilde\kappa_{4} \vareps^\circ
    \right]
    \\
    & \quad + \mathfrak{C}'  \left[ 
        \tilde\mu_{1,i-2} {{L}_{3,\max}}
        + \tilde\mu_{2,i-2} {{\nu}_{q,\max}^{1/(q-2)}} 
        + \frac{\delta + 2\vareps^\circ}{\sigma_{\min}} \sqrt{\log(ep)}
        + \frac{\nu_{1,\max}}{\sigma_{\min}} \log(ep)
    \right],
\end{aligned}
\end{equation}
for some universal constant $\mathfrak{C}'$, whose value does not change in this subsection.
Plugging in $\vareps = \max\{2 \mathfrak{C}', 1\} \sqrt{\log(ep)} {\nu}_{1,\max} \geq \sigma_{\min}$,
\begin{equation*}
\begin{aligned}
    & \sqrt{i} \kappa_{[1,i]|\{i+1\}}(\delta) \\
    & \leq \frac{1}{2}
    \left[ 
        \tilde\kappa_{1,i-2} {{L}_{3,\max}}
        + \tilde\kappa_{2,i-2} {{\nu}_{q,\max}^{1/(q-2)}}
        + \tilde\kappa_{3,i-2} {\nu_{1,\max}}
    \right] 
    + 20 \mathfrak{C}' \tilde\kappa_{4} \sqrt{\log(ep)\log(pi)} \nu_{1,\max} \\
    & \quad  
    + \mathfrak{C}' \left[ 
        \tilde\mu_{1,i-2} {L}_{3,\max}
        + \tilde\mu_{2,i-2} \nu_{q,\max}^{1/(q-2)}  
        + \frac{\log(ep)(1+40 \sqrt{\log(p i)})}{\sigma_{\min}} {\nu}_{1,\max}
        + \frac{\sqrt{\log(ep)}}{\sigma_{\min}} \delta
    \right]
    \\
    & \leq \tilde\kappa_{1,i} {L}_{3,\max}
    + \tilde\kappa_{2,i} \nu_{q,\max}^{1/(q-2)}
    + \tilde\kappa_{3,i} \nu_{1,\max}
    + \tilde\kappa_{4} \delta,
\end{aligned}
\end{equation*}
where $\tilde\kappa_{1,i} = \mathfrak{C}_{1,\kappa} \tilde\mu_{1,i}$, 
$\tilde\kappa_{2,i} = \mathfrak{C}_{2,\kappa} \tilde\mu_{2,i}$, 
$\tilde\kappa_{3,i} = \mathfrak{C}_{3,\kappa} \frac{\log(ep)\sqrt{\log(pi)}}{\sigma_{\min}}$, and
$\tilde\kappa_{4} = \mathfrak{C}_{4,\kappa} \frac{\sqrt{\log(ep)}}{\sigma_{\min}}$, provided by $\mathfrak{C}_{1,\kappa} = \mathfrak{C}_{2,\kappa} = \max\{2 \mathfrak{C}', 2\}$, $\mathfrak{C}_{3,\kappa} = \max\{82 \mathfrak{C}' + 20 \mathfrak{C}' \mathfrak{C}_{4,\kappa}, 2\}$ and $\mathfrak{C}_{4,\kappa} = \max\{ \mathfrak{C}', 2\}$. 
The inequality $\max\{2 \mathfrak{C}', 1\} \sqrt{\log(ep)} {\nu}_{1,\max} \geq \sigma_{\min}$ holds because 
\begin{equation} \label{eq:nu_1_vs_sigma_min}
\begin{aligned}
    \nu_{1,\max} 
    & \geq \max_{i \in [n]} \Exp[\norm{Y_i}_\infty] 
    \geq \max_{i \in [n]} \max_{k \in [p]} \Exp[\abs{Y_{ik}}] \\
    & \overset{(*)}{\geq} \max_{i \in [n]} \max_{k \in [p]} \frac{2}{\sqrt{\pi}} \sqrt{\Exp[\abs{Y_{ik}}^2]} 
    \geq \frac{2}{\sqrt{\pi}} \sigma_{\min}, 
\end{aligned}
\end{equation}
where the inequality $(*)$ follows that $Y_{ik}$ is (marginally) a Gaussian random variable for all $i$ and $k$.
This proves \eqref{eq:hypothesis_AC_3N} at $n$.

\medskip
Now we prove \eqref{eq:hypothesis_BE_3N} at $n$. We first upper bound the last term in \cref{thm:induction_lemma_BE_3N}. For $\delta \geq \sigma_{\min}$, we have that
\begin{equation*}
\begin{aligned}
    & C \sum_{j \geq n/2} \frac{(\log(ep))^{3/2}}{\delta_{n-j}^3} \left[ 
        {L}_{3,\max} + \phi^{\min\{1, q-3\}} \nu_{q,\max} \frac{ {(\log(ep))}^{\max\{0, q-4\}/2}}{(\delta_{n-j}/\alpha)^{\max\{0, q-4\}}}
    \right]
    \kappa_{[1,j-4]|\{j-3\}}(\delta_{n-j}^\circ).
\end{aligned}
\end{equation*}
Applying \eqref{eq:hypothesis_AC_3N} to $\kappa_{[1,j-3)|\{j-3\}}(\delta_{n-j}^\circ)$ in $\mathfrak{T}_{2,1}$,
\begin{equation*}
\begin{aligned}
    & C \sum_{j \geq n/2} \frac{(\log(ep))^{3/2}}{\delta_{n-j}^3} 
    \left[ 
        {L}_{3,\max} + \phi^{\min\{1, q-3\}} \nu_{q,\max} \frac{ {(\log(ep))}^{\max\{0, q-4\}/2}}{(\delta_{n-j}/\alpha)^{\max\{0, q-4\}}}
    \right] 
    \kappa_{[1,j-4]|\{j-3\}}(\delta_{n-j}^\circ) \\
    & \leq C \sum_{j \geq n/2} 
    \frac{(\log(ep))^{3/2}}{\delta_{n-j}^3 \sqrt{j-4}} 
    \left[ 
        {L}_{3,\max} + \phi^{\min\{1, q-3\}} \nu_{q,\max} \frac{ {(\log(ep))}^{\max\{0, q-4\}/2}}{(\delta_{n-j}/\alpha)^{\max\{0, q-4\}}}
    \right] \\
    & \quad \times \left[ \begin{aligned}
        & \tilde\kappa_{1,j-4} {L}_{3,\max}
        + \tilde\kappa_{2,j-4} \nu_{q,\max}^{1/(q-2)}
        + \tilde\kappa_{3,j-4} \nu_{2,\max}^{1/2}
        + \tilde\kappa_{4} \delta_{n-j}^\circ \\
    \end{aligned} \right] \\
\end{aligned}
\end{equation*}
Recall that $\tilde\kappa_{1,j-4} = \mathfrak{C}_{1,\kappa} \tilde\mu_{1,j-4}$, 
$\tilde\kappa_{2,j-4} = \mathfrak{C}_{2,\kappa} \tilde\mu_{2,j-4}$, 
$\tilde\kappa_{3,j-4} = \mathfrak{C}_{3,\kappa} \frac{\log(ep)\sqrt{\log(p(j-4))}}{\sigma_{\min}}$ and 
$\tilde\kappa_{4} = \mathfrak{C}_{4,\kappa} \frac{\sqrt{\log(ep)}}{\sigma_{\min}}$. 
Thus, using Eq. (15) in \citetalias{kuchibhotla2020high}, which gives that
\begin{equation} \label{eq:sum_delta_nolog}
\begin{aligned}
    \sum_{j=\ceil{n/2}}^{n} \frac{1}{\delta_{n-j}^2} 
    & \leq \frac{C}{\underline\sigma^2} \log\left(1 + \frac{\sqrt{n}\underline{\sigma}}{\delta}\right), \\
    \sum_{j=\ceil{n/2}}^{n} \frac{1}{\delta_{n-j}^3} 
    & \leq \frac{C}{\delta\underline\sigma^2},
\end{aligned}
\end{equation}
we obtain that
\begin{equation*}
\begin{aligned}
    & C \sum_{j \geq n/2} \frac{(\log(ep))^{3/2}}{\delta_{n-j}^3}
    \left[ 
        {L}_{3,\max} + \phi^{\min\{1, q-3\}} \nu_{q,\max} \frac{ {(\log(ep))}^{\max\{0, q-4\}/2}}{(\delta_{n-j}/\alpha)^{\max\{0, q-4\}}}
    \right]
    \kappa_{[1,j-4]|\{j-3\}}(\delta_{n-j}^\circ) \\ 
    & \leq \frac{C}{\sqrt{n}} \frac{(\log(ep))^{3/2}}{\underline{\sigma}^2} 
    \left[ 
        {L}_{3,\max} + \phi^{\min\{1, q-3\}} \nu_{q,\max} \frac{ {(\log(ep))}^{\max\{0, q-4\}/2}}{(\delta/\alpha)^{\max\{0, q-4\}}}
    \right] \\
    & \quad \times \left[ \begin{aligned}
        & (\tilde\mu_{1,n-4} {L}_{3,\max} 
        +\tilde\mu_{2,n-4} \nu_{q,\max}^{1/(q-2)})
        \frac{1}{\delta} 
        +\frac{\log(ep)\sqrt{\log(pn)}}{\sigma_{\min}} \frac{\nu_{1,\max}}{\delta} \\
        & + \frac{\sqrt{\log(ep)\log(pn)}}{\sigma_{\min}} 
        \log\left(1 + \frac{\sqrt{n}\underline{\sigma}}{\delta}\right)
    \end{aligned} \right].
\end{aligned}
\end{equation*}
Thus, as long as $\delta \geq \sqrt{\log(ep)} \nu_{1,\max}$ and $\phi > 0$, we arrive at the recursive inequality on $\tilde\mu$'s 
\begin{equation} \label{eq:recursion_mu}
\begin{aligned}
    \sqrt{n} \mu_{[1,n]} 
    & \leq \mathfrak{C}'' \frac{(\log(ep))^{3/2}}{\underline{\sigma}^2 \delta} 
    \left[ 
        {L}_{3,\max} + \phi^{\min\{1, q-3\}} \nu_{q,\max} \frac{ {(\log(ep))}^{\max\{0, q-4\}/2}}{(\delta/\alpha)^{\max\{0, q-4\}}}
    \right] \\
    & \quad \times \left[ \begin{aligned}
        \tilde\mu_{1,n-4} {L}_{3,\max} 
        + \tilde\mu_{2,n-4} \nu_{q,\max}^{1/(q-2)}
    \end{aligned} \right] 
    + \mathfrak{C}'' \left[ \frac{\delta \log(ep)}{\sigma_{\min}}  
    + \frac{\sqrt{\log(ep)}}{\phi \sigma_{\min}} \right]\\
    & \quad + \mathfrak{C}'' \frac{(\log(ep))^{5/2}}{\underline{\sigma}^2\sigma_{\min}}
    \log\left(1 + \frac{\sqrt{n}\underline{\sigma}}{\delta}\right)
    \left[ 
        {L}_{3,\max} + \phi^{\min\{1, q-3\}} \nu_{q,\max} \frac{ {(\log(ep))}^{\max\{0, q-4\}/2}}{(\delta/\alpha)^{\max\{0, q-4\}}}
    \right] \\
    & \quad + \mathfrak{C}'' \frac{(\log(ep))^{3/2}}{\underline{\sigma}^2} \log\left(1 + \frac{\sqrt{n}\underline{\sigma}}{\delta}\right) 
    \left[ 
        {L}_{3,\max} + \phi^{\min\{1, q-3\}} \nu_{q,\max} \frac{ {(\log(ep))}^{\max\{0, q-4\}/2}}{(\delta/\alpha)^{\max\{0, q-4\}}}
    \right] \\
    & \quad \times \frac{\sqrt{\log(ep)\log(pn)}}{\sigma_{\min}},
\end{aligned}
\end{equation}
where $\mathfrak{C}''$ is a universal constant whose value does not change in this subsection.
Taking $\delta = \frac{\max\{4 \mathfrak{C}'', \alpha\}}{\sqrt{\log(ep)}} \left( 
    \frac{{L}_{3,\max}}{\underline{\sigma}^2} (\log(ep))^2
    + \left(\frac{\nu_{q,\max}}{\underline{\sigma}^2} (\log(ep))^{\max\{2, q-2\}} \right)^{\frac{1}{q-2}} 
\right) \geq \sqrt{\log(ep)} \nu_{1,\max} \overset{ \text{\cref{eq:nu_1_vs_sigma_min}}}{\geq} \sigma_{\min} $ and $\phi = \frac{1}{\delta \sqrt{\log(ep)}}$ we arrive at the bound
\begin{equation*}
\begin{aligned}
    \sqrt{n} \mu_{[1,n]}
    & \leq \frac{1}{2} \max_{j<n}\tilde\mu_{1,j} {L}_{3,\max}
    + \frac{1}{2} \max_{j<n}\tilde\mu_{2,j} \nu_{q,\max}^{1/(q-2)} \\
    & \quad + \mathfrak{C}^{(3)} \left(
        {L}_{3,\max} \frac{(\log(ep))^2}{\underline{\sigma}^2}
        + \nu_{q,\max}^{1/(q-2)} \frac{(\log(ep))^{\max\{2/(q-2),1\}}}{\underline{\sigma}^{2/(q-2)}}
    \right)
    \frac{\sqrt{\log(p n)}}{\sigma_{\min}} \log\left(e n\right) \\
\end{aligned}
\end{equation*}
for another universal constant $\mathfrak{C}^{(3)}$, whose value only depends on $\mathfrak{C}''$.
Taking $\mathfrak{C}_1 = \mathfrak{C}_2 = \max\{2 \mathfrak{C}^{(3)}, 2\}$ the quantities
\begin{equation*}
\begin{aligned}
    \tilde\mu_{1,n} 
    & = \mathfrak{C}_1 \frac{(\log(ep))^{2}\sqrt{\log(pn)}}{\underline\sigma^2\sigma_{\min}} \log\left(e n\right), \\
    \tilde\mu_{2,n}
    & = \mathfrak{C}_2 \frac{(\log(ep))^{\max\{2/(q-2),1\}}\sqrt{\log(pn)}}{\underline\sigma^{2/(q-2)} \sigma_{\min}} \log\left(e n\right)
\end{aligned}
\end{equation*}
satisfy
\begin{equation*}
\begin{aligned}
    \sqrt{n} \mu_{[1,n]}
    \leq \tilde\mu_{1,n} {L}_{3,\max}
    + \tilde\mu_{2,n} \nu_{q,\max}^{1/(q-2)},
\end{aligned}
\end{equation*}
which proves \eqref{eq:hypothesis_BE_3N} for a given $n$.
Finally, a mathematical induction over $n$ proves our theorem.

\subsection{\texorpdfstring{The case $q \geq 4$}{4 ≤ q}} \label{sec:pf_sketch_4N}

When the fourth moment exists, we  obtain a better sample complexity by further decomposing the third order remainder $\mathfrak{R}_{W_j}^{(3,1)}$. Based on the Taylor expansions up to order $4$, 
\begin{equation} \label{eq:decompose_third_remainder_N}
\begin{aligned}
    & \sum_{j=1}^{n} \Exp \left[  
        \mathfrak{R}_{X_j}^{(3,1)} 
        - \mathfrak{R}_{Y_j}^{(3,1)} 
    \right] \\
    & = \frac{1}{6} \sum_{j=1}^{n}  
        \inner*{ \Exp\left[ \nabla^3 \rho_{r,\phi}^\delta(X_{[1,j-1)} + Y_{(j+1,n]}) \right], \Exp[ X_j^{\otimes 3} ]} \\
    & \quad + \frac{1}{2} \sum_{j=1}^{n-1} \inner*{ 
        \Exp\left[ \nabla^3 \rho_{r,\phi}^\delta(X_{[1,j-1)} + Y_{(j+2,n]}) \right], 
        \Exp[ X_{j} \otimes X_{j+1} \otimes (X_j + X_{j+1})] 
    } \\
    & \quad + \sum_{j=1}^{n-2} \inner*{ 
        \Exp\left[ \nabla^3 \rho_{r,\phi}^\delta(X_{[1,j-1)} + Y_{(j+3,n]}) \right], 
        \Exp\left[ X_{j} \otimes X_{j+1} \otimes X_{j+2} \right]} \\
    & \quad + \sum_{j=1}^{n} \Exp \left[ \mathfrak{R}_{X_j}^{(4,1)} 
        - \mathfrak{R}_{Y_j}^{(4,1)} \right],
\end{aligned}
\end{equation}
where $\mathfrak{R}_{X_j}^{(4,1)}$ and $\mathfrak{R}_{Y_j}^{(4,1)}$ are remainder terms of the Taylor expansions specified in \cref{sec:first_lindeberg_swapping}.
We re-apply the Lindeberg swapping. For brevity, we only look at the first term, but similar arguments apply to the other third order moment terms. 
We observe that
\begin{equation} \label{eq:decompose_third_moment_N}
\begin{aligned}
    & \inner*{ 
        \Exp[ \nabla^3 \rho_{r,\phi}^\delta(X_{[1,j-1)} + Y_{(j+1,n]}) ], 
        \Exp[ X_j^{\otimes 3} ]
    } \\
    & = \inner*{ 
        \Exp\left[ \nabla^3 \rho_{r,\phi}^\delta(Y_{[1,j-1)} + Y_{(j+1,n]}) \right], 
        \Exp[ X_j^{\otimes 3} ]
    } \\
    & \quad + \Big\langle 
        \Exp\left[ \nabla^3 \rho_{r,\phi}^\delta(X_{[1,j-1)} + Y_{(j+1,n]}) 
        - \nabla^3 \rho_{r,\phi}^\delta(Y_{[1,j-1)} + Y_{(j+1,n]}) \right], 
        \Exp[ X_j^{\otimes 3} ]\Big\rangle.
\end{aligned}
\end{equation}
For the first term, 
because $Y_{[1,j-1)} + Y_{(j+1,n]}$ is Gaussian, and  the smallest eigenvalue of its covariance matrix is at least $(n-5) \underline{\sigma}^2$ by \cref{assmp:min_ev}, 
\begin{equation*}
    \Exp\left[ 
        \nabla^3 \rho_{r,\phi}^\delta(Y_{[1,j-1)} + Y_{(j+1,n]}) 
    \right]
    = \Exp\left[ 
        \nabla^3 \rho_{r,\phi}^{\delta_{i-5}}(Y^\circ_{[1,j-1)\cup(j+1,n]}) 
    \right],
\end{equation*}
where $\delta_{n-5}^2 \equiv \delta^2 + \underline{\sigma}^2 \max\{n-5,0\}$ and $Y^\circ_{[1,j-1)\cup(j+1,n]}$ is the centered Gaussian random vector with covariance $\Var[Y_{[1,j-1)} + Y_{(j+1,n]}] - (n-5) \underline{\sigma}^2 I_p$. 
Hence,
\begin{equation} \label{eq:third_moment_Gaussian_bound}
\begin{aligned}
    & \abs*{ \inner*{ 
        \Exp\left[ \nabla^3 \rho_{r,\phi}^\delta(Y_{[1,j-1)} + Y_{(j+1,n]}) \right], 
        \Exp[X_j^{\otimes 3}] } } \\
    & = \abs*{ \inner*{ 
        \Exp\left[ 
            \nabla^3 \rho_{r,\phi}^{\delta_{n-5}}(Y^\circ_{[1,j-1)\cup(j+1,n]}) 
        \right], 
        \Exp[X_j^{\otimes 3}] } } \\
    & \leq L_{3,j} \sup_{r, w \in \reals^p} \sum_{k_1,k_2,k_3 = 1}^p \abs{\nabla^{(k_1,k_2,k_3)} \rho_{r,\phi}^{\delta_{n-5}}(w)} \\
    & \overset{\mathrm{(i)}}{\leq} C L_{3,j} \frac{(\log(ep))^{3/2}}{\delta_{n-5}^3}
    \leq \frac{C}{n^{3/2}} L_{3,j} \frac{(\log(ep))^{3/2}}{\underline{\sigma}^3} 
    \overset{\mathrm{(ii)}}{\leq} \frac{C}{n^{3/2}} L_{3,j} \frac{(\log(ep))^2}{\underline{\sigma}^2 \sigma_{\min}},
\end{aligned}
\end{equation} 
where $\mathrm{(i)}$ follows Lemma 6.2 of \citetalias{chernozhukov2020nearly} and $\mathrm{(ii)}$ follows from Assumption~\eqref{assmp:var_ev}.
For the second term, we re-apply Lindeberg swapping and obtain, for $j \in [3, n-1]$, that
\begin{equation} \label{eq:third_lindeberg_swapping_N}
\begin{aligned}
    & \Big\langle 
        \Exp\left[ \nabla^3 \rho_{r,\phi}^\delta(X_{[1,j-1)} + Y_{(j+1,n]}) \right] 
    - \Exp\left[ \nabla^3  \rho_{r,\phi}^\delta(Y_{[1,j-1)} + Y_{(j+1,n]}) \right], \Exp[ X_j^{\otimes 3} ]\Big\rangle. \\
    & = \sum_{k=1}^{j-2} \Big\langle
        \Exp\left[ \nabla^3 \rho_{r,\phi}^\delta(X_{[1,k)} + X_k + Y_{(k,j-1) \cup (j+1,n]}) \right] \\ 
    & \omit{\hfill $- \Exp\left[ \nabla^3   \rho_{r,\phi}^\delta(X_{[1,k)} + Y_k + Y_{(k,j-1) \cup (j+1,n]}) \right], 
        \Exp[ X_j^{\otimes 3} ]
    \Big\rangle.$} \\
\end{aligned}
\end{equation}
By a Taylor series expansion around $X_{[1,k)} + Y_{(k,j-1) \cup (j+1,n]}$, this difference can be rewritten as 
\begin{equation} \label{eq:third_lindeberg_result_N}
\begin{aligned}
    & \sum_{j=1}^{n} \Big\langle 
        \Exp\left[ \nabla^3 \rho_{r,\phi}^\delta(X_{[1,j-1)} + Y_{(j+1,n]}) 
        - \nabla^3 \rho_{r,\phi}^\delta(Y_{[1,j-1)} + Y_{(j+1,n]}) \right], 
        \Exp[ X_j^{\otimes 3} ]
    \Big\rangle \\
    & = \sum_{j=3}^{n} \sum_{k=1}^{j-2} \Exp \left[ 
        \mathfrak{R}_{X_j,X_k}^{(6,1,1)}
        - \mathfrak{R}_{X_j,Y_k}^{(6,1,1)}
    \right], \\
\end{aligned}
\end{equation}
where $\mathfrak{R}_{X_j,W_k}^{(6,1,1)}$ is a sixth order remainder term. The details of this expansion and the specification of the remainder terms 
are given in \cref{sec:third_lindeberg_swapping}. 
Hence, 
\begin{equation*}
\begin{aligned}
    & \sum_{j=1}^{n}  
    \inner*{ \Exp\left[ \nabla^3 \rho_{r,\phi}^\delta(X_{[1,j-1)} + Y_{(j+1,n]}) \right], \Exp[ X_j^{\otimes 3} ]} \\
    & \leq \frac{C}{\sqrt{n}} L_{3,\max} \frac{(\log(ep))^2}{\underline\sigma^2 \sigma_{\min}}
    + \sum_{j=3}^{n} \sum_{k=1}^{j-2} \Exp \left[ 
        \mathfrak{R}_{X_j,X_k}^{(6,1,1)}
        - \mathfrak{R}_{X_j,Y_k}^{(6,1,1)}
    \right].
\end{aligned}
\end{equation*}
Similarly,
\begin{equation*}
\begin{aligned}
    & \sum_{j=1}^{n-1} \inner*{ 
        \Exp\left[ \nabla^3 \rho_{r,\phi}^\delta(X_{[1,j-1)} + Y_{(j+2,n]}) \right], 
        \Exp[ X_{j} \otimes X_{j+1} \otimes (X_j + X_{j+1})] 
    } \\
    & \leq \frac{C}{\sqrt{n}} L_{3,\max} \frac{(\log(ep))^2}{\underline\sigma^2 \sigma_{\min}}
    + \sum_{j=3}^{n-1} \sum_{k=1}^{j-2} \Exp \left[ 
        \mathfrak{R}_{X_j,X_k}^{(6,1,2)}
        - \mathfrak{R}_{X_j,Y_k}^{(6,1,2)}
    \right], \textand
\end{aligned}
\end{equation*}
\begin{equation*}
\begin{aligned}
    & \sum_{j=1}^{n-2} \inner*{ 
        \Exp\left[ \nabla^3 \rho_{r,\phi}^\delta(X_{[1,j-1)} + Y_{(j+3,n]}) \right], 
        \Exp\left[ X_{j} \otimes X_{j+1} \otimes X_{j+2} \right]
    } \\
    & \leq \frac{C}{\sqrt{n}} L_{3,\max} \frac{(\log(ep))^2}{\underline\sigma^2 \sigma_{\min}}
    + \sum_{j=3}^{n-2} \sum_{k=1}^{j-2} \Exp \left[ 
        \mathfrak{R}_{X_j,X_k}^{(6,1,3)}
        - \mathfrak{R}_{X_j,Y_k}^{(6,1,3)}
    \right],
\end{aligned}
\end{equation*}
where $\mathfrak{R}_{X_j,W_k}^{(6,1,2)}$ and $\mathfrak{R}_{X_j,W_k}^{(6,1,3)}$ are analogous sixth-order remainder terms.
Putting everything together, we get that
\begin{equation} \label{eq:decompose_third_result_N}
\begin{aligned}
    \sum_{j=1}^{n} \Exp \left[  
        \mathfrak{R}_{X_j}^{(3,1)} 
        - \mathfrak{R}_{Y_j}^{(3,1)} 
    \right]
    & \leq \frac{C}{\sqrt{n}} L_{3,\max} \frac{(\log(ep))^2}{\underline\sigma^2 \sigma_{\min}} \\
    & \quad + \sum_{j=1}^{n} \abs*{ \Exp \left[ \mathfrak{R}_{X_j}^{(4,1)} 
        - \mathfrak{R}_{Y_j}^{(4,1)} \right] }
    + \sum_{j=3}^{n} \sum_{k=1}^{j-2} \abs*{ \Exp \left[ 
        \mathfrak{R}_{X_j,X_k}^{(6,1)}
        - \mathfrak{R}_{X_j,Y_k}^{(6,1)}
    \right] },
\end{aligned}
\end{equation}
where $\mathfrak{R}_{X_j,W_k}^{(6,1)} = \frac{1}{6} \mathfrak{R}_{X_j,W_k}^{(6,1,1)} + \frac{1}{2} \mathfrak{R}_{X_j,W_k}^{(6,1,2)} + \mathfrak{R}_{X_j,W_k}^{(6,1,3)}$. Thus,
\begin{equation*}
\begin{aligned}
    \mu_{[1,n]}
    & \leq \frac{C}{\sqrt{n}} \frac{\delta \log(ep)}{\sigma_{\min}}  
    + \frac{C}{\sqrt{n}} \frac{\sqrt{\log(ep)}}{\phi \sigma_{\min}} 
    + \frac{C}{\sqrt{n}} L_{3,\max} \frac{(\log(ep))^2}{\underline\sigma^2 \sigma_{\min}} \\
    & \quad + \sum_{j=1}^{n} \abs*{ \Exp \left[ \mathfrak{R}_{X_j}^{(4,1)} 
        - \mathfrak{R}_{Y_j}^{(4,1)} \right] }
    + \sum_{j=3}^{n} \sum_{k=1}^{j-2} \abs*{ \Exp \left[ 
        \mathfrak{R}_{X_j,X_k}^{(6,1)}
        - \mathfrak{R}_{X_j,Y_k}^{(6,1)}
    \right] }.
\end{aligned}
\end{equation*}
A remainder lemma similar to \cref{thm:remainder_lemma_3N} can be derived for $\mathfrak{R}_{W_j}^{(4,1)}$ and $\mathfrak{R}_{X_j,W_k}^{(6,1)}$ -- see \cref{thm:remainder_lemma_4N}). Summing up the upper bounds iteratively over $k$ and $j$ results in a finite fourth moment version of \cref{thm:induction_lemma_BE_3N} (see \cref{thm:induction_lemma_BE_4N}). Finally, a dual induction argument delivers the desired Berry--Esseen bound with finite fourth moments. The details are provided in \cref{sec:pf_1_dep_4N}.


\begin{remark}
As we discussed in \cref{sec:comparison_indep}, the bottleneck of our Berry--Esseen bound is often the first term with the third moment (i.e., $L_3$). A significant improvement by the iterated Lindeberg swapping is reducing the term's order of $\log(ep)$ from $\sqrt{\log(pn)} \log^2(ep)$ to $\sqrt{\log(pn)} \log^{3/2}(ep)$. One may repeat the Lindeberg swapping to further improve the order. For example, the first term of $\mathfrak{R}^{(6,1)}_{X_j, W_k}$ is 
\begin{equation*}
    \frac{1}{2} \int_0^1 (1-t)^2 \inner*{
        \nabla^6 \rho_{r,\phi}^\delta(X_{[1,k)} + t W_k + Y_{(k,j-1) \cup (j+1,n]}),
        X_j^{\otimes 3} \otimes W_k^{\otimes 3}
    } ~dt.
\end{equation*}
Based on the Taylor expansion up to order $7$, 
\begin{equation*}
\begin{aligned}
    & \frac{1}{2} \int_0^1 (1-t)^2 \inner*{
        \nabla^6 \rho_{r,\phi}^\delta(X_{[1,k)} + t W_k + Y_{(k,j-1) \cup (j+1,n]}),
        X_j^{\otimes 3} \otimes W_k^{\otimes 3}
    } ~dt \\
    & = \frac{1}{6} \inner*{\nabla^6 \rho_{r,\phi}^\delta(X_{[1,k)} + Y_{(k,j-1) \cup (j+1,n]}),
        X_j^{\otimes 3} \otimes W_k^{\otimes 3}
    } \\
    & \quad + \frac{1}{6} \int_0^1 (1-t)^3 \inner*{
        \nabla^7 \rho_{r,\phi}^\delta(X_{[1,k)} + t W_k + Y_{(k,j-1) \cup (j+1,n]}),
        X_j^{\otimes 3} \otimes W_k^{\otimes 4}
    } ~dt \\
    & = \frac{1}{6} \inner*{\nabla^6 \rho_{r,\phi}^\delta(X_{[1,k-1)} + Y_{(k+1,j-1) \cup (j+1,n]}),
        X_j^{\otimes 3} \otimes W_k^{\otimes 3}
    } \\
    & \quad + \frac{1}{6} \int_0^1 \Big\langle
        \nabla^7 \rho_{r,\phi}^\delta(X_{[1,k-1)} + t (X_{k-1} + Y_{k+1}) + Y_{(k+1,j-1) \cup (j+1,n]}), \\
    & \hspace{3in} X_j^{\otimes 3} \otimes W_k^{\otimes 3} \otimes (X_{k-1} + Y_{k+1})
    \Big\rangle ~dt \\
    & \quad + \frac{1}{6} \int_0^1 (1-t)^3 \inner*{
        \nabla^7 \rho_{r,\phi}^\delta(X_{[1,k)} + t W_k + Y_{(k,j-1) \cup (j+1,n]}),
        X_j^{\otimes 3} \otimes W_k^{\otimes 4}
    } ~dt. \\
\end{aligned}
\end{equation*}
\end{remark}
Like \cref{eq:decompose_third_moment_N}, one may decompose the first term and re-apply the Lindeberg swapping:
\begin{equation*}
\begin{aligned}
    & \frac{1}{6} \inner*{\nabla^6 \rho_{r,\phi}^\delta(X_{[1,k-1)} + Y_{(k+1,j-1) \cup (j+1,n]}),
        X_j^{\otimes 3} \otimes W_k^{\otimes 3}
    } \\
    & = \frac{1}{6} \inner*{\nabla^6 \rho_{r,\phi}^\delta(Y_{[1,k-1)} + Y_{(k+1,j-1) \cup (j+1,n]}),
        X_j^{\otimes 3} \otimes W_k^{\otimes 3}
    } \\
    & \quad + \sum_{l=1}^{k-2} \frac{1}{6} \Big\langle
        \nabla^6 \rho_{r,\phi}^\delta(X_{[1,l)} +  X_l + Y_{(l,k-1)} + Y_{(k+1,j-1) \cup (j+1,n]}) \\
    & \hspace{1in} - \nabla^6 \rho_{r,\phi}^\delta(X_{[1,l)} +  Y_l + Y_{(l,k-1)} + Y_{(k+1,j-1) \cup (j+1,n]}), 
    X_j^{\otimes 3} \otimes W_k^{\otimes 3}
    \Big\rangle.
\end{aligned}
\end{equation*}
To make a successful improvement, we recommend using piece-wise quadratic $f_{r,\phi}$, instead of the piece-wise linear one defined in \cref{sec:mu_by_kappa_3N}. This choice of $f_{r,\phi}$ allows improved remainder lemmas for the sixth, seventh and ninth-order remainder terms. At the end, infinitely repeating the Lindeberg swapping may improve the order asymptotically to $\sqrt{\log(pn)} \log(ep)$. 

In this paper, we do not pursue further refining \cref{thm:1_dep_berry_esseen_4P}. Because $Y_j$ is Gaussian, the dimension complexity cannot be improved from $\log^4(ep)$ due to the last term with the $q$-th moment (i.e., $\nu_q$). Hence, further Lindeberg swappings do not help match the dimension complexity of \citetalias{chernozhukov2020nearly} under bounded $X_j$.

\subsection{Removing Assumption \texorpdfstring{\eqref{assmp:var_ev}}{Assumption (VAR-EV)}} \label{sec:pf_without_var_ev}

Assumption \eqref{assmp:var_ev} was invoked in the proof of the simplified result in in two key steps: first, in obtaining the upper bound of the "Partitioning the sum" step in \cref{sec:mu_by_kappa_3N} for $j < n/2$,
and, secondly, in obtaining the upper bound in \cref{eq:third_moment_Gaussian_bound}.

Without \eqref{assmp:var_ev},
the rightmost upper bound in \cref{eq:sum_delta_3} is $\frac{C}{\underline{\sigma}^3\sqrt{n}}$ rather than $\frac{C \log(ep)}{\underline{\sigma}^2\sigma_{\min}\sqrt{n}}$. With this new upper bound, summing over $j$, we obtain
\begin{equation*}
\begin{aligned}
    & \sum_{j=1}^{i} \abs*{ \Exp \left[ \mathfrak{R}_{X_j}^{(3,1)} 
        - \mathfrak{R}_{Y_j}^{(3,1)} \right] } \\
    & \leq \frac{C}{\sqrt{n}} \frac{(\log(ep))^{5/2}}{\underline{\sigma}^2 \min\{\sigma_{\min}, \underline{\sigma} \sqrt{\log(ep)}\}}
    \log\left(1 + \frac{\sqrt{n}\underline{\sigma}}{\delta}\right)
    \\
    & \quad \times \left[ 
        {L}_{3,\max} + \phi^{\min\{1, q-3\}} \nu_{q,\max} \frac{ {(\log(ep))}^{\max\{0, q-4\}/2}}{(\delta/\alpha)^{\max\{0, q-4\}}}
    \right]\\
    & \quad + C \sum_{j \geq {n/2}} \frac{(\log(ep))^{3/2}}{\delta_{n-j}^3} \left[ 
        {L}_{3,\max} + \phi^{\min\{1, q-3\}} \nu_{q,\max} \frac{ {(\log(ep))}^{\max\{0, q-4\}/2}}{\delta_{n-j}^{\max\{0, q-4\}}}
    \right] \kappa_{[1,j-4]|\{j-3\}}(\delta_{n-j}^\circ)
\end{aligned}
\end{equation*}
The resulting induction lemma from $\kappa$ to $\mu$  (or equivalently, the modified form of \cref{thm:induction_lemma_BE_3N}) becomes
\begin{equation*}
\begin{aligned}
    \mu_{[1,i]}
    & \leq \frac{C}{\sqrt{n}} \frac{\delta \log(ep)}{\sigma_{\min}}  
    + \frac{C}{\sqrt{n}} \frac{\sqrt{\log(ep)}}{\phi \sigma_{\min}} \\
    & \quad + \frac{C}{\sqrt{n}} \frac{(\log(ep))^{5/2}}{\underline{\sigma}^2 \min\{\sigma_{\min}, \underline{\sigma} \sqrt{\log(ep)}\}}
    \log\left(1 + \frac{\sqrt{n}\underline{\sigma}}{\delta}\right) \\
    & \quad \times \left[ 
        {L}_{3,\max} + \phi^{\min\{1, q-3\}} \nu_{q,\max} \frac{ {(\log(ep))}^{\max\{0, q-4\}/2}}{(\delta/\alpha)^{\max\{0, q-4\}}}
    \right] \\
    & \quad + C \sum_{j \geq {n/2}} \frac{(\log(ep))^{3/2}}{\delta_{n-j}^3} \left[ 
        {L}_{3,\max} + \phi^{\min\{1, q-3\}} \nu_{q,\max} \frac{ {(\log(ep))}^{\max\{0, q-4\}/2}}{\delta_{n-j}^{\max\{0, q-4\}}}
    \right] \kappa_{[1,j-4]|\{j-3\}}(\delta_{n-j}^\circ),
\end{aligned}
\end{equation*} 
and the same dual induction derives the desired conclusion. The same changes apply to the summation of $\mathfrak{R}^{(4,1)}_{X_j}$ and $\mathfrak{R}^{(6,1)}_{X_j,X_k}$.

For \cref{eq:third_moment_Gaussian_bound}, without \eqref{assmp:var_ev}, the rightmost upper bound is $\frac{C}{n^{3/2}} L_{3,j} \frac{(\log(ep))^{3/2}}{\underline{\sigma}^3}$ rather than $\frac{C}{n^{3/2}} L_{3,j} \frac{(\log(ep))^2}{\underline{\sigma}^2 \sigma_{\min}}$. That is,
\begin{equation*}
\begin{aligned}
    \abs*{ \inner*{ 
        \Exp\left[ \nabla^3 \rho_{r,\phi}^\delta(Y_{[1,j-1)} + Y_{(j+1,n]}) \right], 
        \Exp[X_j^{\otimes 3}] } }
    & \leq \frac{C}{n^{3/2}} L_{3,j} \frac{(\log(ep))^{3/2}}{\underline{\sigma}^3}. \\
\end{aligned}
\end{equation*} 
As a result, 
\begin{equation*}
\begin{aligned}
    \mu_{[1,n]}
    & \leq \frac{C}{\sqrt{n}} \frac{\delta \log(ep)}{\sigma_{\min}}  
    + \frac{C}{\sqrt{n}} \frac{\sqrt{\log(ep)}}{\phi \sigma_{\min}} 
    + \frac{C}{\sqrt{n}} L_{3,\max} \frac{(\log(ep))^{2}}{\underline{\sigma}^2 \min\{\sigma_{\min}, \underline{\sigma} \sqrt{\log(ep)}\}} \\
    & + \sum_{j=1}^{n} \abs*{ \Exp \left[ \mathfrak{R}_{X_j}^{(4,1)} 
        - \mathfrak{R}_{Y_j}^{(4,1)} \right] }
    + \sum_{j=3}^{n} \sum_{k=1}^{j-2} \abs*{ \Exp \left[ 
        \mathfrak{R}_{X_j,X_k}^{(6,1)}
        - \mathfrak{R}_{X_j,Y_k}^{(6,1)}
    \right] }.
\end{aligned}
\end{equation*}

\subsection{\texorpdfstring{$1$}{1}-ring dependence and a permutation argument}
\label{sec:permutation_argument}

We note that the Lindeberg swapping in \cref{eq:first_lindeberg_swapping_N} is not symmetric with respect to the indices. The asymmetry resulted in a worse rates in \cref{thm:induction_lemma_BE_3N,thm:induction_lemma_AC_N}, by having the maximal moment terms, $L_{3,\max}$ and $\nu_{q,\max}$. 
To obtain an improved Berry--Esseen bounds with averaged moment terms, $\bar{L}_3$ and $\bar{\nu}_q$, as in \cref{thm:1_dep_berry_esseen_3P,thm:1_dep_berry_esseen_4P}, it is desired to relax the asymmetry in the Lindeberg swappings. One such way is to take the average of the upper bounds over permutations of the indices as done in \citet{deng2020slightly,deng2020beyond}. However, because $1$-dependence is specific to the index ordering, the only permutation preserving the dependence structure is the flipping of the indices (i.e., $X_1 \mapsto X_n, X_2 \mapsto X_{n-1}, \dots, X_n \mapsto X_1$), which is not sufficient for our purpose. We allow more permutations by weakening the dependence structure to $1$-ring dependence. By allowing $X_1$ and $X_n$ dependent on each other, index rotations (i.e., $X_1 \mapsto X_{j^\circ}, X_2 \mapsto X_{j^\circ+1}, \dots, X_n \mapsto X_{j^\circ-1}$ with some $j^\circ \in [1,n]$) are added to the catalog of available permutations. By averaging the upper bound in \cref{eq:first_lindeberg_swapping_N} over the permutations, we obtain
\begin{equation} \label{eq:first_lindeberg_swapping_P}
\begin{aligned}
    & \abs*{ \Exp[\rho_{r,\phi}^\delta(X_{[1,n]})] - \Exp[\rho_{r,\phi}^\delta(Y_{[1,n]})]} \\
    & \leq \frac{1}{n} \sum_{j^\circ=1}^n \sum_{j=1}^{n-1} 
    \left| \Exp \left[
        \rho_{r,\phi}^\delta(X_{(j^\circ,j^\circ+j)_n} + X_{[j^\circ+j]_n} + Y_{(j^\circ+j,j^\circ+n]_n})
    \right. \right. \\
    & \hspace{2in} \left. \left.
        - \rho_{r,\phi}^\delta(X_{(j^\circ,j^\circ+j)_n} + Y_{[j^\circ+j]_n} + Y_{(j^\circ+j,j^\circ+n]_n})
    \right] \right|, \\
\end{aligned}
\end{equation}
where $[j^\circ+j]_n$ is $j^\circ+j$ modulo $n$, and
\begin{equation*}
    [i,j]_n \equiv \{[i]_n, [i+1]_n, \dots, [j-1]_n, [j]_n\}.
\end{equation*}
The subscript $n$ notates that the interval is defined modulo $n$. If the ambient modulo is obvious, we omit the subscript. The other types of intervals, $(i,j]_n, [i,j)_n$ and $(i,j)_n$, are similarly defined. For full notation details, please refer to \cref{sec:Z_n}. A similar permutation argument also applies to \cref{thm:induction_lemma_AC_N}; see \cref{thm:induction_lemma_AC_P}. The dual induction on the resulting induction lemmas proves \cref{thm:1_dep_berry_esseen_3P} for $3 \leq q < 4$. We relegate the proof details to \cref{sec:pf_1_dep_3P}.

For $4 \leq q$, there is the second Lindeberg swapping during the decomposition of the third order remainder terms (e.g., \cref{eq:third_lindeberg_swapping_N}). The same permutation argument as in \cref{eq:first_lindeberg_swapping_P} provides the following averaged version: for $3 \leq j \leq n$,
\begin{equation} \label{eq:third_lindeberg_swapping_P}
\begin{aligned}
    & \inner*{ 
        \Exp[\nabla^3 \rho_{r,\phi}^{\delta}(X_{[1,j-1)} + Y_{(j+1,n]})
        - \nabla^3 \rho_{r,\phi}^{\delta}(Y_{[1,j-1)} + Y_{(j+1,n]})], 
        \Exp[X_j^{\otimes 3}] } \\
    & \leq \frac{1}{j-2} \sum_{k^\circ=1}^{j-2} \sum_{k=1}^{j-2} \Big\langle
        \Exp[\nabla^3 \rho_{r,\phi}^{\delta}(X_{[k^\circ,k^\circ+k)_{j-2}} + X_{[k^\circ+k]_{j-2}} + Y_{(k^\circ+k,k^\circ+j-1)_{j-2}\cup(j,n]}) \\
    & \hspace{0.5in} - \nabla^3 \rho_{r,\phi}^{\delta}(X_{[k^\circ,k^\circ+k)_{j-2}} + Y_{[k^\circ+k]_{j-2}} + Y_{(k^\circ+k,k^\circ+j-1)_{j-2}\cup(j,n]})], 
        \Exp[X_j^{\otimes 3}] \Big\rangle. \\
\end{aligned}
\end{equation}
Then, the dual induction with \cref{thm:induction_lemma_AC_P} proves \cref{thm:1_dep_berry_esseen_4P} for $4 \leq q$. We relegate the proof details to \cref{sec:pf_1_dep_4P}.

%
%

\begin{acks}[Acknowledgments]
We thank the editor and the reviewer for their numerous insightful comments and suggestions, especially regarding the correct rate for the sub-Weibull cases, which have led to significant improvements in our presentation and results.
\end{acks}
\begin{funding}
All the authors were supported in part by NSF grant DMS-2113611.
\end{funding}

\begin{supplement}
\stitle{Supplement to "Dual Induction CLT for High-dimensional \texorpdfstring{$m$}{m}-dependent Data"}
\sdescription{A PDF manuscript providing supplemental descriptions of proofs and extensive details regarding arguments conveyed in the main text.}
\end{supplement}


\bibliographystyle{imsart-nameyear} 
\bibliography{2_ref-main}       


\newpage

\counterwithin{figure}{section}
\counterwithin{equation}{section}

\begin{appendix}

\section{Proof of Theorems} \label{sec:pf_thm}

\subsection{Proof details of Section~\ref{sec:pf_sketch_4N}} \label{sec:pf_1_dep_4N}

\medskip
We recall from \cref{sec:pf_sketch_4N} that
\begin{equation} \label{eq:first_lindeberg_result_4N}
\begin{aligned}
    & \sup_{r \in \reals^p} \abs*{ \Exp[\rho_{r,\phi}^\delta(X_{[1,n]})] - \Exp[\rho_{r,\phi}^\delta(Y_{[1,n]})]} 
    = \sum_{j=1}^{n} \abs*{ \Exp \left[ 
        \mathfrak{R}_{X_j}^{(3,1)} 
        - \mathfrak{R}_{Y_j}^{(3,1)} 
    \right] }\\
    & \leq \frac{C}{\sqrt{n}} L_{3,\max} \frac{(\log(ep))^2}{\underline\sigma^2 \sigma_{\min}}
    + \sum_{j=1}^{n} \abs*{ \Exp \left[ \mathfrak{R}_{X_j}^{(4,1)} 
        - \mathfrak{R}_{Y_j}^{(4,1)} \right] }
    + \sum_{j=3}^{n} \sum_{k=1}^{j-2} \abs*{ \Exp \left[ 
        \mathfrak{R}_{X_j,X_k}^{(6,1)}
        - \mathfrak{R}_{X_j,Y_k}^{(6,1)}
    \right] },
\end{aligned}
\end{equation}
where $\mathfrak{R}_{X_j,W_k}^{(6,1)} = \frac{1}{6} \mathfrak{R}_{X_j,W_k}^{(6,1,1)} + \frac{1}{2} \mathfrak{R}_{X_j,W_k}^{(6,1,2)} + \mathfrak{R}_{X_j,W_k}^{(6,1,3)}$.The upper bounds of the remainder terms are given as the following lemma.

\begin{lemma} \label{thm:remainder_lemma_4N}
    There exist universal constants $C > 0$ and $\alpha > 0$ such that for any $n$, $1$-dependent sequence $(X_1, \dots, X_n)$ satisfying Assumption~\eqref{assmp:min_ev}, $j \in [1,n]$, $k \in [1,j-2]$, $q \geq 4$, $\delta \geq \sigma_{\min}$ and $\phi \geq \frac{1}{\delta \log(ep)}$,
    \begin{equation*}
    \begin{aligned}
        \abs*{\Exp \left[ \mathfrak{R}_{W_j}^{(4,1)} \right]} 
        & \leq C \phi \left[ 
            {L}_{4,\max} \frac{(\log(ep))^{3/2}}{\delta_{n-j}^3}
            + \nu_{q,\max} \frac{ {(\log(ep))}^{(q-1)/2}}{(\delta_{n-j}/\alpha)^{q-1}}
        \right] \\
        & \quad \times \min\{ 1, \kappa_{[1,j-5]|\{j-4\}}(\delta_{n-j}^\circ) + \kappa^\circ_{j}(\delta_{n-j}) \},
    \end{aligned}
    \end{equation*}
    \begin{equation*}
    \begin{aligned}
        \abs*{\Exp \left[ \mathfrak{R}_{X_j,W_k}^{(6,1)} \right]} 
        & \leq C \phi L_{3,\max} \left[ \begin{aligned}
            L_{3,\max} \frac{(\log(ep))^{5/2}}{\delta_{n-k}^5}
            + \nu_{q,\max} \frac{ {(\log(ep))}^{(q+2)/2}}{(\delta_{n-k}/\alpha)^{q+2}}
        \end{aligned} \right] \\
        & \quad \times \min\{ 1, \kappa_{[1,k-4]|\{k-3\}}(\delta_{n-k}^\circ) + \kappa^\circ_{k}(\delta_{n-k}) \}.
    \end{aligned}
    \end{equation*}
    where $W$ represents either $X$ or $Y$, $\delta_{n-j}^2 \equiv \delta^2 + \underline{\sigma}^2 \max\{n-j,0\}$, $\delta^\circ_{n-j} \equiv 12 \delta_{n-j} \sqrt{\log(pn)}$ and $\kappa^\circ_{j}(\delta) \equiv \frac{\delta \log(ep)}{\sigma_{\min} \sqrt{\max\{j, 1\}}}$.
\end{lemma}

Back to \cref{eq:first_lindeberg_result_4N}, we get
\begin{equation*}
\begin{aligned}
    & \abs*{ \Exp[\rho_{r,\phi}^\delta(X_{[1,n]})] - \Exp[\rho_{r,\phi}^\delta(Y_{[1,n]})]} \\
    & \leq \frac{C}{\sqrt{n}} L_{3,\max} \frac{(\log(ep))^2}{\underline{\sigma}^2 \sigma_{\min}} \\
    & \quad + C \phi \sum_{j=1}^{n} \left[ 
        {L}_{4,\max} \frac{(\log(ep))^{3/2}}{\delta_{n-j}^3}
        + \nu_{q,\max} \frac{{(\log(ep))}^{(q-1)/2}}{(\delta_{n-j}/\alpha)^{q-1}}
    \right] \\
    & \quad \hspace{0.65in} \times \min\{1, \kappa_{[1,j-5]|\{j-4\}}(\delta_{n-j}^\circ) + \kappa^\circ_{j}(\delta_{n-j})\} \\
    & \quad + C \phi \sum_{j=3}^{n} 
    L_{3,\max} \sum_{k=1}^{j-2} \left[ 
        {L}_{3,\max} \frac{(\log(ep))^{5/2}}{\delta_{n-k}^5}
        + \nu_{q,\max} \frac{{(\log(ep))}^{(q+2)/2}}{(\delta_{n-k}/\alpha)^{q+2}}
    \right] \\
    & \quad \hspace{1.3in} \times \min\{1, \kappa_{[1,k-4]|\{k-3\}}(\delta_{n-k}^\circ) + \kappa^\circ_{k}(\delta_{n-k})\},
\end{aligned}
\end{equation*}

\medskip
\noindent{\bf Partitioning the sum.} 
Again, we partition the summations at $j = n/2$. For the first summation, similar calculations with the finite third moment cases lead to
\begin{equation*}
\begin{aligned}
    & C \phi \sum_{j=1}^n \left[ 
        {L}_{4,\max} \frac{(\log(ep))^{3/2}}{\delta_{n-j}^3}
        + \nu_{q,\max} \frac{{(\log(ep))}^{(q-1)/2}}{(\delta_{n-j}/\alpha)^{q-1}}
    \right] \\
    & \qquad \quad \times \min\{1, \kappa_{[1,j-5]|\{j-4\}}(\delta_{n-j}^\circ) + \kappa^\circ_{j}(\delta_{n-j})\} \\
    & \leq \frac{C \phi}{\sqrt{n}} \left[
        {L}_{4,\max} \frac{(\log(ep))^{5/2}}{\underline{\sigma}^2\sigma_{\min}} 
        + {\nu}_{q,\max} \frac{{(\log(ep))}^{(q+1)/2}}{\underline{\sigma}^{2}\sigma_{\min} (\delta/\alpha)^{q-4}} 
    \right] 
    \log\left(1 + \frac{\sqrt{n}\underline{\sigma}}{\delta}\right) \\
    & \quad + C \phi \sum_{j \geq n/2} \left[ 
        {L}_{4,\max} \frac{(\log(ep))^{3/2}}{\delta_{n-j}^3}
        + \nu_{q,\max} \frac{{(\log(ep))}^{(q-1)/2}}{(\delta_{n-j}/\alpha)^{q-1}}
    \right]
    \kappa_{[1,j-5]|\{j-4\}}(\delta_{n-j}^\circ) \\
\end{aligned}
\end{equation*}
by noting \cref{eq:sum_delta_3,eq:sum_delta_3_kappa} and that
\begin{equation}\label{eq:sum_delta_q}
\begin{aligned}
    \sum_{j=1}^{\floor{n/2}} \frac{1}{\delta_{n-j}^{q-1}} 
    & \leq \sum_{j=1}^{\floor{n/2}} \frac{1}{\delta^{q-4} \delta_{n-j}^{3}} 
    \leq \frac{C \log(ep)}{\underline{\sigma}^{2}\sigma_{\min} \delta^{q-4} \sqrt{n}},
\end{aligned}         
\end{equation}
\begin{equation} \label{eq:sum_delta_q_kappa}
\begin{aligned}
    \sum_{j=\ceil{n/2}}^{n} \frac{\kappa^\circ_{j}(\delta_{n-j})}{\delta_{n-j}^{q-1}}
    & \leq \sum_{j=\ceil{n/2}}^{n} \frac{\log(ep)}{\delta^{q-4} \delta_{n-j}^{2} \sigma_{\min} \sqrt{\max\{j,1\}}} \\
    & \leq \frac{C\log(ep)}{\underline{\sigma}^{2}\sigma_{\min} \delta^{q-4} \sqrt{n}} \log\left(1 + \frac{\sqrt{n}\underline{\sigma}}{\delta}\right),
\end{aligned}
\end{equation}
for some universal constant $C > 0$. On the other hand, for the second summation,
\begin{equation*}
\begin{aligned}
    & C \phi \sum_{j=3}^{n} {L}_{3,\max} \sum_{k=1}^{\min\{j-2,\floor{n/2}\}} \left[ 
        {L}_{3,\max} \frac{(\log(ep))^{5/2}}{\delta_{n-k}^5}
        + \nu_{q,\max} \frac{{(\log(ep))}^{(q+2)/2}}{(\delta_{n-k}/\alpha)^{q+2}}
    \right] \\
    & \leq \frac{C \phi}{\sqrt{n}} \sum_{j=3}^{n} {L}_{3,\max} 
    \left[ 
        {L}_{3,\max} \frac{(\log(ep))^{7/2}}
        {\underline{\sigma}^{2}\sigma_{\min} \delta_{n-\floor{n/2}}^2}
        + {\nu}_{q,\max} \frac{{(\log(ep))}^{(q+4)/2}}
        {\underline{\sigma}^{2}\sigma_{\min} (\delta_{n-\floor{n/2}}/\alpha)^{q-1}}
    \right] \\
    & \leq \frac{C \phi}{\sqrt{n}} {L}_{3,\max} 
    \left[ 
        {L}_{3,\max} \frac{(\log(ep))^{7/2}}
        {\underline{\sigma}^{4}\sigma_{\min}} 
        + {\nu}_{q,\max} \frac{{(\log(ep))}^{(q+4)/2}}
        {\underline{\sigma}^{4}\sigma_{\min} (\delta/\alpha)^{q-3}}
    \right],
\end{aligned}
\end{equation*}
where the last inequality comes from $\sum_{j=3}^{n} \frac{1}{\delta^2_{n-\floor{n/2}}} \leq \sum_{j=3}^{n} \frac{1}{(n-\floor{n/2}) \underline{\sigma}^2} \leq \frac{C}{\underline{\sigma}^2}$, and
\begin{equation*}
\begin{aligned}
    & C \phi \sum_{j=\ceil{n/2}}^{n} L_{3,\max} \sum_{k=\ceil{n/2}}^{j-2} \left[ 
        L_{3,\max} \frac{(\log(ep))^{5/2}}{\delta_{n-k}^5}
        + \nu_{q,\max} \frac{{(\log(ep))}^{(q+2)/2}}{(\delta_{n-k}/\alpha)^{q+2}}
    \right] \kappa^\circ_{k}(\delta_{n-j}) \\
    & \leq \frac{C \phi}{\sqrt{n}} \sum_{j=\ceil{n/2}}^{n} L_{3,\max} 
    \left[ 
        {L}_{3,\max} \frac{(\log(ep))^{7/2}}{ \underline{\sigma}^{2}\sigma_{\min} \delta_{n-j}^2}
        + {\nu}_{q,\max} \frac{{(\log(ep))}^{(q+4)/2}}{\underline{\sigma}^{2}\sigma_{\min} (\delta_{n-j}/\alpha)^{q-1}}
    \right] 
    \\
    & \leq \frac{C \phi}{\sqrt{n}} {L}_{3,\max} 
    \left[ 
        {L}_{3,\max} \frac{(\log(ep))^{7/2}}{ \underline{\sigma}^{4}\sigma_{\min}} 
        + {\nu}_{q,\max} \frac{{(\log(ep))}^{(q+4)/2}}{\underline{\sigma}^{4}\sigma_{\min} (\delta/\alpha)^{q-3}}
    \right] \log\left(1 + \frac{\sqrt{n}\underline{\sigma}}{\delta}\right),
\end{aligned}
\end{equation*}
where the inequliaties follow \cref{eq:sum_delta_q,eq:sum_delta_q_kappa}, respectively.
In sum, we obtain a finite fourth moment version of \cref{thm:induction_lemma_BE_3N}:
\begin{lemma} \label{thm:induction_lemma_BE_4N}
    There exist universal constants $C > 0$ and $\alpha > 0$ such that for any $n$, $1$-dependent sequence $(X_i \in \reals^p: i \in [1,n])$ satisfying Assumptions \eqref{assmp:min_var}, \eqref{assmp:min_ev} and \eqref{assmp:var_ev}, $q \geq 4$, $\delta \geq \sigma_{\min}$ and $\phi \geq \frac{1}{\delta \log(ep)}$,
    for any $\delta \geq \sigma_{\min}$,
    \begin{equation*}
    \begin{aligned}
        & \mu_{[1,n]} \\
        & \leq \frac{C}{\sqrt{n}} \left[
            \frac{\delta \log(ep)}{\sigma_{\min}}  
            + \frac{\sqrt{\log(ep)}}{\phi \sigma_{\min}}
            + {L}_{3,\max} \frac{(\log(ep))^{2}}{\underline{\sigma}^2 \sigma_{\min}} 
        \right]
        \\
        & \quad + \frac{C\phi}{\sqrt{n}} \left[
            {L}_{4,\max} \frac{(\log(ep))^{5/2}}{\underline{\sigma}^2\sigma_{\min}}
            + {\nu}_{q,\max} \frac{(\log(ep))^{(q+1)/2}}{\underline{\sigma}^{2}\sigma_{\min} (\delta/\alpha)^{q-4}}
        \right] \log\left(1 + \frac{\sqrt{n}\underline{\sigma}}{\delta}\right) 
        \\
        & \quad + \frac{C \phi}{\sqrt{n}} {L}_{3,\max} 
        \left[ 
            {L}_{3,\max} \frac{(\log(ep))^{7/2}}{ \underline{\sigma}^{4}\sigma_{\min}} 
            + {\nu}_{q,\max} \frac{{(\log(ep))}^{(q+4)/2}}{\underline{\sigma}^{4}\sigma_{\min} (\delta/\alpha)^{q-3}}
        \right] \log\left(1 + \frac{\sqrt{n}\underline{\sigma}}{\delta}\right) 
        \\
        & \quad + C \phi \sum_{j=\ceil{n/2}}^{n} \left[ 
            L_{4,\max} \frac{(\log(ep))^{3/2}}{\delta_{n-j}^3}
            + \nu_{q,\max} \frac{{(\log(ep))}^{(q-1)/2}}{(\delta_{n-j}/\alpha)^{q-1}}
        \right]
        \kappa_{[1,j-5]|\{j-4\}}(\delta_{n-j}^\circ) 
        \\
        & \quad + C \phi \sum_{j=\ceil{n/2}}^{n} L_{3,\max}
        \sum_{k=\ceil{n/2}}^{j-2} \left[ 
            L_{3\max} \frac{(\log(ep))^{5/2}}{\delta_{n-k}^5}
            + \nu_{q,\max} \frac{(\log(ep))^{(q+2)/2}}{(\delta_{n-k}/\alpha)^{q+2}}
        \right] 
        \kappa_{[1,k-4]|\{k-3\}}(\delta_{n-k}^\circ),
    \end{aligned}
    \end{equation*}
    where $\delta_{n-j}^2 \equiv \delta^2 + \underline{\sigma}^2 \max\{n-j,0\}$ and $\delta^\circ_{n-j} \equiv 12 \delta_{n-j} \sqrt{\log(pn)}$.
\end{lemma}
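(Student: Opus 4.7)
The plan is to combine the Lindeberg-swapping decomposition already derived in this subsection with the remainder bounds of Lemma~\ref{thm:remainder_lemma_4N}, and then to control the resulting single sum (over $j$) and double sum (over $j,k$) by splitting each summation range at the threshold $J_n = n(1 - \sigma_{\min}^2/(\underline{\sigma}^2 \log^2(4ep)))$. Assumption~\eqref{assmp:var_ev} ensures $J_n \ge n/2$, which is what makes this split effective: for $j<J_n$ (the ``bulk'' range) we have $\delta_{n-j} \gtrsim \underline{\sigma}\sqrt{n-J_n} \gtrsim \sqrt{n}\,\sigma_{\min}/\log(ep)$, so the crude bound $\min\{1,\kappa+\kappa^o\}\le 1$ in Lemma~\ref{thm:remainder_lemma_4N} suffices.

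Concretely, the starting point is the identity already displayed, namely
\[
\mu_{[1,n]} \;\le\; \frac{C\delta \log(ep)}{\sqrt{n}\sigma_{\min}} + \frac{C\sqrt{\log(ep)}}{\sqrt{n}\,\phi\sigma_{\min}} + \frac{C L_{3,\max}(\log(ep))^{2}}{\sqrt{n}\,\underline{\sigma}^{2}\sigma_{\min}} + \sum_{j=1}^{n}\bigl|\Exp[\mathfrak{R}_{X_j}^{(4,1)}-\mathfrak{R}_{Y_j}^{(4,1)}]\bigr| + \sum_{j=3}^{n}\sum_{k=1}^{j-2}\bigl|\Exp[\mathfrak{R}_{X_j,X_k}^{(6)}-\mathfrak{R}_{X_j,Y_k}^{(6)}]\bigr|.
\]
I would insert the two bounds of Lemma~\ref{thm:remainder_lemma_4N} into the last two sums. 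For the single sum, split into $j<J_n$ and $j\ge J_n$. On $\{j<J_n\}$ the factor $\min\{1,\cdot\}$ is discarded and the sums $\sum 1/\delta_{n-j}^{3}$ and $\sum 1/\delta_{n-j}^{q-1}$ are handled by inequalities~\eqref{eq:sum_delta_3} and~\eqref{eq:sum_delta_q} respectively. On $\{j\ge J_n\}$ the term $\min\{1,\kappa_{[1,j-4)}(\delta_{n-j}^o)+\kappa_j^o\}$ is further split: the $\kappa_j^o$ piece is summable via~\eqref{eq:sum_delta_3_kappa} and~\eqref{eq:sum_delta_q_kappa}, while the $\kappa_{[1,j-4)}(\delta_{n-j}^o)$ piece is left as the explicit anti-concentration term that appears in the right-hand side of the lemma and will be handled in the dual induction.

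For the double sum, observe that the upper bound in Lemma~\ref{thm:remainder_lemma_4N} depends on $k$ only through $\delta_{n-k}$ (not on $j$), so after factoring an $L_{3,\max}$ outside one can treat the inner $k$-sum exactly as in the single-sum analysis. The outer $\sum_{j=3}^{n}$ contributes no further $j$-dependence; when $k<J_n$ the bound $\sum_{j}1/\delta_{n-J_n}^{2} \le C n/(\underline\sigma^2 (n-J_n)) \le C/\underline\sigma^2$ (after the explicit $1/\sqrt{n}$ factor from~\eqref{eq:sum_delta_3}) produces the $1/\underline\sigma^{4}$ powers in the third line of the lemma, together with the $\log\bigl(1+\sqrt{n}\underline\sigma/\delta\bigr)$ factor coming from~\eqref{eq:sum_delta_nolog}; when $k\ge J_n$ the $\kappa_{[1,k-3)}(\delta^o_{n-k})$ contribution is again retained verbatim. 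Gathering the pieces and identifying terms reproduces exactly the five groups of summands in the stated bound.

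The main obstacle is purely bookkeeping: carefully tracking the exponents of $\log(ep)$, $\underline\sigma$, $\sigma_{\min}$, $\delta$ and $\phi$ as they propagate through the two sums, and verifying that the residual anti-concentration terms emerge in precisely the form $\kappa_{[1,j-4)}(\delta_{n-j}^o)$ and $\kappa_{[1,k-3)}(\delta_{n-k}^o)$ required for the subsequent dual induction. All of the summation identities (\ref{eq:sum_delta_3})--(\ref{eq:sum_delta_q_kappa}) and~\eqref{eq:sum_delta_nolog} are already available, and the structure mirrors that of Lemma~\ref{thm:induction_lemma_BE_3N} with the single extra layer coming from the iterated Lindeberg swapping, so no new analytic input beyond careful accounting is needed.
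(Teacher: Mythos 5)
Your proposal is correct and follows essentially the same route as the paper's proof: the same iterated Lindeberg-swapping decomposition into $\mathfrak{R}^{(4,1)}$ and $\mathfrak{R}^{(6)}$ terms, Lemma~\ref{thm:remainder_lemma_4N} for the remainders, partitioning at $J_n$, and the summation identities \eqref{eq:sum_delta_3}, \eqref{eq:sum_delta_q}, \eqref{eq:sum_delta_q_kappa}, \eqref{eq:sum_delta_nolog}. One small misattribution in your double-sum sketch: the $\log(1+\sqrt{n}\underline\sigma/\delta)$ factor attached to the third line of the lemma actually arises from the $k\geq J_n$ sub-case after splitting $\min\{1,\kappa+\kappa^o\}$ and summing the $\kappa^o_k$ piece via \eqref{eq:sum_delta_q_kappa} (inner $k$-sum) and \eqref{eq:sum_delta_nolog} (outer $j$-sum over $j\geq J_n$), not from the $k<J_n$ bulk range, which produces the same prefactor without the logarithm; you also omit mentioning the $\kappa^o_k$ split for $k\geq J_n$ in the double sum, which is where the $\kappa_{[1,k-3)}(\delta^o_{n-k})$ term versus the absorbed $\kappa^o_k$ term separates — but the overall plan and tools you cite are exactly those the paper uses.
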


\medskip
\noindent{\bf Dual Induction.}
In this case, our induction hypotheses are as follows.
\begin{lemma}
    There exist positive universal constants $\mathfrak{C}_{1,\kappa}$, $\mathfrak{C}_{2,\kappa}$, $\mathfrak{C}_{3,\kappa}$, $\mathfrak{C}_{4,\kappa}$, $\mathfrak{C}_{5,\kappa}$, $\mathfrak{C}_{1,\mu}$, $\mathfrak{C}_{2,\mu}$ and $\mathfrak{C}_{3,\mu}$ such that for any $n$, $1$-dependent sequence $(X_i \in \reals^p: i \in [1,n])$ satisfying Assumptions \eqref{assmp:min_var}, \eqref{assmp:min_ev} and \eqref{assmp:var_ev} and $\delta \geq 0$,
    \begin{equation} \label{eq:hypothesis_AC_4N} \tag{HYP-AC-2}
    \begin{aligned}
        & \sqrt{i} \kappa_{[1,i]|\{i+1\}}(\delta)
        \leq 
        \tilde\kappa_{1,i} {L}_{3,\max}
        + \tilde\kappa_{2,i} {L}_{4,\max}^{1/2}
        + \tilde\kappa_{3,i} \nu_{q,\max}^{1/(q-2)}
        + \tilde\kappa_{4,i} \nu_{1,\max}
        + \tilde\kappa_{5} \delta, \\
        & \forall i \in [1,n),
    \end{aligned}
    \end{equation}
    \begin{equation} \label{eq:hypothesis_BE_4N} \tag{HYP-BE-2}
        \sqrt{n} \mu_{[1,n]}
        \leq \tilde\mu_{1,n} {L}_{3,\max}
        + \tilde\mu_{2,n} {L}_{4,\max}^{1/2}
        + \tilde\mu_{3,n} \nu_{q,\max}^{1/(q-2)},
    \end{equation}
    where $\tilde\kappa_{1,i} = \mathfrak{C}_{1,\kappa} \tilde\mu_{1,i}$, 
    $\tilde\kappa_{2,i} = \mathfrak{C}_{2,\kappa} \tilde\mu_{2,i}$, 
    $\tilde\kappa_{3,i} = \mathfrak{C}_{3,\kappa} \tilde\mu_{3,i}$,
    $\tilde\kappa_{4,i} = \mathfrak{C}_{4,\kappa} \frac{\log(ep)\sqrt{\log(pi)}}{\sigma_{\min}}$,
    $\tilde\kappa_{5} = \mathfrak{C}_{5,\kappa} \frac{\sqrt{\log(ep)}}{\sigma_{\min}}$,
    \begin{equation*}
    \begin{aligned}
        \tilde\mu_{1,n} 
        & = \mathfrak{C}_{1,\mu} \frac{(\log(ep))^{3/2}\sqrt{\log(pn)}}{\underline\sigma^2\sigma_{\min}} \log\left(e n\right), \\
        \tilde\mu_{2,n}
        & = \mathfrak{C}_{2,\mu} \frac{\log(ep)\sqrt{\log(pn)}}{\underline\sigma \sigma_{\min}} \log\left(e n\right) \\
        \tilde\mu_{3,n}
        & = \mathfrak{C}_{3,\mu} \frac{\log(ep)\sqrt{\log(pn)}}{\underline\sigma^{2/(q-2)} \sigma_{\min}} \log\left(e n\right)
    \end{aligned}
    \end{equation*}
\end{lemma}

If $\mathfrak{C}_{1,\kappa}$, $\mathfrak{C}_{2,\kappa}$, $\mathfrak{C}_{3,\kappa}$, $\mathfrak{C}_{4,\kappa}$, $\mathfrak{C}_{5,\kappa}$, $\mathfrak{C}_{1,\mu}$, $\mathfrak{C}_{1,\mu}$, $\mathfrak{C}_{3,\mu} \geq 2$, then \eqref{eq:hypothesis_AC_4N} and \eqref{eq:hypothesis_BE_4N}, requiring $\kappa_{[1,i]|\{i+1\}}(\delta) \leq 1$ almost surely for all $i \in [1, n)$ and $\mu_{[1,n]} \leq 1$ only, trivially holds for $n \leq 36$. Now we consider the case of $n > 36$. Suppose that the induction hypotheses hold for all smaller $n$.

\medskip
We first derive \eqref{eq:hypothesis_AC_4N} for any $i \in [1,n)$. By \cref{thm:assmp_induction_N}, $(X_1, \dots, X_{n-1})$ is a $1$-dependent sequence satisfying Assumptions \eqref{assmp:min_var}, \eqref{assmp:min_ev} and \eqref{assmp:var_ev} with the same $\sigma_{\min}$ and $\underline{\sigma}$ as the original data $(X_1, \dots, X_n)$. As a result, \eqref{eq:hypothesis_AC_4N} applied to $(X_1, \dots, X_{n-1})$ verifies that the same conditional anti-concentration inequality holds for $X_{[1,i]}$ given $X_{i+1}$ for $ i \in [1,n-1)$. For $i = n-1$, by \cref{thm:induction_lemma_AC_N}, 
for any $\vareps \geq \sigma_{\min}$ and $\delta > 0$,
\begin{equation*} 
\begin{aligned}
    & \kappa_{[1,i]|\{i+1\}}(\delta) \\
    & \leq C
    \frac{\sqrt{\log(ep)}}{\vareps} \nu_{1,\max} 
    \min\{1, \kappa_{[1,i-2]|\{i-1\}}(\vareps^\circ)\} \\
    & \quad + C \mu_{[1,i-2]}
    + C\frac{\delta + 2\vareps^\circ}{\sigma_{\min}} \sqrt{\frac{\log(ep)}{i-2}}
    + C\frac{\nu_{1,\max}}{\sigma_{\min}} \frac{\log(ep)}{\sqrt{i-2}},
\end{aligned}
\end{equation*}
where $\vareps^\circ = 20\vareps \sqrt{\log(p(i-2))}$ and $C > 0$ is a universal constant.
Following \eqref{eq:hypothesis_AC_4N}, we upperbound $\kappa_{[1,i-2]|\{i-1\}}(\vareps^\circ)$. Furthermore, since $(X_1, \dots, X_{i-2})$ is a $1$-dependent sequence satisfying Assumptions~\eqref{assmp:min_var}, \eqref{assmp:min_ev} and \eqref{assmp:var_ev} with the same $\sigma_{\min}$ and $\underline{\sigma}$ as the original data (see \cref{thm:assmp_induction_N}), \eqref{eq:hypothesis_BE_4N} holds for $\mu_{[1,i-2]}$, and we obtain
\begin{equation*} 
\begin{aligned}
    & \kappa_{[1,i]|\{i+1\}}(\delta) \\
    & \leq \frac{C}{\sqrt{i-2}} 
    \frac{\sqrt{\log(ep)}}{\vareps} \nu_{1,\max}
    \left[ 
        \tilde\kappa_{1,i-2} {L}_{3,\max}
        + \tilde\kappa_{2,i-2} {L}_{4,\max}^{1/2}
        + \tilde\kappa_{3,i-2} {\nu}_{q,\max}^{1/(q-2)}
        + \tilde\kappa_{4,i-2} \nu_{1,\max}
        + \tilde\kappa_{5} {\vareps^\circ}
    \right]
    \\
    & \quad + \frac{C}{\sqrt{i-2}} \tilde\mu_{1,i-2} {{L}_{3,\max}}
    + \frac{C}{\sqrt{i-2}} \tilde\mu_{2,i-2} {{L}_{4,\max}^{1/2}}
    + \frac{C}{\sqrt{i-2}} \tilde\mu_{3,i-2} {\nu_{q,\max}^{1/(q-2)}} \\
    & \quad + C\frac{\delta + 2\vareps^\circ}{\sigma_{\min}} \sqrt{\frac{\log(ep)}{i-2}}
    + C\frac{\nu_{1,\max}}{\sigma_{\min}} \frac{\log(ep)}{\sqrt{i-2}}.
\end{aligned}
\end{equation*}
As a result, we obtain a recursive inequality on $\tilde\kappa$'s that
\begin{equation} \label{eq:recursion_kappa_4N}
\begin{aligned}
    & \sqrt{i} \kappa_{[1,i]|\{i+1\}}(\delta) \\
    & \leq \mathfrak{C}' \frac{\sqrt{\log(ep)}}{\vareps} \nu_{1,\max}
    \left[ 
        \tilde\kappa_{1,i-2} {L}_{3,\max}
        + \tilde\kappa_{2,i-2} {L}_{4,\max}^{1/2}
        + \tilde\kappa_{3,i-2} \nu_{q,\max}^{1/(q-2)}
        + \tilde\kappa_{4,i-2} \nu_{1,\max}
        + \tilde\kappa_{5} \vareps^\circ
    \right]
    \\
    & \quad + \mathfrak{C}'  \left[ 
        \tilde\mu_{1,i-2} {L}_{3,\max}
        + \tilde\mu_{2,i-2} {L}_{4,\max}^{1/2}
        + \tilde\mu_{3,i-2} \nu_{q,\max}^{1/(q-2)}
        + \frac{\delta + 2\vareps^\circ}{\sigma_{\min}} \sqrt{\log(ep)}
        + \frac{\nu_{1,\max}}{\sigma_{\min}} \log(ep)
    \right],
\end{aligned}
\end{equation}
for some universal constant $\mathfrak{C}'$, whose value does not change in this subsection.
Plugging in $\vareps = \max\{2 \mathfrak{C}', 1\} \sqrt{\log(ep)} {\nu}_{1,\max} \overset{\text{\cref{eq:nu_1_vs_sigma_min}}}{\geq} \sigma_{\min}$,
\begin{equation*}
\begin{aligned}
    & \sqrt{i} \kappa_{[1,i]|\{i+1\}}(\delta) \\
    & \leq \frac{1}{2}
    \left[ 
        \tilde\kappa_{1,i-2} {L}_{3,\max}
        + \tilde\kappa_{2,i-2} {L}_{4,\max}^{1/2}
        + \tilde\kappa_{3,i-2} \nu_{q,\max}^{1/(q-2)}
        + \tilde\kappa_{4,i-2} \nu_{1,\max}
    \right] \\
    & \quad + 20 \mathfrak{C}' \tilde\kappa_{5} \sqrt{\log(ep)\log(pi)} \nu_{1,\max} \\
    & \quad  
    + \mathfrak{C}' \left[ 
        \tilde\mu_{1,i-2} {L}_{3,\max}
        + \tilde\mu_{2,i-2} \nu_{q,\max}^{1/(q-2)}  
        + \frac{\log(ep)(1+40 \sqrt{\log(p i)})}{\sigma_{\min}} {\nu}_{1,\max}
        + \frac{\sqrt{\log(ep)}}{\sigma_{\min}} \delta
    \right]
    \\
    & \leq \tilde\kappa_{1,i} {L}_{3,\max}
    + \tilde\kappa_{2,i} \nu_{q,\max}^{1/(q-2)}
    + \tilde\kappa_{3,i} \nu_{1,\max}
    + \tilde\kappa_{4} \delta,
\end{aligned}
\end{equation*}
where $\tilde\kappa_{1,i} = \mathfrak{C}_{1,\kappa} \tilde\mu_{1,i}$, 
$\tilde\kappa_{2,i} = \mathfrak{C}_{2,\kappa} \tilde\mu_{2,i}$, 
$\tilde\kappa_{3,i} = \mathfrak{C}_{3,\kappa} \tilde\mu_{3,i}$, 
$\tilde\kappa_{4,i} = \mathfrak{C}_{4,\kappa} \frac{\log(ep)\sqrt{\log(pi)}}{\sigma_{\min}}$, and
$\tilde\kappa_{5} = \mathfrak{C}_{5,\kappa} \frac{\sqrt{\log(ep)}}{\sigma_{\min}}$, provided by $\mathfrak{C}_{1,\kappa} = \mathfrak{C}_{2,\kappa} = \mathfrak{C}_{3,\kappa} = \max\{2 \mathfrak{C}', 2\}$, $\mathfrak{C}_{4,\kappa} = \max\{82 \mathfrak{C}' + 20 \mathfrak{C}' \mathfrak{C}_{5,\kappa}, 2\}$ and $\mathfrak{C}_{5,\kappa} = \max\{ \mathfrak{C}', 2\}$. 
This proves \eqref{eq:hypothesis_AC_4N} at $n$.

For \eqref{eq:hypothesis_BE_4N}, we first upper bound the last two terms in \cref{thm:induction_lemma_BE_4N}: 
\begin{equation*}
\begin{aligned}
    & C \phi \sum_{j=\ceil{n/2}}^{n} \left[ 
        L_{4,\max} \frac{(\log(ep))^{3/2}}{\delta_{n-j}^3}
        + \nu_{q,\max} \frac{{(\log(ep))}^{(q-1)/2}}{(\delta_{n-j}/\alpha)^{q-1}}
    \right]
    \kappa_{[1,j-5]|\{j-4\}}(\delta_{n-j}^\circ) 
    \\
    & + C \phi \sum_{j=\ceil{n/2}}^{n} L_{3,\max}
    \sum_{k=\ceil{n/2}}^{j-2} \left[ 
        L_{3\max} \frac{(\log(ep))^{5/2}}{\delta_{n-k}^5}
        + \nu_{q,\max} \frac{(\log(ep))^{(q+2)/2}}{(\delta_{n-k}/\alpha)^{q+2}}
    \right] 
    \kappa_{[1,k-4]|\{k-3\}}(\delta_{n-k}^\circ). \\
\end{aligned}
\end{equation*}
Applying \eqref{eq:hypothesis_AC_4N} to $\kappa_{[1,j-5]|\{j-4\}}(\delta_{n-j}^\circ)$,
\begin{equation*}
\begin{aligned}
    & C \phi \sum_{j=\ceil{n/2}}^{n} \left[ 
        L_{4,\max} \frac{(\log(ep))^{3/2}}{\delta_{n-j}^3}
        + \nu_{q,\max} \frac{{(\log(ep))}^{(q-1)/2}}{(\delta_{n-j}/\alpha)^{q-1}}
    \right]
    \kappa_{[1,j-5]|\{j-4\}}(\delta_{n-j}^\circ)  
    \\
    & \leq C \phi \sum_{j=\ceil{n/2}}^{n} \left[ 
        L_{4,\max} \frac{(\log(ep))^{3/2}}{\delta_{n-j}^3}
        + \nu_{q,\max} \frac{{(\log(ep))}^{(q-1)/2}}{(\delta_{n-j}/\alpha)^{q-1}}
    \right] \\
    & \quad \times \left[ \begin{aligned}
        & \tilde\kappa_{1,j-5} ~{L}_{3,\max}
        + \tilde\kappa_{2,j-5} ~{L}_{4,\max}^{1/2}
        + \tilde\kappa_{3,j-5} ~\nu_{q,\max}^{1/(q-2)}
        + \tilde\kappa_{4,j-5} ~\nu_{1,\max}
        + \tilde\kappa_{5} \delta_{n-j}^\circ \\
    \end{aligned} \right] 
    \\
    & \leq \frac{C\phi}{\sqrt{n}} \left[ 
        L_{4,\max} (\log(ep))^{3/2}
        + \nu_{q,\max} \frac{{(\log(ep))}^{(q-1)/2}}{(\delta/\alpha)^{q-4}}
    \right]\\
    & \quad \times \left[ \begin{aligned}
        & (\tilde\kappa_{1,n-5} ~{L}_{3,\max}
        + \tilde\kappa_{2,n-5} ~{L}_{4,\max}^{1/2}
        + \tilde\kappa_{3,n-5} ~\nu_{q,\max}^{1/(q-2)}
        + \tilde\kappa_{4,n-5} ~\nu_{1,\max}) 
        \sum_{j=\ceil{n/2}}^n \frac{1}{\delta_{n-j}^3} \\
        & + \tilde\kappa_5 \sqrt{\log(p n)} 
        \sum_{j=\ceil{n/2}}^{n} \frac{1}{\delta_{n-j}^2}
    \end{aligned} \right] 
    \\
    & \leq \frac{C\phi}{\sqrt{n}} \left[ 
        L_{4,\max} (\log(ep))^{3/2}
        + \nu_{q,\max} \frac{{(\log(ep))}^{(q-1)/2}}{(\delta/\alpha)^{q-4}}
    \right]\\
    & \quad \times \left[ \begin{aligned}
        & (\tilde\mu_{1,n-5} ~{L}_{3,\max} 
         +\tilde\mu_{2,n-5} ~{L}_{4,\max}^{1/2}
         +\tilde\mu_{3,n-5} ~\nu_{q,\max}^{1/(q-2)})
        \frac{1}{\delta} 
        +\frac{\log(ep)\sqrt{\log(pn)}}{\sigma_{\min}} \frac{\nu_{1,\max}}{\delta} \\
        & + \frac{\sqrt{\log(ep)\log(pn)}}{\sigma_{\min}} 
        \log\left(1 + \frac{\sqrt{n}\underline{\sigma}}{\delta}\right)
    \end{aligned} \right].
\end{aligned}
\end{equation*}
by \cref{eq:sum_delta_nolog} and that $\tilde\kappa_{1,n-5} = \mathfrak{C}_{1,\kappa} \tilde\mu_{1,n-5}$, $\tilde\kappa_{2,n-5} = \mathfrak{C}_{2,\kappa} \tilde\mu_{2,n-5}$, $\tilde\kappa_{3,n-5} = \mathfrak{C}_{3,\kappa} \tilde\mu_{3,n-5}$, $\tilde\kappa_{4,n-5} = \mathfrak{C}_{4,\kappa} \frac{\log(ep)\sqrt{\log(p(n-5))}}{\sigma_{\min}}$ and $\tilde\kappa_{5} = \mathfrak{C}_{5,\kappa} \frac{\sqrt{\log(ep)}}{\sigma_{\min}}$. 
Similarly,
\begin{equation*}
\begin{aligned}
    & C \phi \sum_{j=\ceil{n/2}}^{n} L_{3,\max}
    \sum_{k=\ceil{n/2}}^{j-2} \left[ 
        L_{3\max} \frac{(\log(ep))^{5/2}}{\delta_{n-k}^5}
        + \nu_{q,\max} \frac{(\log(ep))^{(q+2)/2}}{(\delta_{n-k}/\alpha)^{q+2}}
    \right] 
    \kappa_{[1,k-4]|\{k-3\}}(\delta_{n-k}^\circ) 
    \\
    & \leq C \phi \sum_{j=\ceil{n/2}}^{n} L_{3,\max}
    \sum_{k=\ceil{n/2}}^{j-2} \left[ 
        L_{3\max} \frac{(\log(ep))^{5/2}}{\delta_{n-k}^5}
        + \nu_{q,\max} \frac{(\log(ep))^{(q+2)/2}}{(\delta_{n-k}/\alpha)^{q+2}}
    \right] \\
    & \quad \times 
    \left[ \begin{aligned}
        & \tilde\kappa_{1,k-4} ~{L}_{3,\max}
        + \tilde\kappa_{2,k-4} ~{L}_{4,\max}^{1/2}
        + \tilde\kappa_{3,k-4} ~\nu_{q,\max}^{1/(q-2)}
        + \tilde\kappa_{4,k-4} ~\nu_{1,\max}
        + \tilde\kappa_{5} \delta_{n-k}^\circ \\
    \end{aligned} \right]  
    \\
    & \leq C \phi \sum_{j=\ceil{n/2}}^{n} L_{3,\max} \left[ 
        L_{3\max} \frac{(\log(ep))^{5/2}}{\delta_{n-j}^3}
        + \nu_{q,\max} \frac{(\log(ep))^{(q+2)/2}}{(\delta_{n-j}/\alpha)^{q}}
    \right] \\
    & \quad \times 
    \left[ \begin{aligned}
        & \tilde\kappa_{1,j-6} ~{L}_{3,\max}
        + \tilde\kappa_{2,j-6} ~{L}_{4,\max}^{1/2}
        + \tilde\kappa_{3,j-6} ~\nu_{q,\max}^{1/(q-2)}
        + \tilde\kappa_{4,j-6} ~\nu_{1,\max}
        + \tilde\kappa_{5} \delta_{n-j}^\circ \\
    \end{aligned} \right]  
    \\
    & \leq C \phi L_{3,\max} \left[ 
        L_{3\max} (\log(ep))^{5/2}
        + \nu_{q,\max} \frac{(\log(ep))^{(q+2)/2}}{(\delta/\alpha)^{q-3}}
    \right] \\
    & \quad \times \left[ \begin{aligned}
        & (\tilde\mu_{1,n-6} ~{L}_{3,\max} 
         +\tilde\mu_{2,n-6} ~{L}_{4,\max}^{1/2}
         +\tilde\mu_{3,n-6} ~\nu_{q,\max}^{1/(q-2)})
        \frac{1}{\delta} 
        +\frac{\log(ep)\sqrt{\log(pn)}}{\sigma_{\min}} \frac{\nu_{1,\max}}{\delta} \\
        & + \frac{\sqrt{\log(ep)\log(pn)}}{\sigma_{\min}} 
        \log\left(1 + \frac{\sqrt{n}\underline{\sigma}}{\delta}\right)
    \end{aligned} \right].
    \\
\end{aligned}
\end{equation*}
In sum, as long as $\delta \geq \nu_{1,\max} \sqrt{\log(ep)}$ and $\phi > 0$,
\begin{equation*}
\begin{aligned}
    & \sqrt{n} \mu_{[1,n]} \\
    & \leq \mathfrak{C}^{(4)} \phi \left[ \begin{aligned} 
        & {L}_{4,\max} \frac{(\log(ep))^{3/2}}{\underline{\sigma}^2 \delta} 
        + \nu_{q,\max} \frac{(\log(ep))^{(q-1)/2}}{\underline\sigma^2 (\delta/\alpha)^{q-3}} \\
        & + {L}_{3,\max} \left[ 
            {L}_{3,\max} \frac{(\log(ep))^{5/2}}{\underline{\sigma}^4 \delta}
            + \nu_{q,\max} \frac{(\log(ep))^{(q+2)/2}}{\underline{\sigma}^4 (\delta/\alpha)^{q-2}}
        \right] 
    \end{aligned} \right] \\
    & \quad \times \left[ \begin{aligned}
        \tilde\mu_{1,n-6} {L}_{3,\max}
        + \tilde\mu_{2,n-6} {L}_{4,\max}^{1/2}
        + \tilde\mu_{3,n-6} \nu_{q,\max}^{1/(q-2)}
    \end{aligned} \right] \\
    & \quad + \mathfrak{C}^{(4)} \left[
        \frac{\delta \log(ep)}{\sigma_{\min}}  
        + \frac{\sqrt{\log(ep)}}{\phi \sigma_{\min}}
        + {L}_{3,\max} \frac{(\log(ep))^{2}}{\underline{\sigma}^2 \sigma_{\min}} 
    \right]
    \\
    & \quad + \mathfrak{C}^{(4)} \phi \left[ \begin{aligned} 
        & {L}_{4,\max} \frac{(\log(ep))^{3/2}}{\underline{\sigma}^2} 
        + 2^{q-4} {\nu}_{q,\max} \frac{(\log(ep))^{(q-1)/2}}{\underline\sigma^2 (\delta/\alpha)^{q-4}} \\
        & + {L}_{3,\max} \left[ 
            {L}_{3,\max} \frac{(\log(ep))^{5/2}}{\underline{\sigma}^4}
            + 2^{q-3} {\nu}_{q,\max} \frac{(\log(ep))^{(q+2)/2}}{\underline{\sigma}^4 (\delta/\alpha)^{q-3}}
        \right] 
    \end{aligned} \right] \log\left(1 + \frac{\sqrt{n}\underline{\sigma}}{\delta}\right) 
    \\
\end{aligned}
\end{equation*}
where $\mathfrak{C}^{(4)}$ is a universal constant whose value does not change in this subsection.
Taking $\delta = \max\{ 8 \mathfrak{C}^{(4)}, \alpha \} \left( 
    \frac{{L}_{3,\max}}{\underline{\sigma}^2} \sqrt{\log(ep)}
    + \left(\frac{{L}_{4,\max}}{\underline{\sigma}^2}\right)^{\frac{1}{2}} 
    + \left(\frac{\nu_{q,\max}}{\underline{\sigma}^2}\right)^{\frac{1}{q-2}} 
\right) \sqrt{\log(ep)} \geq \nu_{1,\max} \sqrt{\log(ep)}$ and $\phi = \frac{1}{\delta \sqrt{\log(ep)}}$, 
\begin{equation*}
\begin{aligned}
    & \sqrt{n} \mu_{[1,n]} \\
    & \leq \frac{1}{2} \max_{j<n}\tilde\mu_{1,j} {L}_{3,\max}
    + \frac{1}{2} \max_{j<n}\tilde\mu_{2,j} {L}_{4,\max}^{1/2}
    + \frac{1}{2} \max_{j<n}\tilde\mu_{3,j} \nu_{q,\max}^{1/(q-2)} \\
    & \quad + \mathfrak{C}^{(5)} \left(
        {L}_{3,\max} \frac{(\log(ep))^{3/2}}{\underline{\sigma}^2}
        + {L}_{4,\max}^{1/2} \frac{\log(ep)}{\underline{\sigma}}
        + \nu_{q,\max}^{1/(q-2)} \frac{\log(ep)}{\underline{\sigma}^{2/(q-2)}}
    \right)
    \frac{\sqrt{\log(pn)}}{\sigma_{\min}} \log\left(e n\right) \\
\end{aligned}
\end{equation*}
for another universal constant $\mathfrak{C}^{(5)}$, whose value only depends on $\mathfrak{C}''$.
Taking $\mathfrak{C}_1 = \mathfrak{C}_2 = \mathfrak{C}_3 = \max\{2 \mathfrak{C}^{(5)}, 2\}$,
\begin{equation*}
\begin{aligned}
    \tilde\mu_{1,i} 
    & = \mathfrak{C}_1 \frac{(\log(ep))^{3/2}\sqrt{\log(pn)}}{\underline\sigma^2\sigma_{\min}} \log\left(e n\right), \\
    \tilde\mu_{2,i}
    & = \mathfrak{C}_2 \frac{\log(ep)\sqrt{\log(pn)}}{\underline\sigma \sigma_{\min}} \log\left(e n\right) \\
    \tilde\mu_{3,i}
    & = \mathfrak{C}_3 \frac{\log(ep)\sqrt{\log(pn)}}{\underline\sigma^{2/(q-2)} \sigma_{\min}} \log\left(e n\right)
\end{aligned}
\end{equation*}
satisfies
\begin{equation*}
\begin{aligned}
    \sqrt{n} \mu_{[1,n]}
    \leq \tilde\mu_{1,n} {L}_{3,\max}
    + \tilde\mu_{2,n} {L}_{4,\max}^{1/2}
    + \tilde\mu_{3,n} \nu_{q,\max}^{1/(q-2)},
\end{aligned}
\end{equation*}
which proves \eqref{eq:hypothesis_BE_4N} at $n$.
Finally, a mathematical induction over $n$ proves our theorem.

\subsection{Notation of modulo \texorpdfstring{$n$}{n}} \label{sec:Z_n}

To facilitate the notations under permutation arguments, we introduce modulo notations.
We denote $i$ modulo $n$ by $[i]_n$ and
%
define
\begin{equation*}
    [i,j]_n \equiv \{[i]_n, [i+1]_n, \dots, [j-1]_n, [j]_n\}.
\end{equation*}
We note $[i,j]_n$ is well-defined even if $i$ and/or $j$ are/is smaller than $1$ or larger than $n$. In \cref{sec:pf_1_dep_3P}, \cref{sec:pf_1_dep_4P} and relevant sections, by intervals in $[1,n]$, we indicate $[i,j]_n$ for any $i,j \in \ints$ satisfying $i \leq j$.
The other types of intervals, $(i,j]_n, [i,j)_n$ and $(i,j)_n$, are defined similarly.

For a $1$-ring dependent sequence $(X_1, \dots, X_n)$, we allow a slight notational conflict so that $X_{i} \equiv X_{[i]_n}$. It means that the next element of $n$ is $1$, which is the same as $n+1$ modulo $n$. Similarly, $X_{[i,j]} \equiv X_{[i,j]_n} = \sum_{k=i}^n X_{[k]_n}$. Let this notation extend to the indices of $L_{q,i}$ and $\nu_{q,i}$.
Given the modulo notation, the monotonicity of $\kappa$ described in \cref{thm:kappa_comparison} naturally extends to the cases where $i_1$ and/or $i_2$ are/is smaller than $1$ or larger than $n$, as long as $i_1 < i_2 < i_1+n$.

\subsection{Proof of Theorem \ref{thm:1_dep_berry_esseen_3P}} \label{sec:pf_1_dep_3P}

The proof for $1$-ring dependent cases with finite third moments is similar with the $1$-dependent cases in \cref{sec:pf_sketch}. In $1$-ring depent cases, we need to address the additional dependence between $X_1$ and $X_n$ and the average across the permutations in \cref{sec:permutation_argument}. 

\medskip
\noindent{\bf Breaking the ring.} First, we note that the Berry-Esseen bound under $1$-ring dependence can be reduced to the bound under $1$-dependence:
\begin{equation} \label{eq:breaking_ring}
\begin{aligned}
    & \abs*{ \Exp\left[\rho_{r,\phi}^\delta(X_{[1,n]}) - \rho_{r,\phi}^\delta(Y_{[1,n]})\right]} \\
    & \leq \abs*{ \Exp\left[\rho_{r,\phi}^\delta(X_{[1,n)}) - \rho_{r,\phi}^\delta(Y_{[1,n)})\right]} \\
    & \quad + \abs*{ \Exp\left[ \rho_{r,\phi}^\delta(X_{[1,n]}) - \rho_{r,\phi}^\delta(X_{[1,n)}) + \rho_{r,\phi}^\delta(Y_{[1,n]}) - \rho_{r,\phi}^\delta(Y_{[1,n)})\right]}.
\end{aligned}
\end{equation}
Note that we removed $X_n$ and $Y_n$ from $X_{[1,n]}$ and $Y_{[1,n]}$, respectively, to break the $1$-ring dependence. By the Taylor expansion centered at $X_{[1,n)}$,
\begin{equation} \label{eq:zeroth_taylor_expansion_3P}
\begin{aligned}
    & \rho_{r,\phi}^{\delta}(X_{[1,n]}) - \rho_{r,\phi}^\delta(X_{[1,n)}) \\
    & = \frac{1}{2} \inner*{\nabla^2 \rho_{r,\phi}^\delta(X_{(1,n-1)}), X_n^{\otimes 2}}
    + \inner*{\nabla^2 \rho_{r,\phi}^\delta(X_{(2,n-1)}), X_n \otimes X_1} \\
    & \quad + \inner*{\nabla^2 \rho_{r,\phi}^\delta(X_{(1,n-2)}), X_{n-1} \otimes X_n} 
    + \mathfrak{R}_X^{(3)},
\end{aligned}
\end{equation}
where $\mathfrak{R}_X^{(3)}$ is specified in \cref{sec:Taylor_expansion}. This is the same for $\rho_{r,\phi}^{\delta}(Y_{[1,n]}) - \rho_{r,\phi}^\delta(Y_{[1,n)})$ but with $Y$ in place of $X$.

\medskip
\noindent{\bf First Lindeberg swapping.} We bound $\abs*{ \Exp[\rho_{r,\phi}^\delta(X_{[1,n)})] - \Exp[\rho_{r,\phi}^\delta(Y_{[1,n)})]}$ by the Lindeberg swapping as in \cref{sec:mu_by_kappa_3N}. Here we define 
\begin{equation*}
    W_{[i,j]}^\cmpl \equiv X_{[1,i)} + Y_{(j,n)}.
\end{equation*}
Note that unlike \cref{sec:mu_by_kappa_3N}, the $n$-th element is removed. Then,
\begin{equation} \label{eq:first_lindeberg_result_3P}
\begin{aligned}
    & \sum_{j=1}^{n-1} \Exp \left[
        \rho_{r,\phi}^\delta(W^\cmpl_{[j,j]} + X_j)
        - \rho_{r,\phi}^\delta(W^\cmpl_{[j,j]} + Y_j) \right] 
    = \sum_{j=1}^{n-1} \Exp \left[ \mathfrak{R}_{X_j}^{(3,1)} 
        - \mathfrak{R}_{Y_j}^{(3,1)} \right],
\end{aligned}
\end{equation}
where $\mathfrak{R}_{X_j}^{(3,1)}$ and $\mathfrak{R}_{Y_j}^{(3,1)}$ are remainder terms of the Taylor expansions specified in \cref{sec:first_lindeberg_swapping}.

\medskip
\noindent{\bf Second moment decomposition and second Lindeberg swapping.}
To bound the second order terms in \cref{eq:zeroth_taylor_expansion_3P}, we re-apply the Lindeberg swapping. For simplicity, we only look at $\inner*{\nabla^2 \rho_{r,\phi}^\delta(X_{(1,n-1)}), X_n^{\otimes 2}}$, but similar arguments work for the other second moment terms. Because $\Exp[X_n^{\otimes 2}] = \Exp[Y_n^{\otimes 2}]$,
\begin{equation*}
\begin{aligned}
    & \inner*{ 
        \Exp\left[ \nabla^2 \rho_{r,\phi}^\delta(X_{(1,n-1)}) \right], 
        \Exp\left[ X_n^{\otimes 2} \right]
    }
    - \inner*{ 
        \Exp\left[ \nabla^2 \rho_{r,\phi}^\delta(Y_{(1,n-1)}) \right], 
        \Exp\left[ Y_n^{\otimes 2} \right]
    } \\
    & = \sum_{j=2}^{n-2} \inner*{ 
        \Exp\left[ 
            \nabla^2 \rho_{r,\phi}^\delta(X_{(1,j)} + X_j + Y_{(j,n-1)}) 
            -\nabla^2 \rho_{r,\phi}^\delta(X_{(1,j)} + Y_j + Y_{(j,n-1)})
        \right], 
        \Exp\left[ X_n^{\otimes 2} \right]
    },
\end{aligned}
\end{equation*}
By the Taylor expansion up to order $3$,
\begin{equation*}
\begin{aligned}
    & \inner*{ 
        \Exp\left[ \nabla^2 \rho_{r,\phi}^\delta(X_{(1,n-1)}) \right], 
        \Exp\left[ X_n^{\otimes 2} \right]}
    - \inner*{ 
        \Exp\left[ \nabla^2 \rho_{r,\phi}^\delta(Y_{(1,n-1)}) \right], 
        \Exp\left[ X_n^{\otimes 2} \right]} \\
    & = \sum_{j=2}^{n-2} \Exp\left[ \mathfrak{R}_{X_j}^{(3,2,1)} - \mathfrak{R}_{Y_j}^{(3,2,1)} \right],
\end{aligned}
\end{equation*}
where $\mathfrak{R}_{X_j}^{(3,2,1)}$ is the third-order remainder, specified in \cref{sec:second_lindeberg_swapping}. Similarly, 
\begin{equation*}
\begin{aligned}
    & \inner*{ 
        \Exp\left[ \nabla^2 \rho_{r,\phi}^\delta(X_{(2,n-1)}) \right], 
        \Exp\left[ X_n \otimes X_1 \right]}
    - \inner*{ 
        \Exp\left[ \nabla^2 \rho_{r,\phi}^\delta(Y_{(2,n-1)}) \right], 
        \Exp\left[ X_n \otimes X_1 \right]} \\
    & = \sum_{j=3}^{n-2} \Exp\left[ \mathfrak{R}_{X_j}^{(3,2,2)} - \mathfrak{R}_{Y_j}^{(3,2,2)} \right], \textand
\end{aligned}
\end{equation*}
\begin{equation*}
\begin{aligned}
    & \inner*{ 
        \Exp\left[ \nabla^2 \rho_{r,\phi}^\delta(X_{(1,n-2)}) \right], 
        \Exp\left[ X_n \otimes X_{n-1} \right]}
    - \inner*{ 
        \Exp\left[ \nabla^2 \rho_{r,\phi}^\delta(Y_{(1,n-2)}) \right], 
        \Exp\left[ X_n \otimes X_{n-1} \right]} \\
    & = \sum_{j=2}^{n-3} \Exp\left[ \mathfrak{R}_{X_j}^{(3,2,3)} - \mathfrak{R}_{Y_j}^{(3,2,3)} \right],
\end{aligned}
\end{equation*}
where $\mathfrak{R}_{W_j}^{(3,2,2)}$ and $\mathfrak{R}_{W_j}^{(3,2,3)}$ are simlarly derived third-order remainder terms.
Putting all the above terms together, 
\begin{equation*}
\begin{aligned}
    & \Exp\left[ \rho_{r,\phi}^{\delta}(X_{[1,n]}) - \rho_{r,\phi}^\delta(X_{[1,n)}) 
    - \rho_{r,\phi}^{\delta}(Y_{[1,n]}) + \rho_{r,\phi}^\delta(Y_{[1,n)}) \right] \\
    & = \Exp\left[ \mathfrak{R}_X^{(3)} - \mathfrak{R}_Y^{(3)} \right]
    + \sum_{j=2}^{n-2} \Exp\left[ \mathfrak{R}_{X_j}^{(3,2)} - \mathfrak{R}_{Y_j}^{(3,2)} \right],
\end{aligned}
\end{equation*}
where $\mathfrak{R}_{W_j}^{(3,2)} = \frac{1}{2} \mathfrak{R}_{W_j}^{(3,2,1)} + \mathfrak{R}_{W_j}^{(3,2,2)} + \mathfrak{R}_{W_j}^{(3,2,3)}$, and
\begin{equation} \label{eq:second_lindeberg_result_3P}
\begin{aligned}
    & \abs*{ \Exp\left[\rho_{r,\phi}^\delta(X_{[1,n]}) - \rho_{r,\phi}^\delta(Y_{[1,n]})\right]} \\
    & \leq \abs*{\Exp\left[\rho_{r,\phi}^\delta(X_{[1,n)}) - \rho_{r,\phi}^\delta(Y_{[1,n)})\right]} \\
    & \quad + \abs*{\Exp\left[\rho_{r,\phi}^{\delta}(X_{[1,n]}) - \rho_{r,\phi}^\delta(X_{[1,n)}) 
    - \rho_{r,\phi}^{\delta}(Y_{[1,n]}) + \rho_{r,\phi}^\delta(Y_{[1,n)})\right]} \\
    & \leq \abs*{\Exp\left[ \mathfrak{R}_X^{(3)} - \mathfrak{R}_Y^{(3)} \right]}
    + \sum_{j=1}^{n-1} \abs*{ \Exp \left[ 
        \mathfrak{R}_{X_j}^{(3,1)} 
        - \mathfrak{R}_{Y_j}^{(3,1)} 
    \right] }
    + \sum_{j=2}^{n-2} \abs*{ \Exp\left[ \mathfrak{R}_{X_j}^{(3,2)} - \mathfrak{R}_{Y_j}^{(3,2)} \right] }.
\end{aligned}
\end{equation}

\medskip
\noindent{\bf Remainder lemma.} Similar to \cref{sec:mu_by_kappa_3N}, the remainder terms $\mathfrak{R}_{W}^{(3)}$, $\mathfrak{R}_{W_j}^{(3,1)}$ and $\mathfrak{R}_{W_j}^{(3,2)}$ are upper bounded by conditional anti-concentration probability bounds. For $q > 0$, let
\begin{equation*}
    \tilde{L}_{q,j} \equiv \sum_{j'=j-3}^{j+3} L_{q,j'} \textand 
    \tilde{L}_{q,[k]_{j-2}} \equiv \sum_{k'=k-3}^{k+3} L_{q,[k']_{j-2}},
\end{equation*}
where $[k']_{j-2}$ is $k'$ modulo $j-2$, and $\tilde{\nu}_{q,j}$ and $\tilde{\nu}_{q,[k]_{j-2}}$ are similarly defined.

\begin{lemma} \label{thm:remainder_lemma_3P}
    There exist universal constants $C > 0$ and $\alpha > 0$ such that for any $n$, $1$-dependent sequence $(X_1, \dots, X_n)$ satisfying Assumption~\eqref{assmp:min_ev}, $j \in [1,n]$, $q \geq 3$, $\delta \geq \sigma_{\min}$ and $\phi \geq \frac{1}{\delta \log(ep)}$,
    \begin{equation*}
    \begin{aligned}
        \abs*{\Exp \left[ \mathfrak{R}_{W}^{(3)} \right]} 
        & \leq C \frac{(\log(ep))^{3/2}}{\delta^3}
        \left[ 
            \tilde{L}_{3,n} 
            + \phi^{\min\{1, q-3\}} \tilde\nu_{q,n} \frac{ {(\log(ep))}^{\max\{0, q-4\}/2}}{(\delta/\alpha)^{\max\{0, q-4\}}}
        \right] \\
        & \quad \times \min\{\kappa_{(3,n-3)|\{3,n-3\}}(\delta^\circ) + \kappa^\circ_{n}(\delta), 1\},
    \end{aligned}
    \end{equation*}
    \begin{equation*}
    \begin{aligned}
        \abs*{\Exp \left[ \mathfrak{R}_{W_j}^{(3,1)} \right]} 
        & \leq C \frac{(\log(ep))^{3/2}}{\delta_{n-j}^3}
        \left[ 
            \tilde{L}_{3,j} 
            + \phi^{\min\{1, q-3\}} \tilde\nu_{q,j} \frac{ {(\log(ep))}^{\max\{0, q-4\}/2}}{(\delta_{n-j}/\alpha)^{\max\{0, q-4\}}}
        \right] \\
        & \quad \times \min\{\kappa_{(3,j-3)|\{3,j-3\}}(\delta_{n-j}^\circ) + \kappa^\circ_{j}(\delta_{n-j}), 1\},
    \end{aligned}
    \end{equation*}
    \begin{equation*}
    \begin{aligned}
        \abs*{\Exp \left[ \mathfrak{R}_{W_j}^{(3,2)} \right]}
        & \leq C \frac{(\log(ep))^{3/2}}{\delta_{n-j}^3} \left[ \begin{aligned}
            \tilde{L}_{3,j}
            + \phi^{\min\{1, q-3\}} (\tilde\nu_{q,j} + \nu_{q,n}) \frac{ {(\log(ep))}^{\max\{0, q-4\}/2}}{(\delta_{n-j}/\alpha)^{\max\{0, q-4\}}}
        \end{aligned} \right] \\
        & \quad \times \min\{\kappa_{(3,j-3)|\{3,j-3\}}(\delta_{n-j}^\circ) + \kappa^\circ_{j}(\delta_{n-j}), 1\},
    \end{aligned}
    \end{equation*}
    where $W$ represents either $X$ or $Y$, $\delta_{n-j}^2 \equiv \delta^2 + \underline{\sigma}^2 \max\{n-j,0\}$, $\delta^\circ_{n-j} \equiv 12 \delta_{n-j} \sqrt{\log(pn)}$ and $\kappa^\circ_{j}(\delta) \equiv \frac{\delta \log(ep)}{\sigma_{\min} \sqrt{\max\{j, 1\}}}$.
\end{lemma}

\medskip
\noindent{\bf Permutation argument.} 
We apply the permutation argument to \cref{eq:breaking_ring} as in \cref{eq:first_lindeberg_swapping_P}:
\begin{equation*}
\begin{aligned}
    & \abs*{ \Exp\left[\rho_{r,\phi}^\delta(X_{[1,n]}) - \rho_{r,\phi}^\delta(Y_{[1,n]})\right]} \\
    & \leq \frac{1}{n} \sum_{j^\circ=1}^n \abs*{ \Exp\left[
        \rho_{r,\phi}^\delta(X_{j^\circ+[1,n)}) - \rho_{r,\phi}^\delta(Y_{j^\circ+[1,n)})
    \right]} \\
    & \quad + \frac{1}{n} \sum_{j^\circ=1}^n \abs*{ \Exp\left[ \rho_{r,\phi}^\delta(X_{[1,n]}) - \rho_{r,\phi}^\delta(X_{j^\circ+[1,n)}) + \rho_{r,\phi}^\delta(Y_{[1,n]}) - \rho_{r,\phi}^\delta(Y_{j^\circ+[1,n)})\right]}.
\end{aligned}
\end{equation*}
where $j^\circ+[1,n)$ is the shifted interval of $[1, n)$ by $j^\circ$, namely, $\{j^\circ+1, \dots, j^\circ+n-1\}$. 
Together with the results in \cref{thm:remainder_lemma_3P},
\begin{equation*}
\begin{aligned}
    & \abs*{ \Exp\left[\rho_{r,\phi}^\delta(X_{[1,n]}) - \rho_{r,\phi}^\delta(Y_{[1,n]})\right]} \\
    & \leq \frac{C}{n} \sum_{j^\circ=1}^n \frac{(\log(ep))^{3/2}}{\delta^3} 
    \left[ 
        \tilde{L}_{3,j^\circ} 
        + \phi^{\min\{1, q-3\}} \tilde\nu_{q,j^\circ} \frac{ {(\log(ep))}^{\max\{0, q-4\}/2}}{(\delta/\alpha)^{\max\{0, q-4\}}}
    \right]  \\
    & \quad \times \min\{1, \kappa_{j^\circ+(3,n-3)|\{j^\circ+3,j^\circ+n-3\}}(\delta^\circ) + \kappa^\circ_{n}(\delta)\} \\
    & \quad + \frac{C}{n} \sum_{j^\circ=1}^n \sum_{j=1}^{n-1} \frac{(\log(ep))^{3/2}}{\delta_{n-j}^3} 
    \left[ 
        \tilde{L}_{3,j^\circ} 
        + \phi^{\min\{1, q-3\}} \tilde\nu_{q,j^\circ} \frac{ {(\log(ep))}^{\max\{0, q-4\}/2}}{(\delta_{n-j}/\alpha)^{\max\{0, q-4\}}}
    \right] \\ 
    & \quad \times \min\{1, \kappa_{j^\circ+(3,j-3)|\{j^\circ+3,j^\circ+j-3\}}(\delta_{n-j}^\circ) + \kappa^\circ_{j}(\delta_{n-j})\} \\
    & \quad + \frac{C}{n} \sum_{j^\circ=1}^n \sum_{j=1}^{n-1} \frac{(\log(ep))^{3/2}}{\delta_{n-j}^3}  
    \left[ 
        \tilde{L}_{3,j^\circ+j}
        + \phi^{\min\{1, q-3\}} \tilde\nu_{q,j^\circ+j} \frac{ {(\log(ep))}^{\max\{0, q-4\}/2}}{(\delta_{n-j}/\alpha)^{\max\{0, q-4\}}}
    \right] \\
    & \quad \times \min\{1, \kappa_{j^\circ+(3,j-3)|\{j^\circ+3,j^\circ+j-3\}}(\delta_{n-j}^\circ) + \kappa^\circ_{j}(\delta_{n-j})\}. \\
\end{aligned}
\end{equation*}

\medskip
\noindent{\bf Partitioning the sum.} 
We partition the summation over $j$ at 
$n/2$. A notable distinction from \cref{sec:mu_by_kappa_3N} is that we should also take averages over $j^\circ$ alongside the summations over $j$.
For $j < n/2$, 
\begin{equation*}
\begin{aligned}
    & \frac{C}{n} \sum_{j^\circ=1}^n \sum_{j < n/2} \frac{(\log(ep))^{3/2}}{\delta_{n-j}^3}  
    \left[ 
        \tilde{L}_{3,j^\circ+j}
        + \phi^{\min\{1, q-3\}} \tilde\nu_{q,j^\circ+j} \frac{ {(\log(ep))}^{\max\{0, q-4\}/2}}{(\delta_{n-j}/\alpha)^{\max\{0, q-4\}}}
    \right] \\
    & \quad \times \min\{1, \kappa_{j^\circ+(3,j-3)|\{j^\circ+3,j^\circ+j-3\}}(\delta_{n-j}^\circ) + \kappa^\circ_{j}(\delta_{n-j})\} \\
    & \leq \frac{C}{n} \sum_{j^\circ=1}^n \sum_{j < n/2} \frac{(\log(ep))^{3/2}}{\delta_{n-j}^3}  
    \left[ 
        \tilde{L}_{3,j^\circ+j}
        + \phi^{\min\{1, q-3\}} \tilde\nu_{q,j^\circ+j} \frac{ {(\log(ep))}^{\max\{0, q-4\}/2}}{(\delta_{n-j}/\alpha)^{\max\{0, q-4\}}}
    \right] \\
    & \leq C \sum_{j < n/2} \frac{(\log(ep))^{3/2}}{\delta_{n-j}^3} 
    \left[ 
        \bar{L}_{3}
        + \phi^{\min\{1, q-3\}} \bar\nu_{q} \frac{ {(\log(ep))}^{\max\{0, q-4\}/2}}{(\delta_{n-j}/\alpha)^{\max\{0, q-4\}}}
    \right] \\
    & \leq \frac{C}{\sqrt{n}} \frac{(\log(ep))^{5/2}}{\underline{\sigma}^2\sigma_{\min}}
    \left[ 
        \bar{L}_{3}
        + \phi^{\min\{1, q-3\}} \bar\nu_{q} \frac{ {(\log(ep))}^{\max\{0, q-4\}/2}}{(\delta/\alpha)^{\max\{0, q-4\}}}
    \right],
\end{aligned}
\end{equation*}
because of \cref{eq:sum_delta_3}.
For $j \geq n/2$,
\begin{equation*}
\begin{aligned}
    & \frac{C}{n} \sum_{j^\circ=1}^n \sum_{j \geq n/2} \frac{(\log(ep))^{3/2}}{\delta_{n-j}^3}  
    \left[ 
        \tilde{L}_{3,j^\circ+j}
        + \phi^{\min\{1, q-3\}} \tilde\nu_{q,j^\circ+j} \frac{ {(\log(ep))}^{\max\{0, q-4\}/2}}{(\delta_{n-j}/\alpha)^{\max\{0, q-4\}}}
    \right] \\
    & \quad \times \min\{1, \kappa_{j^\circ+(3,j-3)|\{j^\circ+3,j^\circ+j-3\}}(\delta_{n-j}^\circ) + \kappa^\circ_{j}(\delta_{n-j})\}. \\
    & \leq \frac{C}{n} \sum_{j^\circ=1}^n \sum_{j \geq n/2}  \frac{(\log(ep))^{3/2}}{\delta_{n-j}^3}
    \left[ 
        \tilde{L}_{3,j^\circ+j}
        + \phi^{\min\{1, q-3\}} \tilde\nu_{q,j^\circ+j} \frac{ {(\log(ep))}^{\max\{0, q-4\}/2}}{(\delta_{n-j}/\alpha)^{\max\{0, q-4\}}}
    \right] \\
    & \quad \times \kappa_{j^\circ+(3,j-3)|\{j^\circ+3,j^\circ+j-3\}}(\delta_{n-j}^\circ) \\
    & \quad + C \sum_{j \geq n/2} \frac{(\log(ep))^{3/2}}{\delta_{n-j}^3}
    \left[ 
        \bar{L}_{3}
        + \phi^{\min\{1, q-3\}} \bar\nu_{q} \frac{ {(\log(ep))}^{\max\{0, q-4\}/2}}{(\delta_{n-j}/\alpha)^{\max\{0, q-4\}}}
    \right]
    \kappa^\circ_{j}(\delta_{n-j}).
\end{aligned}
\end{equation*}
The last term is upper bounded by
\begin{equation*}
\begin{aligned}
    & C \sum_{j \geq n/2} \frac{(\log(ep))^{3/2}}{\delta_{n-j}^3}
    \left[ 
        \bar{L}_{3}
        + \phi^{\min\{1, q-3\}} \bar\nu_{q} \frac{ {(\log(ep))}^{\max\{0, q-4\}/2}}{(\delta_{n-j}/\alpha)^{\max\{0, q-4\}}}
    \right]
    \kappa^\circ_{j}(\delta_{n-j}) \\
    & \leq \frac{C}{\sqrt{n}} \frac{(\log(ep))^{5/2}}{\underline{\sigma}^2\sigma_{\min}}
    \left[ 
        \bar{L}_{3}
        + \phi^{\min\{1, q-3\}} \bar\nu_{q} \frac{ {(\log(ep))}^{\max\{0, q-4\}/2}}{(\delta/\alpha)^{\max\{0, q-4\}}}
    \right]
    \log\left(1 + \frac{\sqrt{n}\underline{\sigma}}{\delta}\right)
\end{aligned}
\end{equation*}
because of \cref{eq:sum_delta_3_kappa}.
%
%
In sum, we obtain the following induction from $\kappa_{I|I'}(\delta)$ for $I \subsetneq [1,n]$ to
\begin{equation*}
    \mu_{[1,n]} = \mu(X_{[1,n]}, Y_{[1,n]}).
\end{equation*}
\begin{lemma} \label{thm:induction_lemma_BE_3P}
    There exists a universal constant $C > 0$ such that for any $n$, $1$-ring dependent sequence $(X_i \in \reals^p: i \in [1,n])$ satisfying Assumptions \eqref{assmp:min_var}, \eqref{assmp:min_ev} and \eqref{assmp:var_ev}, $q \geq 3$, $\delta \geq \sigma_{\min}$ and $\phi \geq \frac{1}{\delta \log(ep)}$,
    \begin{equation*}
    \begin{aligned}
        & \mu_{[1,n]} \\
        & \leq \frac{C}{\sqrt{n}} \frac{\delta \log(ep)}{\sigma_{\min}}  
        + \frac{C}{\sqrt{n}} \frac{\sqrt{\log(ep)}}{\phi \sigma_{\min}} \\
        & \quad + \frac{C}{\sqrt{n}} \frac{(\log(ep))^{5/2}}{\underline{\sigma}^2\sigma_{\min}} 
        \left[ 
            \bar{L}_{3}
            + \phi^{\min\{1, q-3\}} \bar\nu_{q} \frac{ {(\log(ep))}^{\max\{0, q-4\}/2}}{(\delta/\alpha)^{\max\{0, q-4\}}}
        \right]
        \log\left(1 + \frac{\sqrt{n}\underline{\sigma}}{\delta}\right)\\
        & \quad + \frac{C}{n} \sum_{j^\circ=1}^n \sum_{j=\ceil{n/2}}^{n} \frac{(\log(ep))^{3/2}}{\delta_{n-j}^3}
        \left[ 
            \tilde{L}_{3,j^\circ}
            + \phi^{\min\{1, q-3\}} \tilde\nu_{q,j^\circ} \frac{ {(\log(ep))}^{\max\{0, q-4\}/2}}{(\delta_{n-j}/\alpha)^{\max\{0, q-4\}}}
        \right] \\
        & \quad \times \kappa_{j^\circ+(3,j-3)|\{j^\circ+3,j^\circ+j-3\}}(\delta_{n-j}^\circ) \\
        & \quad + \frac{C}{n} \sum_{j^\circ=1}^n \sum_{j=\ceil{n/2}}^{n-1} \frac{(\log(ep))^{3/2}}{\delta_{n-j}^3}
        \left[ 
            \tilde{L}_{3,j^\circ+j}
            + \phi^{\min\{1, q-3\}} \tilde\nu_{q,j^\circ+j} \frac{ {(\log(ep))}^{\max\{0, q-4\}/2}}{(\delta_{n-j}/\alpha)^{\max\{0, q-4\}}}
        \right] \\
        & \quad \times \kappa_{j^\circ+(3,j-3)|\{j^\circ+3,j^\circ+j-3\}}(\delta_{n-j}^\circ),
    \end{aligned}
    \end{equation*}
    for some absolute constant $C > 0$.
\end{lemma}

\medskip
\noindent{\bf Anti-concentration inequality.} 
%
We now proceed to obtain an induction from $\mu$ to $\kappa$. For the sake of simplicity, we argue the upperbound of $\kappa_{[1,i]|\{0, i+1\}}(\delta)$ for $i \in [6, n)$, where $(X_1, \dots, X_n)$ is $1$-ring dependent and $\delta > 0$, but the arguments easily extend to arbitrary $i_1, i_2 \in \ints$ satisfying $i_1 < i_2 < i_1+n-1$. 

First, using the monotonicity of $\kappa$ (see \cref{thm:kappa_comparison}), we obtain
\begin{equation*}
    \kappa_{[1, i]|\{0, i+1\}}(\delta) 
    \leq \frac{1}{i^\circ-3} \sum_{j=0}^{i^\circ-4} \kappa_{[j+1, j+i^\circ]|\{j, j+i^\circ+1\}}(\delta),
\end{equation*}
where $i^\circ \equiv \floor{\frac{i+4}{2}} \geq 5$.
This step is necessary for the permutation argument in the subsequent induction step. Then using a similar smoothing and Taylor expansion to those in \cref{sec:kappa_by_mu_N}, we obtain for any $j \in [0, i^\circ-4]$, $r \in \reals^p$ and $\delta, \epsilon, h > 0$, 
\begin{equation} \label{eq:anti_concentration_Taylor_P}
\begin{aligned}
    & \Pr[X_{[j+1, j+i^\circ]} \in A_{r,\delta} | \mathscr{X}_{\{j, j+i^\circ+1\}}] \\
    & \leq \Exp[\varphi_{r,\delta+\vareps^o}^\vareps(X_{[j+1, j+i^\circ]}) 
    | \mathscr{X}_{\{j, j+i^\circ+1\}}] 
    + \frac{1}{h^4} \\
    & \leq \Exp\left[ 
        \varphi_{r,\delta+\vareps^o}^\vareps(X_{[j+1, j+i^\circ]} - X_{\{j+2,j+i^\circ-1\}}) | \mathscr{X}_{\{j, j+i^\circ+1\}} 
    \right]
    + \Exp[ \mathfrak{R}_{X_{\{j+2, j+i^\circ-1\}}}^{(1)} | \mathscr{X}_{\{j, j+i^\circ+1\}} ]
    + \frac{1}{h^4} \\
    & \leq \Exp[\Pr[
        X_{[j+3,j+i^\circ-2]} \in A_{r_1, \delta + 2\vareps^o} 
        | \mathscr{X}_{\{j+1, j+i^\circ\}}
    ] | \mathscr{X}_{\{j, j+i^\circ+1\}} ] 
    + \Exp[ \mathfrak{R}_{X_{\{j+2, j+i^\circ-1\}}}^{(1)} | \mathscr{X}_{\{j, j+i^\circ+1\}} ]
    + \frac{2}{h^4} \\
    & \leq C\frac{\delta + 20\vareps\sqrt{\log(ph)}}{\sigma_{\min}} \sqrt{\frac{\log(ep)}{i^\circ-4}}
    + 2\mu_{[j+3,j+i^\circ-2]} + \Exp[ \mathfrak{R}_{X_{\{j+2, j+i^\circ-1\}}}^{(1)} | \mathscr{X}_{\{j, j+i^\circ+1\}} ] + \frac{2}{h^4},
\end{aligned}
\end{equation}
where $\epsilon^\circ \equiv 10 \epsilon \sqrt{\log(ph)}$, and $r_1 \equiv r - X_{\{j+1, j+i^\circ\}}$. Bounding the remainder term $\Exp[ \mathfrak{R}_{X_{\{j+2, j+i^\circ-1\}}}^{(1)} | \mathscr{X}_{\{j, j+i^\circ+1\}} ]$ similarly to \cref{sec:kappa_by_mu_N}, we obtain the following lemma.

\begin{lemma} 
\label{thm:induction_lemma_AC_P}
    There exists a universal constant $C > 0$ such that for any $n$, $1$-ring dependent sequence $(X_i \in \reals^p: i \in [1,n])$ satisfying Assumption \eqref{assmp:min_var}, $i_1, i_2 \in \ints$ satisfying $i_1+6 \leq i_2 < i_1+n-1$, $\delta > 0$ and $\vareps \geq \sigma_{\min}$,
    \begin{equation*} 
    \begin{aligned}
        & \kappa_{[i_1, i_2]|\{i_1-1, i_2+1\}}(\delta) \\
        & \leq \frac{C}{i^\circ-3} \sum_{j=0}^{i^\circ-4} 
        \frac{\sqrt{\log(ep)}}{\vareps} 
        ~ (\nu_{1,j+i_1+1} + \nu_{1,j+i_1+i^\circ-2}) 
        ~ \kappa_{j+i_1+[2,i^\circ-3]|\{j+i_1+1, j+i_1+i^\circ-2\}}(\vareps^\circ)\\
        & \quad + \frac{C}{i^\circ-3} \sum_{j=0}^{i^\circ-4} \mu_{j+i_1+[2,i^\circ-3]} 
        + \min\left\{1, C\frac{\delta + 2\vareps^\circ}{\sigma_{\min}} \sqrt{\frac{\log(ep)}{i^\circ-4}} \right\} \\
        & \quad + \frac{C}{\sigma_{\min}} \bar{\nu}_{1,(i_1,i_2)} \frac{\log(ep)}{\sqrt{i^\circ-4}},
    \end{aligned}
    \end{equation*}
    where $i^\circ \equiv \floor{\frac{\abs{[i_1,i_2]}+4}{2}}$ and $\vareps^\circ \equiv 20\vareps \sqrt{\log(p(i^\circ-4))}$. 
\end{lemma}

\medskip
\noindent{\bf Dual Induction.}  In this part we use the dual induction to prove the following lemma.
\begin{lemma}
    There exist positive universal constants $\mathfrak{C}_{1,\kappa}$, $\mathfrak{C}_{2,\kappa}$, $\mathfrak{C}_{3,\kappa}$, $\mathfrak{C}_{4,\kappa}$, $\mathfrak{C}_{1,\mu}$ and $\mathfrak{C}_{2,\mu}$ such that for any $n$, $1$-ring dependent sequence $(X_i \in \reals^p: i \in [1,n])$ satisfying Assumptions \eqref{assmp:min_var}, \eqref{assmp:min_ev} and \eqref{assmp:var_ev} and $\delta \geq 0$,
    \begin{equation} \label{eq:hypothesis_AC_3P} \tag{HYP-AC-3}
    \begin{aligned}
        & \sqrt{\abs{I}} \kappa_{I|\{i_1-1, i_2+1\}}(\delta)
        \leq 
        \tilde\kappa_{1,\abs{I}} \bar{L}_{3,I}
        + \tilde\kappa_{2,\abs{I}} \bar\nu_{q,I}^{1/(q-2)}
        + \tilde\kappa_{3,\abs{I}} \bar\nu_{1,I}
        + \tilde\kappa_{4} \delta, \\
        & \forall I = [i_1, i_2] ~~\text{s.t.}~~ i_1 < i_2 < i_1+n,
    \end{aligned}
    \end{equation}
    \begin{equation} \label{eq:hypothesis_BE_3P} \tag{HYP-BE-3}
        \sqrt{n} \mu_{[1,n]}
        \leq \tilde\mu_{1,n} \bar{L}_{3}
        + \tilde\mu_{2,n} \bar\nu_{q}^{1/(q-2)},
    \end{equation}
    where $\tilde\kappa_{1,i} = \mathfrak{C}_{1,\kappa} \tilde\mu_{1,i}$, 
    $\tilde\kappa_{2,i} = \mathfrak{C}_{2,\kappa} \tilde\mu_{2,i}$, 
    $\tilde\kappa_{3,i} = \mathfrak{C}_{3,\kappa} \frac{\log(ep)\sqrt{\log(pi)}}{\sigma_{\min}}$,
    $\tilde\kappa_{4} = \mathfrak{C}_{4,\kappa} \frac{\sqrt{\log(ep)}}{\sigma_{\min}}$,
    \begin{equation*}
    \begin{aligned}
        \tilde\mu_{1,n} 
        & = \mathfrak{C}_{1,\mu} \frac{(\log(ep))^{3/2}\sqrt{\log(pn)}}{\underline\sigma^2\sigma_{\min}} \log\left(e n\right), \\
        \tilde\mu_{2,n}
        & = \mathfrak{C}_{2,\mu} \frac{\log(ep)\sqrt{\log(pn)}}{\underline\sigma^{2/(q-2)} \sigma_{\min}} \log\left(e n\right).
    \end{aligned}
    \end{equation*}
\end{lemma}
If $\mathfrak{C}_{1,\kappa}$, $\mathfrak{C}_{2,\kappa}$, $\mathfrak{C}_{3,\kappa}$, $\mathfrak{C}_{4,\kappa}$, $\mathfrak{C}_{1,\mu}$, $\mathfrak{C}_{2,\mu} \geq 2$, then \eqref{eq:hypothesis_BE_3P} and \eqref{eq:hypothesis_AC_3P}, requiring $\mu_{[1,n]} \leq 1$ and $\kappa_{I|\{i_1-1,i_2+1\}}(\delta) \leq 1$ almost surely for all $I \subsetneq [1,n]$ only, trivially holds for $n \leq 16$. Now we consider the case of $n > 16$. Suppose that the induction hypotheses hold for all smaller $n$. 

We first derive \eqref{eq:hypothesis_AC_3P} for any $I = [i_1, i_2]$ satisfying $i_1 < i_2 < i_1+n$. For the case of $i \leq 16$, \eqref{eq:hypothesis_AC_3P} trivially holds given $\mathfrak{C}_{1,\kappa}, \mathfrak{C}_{2,\kappa}, \mathfrak{C}_{3,\kappa}, \mathfrak{C}_{4,\kappa} \geq 2$. For the case of $i > 16$, we first assume that \eqref{eq:hypothesis_AC_3P} holds for all $I$ satisfying $\abs{I} < i$ and then show this extends to all $I$ with $\abs{I} = i$. Without loss of generality, we only show for the interval $[1,i]$.
By \cref{thm:induction_lemma_AC_P}, 
for any $\vareps \geq \sigma_{\min}$ and $\delta > 0$,
\begin{equation*} 
\begin{aligned}
    & \kappa_{[1,i]|\{0, i+1\}}(\delta) \\
    & \leq \frac{C}{i^\circ-3} \sum_{j=0}^{i^\circ-4}  
    \frac{\sqrt{\log(ep)}}{\vareps} (\nu_{1,j+2} + \nu_{1,j+i^\circ-1}) 
    \min\{1, \kappa_{j+[3,i^\circ-2]|\{j+2, j+i^\circ-1\}}(\vareps^\circ)\} \\
    & \quad + \frac{C}{i^\circ-3} \sum_{j=0}^{i^\circ-4} \mu_{j+[3,i^\circ-2]}
    + C\frac{\delta + 2\vareps^\circ}{\sigma_{\min}} \sqrt{\frac{\log(ep)}{i^\circ-4}}
    + C\frac{\bar\nu_{1,(1,i)}}{\sigma_{\min}} \frac{\log(ep)}{\sqrt{i^\circ-4}},
\end{aligned}
\end{equation*}
where $i^\circ = \floor{\frac{i+4}{2}}$, $\vareps^\circ = 20\vareps \sqrt{\log(p(i^\circ-4))}$. 
Since $(X_1, \dots, X_n)$ is a $1$-ring dependent sequence satisfying Assumptions \eqref{assmp:min_var}, \eqref{assmp:min_ev} and \eqref{assmp:var_ev} and the interval $j+[3,i^\circ-2]$ is a proper subset of $[1,n]$, \eqref{eq:hypothesis_AC_3P} holds for $\kappa_{j+[3,i^\circ-2]|\{j+2, j+i^\circ-1\}}(\vareps^\circ)$. Furthermore, $(X_{j+2}, \dots, X_{j+i^\circ-1})$ is a $1$-ring dependent sequence satisfying Assumptions \eqref{assmp:min_var}, \eqref{assmp:min_ev} and \eqref{assmp:var_ev} with the same $\sigma_{\min}$ and $\underline{\sigma}$ as the original data $(X_1, \dots, X_n)$. We formalize this fact into the following lemma:
\begin{lemma} \label{thm:assmp_induction_P}
    Suppose that $(X_1, \dots, X_n)$ is a $1$-ring dependent sequence satisfying Assumptions \eqref{assmp:min_var}, \eqref{assmp:min_ev} and \eqref{assmp:var_ev}. Then for any $i_1$ and $i_2$ satisfying $i_1 \leq i_2 \leq i_1+n$, $(X_{i_1}, X_{i_1+1}, \dots, X_{i_2})$ is $1$-ring dependent and satisfies Assumptions \eqref{assmp:min_var}, \eqref{assmp:min_ev} and \eqref{assmp:var_ev} with the same $\sigma_{\min}$ and $\underline{\sigma}$ as the original data.
\end{lemma}
As a result, \eqref{eq:hypothesis_BE_3P} holds for $\mu_{j+[3,i^\circ-2]}$. Putting the resulting upperbounds for $\kappa_{j+[3,i^\circ-2]|\{j+2, j+i^\circ-1\}}(\vareps^\circ)$ and $\mu_{j+[3,i^\circ-2]}$ back to the previous upperbound for $\kappa_{[1,i]|\{0,i+1\}}$, we obtain
\begin{equation*} 
\begin{aligned}
    & \kappa_{[1,i]|\{0,i+1\}}(\delta) \\
    & \leq \frac{C}{(i^\circ-4)^{3/2}} \sum_{j=0}^{i^\circ-4}  
    \frac{\sqrt{\log(ep)}}{\vareps} (\nu_{1,j+2} + \nu_{1,j+i^\circ-1})
    \\
    & \quad \times \left[ 
        \tilde\kappa_{1,i^\circ-4} \bar{L}_{3,j+[3,i^\circ-2]}
        + \tilde\kappa_{2,i^\circ-4} \bar{\nu}_{q,j+[3,i^\circ-2]}^{1/(q-2)}
        + \tilde\kappa_{3,i^\circ-4} \bar\nu_{1,j+[3,i^\circ-2]}
        + \tilde\kappa_{4} {\vareps^\circ}
    \right]
    \\
    & \quad + \frac{C}{(i^\circ-4)^{3/2}} \sum_{j=0}^{i^\circ-4} \tilde\mu_{1,i^\circ-4} {\bar{L}_{3,j+[3,i^\circ-2]}}
    + \frac{C}{(i^\circ-4)^{3/2}} \sum_{j=0}^{i^\circ-4} \tilde\mu_{2,i^\circ-4} {\bar{L}_{4,j+[3,i^\circ-2]}^{1/2}} \\
    &\quad + \frac{C}{(i^\circ-4)^{3/2}} \sum_{j=0}^{i^\circ-4} \tilde\mu_{3,i^\circ-4} {\bar{\nu}_{q,j+[3,i^\circ-2]}^{1/(q-2)}}
    + C\frac{\delta + 2\vareps^\circ}{\sigma_{\min}} \sqrt{\frac{\log(ep)}{i^\circ-4}}
    + C\frac{\bar\nu_{1,(1,i)}}{\sigma_{\min}} \frac{\log(ep)}{\sqrt{i^\circ-4}}.
\end{aligned}
\end{equation*}
To provide an upper bound in terms of $\bar{L}_3$ and $\bar{\nu}_q$, we use the following lemma based on Jensen's inequality.
\begin{lemma} \label{thm:jensen} 
    Suppose that $j, k \geq \frac{n}{2}$. For any $q_1, q_2, q_3 > 0$ and $\alpha \leq 1$,
    \begin{equation*}
    \begin{aligned}
        \frac{1}{n} \sum_{j^\circ=1}^{n} L_{q_1,j^\circ+j} \bar{L}_{q_2,(j^\circ,j^\circ+j)}^\alpha
        \leq C \bar{L}_{q_1} \bar{L}_{q_2}^\alpha 
    \end{aligned}
    \end{equation*}
    and
    \begin{equation*}
    \begin{aligned}
        \frac{1}{n} \sum_{j^\circ=1}^{n} \frac{L_{q_1,j^\circ+j}}{j-1} 
        \sum_{k^\circ=1}^{j-1} L_{q_2,j^\circ+[k^\circ+k]_{j-1}} \bar{L}_{q_3, j^\circ+(l,k+l)_{j-1}}^\alpha
        \leq C \bar{L}_{q_1} \bar{L}_{q_2} \bar{L}_{q_3}^\alpha, \\
    \end{aligned}
    \end{equation*}
    where $j^\circ+(k^\circ,k^\circ+k)_{j-1}$ is the shifted interval of $(k^\circ,k^\circ+k)_{j-1}$ by $j^\circ$, namely, $\{j^\circ+[k^\circ+1]_{j-1}, \dots, j^\circ+[k^\circ+k-1]_{j-1}\}$. The same inequality holds when some $L$ is replaced with $\nu$.
\end{lemma}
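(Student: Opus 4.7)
The plan is to combine Jensen's inequality (concavity of $x \mapsto x^{\alpha}$ on $[0,\infty)$ for $\alpha \in [0,1]$, which is the range actually used in the applications, namely $\alpha \in \{1/2, 1/(q-2)\}$) with a purely combinatorial \emph{local-to-global} averaging estimate that exploits the lower bound on interval sizes. The crucial input, to be used repeatedly, is that for any nonnegative sequence $(L_{q,i})_{i \in \ints_n}$ and any $I \subset \ints_n$ with $|I| \geq n/2$,
\[
\bar{L}_{q,I} \;=\; \frac{1}{|I|}\sum_{i \in I} L_{q,i} \;\leq\; \frac{n}{|I|}\,\bar{L}_q \;\leq\; 3\,\bar{L}_q,
\]
which follows by extending the nonnegative sum to all of $\ints_n$ and holds pointwise, independently of where $I$ sits. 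Raising to the power $\alpha \in [0,1]$ preserves this constant factor (since $3^{\alpha} \leq 3$), so the resulting bound can be pulled out of any outer sum.

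For the first inequality, the assumption $j \geq n/2$ gives $|(j_o,j_o+j)| = j-1 \geq n/2 - 1$, hence $\bar{L}_{q_2,(j_o,j_o+j)}^{\alpha} \leq C\,\bar{L}_{q_2}^{\alpha}$ uniformly in $j_o$. Factoring this constant out of the outer average and invoking the cyclic identity $\frac{1}{n}\sum_{j_o=1}^{n} L_{q_1, j_o+j} = \bar{L}_{q_1}$, which holds because $j_o \mapsto j_o + j$ is a bijection of $\ints_n$, completes the argument.

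The second inequality is handled by a nested, twofold application of the same device. Fix $j_o$. Since $k \geq n/2 \geq (j-1)/2$ (using $j \leq n$), the local-to-global estimate inside the smaller ring $\ints_{j-1}$ yields
\[
\bar{L}_{q_3,\,j_o+(k_o,k_o+k)_{j-1}} \;\leq\; C\,\bar{L}_{q_3,\,j_o+\ints_{j-1}},
\]
and a second application (now using $|j_o + \ints_{j-1}| = j-1 \geq n/2 - 1$ as a subset of $\ints_n$) gives $\bar{L}_{q_3,\,j_o+\ints_{j-1}} \leq C\,\bar{L}_{q_3}$ uniformly in $j_o,k_o$. After raising to the power $\alpha$ and pulling out, one is left with $\frac{1}{n(j-1)}\sum_{j_o,k_o} L_{q_1, j_o+j}\,L_{q_2,\,j_o+[k_o+k]_{j-1}}$. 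Since $k_o \mapsto [k_o+k]_{j-1}$ is a bijection of $\ints_{j-1}$, the inner sum collapses to $(j-1)\,\bar{L}_{q_2,\,j_o + \ints_{j-1}} \leq C(j-1)\,\bar{L}_{q_2}$ by a third invocation of the local-to-global bound, and a final cyclic averaging over $j_o$ produces the missing $\bar{L}_{q_1}$ factor.

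The main difficulty is not analytic but book-keeping: three cyclic scales ($\ints_n$, $\ints_{j-1}$, and their sub-intervals) interact, and at each step one must verify that the relevant shift is a bijection of the appropriate ring before swapping sums. The degenerate case $k \geq j-1$, where $(k_o,k_o+k)_{j-1}$ wraps all the way around $\ints_{j-1}$, is trivial. The $\nu$-variant is identical, since only nonnegativity and the cyclic bijection are used. The sole genuine restriction of the method is $\alpha \geq 0$: for negative $\alpha$ the inequality $\bar{L}_I^{\alpha} \leq C\,\bar{L}^{\alpha}$ goes in the wrong direction and would require a matching lower bound on $\bar{L}_I$, which is not assumed; fortunately all uses in Sections~\ref{sec:pf_1_dep_3P}--\ref{sec:pf_1_dep_4N} satisfy $\alpha \in (0,1]$.
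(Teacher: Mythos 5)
Your proof is correct, and it takes a genuinely different and in fact more elementary route than the paper's. The paper's proof applies Jensen's inequality to the concave map $x \mapsto x^\alpha$ with weights proportional to $L_{q_1,\cdot}$, converting $\sum_i w_i\,\bar L_{q_2,(i,i+j)}^\alpha$ into $(\sum_i w_i\,\bar L_{q_2,(i,i+j)})^\alpha$, and only then expands and bounds the resulting double sum. You instead observe the pointwise estimate $\bar L_{q_2,I} \leq \frac{n}{|I|}\,\bar L_{q_2} \leq C\,\bar L_{q_2}$ for any $I$ with $|I| \gtrsim n$, which holds uniformly over $j_o$ (and, in the nested case, over $k_o$) by nonnegativity; raising to the power $\alpha \in [0,1]$ preserves the constant, and pulling it out leaves only cyclic reindexing of the remaining factors. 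This is strictly simpler — no adaptive-weight Jensen step is needed, because the quantity being raised to the power $\alpha$ is bounded by a constant multiple of $\bar L_{q_2}$ \emph{before} averaging. Two small remarks. First, your opening sentence announces ``Jensen's inequality'' but your argument never invokes it; all that is used is monotonicity of $x \mapsto x^\alpha$ together with $C^\alpha \leq C$, which is a cleaner characterization of what you actually do. Second, you correctly flag that $\alpha \geq 0$ is needed; the paper's concavity argument silently requires this too, so neither proof is more restrictive than the other, and the hypothesis $\alpha \leq 1$ in the lemma statement should be read as $\alpha \in [0,1]$, consistent with the values $\alpha \in \{1/2, 1/(q-2), 1\}$ used downstream.
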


According to the lemma,
\begin{equation*}
\begin{aligned}
    \frac{1}{i^\circ-3} \sum_{j=0}^{i^\circ-4} \bar{L}_{3,j+[3,i^\circ-2]}
    & \leq C \bar{L}_{3,(2,i-1)},
    \\
    \frac{1}{i^\circ-3} \sum_{j=0}^{i^\circ-4}  (\nu_{1,j+2} + \nu_{1,j+i^\circ-1})
    \bar{L}_{3,j+[3,i^\circ-2]}
    & \leq C \bar\nu_{1,(1,i)} \bar{L}_{3,(2,i-1)}, 
    \\
    \frac{1}{i^\circ-3} \sum_{j=0}^{i^\circ-4}  (\nu_{1,j+2} + \nu_{1,j+i^\circ-1})
    \bar\nu_{1,j+[3,i^\circ-2]}
    & \leq C \bar\nu_{1,(1,i)} \bar\nu_{1,(2,i-1)}.
\end{aligned}
\end{equation*}
Similar inequalities hold with $\bar{\nu}_{q}^{1/(q-2)}$ in place of $\bar{L}_3$.
As a result, we obtain a similar recursive inequality on $\tilde\kappa$'s with \cref{eq:recursion_kappa_3N} except that $L_{3,\max}$, $\nu_{q,\max}$ and $\nu_{1,\max}$ are replaced with $\bar{L}_{q,[1,i]}$, $\bar{\nu}_{q,[1,i]}$ and $\bar{\nu}_{1,[1,i]}$, respectively. 
\begin{equation*} 
\begin{aligned}
    & \sqrt{i} \kappa_{[1,i]|\{0,i+1\}}(\delta) \\
    & \leq \mathfrak{C}' \frac{\sqrt{\log(ep)}}{\vareps} \bar\nu_{1,(1,i)}
    \left[ 
        \tilde\kappa_{1,i^\circ-4} \bar{L}_{3,(2,i-1)}
        + \tilde\kappa_{2,i^\circ-4} \bar{\nu}_{q,(2,i-1)}^{1/(q-2)}
        + \tilde\kappa_{3,i^\circ-4} \bar\nu_{1,(2,i-1)}
        + \tilde\kappa_{4} \vareps^\circ
    \right]
    \\
    & \quad + \mathfrak{C}'  \left[ 
        \tilde\mu_{1,i^\circ-4} {\bar{L}_{3,(2,i-1)}}
        + \tilde\mu_{2,i^\circ-4} {\bar{\nu}_{q,(2,i-1)}^{1/(q-2)}} 
        + \frac{\delta + 2\vareps^\circ}{\sigma_{\min}} \sqrt{\log(ep)}
        + \frac{\bar\nu_{1,(1,i)}}{\sigma_{\min}} \log(ep)
    \right],
\end{aligned}
\end{equation*}
for some universal constant $\mathfrak{C}'$, whose value does not change over lines.
Plugging in $\vareps = \max\{2 \mathfrak{C}', 1\} \sqrt{\log(ep)} \bar{\nu}_{1,(1,i)} \overset{\text{\cref{eq:nu_1_vs_sigma_min}}}{\geq} \sigma_{\min}$,
\begin{equation*}
\begin{aligned}
    & \sqrt{i} \kappa_{[1,i]|\{0,i+1\}}(\delta) \\
    & \leq \frac{1}{2}
    \left[ 
        \tilde\kappa_{1,i^\circ-4} {\bar{L}_{3,(2,i-1)}}
        + \tilde\kappa_{2,i^\circ-4} {\bar{\nu}_{q,(2,i-1)}^{1/(q-2)}}
        + \tilde\kappa_{3} {\bar\nu_{1,(2,i-1)}}
    \right] 
    + 20 \mathfrak{C}' \tilde\kappa_4 \sqrt{\log(ep)\log(pi^\circ)} \bar\nu_{(1,i)} \\
    & \quad  
    + \mathfrak{C}' \left[ \begin{aligned}
        \tilde\mu_{1,i^\circ-4} \bar{L}_{3,(2,i-1)}
        + \tilde\mu_{2,i^\circ-4} \bar\nu_{q,(2,i-1)}^{1/(q-2)} 
        + \frac{\log(ep)(1+40\sqrt{\log(pi^\circ)})}{\sigma_{\min}} \bar{\nu}_{1,(1,i)}
        + \frac{\sqrt{\log(ep)}}{\sigma_{\min}} \delta
    \end{aligned} \right] \\
    & \leq \tilde\kappa_{1,i} \bar{L}_{3,[1,i]}
    + \tilde\kappa_{2,i} \bar\nu_{q,[1,i]}^{1/(q-2)}
    + \tilde\kappa_{3,i} \bar\nu_{2,[1,i]}^{1/2}
    + \tilde\kappa_{4} \delta,
\end{aligned}
\end{equation*}
where $\tilde\kappa_{1,i} = \mathfrak{C}_{1,\kappa} \tilde\mu_{1,i}$, 
$\tilde\kappa_{2,i} = \mathfrak{C}_{2,\kappa} \tilde\mu_{2,i}$, 
$\tilde\kappa_{3,i} = \mathfrak{C}_{3,\kappa} \frac{\log(ep)\sqrt{\log(pi)}}{\sigma_{\min}}$, and
$\tilde\kappa_{4} = \mathfrak{C}_{4,\kappa} \frac{\sqrt{\log(ep)}}{\sigma_{\min}}$, provided by $\mathfrak{C}_{1,\kappa} = \mathfrak{C}_{2,\kappa} = \max\{2 \mathfrak{C}', 2\}$, $\mathfrak{C}_{3,\kappa} = \max\{82 \mathfrak{C}' + 40 \mathfrak{C}'^2, 2\}$ and $\mathfrak{C}_{4,\kappa} = \max\{ \mathfrak{C}', 2\}$. 
A generalization the above argument to any interval with length $i$ and a mathematical induction for $i < n$ proves \eqref{eq:hypothesis_AC_3P} at $n$.

\medskip

The proof of \eqref{eq:hypothesis_BE_3P} at $n$ also proceeds similarly using \cref{thm:jensen}. 
%
We first upper bound the last two terms in \cref{thm:induction_lemma_BE_3P}: for $\delta \geq \sigma_{\min}$,
\begin{equation*}
\begin{aligned}
    & \frac{C}{n} \sum_{j^\circ=1}^n \sum_{j=\ceil{n/2}}^{n} \frac{(\log(ep))^{3/2}}{\delta_{n-j}^3}
    \left[ 
        \tilde{L}_{3,j^\circ}
        + \phi^{\min\{1, q-3\}} \tilde\nu_{q,j^\circ} \frac{ {(\log(ep))}^{\max\{0, q-4\}/2}}{(\delta_{n-j}/\alpha)^{\max\{0, q-4\}}}
    \right] \\
    & \times \kappa_{j^\circ+(3,j-3)|\{j^\circ+3,j^\circ+j-3\}}(\delta_{n-j}^\circ) \\
    & + \frac{C}{n} \sum_{j^\circ=1}^n \sum_{j=\ceil{n/2}}^{n-1} \frac{(\log(ep))^{3/2}}{\delta_{n-j}^3}
    \left[ 
        \tilde{L}_{3,j^\circ+j}
        + \phi^{\min\{1, q-3\}} \tilde\nu_{q,j^\circ+j} \frac{ {(\log(ep))}^{\max\{0, q-4\}/2}}{(\delta_{n-j}/\alpha)^{\max\{0, q-4\}}}
    \right] \\
    &\times \kappa_{j^\circ+(3,j-3)|\{j^\circ+3,j^\circ+j-3\}}(\delta_{n-j}^\circ) \\
    & \equiv \left[ \mathfrak{T}_{1,1} + \mathfrak{T}_{1,2} \right] 
    + \left[ \mathfrak{T}_{2,1} + \mathfrak{T}_{2,2} \right].
\end{aligned}
\end{equation*}
Since $(X_1, \dots, X_n)$ is a $1$-ring dependent sequence satisfying Assumptions \eqref{assmp:min_var}, \eqref{assmp:min_ev} and \eqref{assmp:var_ev} and the interval $j^\circ+(3,j-3)$ (modulo $n$) is a proper subset of $[1,n]$, \eqref{eq:hypothesis_AC_3P} holds for $\kappa_{j^\circ+(3,j-3)|\{j^\circ+3, j^\circ+j-3\}}(\delta_{n-j}^\circ)$.
Hence
\begin{equation*}
\begin{aligned}
    \mathfrak{T}_{2,1} 
    & = \frac{C}{n} \sum_{j^\circ=1}^{n} \sum_{j=\ceil{n/2}}^{n-1} \tilde L_{3,j^\circ+j} \frac{(\log(ep))^{3/2}}{\delta_{n-j}^3} 
    \kappa_{j^\circ+(3,j-3)|\{j^\circ+3,j^\circ+j-3\}}(\delta_{n-j}^\circ)
    \\
    & \leq \frac{C}{n} \sum_{j^\circ=1}^{n} \sum_{j=\ceil{n/2}}^{n-1} \tilde L_{3,j^\circ+j} \frac{(\log(ep))^{3/2}}{\delta_{n-j}^3 \sqrt{j-7}} \\
    & \quad \times \left[ \begin{aligned}
        & \tilde\kappa_{1,j-7} \bar{L}_{3,j^\circ+(3,j-3)}
        + \tilde\kappa_{2,j-7} \bar\nu_{q,j^\circ+(3,j-3)}^{1/(q-2)}
        + \tilde\kappa_{3,j-7} \bar\nu_{1,j^\circ+(3,j-3)}
        + \tilde\kappa_{4} \delta_{n-j}^\circ \\
    \end{aligned} \right] \\
\end{aligned}
\end{equation*}
Based on \cref{thm:jensen}, we obtain
\begin{equation*}
\begin{aligned}
    & \frac{C}{n} \sum_{j^\circ=1}^{n} \sum_{j=\ceil{n/2}}^{n-1} \tilde L_{3,j^\circ+j} \frac{(\log(ep))^{3/2}}{\delta_{n-j}^3 \sqrt{j-7}} \\
    & \quad \times \left[ \begin{aligned}
        & \tilde\kappa_{1,j-7} \bar{L}_{3,j^\circ+(3,j-3)}
        + \tilde\kappa_{2,j-7} \bar\nu_{q,j^\circ+(3,j-3)}^{1/(q-2)}
        + \tilde\kappa_{3,j-7} \bar\nu_{1,j^\circ+(3,j-3)}
        + \tilde\kappa_{4} \delta_{n-j}^\circ \\
    \end{aligned} \right]
    \\
    & \leq C \sum_{j=\ceil{n/2}}^{n} \frac{(\log(ep))^{3/2}}{\delta_{n-j}^3 \sqrt{j}} \bar{L}_{3} \\
    & \quad \times \left[ \begin{aligned}
        \tilde\kappa_{1,j-7} \bar{L}_{3}
        + \tilde\kappa_{2,j-7} \bar\nu_{q}^{1/(q-2)}
        + \tilde\kappa_{3,j-7} \bar\nu_{1}
        + \tilde\kappa_{4} \delta_{n-j}^\circ \\
    \end{aligned} \right]
    \\
    & \leq \frac{C}{\sqrt{n}} (\log(ep))^{3/2} \bar{L}_{3} \\
    & \quad \times \left[ \begin{aligned}
        (\tilde\kappa_{1,n-1} \bar{L}_{3} 
        + \tilde\kappa_{2,n-1} \bar\nu_{q}^{1/(q-2)}
        + \tilde\kappa_{3,n-1} \bar\nu_{1}) 
        \sum_{j=\ceil{n/2}}^n \frac{1}{\delta_{n-j}^3} 
        + \tilde\kappa_4 \sqrt{\log(p n)} 
        \sum_{j=\ceil{n/2}}^{n} \frac{1}{\delta_{n-j}^2}
    \end{aligned} \right].
\end{aligned}
\end{equation*}
We recall that $\tilde\kappa_{1,n-1} = \mathfrak{C}_{1,\kappa} \tilde\mu_{1,n-1}$, $\tilde\kappa_{2,n-1} = \mathfrak{C}_{2,\kappa} \tilde\mu_{2,n-1}$, $\tilde\kappa_{3,n-1} = \mathfrak{C}_{3,\kappa} \tilde\mu_{3,n-1}$, $\tilde\kappa_{4,n-1} = \mathfrak{C}_{4,\kappa} \frac{\log(ep)\sqrt{\log(p(n-1))}}{\sigma_{\min}}$ and $\tilde\kappa_{5} = \mathfrak{C}_{5,\kappa} \frac{\sqrt{\log(ep)}}{\sigma_{\min}}$. 
Based on \cref{eq:sum_delta_nolog}, we obtain
\begin{equation*}
\begin{aligned}
    \mathfrak{T}_{2,1} 
    \leq \frac{C}{\sqrt{n}} \frac{(\log(ep))^{3/2}}{\underline{\sigma}^2} \bar{L}_{3}
    \left[ \begin{aligned}
        & (\tilde\mu_{1,n-1} \bar{L}_{3} 
         +\tilde\mu_{2,n-1} \bar\nu_{q}^{1/(q-2)})
        \frac{1}{\delta} 
        +\frac{\log(ep)\sqrt{\log(pn)}}{\sigma_{\min}} \frac{\bar\nu_{1}}{\delta} \\
        & + \frac{\sqrt{\log(ep)\log(pn)}}{\sigma_{\min}} 
        \log\left(1 + \frac{\sqrt{n}\underline{\sigma}}{\delta}\right)
    \end{aligned} \right].
\end{aligned}
\end{equation*}
Similarly,
\begin{equation*}
\begin{aligned}
    \mathfrak{T}_{2,2} 
    & \leq \frac{C}{\sqrt{n}} \frac{(\log(ep))^{\max\{3, q-1\}/2}}{\underline\sigma^2 (\delta/\alpha)^{\max\{0, q-4\}}}
    \phi^{\min\{1, q-3\}} \bar\nu_{q}  
    \left[ \begin{aligned}
        & (\tilde\mu_{1,n-1} \bar{L}_{3} 
         +\tilde\mu_{2,n-1} \bar\nu_{q}^{1/(q-2)})
        \frac{1}{\delta} 
        +\frac{\log(ep)\sqrt{\log(pn)}}{\sigma_{\min}} \frac{\bar\nu_{1}}{\delta} \\
        & + \frac{\sqrt{\log(ep)\log(pn)}}{\sigma_{\min}} 
        \log\left(1 + \frac{\sqrt{n}\underline{\sigma}}{\delta}\right)
    \end{aligned} \right].
\end{aligned}
\end{equation*}
The same arguments and bounds apply to $\mathfrak{T}_{1,1}$ and $\mathfrak{T}_{1,2}$. 
In sum, as long as $\delta \geq \bar\nu_1 \sqrt{\log(ep)}  \overset{\text{\cref{eq:nu_1_vs_sigma_min}}}{\geq} \sigma_{\min} $ and $\phi > 0$,
\begin{equation*}
\begin{aligned}
    & \sqrt{n} \mu_{[1,n]} \\
    & \leq \mathfrak{C}'' \frac{(\log(ep))^{3/2}}{\underline\sigma^2 \delta} \left[ 
        \bar{L}_{3} + \phi^{\min\{1, q-3\}} \bar\nu_{q} \frac{ {(\log(ep))}^{\max\{0, q-4\}/2}}{(\delta/\alpha)^{\max\{0, q-4\}}}
    \right] \\
    & \quad \times \left[ \begin{aligned}
        \tilde\mu_{1,n-1} \bar{L}_{3}
        + \tilde\mu_{2,n-1} \bar\nu_{q}^{1/(q-2)}
    \end{aligned} \right] 
    + \mathfrak{C}'' \left[ 
        \frac{\delta \log(ep)}{\sigma_{\min}}  
        + \frac{\sqrt{\log(ep)}}{\phi \sigma_{\min}}
    \right] \\
    & \quad + \mathfrak{C}'' \frac{(\log(ep))^{3/2}}{\underline{\sigma}^2} \log\left(en\right)
    \left[ 
        \bar{L}_{3} + \phi^{\min\{1, q-3\}} \bar\nu_{q} \frac{ {(\log(ep))}^{\max\{0, q-4\}/2}}{(\delta/\alpha)^{\max\{0, q-4\}}}
    \right] 
    \frac{\sqrt{\log(ep)\log(pn)}}{\sigma_{\min}}
\end{aligned}
\end{equation*}
where $\mathfrak{C}''$ is a universal constant whose value does not change over lines.
Taking $\delta = \frac{\max\{4 \mathfrak{C}'', 2\}}{\sqrt{\log(ep)}} \left( 
    \frac{\bar{L}_{3}}{\underline{\sigma}^2} (\log(ep))^2
    + \left(\frac{\bar\nu_{q}}{\underline{\sigma}^2} (\log(ep))^{\max\{2, q-2\}} \right)^{\frac{1}{q-2}} 
\right) \geq \bar\nu_{1} \sqrt{\log(ep)}$
and $\phi = \frac{1}{\delta \sqrt{\log(ep)}}$, 
\begin{equation*}
\begin{aligned}
    \sqrt{n} \mu_{[1,n]}
    & \leq \frac{1}{2} \max_{j<n}\tilde\mu_{1,j} \bar{L}_{3}
    + \frac{1}{2} \max_{j<n}\tilde\mu_{2,j} \bar\nu_{q}^{1/(q-2)} \\
    & \quad + \mathfrak{C}^{(3)} \left(
        \bar{L}_{3} \frac{(\log(ep))^2}{\underline{\sigma}^2}
        + \bar\nu_{q}^{1/(q-2)} \frac{(\log(ep))^{\max\{2/(q-2),1\}}}{\underline{\sigma}^{2/(q-2)}}
    \right)
    \frac{\sqrt{\log(p n)}}{\sigma_{\min}} \log\left(e n\right) \\
\end{aligned}
\end{equation*}
for another universal constant $\mathfrak{C}^{(3)}$,whose value only depends on $\mathfrak{C}''$.
Taking $\mathfrak{C}_1 = \mathfrak{C}_2 = \max\{2 \mathfrak{C}^{(3)}, 1\}$,
\begin{equation*}
\begin{aligned}
    \tilde\mu_{1,n} 
    & = \mathfrak{C}_1 \frac{(\log(ep))^{2}\sqrt{\log(pn)}}{\underline\sigma^2\sigma_{\min}} \log\left(e n\right), \\
    \tilde\mu_{2,n}
    & = \mathfrak{C}_2 \frac{(\log(ep))^{\max\{2/(q-2),1\}}\sqrt{\log(pn)}}{\underline\sigma^{2/(q-2)} \sigma_{\min}} \log\left(e n\right)
\end{aligned}
\end{equation*}
satisfies
\begin{equation*}
\begin{aligned}
    \sqrt{n} \mu_{[1,n]}
    \leq \tilde\mu_{1,n} \bar{L}_{3}
    + \tilde\mu_{2,n} \bar\nu_{q}^{1/(q-2)},
\end{aligned}
\end{equation*}
which proves \eqref{eq:hypothesis_BE_3P} at $n$. By a mathematical induction, the induction hypotheses hold for all $n$, and it concludes our proof.

\subsection{Proof of Theorem \ref{thm:1_dep_berry_esseen_4P}} \label{sec:pf_1_dep_4P}

We recall \cref{eq:second_lindeberg_result_3P} from \cref{sec:pf_1_dep_3P}:
\begin{equation*}
\begin{aligned}
    & \abs*{ \Exp\left[\rho_{r,\phi}^\delta(X_{[1,n]}) - \rho_{r,\phi}^\delta(Y_{[1,n]})\right]} \\
    & \leq \abs*{\Exp\left[ \mathfrak{R}_X^{(3)} - \mathfrak{R}_Y^{(3)} \right]}
    + \sum_{j=1}^{n-1} \abs*{ \Exp \left[ 
        \mathfrak{R}_{X_j}^{(3,1)} 
        - \mathfrak{R}_{Y_j}^{(3,1)} 
    \right] }
    + \sum_{j=2}^{n-2} \abs*{ \Exp\left[ \mathfrak{R}_{X_j}^{(3,2)} - \mathfrak{R}_{Y_j}^{(3,2)} \right] }.
\end{aligned}
\end{equation*}
In \cref{sec:pf_sketch_4N} we improved the rate by decomposing the third order remainder $\mathfrak{R}^{(3,1)}_{W_j}$ when the fourth moments were finite. Namely,
\begin{equation*}
\begin{aligned}
    & \sum_{j=1}^{n-1} \abs*{ \Exp \left[ 
        \mathfrak{R}_{X_j}^{(3,1)} 
        - \mathfrak{R}_{Y_j}^{(3,1)} 
    \right] }\\
    & \leq \frac{C}{\sqrt{n}} \bar{L}_{3} \frac{(\log(ep))^2}{\underline\sigma^2 \sigma_{\min}}
    + \sum_{j=1}^{n-1} \abs*{ \Exp \left[ \mathfrak{R}_{X_j}^{(4,1)} 
        - \mathfrak{R}_{Y_j}^{(4,1)} \right] }
    + \sum_{j=3}^{n-1} \sum_{k=1}^{j-2} \abs*{ \Exp \left[ 
        \mathfrak{R}_{X_j,X_k}^{(6,1)}
        - \mathfrak{R}_{X_j,Y_k}^{(6,1)}
    \right] }.
\end{aligned}
\end{equation*}
In $1$-ring dependence cases, there exists additional third-order remainder terms $\mathfrak{R}^{(3)}_{W}$ and $\mathfrak{R}^{(3,2)}_{X_j,W_k}$, which came up while breaking the $1$-ring dependence in \cref{sec:pf_1_dep_3P}. Based on Tyalor expansions up to order $4$ and the second moment matching between $X_j$ and $Y_j$, we decompose those additional remainder terms. First,
\begin{equation} \label{eq:decompose_third_remainder_P}
\begin{aligned}
    & \mathfrak{R}_X^{(3)} \\
    & = \frac{1}{6} \inner*{\nabla^3 \rho_{r,\phi}^\delta(X_{(1,n-1)}), X_n^{\otimes 3}}
    + \inner*{\nabla^3 \rho_{r,\phi}^\delta(X_{(1,n-3)}), X_{n-2} \otimes X_{n-1} \otimes X_n} \\
    & \quad + \inner*{\nabla^3 \rho_{r,\phi}^\delta(X_{(2,n-2)}),  X_{n-1} \otimes X_{n} \otimes X_1} 
    + \inner*{\nabla^3 \rho_{r,\phi}^\delta(X_{(3,n-1)}),  X_{n} \otimes X_{1} \otimes X_2} \\
    & \quad + \frac{1}{2} \inner*{
        \nabla^3 \rho_{r,\phi}^\delta(X_{(1,n-2)}), 
        X_{n-1} \otimes X_n \otimes (X_{n-1} + X_n)
    } \\
    & \quad + \frac{1}{2} \inner*{
        \nabla^3 \rho_{r,\phi}^\delta(X_{(2,n-1)}), 
        X_{n} \otimes X_1 \otimes (X_1 + X_n)
    } \\
    & \quad + \mathfrak{R}_X^{(4)},
\end{aligned}
\end{equation}
where $\mathfrak{R}_X^{(4)}$ is specified in \cref{sec:Taylor_expansion}. This is the same for $\rho_{r,\phi}^{\delta}(Y_{[1,n]}) - \rho_{r,\phi}^\delta(Y_{[1,n)})$ but with $Y$ in place of $X$. 
To bound the third-order moment terms, we re-apply the Lindeberg swapping and the Taylor expansion up to the sixth order as we did to $ \inner*{ 
    \Exp[ \nabla^3 \rho_{r,\phi}^\vareps(X_{[1,j-1)} + Y_{(j+1,n)}) ], 
    \Exp[ X_j^{\otimes 3} ]
}$ in \cref{sec:pf_sketch_4N}. As a result,
\begin{equation*}
\begin{aligned}
    & \abs*{ \Exp\left[ \mathfrak{R}_X^{(3)} - \mathfrak{R}_Y^{(3)} \right] } 
    \leq \frac{C}{\sqrt{n}} \bar{L}_{3} \frac{(\log(ep))^2}{\underline\sigma^2 \sigma_{\min}}
    + \abs*{ \Exp\left[ \mathfrak{R}_X^{(4)} - \mathfrak{R}_Y^{(4)} \right] }
    + \sum_{k=2}^{n-2} \abs*{\Exp \left[ 
        \mathfrak{R}_{X,X_k}^{(6)}
        - \mathfrak{R}_{X,Y_k}^{(6)}
    \right] },
\end{aligned}
\end{equation*}
where $\mathfrak{R}_{X,W_k}^{(6)}$ is the sixth-order remainder term specified in \cref{sec:Taylor_expansion}.

For $\sum_{j=2}^{n-2} \abs*{ \Exp\left[ \mathfrak{R}_{X_j}^{(3,2)} - \mathfrak{R}_{Y_j}^{(3,2)} \right] }$, by the Taylor expansion up to the fourth order,
\begin{equation*}
    \sum_{j=2}^{n-2} \Exp\left[ \mathfrak{R}_{X_j}^{(3,2)} - \mathfrak{R}_{Y_j}^{(3,2)} \right]
    = \sum_{j=2}^{n-2} \Exp\left[ \mathfrak{R}_{X_j}^{(4,2)} - \mathfrak{R}_{Y_j}^{(4,2)} \right],
\end{equation*}
where $\mathfrak{R}_{X_j}^{(4,2)}$ is the fourth-order remainder, specified in \cref{sec:second_lindeberg_swapping}. 
Putting all the above results together, we get
\begin{equation} \label{eq:decompose_third_result_P}
\begin{aligned}
    & \abs*{ \Exp[\rho_{r,\phi}^\delta(X_{[1,n]})] - \Exp[\rho_{r,\phi}^\delta(Y_{[1,n]})]} \\
    & \leq \frac{C}{\sqrt{n}} \bar{L}_{3} \frac{(\log(ep))^2}{\underline\sigma^2 \sigma_{\min}}
    + \abs*{ \mathfrak{R}_X^{(4)} - \mathfrak{R}_Y^{(4)} }
    + \sum_{j=1}^{n-1} \abs*{ \Exp \left[ \mathfrak{R}_{X_j}^{(4,1)} 
        - \mathfrak{R}_{Y_j}^{(4,1)} \right] }
    + \sum_{j=2}^{n-2} \abs*{ \Exp\left[ \mathfrak{R}_{X_j}^{(4,2)} - \mathfrak{R}_{Y_j}^{(4,2)} \right] }\\
    & \quad + \sum_{k=2}^{n-2} \abs*{ \Exp \left[ 
        \mathfrak{R}_{X,X_k}^{(6)}
        - \mathfrak{R}_{X,Y_k}^{(6)}
    \right] }
    + \sum_{j=3}^{n-1} \sum_{k=1}^{j-2} \abs*{ \Exp \left[ 
        \mathfrak{R}_{X_j,X_k}^{(6,1)}
        - \mathfrak{R}_{X_j,Y_k}^{(6,1)}
    \right] }.
\end{aligned}
\end{equation}

\medskip
\noindent{\bf Remainder lemma.} Similar to \cref{sec:mu_by_kappa_3N}, the remainder terms are upper bounded by conditional anti-concentration probability bounds. 
For $q > 0$, let
\begin{equation*}
    \tilde{L}_{q,j} \equiv \sum_{j'=j-4}^{j+4} L_{q,j'} \textand 
    \tilde{L}_{q,[k]_{j-2}} \equiv \sum_{k'=k-4}^{k+4} L_{q,[k']_{j-2}},
\end{equation*}
where $[k']_{j-2}$ is $k'$ modulo $j-2$, and $\tilde{\nu}_{q,j}$ and $\tilde{\nu}_{q,[k]_{j-2}}$ are similarly defined.

\begin{lemma} \label{thm:remainder_lemma_4P}
    Suppose that Assumption \eqref{assmp:min_ev} holds. For $W$ representing either $X$ or $Y$ and $j, k \in \ints_n$ such that $k \leq j-2$,
    \begin{equation*}
    \begin{aligned}
        \abs*{\Exp \left[ \mathfrak{R}_{W}^{(4)} \right]} 
        & \leq C \phi \left[ 
            \tilde{L}_{4,n} \frac{(\log(ep))^{3/2}}{\delta^3}
            + \tilde\nu_{q,n} \frac{ {(\log(ep))}^{(q-1)/2}}{\delta^{q-1}}
        \right] \\
        & \quad \times \min\{\kappa_{(4,n-4)|\{4,n-4\}}(\delta^\circ) + \kappa^\circ_n(\delta), 1\},
    \end{aligned}
    \end{equation*}
    \begin{equation*}
    \begin{aligned}
        \abs*{\Exp \left[ \mathfrak{R}_{W_j}^{(4,1)} \right]} 
        & \leq C \phi \left[ 
            \tilde{L}_{4,j} \frac{(\log(ep))^{3/2}}{\delta_{n-j}^3}
            + \tilde\nu_{q,j} \frac{ {(\log(ep))}^{(q-1)/2}}{(\delta_{n-j}/\alpha)^{q-1}}
        \right] \\
        & \quad \times \min\{\kappa_{(4,j-4)|\{4,j-4\}}(\delta_{n-j}^\circ) + \kappa^\circ_{j}(\delta_{n-j}), 1\},
    \end{aligned}
    \end{equation*}
    \begin{equation*}
    \begin{aligned}
        \abs*{\Exp \left[ \mathfrak{R}_{W_j}^{(4,2)} \right]}
        & \leq C \phi \left[ \begin{aligned}
            \left(\tilde{L}_{4,j} + \tilde L_{4,n} \right) \frac{(\log(ep))^{3/2}}{\delta_{n-j}^3}
            + \left(\tilde\nu_{q,j} + \tilde\nu_{q,n} \right) \frac{ {(\log(ep))}^{(q-1)/2}}{(\delta_{n-j}/\alpha)^{q-1}}
        \end{aligned} \right] \\
        & \quad \times \min\{\kappa_{(4,j-4)|\{4,j-4\}}(\delta_{n-j}^\circ) + \kappa^\circ_{j}(\delta_{n-j}), 1\},
    \end{aligned}
    \end{equation*}
    \begin{equation*}
    \begin{aligned}
        \abs*{\Exp \left[ \mathfrak{R}_{X,W_k}^{(6)} \right]} 
        & \leq C \phi \tilde L_{3,n} \left[ \begin{aligned}
            \tilde L_{3,[k]_{n-2}} \frac{(\log(ep))^{5/2}}{\delta_{n-k}^5}
            + \tilde\nu_{q,[k]_{n-2}} \frac{ {(\log(ep))}^{(q+2)/2}}{(\delta_{n-k}/\alpha)^{q+2}}
        \end{aligned} \right] \\
        & \quad \times \min\{\kappa_{(4,k-4)|\{4,k-4\}}(\delta_{n-k}^\circ) + \kappa^\circ_{k}(\delta_{n-k}), 1\}.
    \end{aligned}
    \end{equation*}
    \begin{equation*}
    \begin{aligned}
        \abs*{\Exp \left[ \mathfrak{R}_{X_j,W_k}^{(6,1)} \right]} 
        & \leq C \phi \tilde L_{3,j} \left[ \begin{aligned}
            \tilde L_{3,[k]_{j-2}} \frac{(\log(ep))^{5/2}}{\delta_{n-k}^5}
            + \tilde\nu_{q,[k]_{j-2}} \frac{ {(\log(ep))}^{(q+2)/2}}{(\delta_{n-k}/\alpha)^{q+2}}
        \end{aligned} \right] \\
        & \quad \times \min\{\kappa_{(4,k-4)|\{4,k-4\}}(\delta_{n-k}^\circ) + \kappa^\circ_{k}(\delta_{n-k}), 1\}.
    \end{aligned}
    \end{equation*}
    where $\delta_{n-j}^2 \equiv \delta^2 + \underline{\sigma}^2 \max\{n-j,0\}$, $\delta^\circ_{n-j} \equiv 12 \delta_{n-j} \sqrt{\log(pn)}$ and $\kappa^\circ_{j}(\delta) \equiv \frac{\delta \log(ep)}{\sigma_{\min} \sqrt{\max\{j, 1\}}}$, as long as $\delta \geq \sigma_{\min}$ and $\phi\delta \geq \frac{1}{\log(ep)}$.
\end{lemma}

\medskip
\noindent{\bf Permutation argument.} Recall the third Lindeberg swapping under $1$-dependence in \cref{eq:third_lindeberg_swapping_N,eq:third_lindeberg_result_N}: 
\begin{equation*}
\begin{aligned}
    & \Big\langle 
        \Exp\left[ \nabla^3 \rho_{r,\phi}^\delta(X_{[1,j-1)} + Y_{(j+1,n]}) \right] 
    - \Exp\left[ \nabla^3  \rho_{r,\phi}^\delta(Y_{[1,j-1)} + Y_{(j+1,n]}) \right], \Exp[ X_j^{\otimes 3} ]\Big\rangle. \\
    & = \sum_{k=1}^{j-2} \Big\langle
        \Exp\left[ \nabla^3 \rho_{r,\phi}^\delta(X_{[1,k)} + X_k + Y_{(k,j-1) \cup (j+1,n]}) \right] \\ 
    & \omit{\hfill $- \Exp\left[ \nabla^3   \rho_{r,\phi}^\delta(X_{[1,k)} + Y_k + Y_{(k,j-1) \cup (j+1,n]}) \right], 
        \Exp[ X_j^{\otimes 3} ]
    \Big\rangle.$} \\
    & = \sum_{j=3}^{n} \sum_{k=1}^{j-2} \Exp \left[ 
        \mathfrak{R}_{X_j,X_k}^{(6,1,1)}
        - \mathfrak{R}_{X_j,Y_k}^{(6,1,1)}
    \right]. \\
\end{aligned}
\end{equation*}
Under $1$-ring dependence, we apply the third Lindeberg swapping to $\rho_{r,\phi}^{\delta}(X_{[1,j-1)} + Y_{(j+1,n)})$ after breaking the ring, rather than $\rho_{r,\phi}^{\delta}(X_{[1,j-1)} + Y_{(j+1,n]})$. Then we apply \cref{thm:remainder_lemma_4P}toon the resulting remainder terms and obtain
\begin{equation*}
\begin{aligned}
    & \abs*{\inner*{ 
        \Exp[\nabla^3 \rho_{r,\phi}^{\delta}(X_{[1,j-1)} + Y_{(j+1,n)})
        - \nabla^3 \rho_{r,\phi}^{\delta}(Y_{[1,j-1)} + Y_{(j+1,n)})], 
        \Exp[X_j^{\otimes 3}] }} \\
    & \leq \sum_{k=1}^{j-2} \Big| \Big\langle 
        \Exp[\nabla^3 \rho_{r,\phi}^{\delta}(X_{[1,k)} + X_k + Y_{(k,j-1)\cup(j+1,n)}) \\
    & \hspace{2in} - \nabla^3 \rho_{r,\phi}^{\delta}(X_{[1,k)} + Y_k + Y_{(k,j-1)\cup(j+1,n)})], 
        \Exp[X_j^{\otimes 3}] 
    \Big\rangle \Big|\\
    & \leq C \phi \tilde L_{3,j} \sum_{k=1}^{j-2} \left[ \begin{aligned}
        \tilde L_{3,[k]_{j-2}} \frac{(\log(ep))^{5/2}}{\delta_{n-k}^5}
        + \tilde\nu_{q,[k]_{j-2}} \frac{ {(\log(ep))}^{(q+2)/2}}{(\delta_{n-k}/\alpha)^{q+2}}
    \end{aligned} \right] \\
    & \quad \times \min\{\kappa_{(4,k-4)|\{4,k-4\}}(\delta_{n-k}^\circ) + \kappa^\circ_{k}(\delta_{n-k}), 1\}, \\
\end{aligned}
\end{equation*}
for $3 \leq j \leq n-1$.
Above, the Lindeberg swapping started at $k = 1$. Because the inequality holds regardless where the swapping starts, 
\begin{equation*}
\begin{aligned}
    & \inner*{ 
        \Exp[\nabla^3 \rho_{r,\phi}^{\delta}(X_{[1,j-1)} + Y_{(j+1,n]})
        - \nabla^3 \rho_{r,\phi}^{\delta}(Y_{[1,j-1)} + Y_{(j+1,n]})], 
        \Exp[X_j^{\otimes 3}] } \\
    & \leq \frac{1}{j-2} \sum_{k^\circ=1}^{j-2} \sum_{k=1}^{j-2} \Big\langle
        \Exp[\nabla^3 \rho_{r,\phi}^{\delta}(X_{[k^\circ,k^\circ+k)_{j-2}} + X_{[k^\circ+k]_{j-2}} + Y_{(k^\circ+k,k^\circ+j-1)_{j-2}\cup(j,n]}) \\
    & \hspace{0.5in} - \nabla^3 \rho_{r,\phi}^{\delta}(X_{[k^\circ,k^\circ+k)_{j-2}} + Y_{[k^\circ+k]_{j-2}} + Y_{(k^\circ+k,k^\circ+j-1)_{j-2}\cup(j,n]})], 
        \Exp[X_j^{\otimes 3}] \Big\rangle \\
    & \leq C \phi \frac{\tilde L_{3,j}}{j-2} \sum_{k^\circ=1}^{j-2} \sum_{k=1}^{j-2} \left[ 
        \tilde L_{3,[k^\circ+k']_{j-2}} \frac{(\log(ep))^{5/2}}{\delta_{n-k}^5}
        + \tilde \nu_{q,[k^\circ+k']_{j-2}} \frac{ {(\log(ep))}^{(q+2)/2}}{(\delta_{n-k}/2)^{q+2}}
    \right] \\
    & \quad \hspace{1.1in} \times \min\{\kappa_{(k^\circ+4,k^\circ+k-4)_{j-2}|\{[k^\circ+4]_{j-2},[k^\circ+k-4]_{j-2}\}}(\delta_{n-k}^\circ) + \kappa^\circ_{k}(\delta_{n-k}), 1\}. \\
\end{aligned}
\end{equation*}
The permutation argument also applies to the first Lindeberg swapping in \cref{eq:third_lindeberg_swapping_N}. Together with the results in \cref{thm:remainder_lemma_4P}, \cref{eq:decompose_third_result_P} becomes
\begin{equation*}
\begin{aligned}
    & \abs*{ \Exp[\rho_{r,\phi}^\delta(X_{[1,n]})] - \Exp[\rho_{r,\phi}^\delta(Y_{[1,n]})]} \\
    & \leq \frac{1}{n} \sum_{j^\circ=1}^n \abs*{ \Exp\left[
        \rho_{r,\phi}^\delta(X_{j^\circ+[1,n)}) - \rho_{r,\phi}^\delta(Y_{j^\circ+[1,n)})
    \right]} \\
    & \quad + \frac{1}{n} \sum_{j^\circ=1}^n \abs*{ \Exp\left[ \rho_{r,\phi}^\delta(X_{[1,n]}) - \rho_{r,\phi}^\delta(X_{j^\circ+[1,n)}) + \rho_{r,\phi}^\delta(Y_{[1,n]}) - \rho_{r,\phi}^\delta(Y_{j^\circ+[1,n)})\right]} \\
    & \leq \frac{C}{\sqrt{n}} \bar{L}_{3} \frac{(\log(ep))^2}{\underline{\sigma}^2 \sigma_{\min}} \\
    & \quad + \frac{C \phi}{n} \sum_{j^\circ=1}^n \sum_{j=2}^{n} \left[ 
        (\tilde{L}_{4,j^\circ} + \tilde{L}_{4,j^\circ+j}) \frac{(\log(ep))^{3/2}}{\delta_{n-j}^3}
        + (\tilde\nu_{q,j^\circ} + \tilde\nu_{q,j^\circ+j}) \frac{{(\log(ep))}^{(q-1)/2}}{(\delta_{n-j}/2)^{q-1}}
    \right] \\
    & \quad \hspace{1in} \times \min\{\kappa_{j^\circ+(4,n-4)|\{j^\circ+4,j^\circ+j-4\}}(\delta_{n-j}^\circ) + \kappa^\circ_n(\delta_{n-j}), 1\} \\
    & \quad + \frac{C \phi}{n} \sum_{j^\circ=1}^n \sum_{j=3}^{n} 
    \frac{\tilde{L}_{3,j^\circ+j}}{j-2} \sum_{k^\circ=1}^{j-2} \sum_{k=1}^{j-2} \left[ 
        \tilde{L}_{3,j^\circ+[k^\circ+k]_{j-2}} \frac{(\log(ep))^{5/2}}{\delta_{n-k}^5}
        + \tilde\nu_{q,j^\circ+[k^\circ+k]_{j-2}} \frac{{(\log(ep))}^{(q+2)/2}}{(\delta_{n-k}/2)^{q+2}}
    \right] \\
    & \quad \hspace{1in} \times \min\{\kappa_{j^\circ + (k^\circ+4,k^\circ+k-4)_{j-2}|\{j^\circ + [k^\circ+4]_{j-2},j^\circ + [k^\circ+k-4]_{j-2}\}}(\delta_{n-k}^\circ) + \kappa^\circ_{k}(\delta_{n-k}), 1\}, \\
\end{aligned}
\end{equation*}
where $\tilde{L}_{q,j+[k]_{j-2}} \equiv \sum_{k'=k-4}^{k+4} L_{q,j+[k']_{j-2}}$, $\tilde\nu_{q,j+[k]_{j-2}}$ is similarly defined, and $j^\circ+(k^\circ,k^\circ+k)_{j-1}$ is the shifted interval of $(k^\circ,k^\circ+k)_{j-1}$ by $j^\circ$, namely, $\{j^\circ+[k^\circ+1]_{j-1}, \dots, j^\circ+[k^\circ+k-1]_{j-1}\}$.

\medskip
\noindent{\bf Partitioning the sum.} 
Again, we partition the summations at $n/2 = n (1 - \frac{\sigma_{\min}^2}{\underline{\sigma}^2 \log^2(4ep)})$. The calculations of the first three summations are similar to those in \cref{sec:pf_1_dep_3P}. Here we only take a look at the last summation where the summation iterates.
For $k < n/2$,
\begin{equation*}
\begin{aligned}
    & \frac{1}{n} \sum_{j^\circ=1}^n \sum_{j=3}^{n} \frac{\tilde{L}_{3,j^\circ+j}}{j-2} \sum_{k^\circ=1}^{j-2} \sum_{k=1}^{(j-2) \wedge \floor{n/2}} \left[ 
        \tilde{L}_{3,j^\circ+[k^\circ+k]_{j-2}} \frac{(\log(ep))^{5/2}}{\delta_{n-k}^5}
        + \tilde\nu_{q,j^\circ+[k^\circ+k]_{j-2}} \frac{{(\log(ep))}^{(q+2)/2}}{(\delta_{n-k}/\alpha)^{q+2}}
    \right] \\
    & \leq \frac{C}{n} \sum_{j^\circ=1}^n \sum_{j=3}^{n} \tilde{L}_{3,j^\circ+j} \sum_{k=1}^{(j-2) \wedge \floor{n/2}} \left[ 
        \bar{L}_{3} \frac{n(\log(ep))^{5/2}}{(j-2)\delta_{n-k}^5}
        + \bar{\nu}_{q} \frac{n{(\log(ep))}^{(q+2)/2}}{(j-2)(\delta_{n-k}/\alpha)^{q+2}}
    \right] \\
    & \leq \frac{C}{\sqrt{n}} \sum_{j=3}^{n} \bar{L}_{3} 
    \left[ 
        \bar{L}_{3} \frac{n(\log(ep))^{7/2}}
        {(j-2)\underline{\sigma}^{2}\sigma_{\min} \delta_{n-\floor{n/2}}^2}
        + \bar{\nu}_{q} \frac{n{(\log(ep))}^{(q+4)/2}}
        {(j-2)\underline{\sigma}^{2}\sigma_{\min} \delta_{n-\floor{n/2}}^{q-1}}
    \right] \\
    & \leq \frac{C}{\sqrt{n}} \bar{L}_{3} 
    \left[ 
        \bar{L}_{3} \frac{(\log(ep))^{7/2}}
        {\underline{\sigma}^{4}\sigma_{\min}}
        + \bar{\nu}_{q} \frac{{(\log(ep))}^{(q+4)/2}}
        {\underline{\sigma}^{4}\sigma_{\min} \delta^{q-3}}
    \right] \log(en),
\end{aligned}
\end{equation*}
where the third inequality comes from Eq. (16) and the last inequality comes from the fact that $\sum_{j=3}^{n} \frac{n}{(j-2) \delta^2_{n-\floor{n/2}}} \leq \sum_{j=3}^{n} \frac{n}{(j-2) (n-\floor{n/2}) \underline{\sigma}^2} \leq \frac{C}{\underline{\sigma}^2} \log(en)$.
For $k > n/2$, 
\begin{equation*}
\begin{aligned}
    & \frac{1}{n} \sum_{j^\circ=1}^n \sum_{j=\ceil{n/2}}^{n} \frac{\tilde L_{3,j^\circ+j}}{j-2} \sum_{k^\circ=1}^{j-2} \sum_{k=\ceil{n/2}}^{j-2} \left[ 
        \tilde L_{3,j^\circ+[k^\circ+k]_{j-2}} \frac{(\log(ep))^{5/2}}{\delta_{n-k}^5}
        + \tilde\nu_{q,j^\circ+[k^\circ+k]_{j-2}} \frac{{(\log(ep))}^{(q+2)/2}}{(\delta_{n-k}/\alpha)^{q+2}}
    \right] \\
    & \quad \hspace{1in} \times \min\{\kappa_{j^\circ + (k^\circ+4,k^\circ+k-4)_{j-2}|\{j^\circ + [k^\circ+4]_{j-2},j^\circ + [k^\circ+k-4]_{j-2}\}}(\delta_{n-k}^\circ) + \kappa^\circ_{k}(\delta_{n-k}), 1\} 
    \\
    & \leq \frac{1}{n} \sum_{i=1}^n \sum_{j=\ceil{n/2}}^{n} \frac{\tilde L_{3,j^\circ+j}}{j-2} \sum_{k^\circ=1}^{j-2} \sum_{k=\ceil{n/2}}^{j-2} \left[ 
        \tilde L_{3,j^\circ+[k^\circ+k]_{j-2}} \frac{(\log(ep))^{5/2}}{\delta_{n-k}^5}
        + \tilde \nu_{q,j^\circ+[k^\circ+k]_{j-2}} \frac{{(\log(ep))}^{(q+2)/2}}{(\delta_{n-k}/\alpha)^{q+2}}
    \right] \\
    & \quad \hspace{1in} \times \kappa_{j^\circ + (k^\circ+4,k^\circ+k-4)_{j-2}|\{j^\circ + [k^\circ+4]_{j-2},j^\circ + [k^\circ+k-4]_{j-2}\}}(\delta_{n-k}^\circ)
    \\
    & \quad + \frac{C}{n} \sum_{j^\circ=1}^n \sum_{j=\ceil{n/2}}^{n} \tilde L_{3,j^\circ+j} \sum_{k=\ceil{n/2}}^{j-2} \left[ 
        \bar{L}_{3,j^\circ+(0,j)} \frac{(\log(ep))^{5/2}}{\delta_{n-k}^5}
        + \bar{\nu}_{q,j^\circ+(0,j)} \frac{{(\log(ep))}^{(q+2)/2}}{(\delta_{n-k}/\alpha)^{q+2}}
    \right] 
    \kappa^\circ_{k}(\delta_{n-k}).
\end{aligned}
\end{equation*}
Based on \cref{thm:jensen},
\begin{equation*}
\begin{aligned}
    & \frac{1}{n} \sum_{j^\circ=1}^n \sum_{j=\ceil{n/2}}^{n} \tilde L_{3,j^\circ+j} \sum_{k=\ceil{n/2}}^{j-1} \left[ 
        \bar{L}_{3,(j^\circ,j^\circ+j)} \frac{(\log(ep))^{5/2}}{\delta_{n-k}^5}
        + \bar{\nu}_{q,(j^\circ,j^\circ+j)} \frac{{(\log(ep))}^{(q+2)/2}}{(\delta_{n-k}/\alpha)^{q+2}}
    \right] 
    \kappa^\circ_{k}(\delta_{n-k}) \\
    & \leq \frac{C}{n^{3/2}} \sum_{j^\circ=1}^n \sum_{j=\ceil{n/2}}^{n} \tilde L_{3,j^\circ+j} 
    \left[ 
        \bar{L}_{3,(j^\circ,j^\circ+j)} \frac{(\log(ep))^{7/2}}{ \underline{\sigma}^{2}\sigma_{\min} \delta_{n-j}^2}
        + \bar{\nu}_{q,(j^\circ,j^\circ+j)} \frac{{(\log(ep))}^{(q+4)/2}}{\underline{\sigma}^{2}\sigma_{\min} (\delta_{n-j}/\alpha)^{q-1}}
    \right] \\
    & \leq \frac{C}{\sqrt{n}} \sum_{j=\ceil{n/2}}^{n} \bar{L}_{3} 
    \left[ 
        \bar{L}_{3} \frac{(\log(ep))^{7/2}}{ \underline{\sigma}^{2}\sigma_{\min} \delta_{n-j}^2}
        + \bar{\nu}_{q} \frac{{(\log(ep))}^{(q+4)/2}}{\underline{\sigma}^{2}\sigma_{\min} (\delta_{n-j}/\alpha)^{q-1}}
    \right] \\
    & \leq \frac{C}{\sqrt{n}} \bar{L}_{3} 
    \left[ 
        \bar{L}_{3} \frac{(\log(ep))^{7/2}}{ \underline{\sigma}^{4}\sigma_{\min}} 
        + \bar{\nu}_{q} \frac{{(\log(ep))}^{(q+4)/2}}{\underline{\sigma}^{4}\sigma_{\min} \delta^{q-3}}
    \right] \log\left(1 + \frac{\sqrt{n}\underline{\sigma}}{\delta}\right),
\end{aligned}
\end{equation*}
where the first and last inequalities follow \cref{eq:sum_delta_3,eq:sum_delta_nolog}, respectively.
In sum, we obtain the following induction lemma.
\begin{lemma} \label{thm:induction_lemma_BE_4P}
    If Assumptions \eqref{assmp:min_var}, \eqref{assmp:min_ev} and \eqref{assmp:var_ev} hold, then 
    for any $\delta \geq \sigma_{\min}$,
    \begin{equation*}
    \begin{aligned}
        & \mu_{[1,n]} \\
        & \leq \frac{C}{\sqrt{n}} \frac{\delta \log(ep)}{\sigma_{\min}}  
        + \frac{C}{\sqrt{n}} \frac{\sqrt{\log(ep)}}{\phi \sigma_{\min}}
        + \frac{C}{\sqrt{n}} \bar{L}_{3} \frac{(\log(ep))^{2}}{\underline{\sigma}^2 \sigma_{\min}} \\
        & \quad + \frac{C\phi}{\sqrt{n}} \left[
            \bar{L}_4 \frac{(\log(ep))^{5/2}}{\underline{\sigma}^2\sigma_{\min}}
            + \bar{\nu}_q \frac{(\log(ep))^{(q+1)/2}}{\underline{\sigma}^{2}\sigma_{\min} (\delta/2)^{q-4}}
        \right] \log(en) \\
        & \quad + \frac{C\phi}{\sqrt{n}} \bar{L}_3 \left[
            \bar{L}_{3} \frac{(\log(ep))^{7/2}}
            {\underline{\sigma}^{4}\sigma_{\min}}
            + \bar{\nu}_{q} \frac{{(\log(ep))}^{(q+4)/2}}
            {\underline{\sigma}^{4}\sigma_{\min} (\delta/2)^{q-3}}
        \right] \log(en) \\
        & \quad + \frac{C \phi}{n} \sum_{j^\circ=1}^n \sum_{j=\ceil{n/2}}^{n} \left[ 
            (\tilde{L}_{4,j^\circ} + \tilde{L}_{4,j^\circ+j}) \frac{(\log(ep))^{3/2}}{\delta_{n-j}^3}
            + (\tilde\nu_{q,j^\circ} + \tilde\nu_{q,j^\circ+j}) \frac{{(\log(ep))}^{(q-1)/2}}{(\delta_{n-j}/2)^{q-1}}
        \right] \\
        & \quad \hspace{1in} \times \kappa_{j^\circ+(4,n-4)|\{j^\circ+4,j^\circ+j-4\}}(\delta_{n-j}^\circ) \\
        & \quad + \frac{C \phi}{n} \sum_{j^\circ=1}^n \sum_{j=\ceil{n/2}}^{n} \frac{\tilde L_{3,j^\circ+j}}{j-2} \\ 
        & \quad \quad \quad \times \sum_{k^\circ=1}^{j-2} \sum_{k=\ceil{n/2}}^{j-2} \left[ 
            \tilde L_{3,j^\circ+[k^\circ+k]_{j-2}} \frac{(\log(ep))^{5/2}}{\delta_{n-k}^5}
            + \tilde \nu_{q,j^\circ+[k^\circ+k]_{j-2}} \frac{(\log(ep))^{(q+2)/2}}{(\delta_{n-k}/2)^{q+2}}
        \right] \\
        & \quad \hspace{1.2in} \times \kappa_{j^\circ + (k^\circ+4,k^\circ+k-4)_{j-2}|\{j^\circ + [k^\circ+4]_{j-2},j^\circ + [k^\circ+k-4]_{j-2}\}}(\delta_{n-k}^\circ),
    \end{aligned}
    \end{equation*}
    for some absolute constant $C > 0$.
\end{lemma}

\medskip
\noindent{\bf Dual Induction.} Based on \cref{thm:induction_lemma_BE_4P,thm:induction_lemma_AC_P}, we proceed to carry out a dual induction in the same way as in \cref{sec:pf_1_dep_4N}, but using instead \cref{thm:jensen} from \cref{sec:pf_1_dep_3P}. We summarize the final result in the next lemma.
\begin{lemma}
    There exist positive universal constants $\mathfrak{C}_{1,\kappa}$, $\mathfrak{C}_{2,\kappa}$, $\mathfrak{C}_{3,\kappa}$, $\mathfrak{C}_{4,\kappa}$, $\mathfrak{C}_{5,\kappa}$, $\mathfrak{C}_{1,\mu}$, $\mathfrak{C}_{2,\mu}$ and $\mathfrak{C}_{3,\mu}$ such that for any $n$, any $1$-ring dependent sequence $(X_i \in \reals^p: i \in [1,n])$ satisfying Assumptions \eqref{assmp:min_var}, \eqref{assmp:min_ev} and \eqref{assmp:var_ev} and any $\delta \geq 0$,
    \begin{equation} \label{eq:hypothesis_AC_4P} \tag{HYP-AC-4}
    \begin{aligned}
        & \sqrt{\abs{I}} \kappa_{I|\{i_1-1, i_2+1\}}(\delta)
        \leq 
        \tilde\kappa_{1,\abs{I}} \bar{L}_{3,I}
        + \tilde\kappa_{2,\abs{I}} \bar\nu_{q,I}^{1/(q-2)}
        + \tilde\kappa_{3,\abs{I}} \bar\nu_{q,I}^{1/(q-2)}
        + \tilde\kappa_{4,\abs{I}} \bar\nu_{1,I}
        + \tilde\kappa_{5} \delta, \\
        & \forall I = [i_1, i_2] ~~\text{s.t.}~~ i_1 < i_2 < i_1+n,
    \end{aligned}
    \end{equation}
    \begin{equation} \label{eq:hypothesis_BE_4P} \tag{HYP-BE-4}
        \sqrt{n} \mu_{[1,n]}
        \leq \tilde\mu_{1,n} \bar{L}_{3}
        + \tilde\mu_{2,n} \bar{L}_{4}^{1/2}
        + \tilde\mu_{3,n} \bar\nu_{q}^{1/(q-2)},
    \end{equation}
    where $\tilde\kappa_{1,i} = \mathfrak{C}_{1,\kappa} \tilde\mu_{1,i}$, 
    $\tilde\kappa_{2,i} = \mathfrak{C}_{2,\kappa} \tilde\mu_{2,i}$, 
    $\tilde\kappa_{3,i} = \mathfrak{C}_{3,\kappa} \tilde\mu_{3,i}$,
    $\tilde\kappa_{4,i} = \mathfrak{C}_{4,\kappa} \frac{\log(ep)\sqrt{\log(pi)}}{\sigma_{\min}}$,
    $\tilde\kappa_{5} = \mathfrak{C}_{5,\kappa} \frac{\sqrt{\log(ep)}}{\sigma_{\min}}$,
    \begin{equation*}
    \begin{aligned}
        \tilde\mu_{1,n} 
        & = \mathfrak{C}_1 \frac{(\log(ep))^{3/2}\sqrt{\log(pn)}}{\underline\sigma^2\sigma_{\min}} \log\left(e n\right), \\
        \tilde\mu_{2,n}
        & = \mathfrak{C}_2 \frac{\log(ep)\sqrt{\log(pn)}}{\underline\sigma \sigma_{\min}} \log\left(e n\right) \\
        \tilde\mu_{3,n}
        & = \mathfrak{C}_3 \frac{\log(ep)\sqrt{\log(pn)}}{\underline\sigma^{2/(q-2)} \sigma_{\min}} \log\left(e n\right)
    \end{aligned}
    \end{equation*}
\end{lemma}

If $\mathfrak{C}_{1,\kappa}$, $\mathfrak{C}_{2,\kappa}$, $\mathfrak{C}_{3,\kappa}$, $\mathfrak{C}_{4,\kappa}$, $\mathfrak{C}_{1,\mu}$, $\mathfrak{C}_{2,\mu} \geq 2$, then \eqref{eq:hypothesis_BE_4P} and \eqref{eq:hypothesis_AC_4P}, requiring $\mu_{[1,n]} \leq 1$ and $\kappa_{I|\{i_1-1,i_2+1\}}(\delta) \leq 1$ almost surely for all $I \subsetneq [1,n]$ only, trivially holds for $n \leq 36$. Now we consider the case of $n > 36$. Suppose that the induction hypotheses hold for all smaller $n$. 

\medskip
We first derive \eqref{eq:hypothesis_AC_4P} for any $I = [i_1, i_2]$ satisfying $i_1 < i_2 < i_1+n$. For the case of $i \leq 36$, \eqref{eq:hypothesis_AC_4P} trivially holds given $\mathfrak{C}_{1,\kappa}, \mathfrak{C}_{2,\kappa}, \mathfrak{C}_{3,\kappa}, \mathfrak{C}_{4,\kappa}, \mathfrak{C}_{5,\kappa} \geq 2$. For the case of $i > 36$, we first assume that \eqref{eq:hypothesis_AC_4P} holds for all $I$ satisfying $\abs{I} < i$ and then show this extends to all $I$ with $\abs{I} = i$. Without loss of generality, we only show for the interval $[1,i]$.
By \cref{thm:induction_lemma_AC_P}, 
for any $\vareps \geq \sigma_{\min}$ and $\delta > 0$,
\begin{equation*} 
\begin{aligned}
    & \kappa_{[1,i]|\{0, i+1\}}(\delta) \\
    & \leq \frac{C}{i^\circ-3} \sum_{j=0}^{i^\circ-4}  
    \frac{\sqrt{\log(ep)}}{\vareps} (\nu_{1,j+2} + \nu_{1,j+i^\circ-1}) 
    \min\{1, \kappa_{j+[3,i^\circ-2]|\{j+2, j+i^\circ-1\}}(\vareps^\circ)\} \\
    & \quad + \frac{C}{i^\circ-3} \sum_{j=0}^{i^\circ-4} \mu_{j+[3,i^\circ-2]}
    + C\frac{\delta + 2\vareps^\circ}{\sigma_{\min}} \sqrt{\frac{\log(ep)}{i^\circ-4}}
    + C\frac{\bar\nu_{1,(1,i)}}{\sigma_{\min}} \frac{\log(ep)}{\sqrt{i^\circ-4}},
\end{aligned}
\end{equation*}
where $i^\circ = \floor{\frac{i+4}{2}}$, $\vareps^\circ = 20\vareps \sqrt{\log(p(i^\circ-4))}$ and $C > 0$ is an absolute constant. 
Since $(X_1, \dots, X_n)$ is a $1$-ring dependent sequence satisfying Assumptions \eqref{assmp:min_var}, \eqref{assmp:min_ev} and \eqref{assmp:var_ev} and the interval $j+[3,i^\circ-2]$ is a proper subset of $[1,n]$, \eqref{eq:hypothesis_AC_4P} holds for $\kappa_{j+[3,i^\circ-2]|\{j+2, j+i^\circ-1\}}(\vareps^\circ)$. Furthermore, $(X_{j+2}, \dots, X_{j+i^\circ-1})$ is a $1$-ring dependent sequence satisfying Assumptions \eqref{assmp:min_var}, \eqref{assmp:min_ev} and \eqref{assmp:var_ev} with the same $\sigma_{\min}$ and $\underline{\sigma}$ as the original data $(X_1, \dots, X_n)$. (See \cref{thm:assmp_induction_P}.) 
As a result, \eqref{eq:hypothesis_BE_4P} holds for $\mu_{j+[3,i^\circ-2]}$. Plugging the resulting upper bounds on $\kappa_{j+[3,i^\circ-2]|\{j+2, j+i^\circ-1\}}(\vareps^\circ)$ and $\mu_{j+[3,i^\circ-2]}$ back into the previous upper bound for $\kappa_{[1,i]|\{0,i+1\}}$, we obtain that
\begin{equation*} 
\begin{aligned}
    & \kappa_{[1,i]|\{0,i+1\}}(\delta) \\
    & \leq \frac{C}{(i^\circ-4)^{3/2}} \sum_{j=0}^{i^\circ-4}  
    \frac{\sqrt{\log(ep)}}{\vareps} (\nu_{1,j+2} + \nu_{1,j+i^\circ-1})
    \\
    & \quad \times \left[ 
        \tilde\kappa_{1,i^\circ-4} \bar{L}_{3,j+[3,i^\circ-2]}
        + \tilde\kappa_{2,i^\circ-4} \bar{L}_{4,j+[3,i^\circ-2]}^{1/2}
        + \tilde\kappa_{3,i^\circ-4} \bar{\nu}_{q,j+[3,i^\circ-2]}^{1/(q-2)}
        + \tilde\kappa_{4,i^\circ-4} \bar\nu_{1,j+[3,i^\circ-2]}
        + \tilde\kappa_{5} {\vareps^\circ}
    \right]
    \\
    & \quad + \frac{C}{(i^\circ-4)^{3/2}} \sum_{j=0}^{i^\circ-1} \tilde\mu_{1,i^\circ-4} {\bar{L}_{3,j+[3,i^\circ-2]}}
    + \frac{C}{(i^\circ-4)^{3/2}} \sum_{j=0}^{i^\circ-4} \tilde\mu_{2,i^\circ-4} {\bar{L}_{4,j+[3,i^\circ-2]}^{1/2}} \\
    &\quad + \frac{C}{(i^\circ-4)^{3/2}} \sum_{j=1}^{i^\circ-4} \tilde\mu_{3,i^\circ-4} {\bar{\nu}_{q,j+[3,i^\circ-2]}^{1/(q-2)}}
    + C\frac{\delta + 2\vareps^\circ}{\sigma_{\min}} \sqrt{\frac{\log(ep)}{i^\circ-4}}
    + C\frac{\bar\nu_{1,(1,i)}}{\sigma_{\min}} \frac{\log(ep)}{\sqrt{i^\circ-4}}.
\end{aligned}
\end{equation*}

By \cref{thm:jensen},
\begin{equation*}
\begin{aligned}
    \frac{1}{i^\circ-4} \sum_{j=0}^{i^\circ-4} \bar{L}_{3,j+[3,i^\circ-2]}
    & \leq C \bar{L}_{3,(2,j-1)},
    \\
    \frac{1}{i^\circ-4} \sum_{j=0}^{i^\circ-4}  (\nu_{1,j+2} + \nu_{1,j+i^\circ-1})
    \bar{L}_{3,j+[3,i^\circ-2]}
    & \leq C \bar\nu_{1,(1,i)} \bar{L}_{3,(2,i-1)}, 
    \\
    \frac{1}{i^\circ-4} \sum_{j=0}^{i^\circ-4}  (\nu_{1,j+2} + \nu_{1,j+i^\circ-1})
    \bar\nu_{1,j+[3,i^\circ-2]}
    & \leq C \bar\nu_{1,(1,i)} \bar\nu_{1,(2,i-1)}.
\end{aligned}
\end{equation*}
Similar inequalities hold with $\bar{L}_{4}^{1/2}$ and $\bar{\nu}_{q}^{1/(q-2)}$ in place of $\bar{L}_3$.
As a result,
\begin{equation*} 
\begin{aligned}
    & \sqrt{i} \kappa_{[1,i]|\{0,i+1\}}(\delta) \\
    & \leq \mathfrak{C}' \frac{\sqrt{\log(ep)}}{\vareps} \bar\nu_{1,(1,i)}
    \\
    & \quad \times \left[ 
        \tilde\kappa_{1,i^\circ-4} \bar{L}_{3,(2,i-1)}
        + \tilde\kappa_{2,i^\circ-4} \bar{L}_{4,(2,i-1)}^{1/2}
        + \tilde\kappa_{3,i^\circ-4} \bar{\nu}_{q,(2,i-1)}^{1/(q-2)}
        + \tilde\kappa_{4,i^\circ-4} \bar\nu_{1,(2,i-1)}
        + \tilde\kappa_{5} \vareps^\circ
    \right]
    \\
    & \quad + \mathfrak{C}'  \left[ 
        \tilde\mu_{1,i^\circ-4} {\bar{L}_{3,(2,i-1)}}
        + \tilde\mu_{2,i^\circ-4} {\bar{L}_{4,(2,i-1)}^{1/2}}
        + \tilde\mu_{3,i^\circ-4} {\bar{\nu}_{q,(2,i-1)}^{1/(q-2)}} 
        \right] \\
    & \quad + \mathfrak{C}' \left[
            \frac{\delta + 2\vareps^\circ}{\sigma_{\min}} \sqrt{\log(ep)}
        + \frac{\bar\nu_{1,(1,i)}}{\sigma_{\min}} \log(ep)
    \right],
\end{aligned}
\end{equation*}
for some universal constant $\mathfrak{C}'$, whose value does not change over lines.
Plugging in $\vareps = \max\{2 \mathfrak{C}', 1\} \sqrt{\log(ep)} \bar{\nu}_{1,(1,i)} \overset{\text{\cref{eq:nu_1_vs_sigma_min}}}{\geq} \sigma_{\min}$,
\begin{equation*}
\begin{aligned}
    & \sqrt{i} \kappa_{[1,i]|\{0,i+1\}}(\delta) \\
    & \leq \frac{1}{2}
    \left[ 
        \tilde\kappa_{1,i^\circ-4} {\bar{L}_{3,(2,i-1)}}
        + \tilde\kappa_{2,i^\circ-4} {\bar{L}_{4,(2,i-1)}^{1/2}}
        + \tilde\kappa_{3,i^\circ-4} {\bar{\nu}_{q,(2,i-1)}^{1/(q-2)}}
        + \tilde\kappa_{4} {\bar\nu_{1,(2,i-1)}}
        + \tilde\kappa_{5} {\delta}
    \right] \\
    & \quad  
    + \mathfrak{C}' \left[ \begin{aligned}
        \tilde\mu_{1,i^\circ-4} \bar{L}_{3,(2,i-1)}
        + \tilde\mu_{2,i^\circ-4} {\bar{L}_{4,(2,i-1)}^{1/2}}
        + \tilde\mu_{3,i^\circ-4} \bar\nu_{q,(2,i-1)}^{1/(q-2)} 
    \end{aligned} \right] \\
    & \quad  
    + \mathfrak{C}' \frac{\log(ep)}{\sigma_{\min}} \bar{\nu}_{1,(1,i)}
    + 40 \mathfrak{C}' \frac{\log(ep)\sqrt{\log(p(i^\circ-2))}}{\sigma_{\min}} \bar{\nu}_{2,(1,i)}^{1/2}
    + \mathfrak{C}' \frac{\sqrt{\log(ep)}}{\sigma_{\min}} \delta
    \\
    & \leq \tilde\kappa_{1,i} \bar{L}_{3,[1,i]}
    + \tilde\kappa_{2,i} \bar{L}_{4,[1,i]}^{1/2}
    + \tilde\kappa_{3,i} \bar\nu_{q,[1,i]}^{1/(q-2)}
    + \tilde\kappa_{4,i} \bar\nu_{2,[1,i]}^{1/2}
    + \tilde\kappa_{5} \delta,
\end{aligned}
\end{equation*}
where $\tilde\kappa_{1,i} = \mathfrak{C}_{1,\kappa} \tilde\mu_{1,i}$, 
$\tilde\kappa_{2,i} = \mathfrak{C}_{2,\kappa} \tilde\mu_{2,i}$, 
$\tilde\kappa_{3,i} = \mathfrak{C}_{3,\kappa} \tilde\mu_{3,i}$, 
$\tilde\kappa_{4,i} = \mathfrak{C}_{4,\kappa} \frac{\log(ep)\sqrt{\log(pi)}}{\sigma_{\min}}$, and
$\tilde\kappa_{5} = \mathfrak{C}_{5,\kappa} \frac{\sqrt{\log(ep)}}{\sigma_{\min}}$, provided by $\mathfrak{C}_{1,\kappa} = \mathfrak{C}_{2,\kappa} = \mathfrak{C}_{3,\kappa} = \max\{2 \mathfrak{C}', 2\}$, $\mathfrak{C}_{4,\kappa} = \max\{82 \mathfrak{C}' + 40 \mathfrak{C}'^2, 2\}$ and
$\mathfrak{C}_{5,\kappa} = \max\{ \mathfrak{C}', 2\}$. 
A generalization the above argument to any interval with length $i$ and a mathematical induction for $i < n$ proves \eqref{eq:hypothesis_AC_4P} at $n$.

\medskip
The proof of \eqref{eq:hypothesis_BE_4P} at $n$ also proceeds similarly using \cref{thm:jensen}. We first upper bound the last two terms in \cref{thm:induction_lemma_BE_4P}:
\begin{equation*}
\begin{aligned}
    & \frac{C \phi}{n} \sum_{j^\circ=1}^n \sum_{j=\ceil{n/2}}^{n} \left[ 
        (\tilde{L}_{4,j^\circ} + \tilde{L}_{4,j^\circ+j}) \frac{(\log(ep))^{3/2}}{\delta_{n-j}^3}
        + (\tilde\nu_{q,j^\circ} + \tilde\nu_{q,j^\circ+j}) \frac{{(\log(ep))}^{(q-1)/2}}{(\delta_{n-j}/2)^{q-1}}
    \right] \\
    & \quad \hspace{1in} \times \kappa_{j^\circ+(4,n-4)|\{j^\circ+4,j^\circ+j-4\}}(\delta_{n-j}^\circ) \\
    & \quad + \frac{C \phi}{n} \sum_{j^\circ=1}^n \sum_{j=\ceil{n/2}}^{n} \frac{\tilde L_{3,j^\circ+j}}{j-2} \\ 
    & \quad \quad \quad \times \sum_{k^\circ=1}^{j-2} \sum_{k=\ceil{n/2}}^{j-2} \left[ 
        \tilde L_{3,j^\circ+[k^\circ+k]_{j-2}} \frac{(\log(ep))^{5/2}}{\delta_{n-k}^5}
        + \tilde \nu_{q,j^\circ+[k^\circ+k]_{j-2}} \frac{(\log(ep))^{(q+2)/2}}{(\delta_{n-k}/2)^{q+2}}
    \right] \\
    & \quad \hspace{1.2in} \times \kappa_{j^\circ + (k^\circ+4,k^\circ+k-4)_{j-2}|\{j^\circ + [k^\circ+4]_{j-2},j^\circ + [k^\circ+k-4]_{j-2}\}}(\delta_{n-k}^\circ) \\
    & \equiv \left[ \mathfrak{T}_{1,1} + \mathfrak{T}_{1,2} \right] 
    + \left[ \mathfrak{T}_{2,1} + \mathfrak{T}_{2,2} \right].
\end{aligned}
\end{equation*}
Since $(X_{j^\circ}, \dots, X_{j^\circ+j-2})$ is a $1$-ring dependent sequence satisfying Assumptions \eqref{assmp:min_var}, \eqref{assmp:min_ev} and \eqref{assmp:var_ev} (see \cref{thm:assmp_induction_P}) and the interval $j^\circ+(k^\circ+4, k^\circ+k-4)_{j-2}$ is a proper subset of $[j^\circ, j^\circ+j-2]$, \eqref{eq:hypothesis_AC_4P} holds for $\kappa_{j^\circ + (k^\circ+4,k^\circ+k-4)_{j-2}|\{\cdots\}}(\delta_{n-k}^\circ)$.
Hence
based on \cref{thm:jensen} and \cref{eq:sum_delta_q,eq:sum_delta_q_kappa} with appropriate $q$, we obtain
\begin{equation*}
\begin{aligned}
    & \mathfrak{T}_{2,1} \\
    & \leq \frac{C\phi}{\sqrt{n}} \sum_{j=\ceil{n/2}}^{n-1} \sum_{k=\ceil{n/2}}^{j-2} \bar{L}_{3} \left[ 
        \bar{L}_{3} \frac{(\log(ep))^{5/2}}{\delta_{n-k}^5}
    \right] 
    \left[ \begin{aligned}
        & \tilde\kappa_{1,n-1} \bar{L}_{3}
        + \tilde\kappa_{2,n-1} \bar{L}_{4}^{1/2} \\
        & + \tilde\kappa_{3,n-1} \bar\nu_{q}^{1/(q-2)}
        + \tilde\kappa_{4,n-1} \bar\nu_{1}
        + \tilde\kappa_{5} \delta_{n-j}^\circ \\
    \end{aligned} \right] \\
    & \leq \frac{C\phi}{\sqrt{n}} \sum_{j=\ceil{n/2}}^{n-1} \bar{L}_{3} \left[ 
        \bar{L}_{3} \frac{(\log(ep))^{5/2}}{\underline{\sigma}^2 \delta_{n-j}^3}
    \right] 
    \left[ \begin{aligned}
        & \tilde\kappa_{1,n-1} \bar{L}_{3}
        + \tilde\kappa_{2,n-1} \bar{L}_{4}^{1/2} \\
        & + \tilde\kappa_{3,n-1} \bar\nu_{q}^{1/(q-2)}
        + \tilde\kappa_{4,n-1} \bar\nu_{1}
        + \tilde\kappa_{5} \delta_{n-j}^\circ \\
    \end{aligned} \right] \\
    & \leq \frac{C\phi}{\sqrt{n}} \bar{L}_{3} \left[ 
        \bar{L}_{3} \frac{(\log(ep))^{5/2}}{\underline{\sigma}^4}
    \right] 
    \left[ \begin{aligned}
        & (\tilde\mu_{1,n-1} \bar{L}_{3} 
         +\tilde\mu_{2,n-1} \bar{L}_{4}^{1/2}
         +\tilde\mu_{3,n-1} \bar\nu_{q}^{1/(q-2)})
        \frac{1}{\delta} \\
        & +\frac{\log(ep)\sqrt{\log(pn)}}{\sigma_{\min}} \frac{\bar\nu_{1}}{\delta} \\
        & + \frac{\sqrt{\log(ep)\log(pn)}}{\sigma_{\min}} 
        \log\left(1 + \frac{\sqrt{n}\underline{\sigma}}{\delta}\right)
    \end{aligned} \right], \\
\end{aligned}
\end{equation*}
and a similar upper bound for $\mathfrak{T}_{3,2}$
\begin{equation*}
\begin{aligned}
    & \mathfrak{T}_{3,2} \\
    & \leq \frac{C\phi}{\sqrt{n}} \sum_{j=\ceil{n/2}}^{n-1} \sum_{k=\ceil{n/2}}^{j-2} \bar{L}_{3} \left[ 
        \bar{\nu}_{q} \frac{(\log(ep))^{(q+2)/2}}{(\delta_{n-k}/2)^{q+2}}
    \right] 
    \left[ \begin{aligned}
        & \tilde\kappa_{1,n-1} \bar{L}_{3}
        + \tilde\kappa_{2,n-1} \bar{L}_{4}^{1/2} \\
        & + \tilde\kappa_{3,n-1} \bar\nu_{q}^{1/(q-2)}
        + \tilde\kappa_{4,n-1} \bar\nu_{1}
        + \tilde\kappa_{5} \delta_{n-j}^\circ \\
    \end{aligned} \right] \\
    & \leq \frac{C\phi}{\sqrt{n}} \sum_{j=\ceil{n/2}}^{n-1} \bar{L}_{3} \left[ 
        \bar{\nu}_{q} \frac{(\log(ep))^{(q+2)/2}}{\underline{\sigma}^2 (\delta_{n-j}/2)^{q}}
    \right] 
    \left[ \begin{aligned}
        & \tilde\kappa_{1,n-1} \bar{L}_{3}
        + \tilde\kappa_{2,n-1} \bar{L}_{4}^{1/2} \\
        & + \tilde\kappa_{3,n-1} \bar\nu_{q}^{1/(q-2)}
        + \tilde\kappa_{4,n-1} \bar\nu_{1}
        + \tilde\kappa_{5} \delta_{n-j}^\circ \\
    \end{aligned} \right] \\
    & \leq \frac{C\phi}{\sqrt{n}} \bar{L}_{3} \left[ 
        \bar{\nu}_{q} \frac{(\log(ep))^{(q+2)/2}}{\underline{\sigma}^4 (\delta/2)^{q-3}}
    \right] 
    \left[ \begin{aligned}
        & (\tilde\mu_{1,n-1} \bar{L}_{3} 
         +\tilde\mu_{2,n-1} \bar{L}_{4}^{1/2}
         +\tilde\mu_{3,n-1} \bar\nu_{q}^{1/(q-2)})
        \frac{1}{\delta} \\
        & +\frac{\log(ep)\sqrt{\log(pn)}}{\sigma_{\min}} \frac{\bar\nu_{1}}{\delta} \\
        & + \frac{\sqrt{\log(ep)\log(pn)}}{\sigma_{\min}}
    \end{aligned} \right], \\
\end{aligned}
\end{equation*}
The upperbounds for $\mathfrak{T}_{1,1}$ and $\mathfrak{T}_{1,2}$ are similar to those for $\mathfrak{T}_{2,1}$ and $\mathfrak{T}_{2,2}$ in \cref{sec:pf_1_dep_3P}. 
In sum, as long as $\delta \geq \bar\nu_1 \sqrt{\log(ep)} \overset{\text{\cref{eq:nu_1_vs_sigma_min}}}{\geq} \sigma_{\min} $ and $\phi > 0$,
\begin{equation*}
\begin{aligned}
    & \sqrt{n} \mu_{[1,n]} \\
    & \leq \mathfrak{C}'' \phi \left( 
        \bar{L}_{4} \frac{(\log(ep))^{3/2}}{\underline{\sigma}^2 \delta} 
        + \bar{\nu}_{q} \frac{(\log(ep))^{(q-1)/2}}{\underline\sigma^2(\delta/2)^{q-3}} 
        + \bar{L}_{3} \left[ 
            \bar{L}_{3} \frac{(\log(ep))^{5/2}}{\underline{\sigma}^4 \delta}
            + \bar{\nu}_{q} \frac{(\log(ep))^{(q+2)/2}}{\underline{\sigma}^4 (\delta/2)^{q-2}}
    \right] \right) \\
    & \quad \hspace{.5in} \times \left[ \begin{aligned}
        \tilde\mu_{1,n-1} \bar{L}_{3}
        + \tilde\mu_{2,n-1} \bar{L}_{4}^{1/2}
        + \tilde\mu_{3,n-1} \bar\nu_{q}^{1/(q-2)}
    \end{aligned} \right] \\
    & \quad + \mathfrak{C}'' \left[ 
        \frac{\delta \log(ep)}{\sigma_{\min}}  
        + \frac{\sqrt{\log(ep)}}{\phi \sigma_{\min}}
        + \bar{L}_{3} \frac{(\log(ep))^{2}}{\underline{\sigma}^2\sigma_{\min}} 
        + \phi \bar{\nu}_{q} \frac{(\log(ep))^{(q+1)/2}}{(q-4) \underline{\sigma}^{2}\sigma_{\min} (\delta/2)^{q-4}} 
    \right] \\
    & \quad + \mathfrak{C}'' \phi \left[
        \bar{L}_{4} \frac{(\log(ep))^{5/2}}{\underline{\sigma}^2\sigma_{\min}}
        + \bar{L}_{3} \bar{L}_{3} \frac{(\log(ep))^{7/2}}
        {\underline{\sigma}^{4}\sigma_{\min}}
        + \bar{L}_{3} \bar{\nu}_{q} \frac{{(\log(ep))}^{(q+4)/2}}
        {\underline{\sigma}^{4}\sigma_{\min} (\delta/2)^{q-3}}
    \right] \log(en) \\
    & \quad + \mathfrak{C}'' \phi \left[
        \bar{L}_{4} \frac{(\log(ep))^{3/2}}{\underline{\sigma}^2} \log\left(en\right) 
        + \bar{\nu}_{q} \frac{(\log(ep))^{(q-1)/2}}{(q-4) \underline\sigma^2 (\delta/2)^{q-4}} 
    \right] 
    \frac{\sqrt{\log(ep)\log(pn)}}{\sigma_{\min}}\\
    & \quad + \mathfrak{C}'' \phi \bar{L}_{3} \left[ 
        \bar{L}_{3} \frac{(\log(ep))^{5/2}}{\underline{\sigma}^4} \log\left(en\right)
        + \bar{\nu}_{q} \frac{(\log(ep))^{(q+2)/2}}{\underline{\sigma}^4 (\delta/2)^{q-3}}
    \right]
    \frac{\sqrt{\log(ep)\log(pn)}}{\sigma_{\min}},
\end{aligned}
\end{equation*}
where $\mathfrak{C}''$ is a universal constant whose value does not change over lines.
Taking $\delta = \max\{8 \mathfrak{C}'', 2\} \left( 
    \frac{\bar{L}_{3}}{\underline{\sigma}^2} \sqrt{\log(ep)}
    + \left(\frac{\bar{L}_{4}}{\underline{\sigma}^2}\right)^{\frac{1}{2}} 
    + \left(\frac{\bar\nu_{q}}{\underline{\sigma}^2}\right)^{\frac{1}{q-2}} 
\right) \sqrt{\log(ep)} \geq \bar\nu_1 \sqrt{\log(ep)}$ and $\phi = \frac{1}{\delta \sqrt{\log(ep)}}$, 
\begin{equation*}
\begin{aligned}
    & \sqrt{n} \mu_{[1,n]} \\
    & \leq \frac{1}{2} \max_{j<n}\tilde\mu_{1,j} \bar{L}_{3}
    + \frac{1}{2} \max_{j<n}\tilde\mu_{2,j} \bar{L}_{4}^{1/2}
    + \frac{1}{2} \max_{j<n}\tilde\mu_{3,j} \bar\nu_{q}^{1/(q-2)} \\
    & \quad + \mathfrak{C}^{(3)} \left(
        \bar{L}_{3} \frac{(\log(ep))^{3/2}}{\underline{\sigma}^2}
        + \bar{L}_{4}^{1/2} \frac{\log(ep)}{\underline{\sigma}}
        + \bar\nu_{q}^{1/(q-2)} \frac{\log(ep)}{\underline{\sigma}^{2/(q-2)}}
    \right)
    \frac{\sqrt{\log(pn)}}{\sigma_{\min}} \log\left(e n\right) \\
\end{aligned}
\end{equation*}
for another universal constant $\mathfrak{C}^{(3)}$, whose value only depends on $\mathfrak{C}''$.
Taking $\mathfrak{C}_1 = \mathfrak{C}_2 = \mathfrak{C}_3 = \max\{2 \mathfrak{C}^{(3)}, 1\}$,
\begin{equation*}
\begin{aligned}
    \tilde\mu_{1,n} 
    & = \mathfrak{C}_1 \frac{(\log(ep))^{3/2}\sqrt{\log(pn)}}{\underline\sigma^2\sigma_{\min}} \log\left(e n\right), \\
    \tilde\mu_{2,n}
    & = \mathfrak{C}_2 \frac{\log(ep)\sqrt{\log(pn)}}{\underline\sigma \sigma_{\min}} \log\left(e n\right) \\
    \tilde\mu_{3,n}
    & = \mathfrak{C}_3 \frac{\log(ep)\sqrt{\log(pn)}}{\underline\sigma^{2/(q-2)} \sigma_{\min}} \log\left(e n\right)
\end{aligned}
\end{equation*}
satisfies
\begin{equation*}
\begin{aligned}
    \sqrt{n} \mu_{[1,n]}
    \leq \tilde\mu_{1,n} \bar{L}_{3}
    + \tilde\mu_{2,n} \bar{L}_{4}^{1/2}
    + \tilde\mu_{3,n} \bar\nu_{q}^{1/(q-2)},
\end{aligned}
\end{equation*}
which proves \eqref{eq:hypothesis_BE_4P} at $n$. By a mathematical induction, the induction hypotheses hold for all $n$, and it concludes our proof.

\subsection{Proof of Theorem 3.1} \label{sec:pf_1_dep_bootstrap}


Due to Theorem 1.1 in \cite{fang2020high} (in the form shown as in Lemma 2.1 in \citetalias{chernozhukov2020nearly}), 
\begin{equation*}
    \mu(X_{[1,n]}, \tilde{Y}) \leq \mu(X_{[1,n]}, Y_{[1,n]}) + \frac{\Delta}{\sigma_{\min}^2} \log(ep) \left( 1 \vee \abs*{\log\frac{\Delta}{\sigma_{\min}^2}} \right),
\end{equation*}
where $\Delta \equiv \norm{\Var[\frac{1}{\sqrt{n}} Y_{[1,n]}] - \Var[\tilde{Y} | \mathcal{X}_{[1,n]}]}_\infty = \norm{\Sigma - \tilde\Sigma}_\infty$. 
%
Due to the optimality of $\tilde\Sigma$, 
\begin{equation*}
    \norm{\Sigma - \tilde\Sigma}_\infty 
    \leq \norm{\Sigma - \hat\Sigma}_\infty
    + \norm{\tilde\Sigma - \hat\Sigma}_\infty
    \leq 2 \norm{\Sigma - \hat\Sigma}_\infty.
\end{equation*}
where, conditional on $\mathcal{X}_{[1,n]}$, 
\begin{equation*}
    \hat\Sigma
    \equiv \sum_{i=1}^n \left( X_i X_i^\top 
    + X_i X_{i+1}^\top + X_{i+1} X_i^\top \right),
\end{equation*}
while the target variance is $\Sigma \equiv \sum_{i=1}^n \Exp[  X_i X_i^\top + X_i X_{i+1}^\top + X_{i+1} X_i^\top ]$.
In other words, $\hat\Sigma$ is centered around $\Sigma$, and an upperbound for $\Delta$ is driven by the law of large numbers. Specifically, for $i \in [n]$, let 
\begin{equation*}
\begin{aligned}
    S_i 
    & \equiv  X_i X_i^\top + X_i X_{i+1}^\top + X_{i+1} X_i^\top
    - \Exp\left[
        X_i X_i^\top + X_i X_{i+1}^\top + X_{i+1} X_i^\top
    \right].
\end{aligned}
\end{equation*}
The summands $(S_i: i \in [n])$ are $3$-ring dependent $p \times p$ mean zero random matrices, and $\Delta = \norm*{\frac{1}{n} \sum_{i=1}^{n} S_i}_\infty$. We divide these summands into three groups: 
for $l \in [3]$ and $j \in [\floor{n/3}]$, let $T^{(l)}_j \equiv S_{3(j-1)+l}$, except for $T^{(3)}_{\floor{n/3}}$, which is defined as $T^{(3)}_{\floor{n/3}} \equiv S_{3\floor{n/3}} + \dots + S_n$. We note that each $(T^{(l)}_j: j \in [\floor{n/3}])$ for $l \in [3]$ is an independent sequence of mean zero Gaussian random matrices, whose entries have finite $q/2$-th moments which are bounded by $3 L_{q,j}$ and $3 \nu_{q,j}$. Based on \cref{thm:conc_ineq_q}, for each $l \in [3]$, we obtain
\begin{equation} \label{eq:conc_ineq_Sigma}
\begin{aligned}
    \norm*{\frac{1}{n} \tsum_{j=1}^{\floor{n/3}} T^{(l)}_j}_\infty 
    & \leq C \bar{L}_{\min\{4, q\}}^{1/2} \left(\frac{\bar{\nu}_{q}}{\delta} \right)^{\max\{2/q-1/2,0\}}
    \left(\frac{\log(2p/\delta)}{n}\right)^{\min\{1-2/q, 1/2\}} \\
    & \quad + C \left(\frac{\bar{\nu}_{q}}{\delta} \right)^{2/q} \left(\frac{\log(2p/\delta)}{n}\right)^{1-2/q},
\end{aligned}
\end{equation}
with probability at least $1-\delta$ for some universal constant $C > 0$. Summing this upperbound over $l \in [3]$, we obtain the desired result. The proof of \cref{thm:conc_ineq_q} is provided in \cref{sec:pf_conc_ineq_q}.



\begin{lemma} \label{thm:conc_ineq_q}
    Suppose that $X_1, \dots, X_n$ are independent $p$-dim random vectors with finite $q$-th moments for some $q > 1$. Specifically, we denote
    \begin{equation*}
        L_{q,i} \equiv \max_{k \in [p]} \Exp[\abs{X_{i,k}}^q],
        \quad
        \nu_{q,i} \equiv \Exp[\norm{X_{i}}_\infty^q],
    \end{equation*}
    $\bar{L}_{q} \equiv \frac{1}{n} \sum_{i \in [n]} L_{q,i}$,
    and $\bar\nu_{q} \equiv \frac{1}{n} \sum_{i \in [n]} \nu_{q,i}$. Then with probability at least $1 - \delta$,
    \begin{equation*}
    \begin{aligned}
        \norm*{\frac{1}{n}\sum_{i=1}^n X_i}_\infty
        & \leq C \bar{L}_{\min\{2, q\}}^{1/2} \left(\frac{\bar{\nu}_q}{\delta} \right)^{\max\{1/q-1/2,0\}}
        \left(\frac{\log(2p/\delta)}{n}\right)^{\min\{1-1/q, 1/2\}} \\
        & \quad + C \left(\frac{\bar{\nu}_q}{\delta} \right)^{1/q} \left(\frac{\log(2p/\delta)}{n}\right)^{1-1/q},
    \end{aligned}
    \end{equation*}
    where $C > 0$ is a universal constant.
\end{lemma}

\subsection{Proof of Theorem 3.3} \label{sec:pf_psg_error}

By well known-properties of sub-gradient methods \citep[see, e.g.][Section 2]{boyd2003subgradient}, 
\begin{equation} \label{eq:psg_basic}
    \norm{\tilde\Sigma^{(k^*)} - \hat\Sigma}_\infty - \norm{\tilde\Sigma - \hat\Sigma}_\infty 
    \leq \frac{R^2}{2 \eta K} + \frac{G^2 \eta}{2},
\end{equation}
where $R \equiv \norm{\tilde\Sigma^{(0)} - \tilde\Sigma}_F$, and $G \equiv \max_{k \in [K]} \norm{g^{(k)}}_F$. Due to the choice of $g^{(k)}$ in \cref{eq:subgradient}, $G \leq 2$ almost surely. 
%
We also observe that
\begin{equation*}
    \norm{\tilde\Sigma^{(0)} - \tilde\Sigma}_F
    \leq \norm{\tilde\Sigma^{(0)}}_F + \norm{\tilde\Sigma}_F
    \leq \sqrt{p} \norm{\tilde\Sigma^{(0)}}_\infty + p \norm{\tilde\Sigma}_\infty.
\end{equation*}
Conditional on $\norm{\hat\Sigma - \Sigma}_\infty \leq \frac{1}{2} \norm{\Sigma}_\infty$, 
\begin{equation*}
\begin{aligned}
    \frac{1}{2} \norm{\Sigma}_\infty
    \leq \norm{\Sigma}_\infty - \norm{\hat\Sigma - \Sigma}_\infty
    \leq \norm{\hat\Sigma}_\infty 
    \leq \norm{\Sigma}_\infty + \norm{\hat\Sigma - \Sigma}_\infty
    \leq \frac{3}{2} \norm{\Sigma}_\infty 
\end{aligned}
\end{equation*}
almost surely. At the same time, due to the optimality of $\tilde\Sigma$,
\begin{equation*}
\begin{aligned}
    \norm{\tilde\Sigma}_\infty 
    & \leq \norm{\hat\Sigma}_\infty + \norm{\tilde\Sigma - \hat\Sigma}_\infty \\
    & \leq \norm{\hat\Sigma}_\infty + 2 \norm{\hat\Sigma - \Sigma}_\infty \\
    & \leq \norm{\hat\Sigma}_\infty + \norm{\Sigma}_\infty
    \leq 3 \norm{\hat\Sigma}_\infty,
\end{aligned}
\end{equation*}
almost surely. Hence, under the same condition, plugging in the step size $\eta = p\norm{\hat\Sigma}_\infty / \sqrt{K}$ to \cref{eq:psg_basic},
\begin{equation*}
\begin{aligned}
    \norm{\tilde\Sigma^{(k^*)} - \hat\Sigma}_\infty - \norm{\tilde\Sigma - \hat\Sigma}_\infty 
    & \leq C \cdot \frac{p \norm{\hat\Sigma}_\infty}{\sqrt{K}} \\
    & \leq \frac{3C}{2} \frac{p \norm{\Sigma}_\infty}{\sqrt{K}},
\end{aligned}
\end{equation*}
almost surely for some universal constant $C > 0$.

\smallskip
Now we give an upper bound for probability of the conditioning event, $\norm{\hat\Sigma - \Sigma}_\infty \leq \frac{1}{2} \norm{\Sigma}_\infty$.
According to \cref{eq:conc_ineq_Sigma}, with probability at least $1 - \delta$,
\begin{equation*}
\begin{aligned}
    \norm{\hat\Sigma - \Sigma}_\infty 
    & \leq C \bar{L}_{\min\{4, q\}}^{1/2} \left(\frac{\bar{\nu}_{q}}{\delta} \right)^{\max\{2/q-1/2,0\}}
    \left(\frac{\log(2p/\delta)}{n}\right)^{\min\{1-2/q, 1/2\}} \\
    & \quad + C \left(\frac{\bar{\nu}_{q}}{\delta} \right)^{2/q} \left(\frac{\log(2p/\delta)}{n}\right)^{1-2/q}. \\
\end{aligned}
\end{equation*}
Hence solving 
\begin{equation*}
\begin{aligned}
    \frac{1}{2} \norm{\Sigma}_\infty 
    & \geq C \bar{L}_{\min\{4, q\}}^{1/2} \left(\frac{\bar{\nu}_{q}}{\delta} \right)^{\max\{2/q-1/2,0\}}
    \left(\frac{\log(2p/\delta)}{n}\right)^{\min\{1-2/q, 1/2\}} \\
    & \quad + C \left(\frac{\bar{\nu}_{q}}{\delta} \right)^{2/q} \left(\frac{\log(2p/\delta)}{n}\right)^{1-2/q}, \\
\end{aligned}
\end{equation*}
we obtain
\begin{equation*}
\begin{aligned}
    & \Pr\left[
        \norm{\hat\Sigma - \Sigma}_\infty \geq \frac{1}{2} \norm{\Sigma}_\infty
    \right] \\
    & \leq C p \cdot \exp\left( - \frac{n \norm{\Sigma}_\infty^2}{C \bar{L}_4} \right) 
    + \left( \frac{C}{\norm{\Sigma}_\infty} \right)^{q/2} \left( \frac{\log(2p/\delta_\circ)}{n} \right)^{q/2 - 1} \bar\nu_q,
\end{aligned}
\end{equation*}
where $\delta_\circ = \left( \frac{C}{\norm{\Sigma}_\infty} \right)^{q/2} \left( \frac{1}{n} \right)^{q/2 - 1} \bar\nu_q$ for some universal constant $C > 0$.


\smallskip
Lastly, each update in \cref{eq:psg_update} has rank at most three, so once the eigendecomposition of $\tilde\Sigma^{(k)}$ is known, the next eigendecomposition can be calculated in $O(p^{2})$ operations \citep[Section 5]{golub1973some}.

\section{Proofs of Lemmas}

\subsection{Proof of Lemma~\ref{thm:1_to_m}} \label{sec:pf_1_to_m}

For $j \in \ints_{n'}$, let $I_j \equiv ((j-1)m, jm]_n$ for $j \neq n'$ and $I_j \equiv ((n'-1)m, n]_n$ for $j = n'$, where $(\cdot,\cdot]_n$ are intervals in $\ints_n$ as defined in \cref{sec:Z_n}. Then $X'_j = X_{I_j}$.

First, we show that $X'_1, \dots, X'_{n'}$ are $1$-ring dependent. Suppose that $d_{n'}(j_1,j_2) > 1$, where $d_{n'}(j_1,j_2) \equiv \min\{\abs{j_1-j_2}, n'-\abs{j_1-j_2}\}$ is the distance between $j_1$ and $j_2$ in $\ints_{n'}$ as defined in \cref{sec:Z_n}. We show $X'_{j_1} \indep X'_{j_2}$ in the following two cases separately: (i) Both $j_1$ and $j_2$ are not $n'$; or (ii) $j_1 = n'$.
\begin{itemize}
    \item {\bf Case (i), both $j_1$ and $j_2$ are not $n'$:} Let $i_1$ and $i_2$ be arbitrary elements in $I_{j_1}$ and $I_{j_2}$, respectively. Then
    \begin{equation*}
    \begin{aligned}
        \abs{i_1 - i_2} 
        & = \max\{i_1 - i_2, i_2 - i_1\}
        > \max\{ (j_1-1)m - j_2m, (j_2-1)m - j_1m \} \\
        & = m (\max\{j_1 - j_2, j_2 - j_1\} - 1)
        = m (\abs{j_1 - j_2} - 1) \\
        & \geq m (d_{n'}(j_1, j_2) - 1) \geq m,
    \end{aligned}
    \end{equation*}
    and
    \begin{equation*}
    \begin{aligned}
        \abs{i_1 - i_2} 
        & = \max\{i_1 - i_2, i_2 - i_1\}
        < \max\{ j_1m - (j_2-1)m, j_2m - (j_1-1)m \} \\
        & = m (\max\{j_1 - j_2, j_2 - j_1\} + 1)
        = m (\abs{j_1 - j_2} + 1) \\
        & \leq m (n' - d_{n'}(j_1, j_2) + 1) \leq n - m.
    \end{aligned}
    \end{equation*}
    In sum,
    \(
        d_n(i_1, i_2) \equiv \min\{\abs{i_1-i_2}, n-\abs{i_1-i_2}\}
        > m,
    \)
    and $X_{i_1} \indep X_{i_2}$ as $X_1, \dots, X_n$ are $m$-ring dependent. Thus $X'_{j_1} = X_{I_{j_1}} \indep X_{I_{j_2}} = X'_{j_2}$.
    
    \item {\bf Case (ii), $j_1 = n'$:} In this case $j_2 \in [2, n'-2]_{n'}$. Let $i_1$ and $i_2$ be arbitrary elements in $I_{j_1}$ and $I_{j_2}$, respectively. Then
    \begin{equation*}
    \begin{aligned}
        \abs{i_1 - i_2} 
        & = \max\{i_1 - i_2, i_2 - i_1\}
        > \max\{ (n'-1)m - j_2m, (j_2-1)m - n \} \\
        & = \max\{(n' - j_2)m, j_2m - n\} - m \\
        & \geq \max\{2m, 2m - n\} - m \geq m,
    \end{aligned}
    \end{equation*}
    and
    \begin{equation*}
    \begin{aligned}
        \abs{i_1 - i_2} 
        & = \max\{i_1 - i_2, i_2 - i_1\}
        < \max\{ n - (j_2-1)m, j_2m - (n'-1)m \} \\
        & = \max\{n - j_2 m, j_2m - n'm\} + m \\
        & \leq \max\{n - 2m, -2m\} + m \leq n - m.
    \end{aligned}
    \end{equation*}
    In sum,
    \(
        d_n(i_1, i_2) \equiv \min\{\abs{i_1-i_2}, n-\abs{i_1-i_2}\}
        > m,
    \)
    and $X_{i_1} \indep X_{i_2}$ as $X_1, \dots, X_n$ are $m$-ring dependent. Thus $X'_{j_1} = X_{I_{j_1}} \indep X_{I_{j_2}} = X'_{j_2}$.
\end{itemize}
Summing up those two cases, we conclude that $X'_1, \dots, X'_{n'}$ are $1$-ring dependent.

Next, we prove the upperbound for $\bar{L}'_q$. For $q > 1$,
\begin{equation*}
\begin{aligned}
    \bar{L}'_q 
    & = \frac{1}{n'} \sum_{j=1}^{n'} \Exp[\abs{X'_{j}}^q]
    = \frac{1}{n'} \sum_{j=1}^{n'} \Exp[\abs{\tsum_{i \in I_j} X_i}^q] \\
    & \overset{\mathrm{(i)}}{\leq} \frac{1}{n'} \sum_{j=1}^{n'} (\tsum_{i \in I_j} (\Exp[\abs{X_i}^q])^{1/q} )^q 
    = \frac{1}{n'} \sum_{j=1}^{n'} \abs{I_j}^q \left( 
       \frac{1}{\abs{I_j}} \tsum_{i \in I_j} (\Exp[\abs{X_i}^q])^{1/q} 
    \right)^q \\
    & \overset{\mathrm{(ii)}}{\leq} \frac{1}{n'} \sum_{j=1}^{n'} \abs{I_j}^{q-1} \sum_{i \in I_j} \Exp[\abs{X_i}^q]
    \leq \frac{1}{n'} \max_j \abs{I_j}^{q-1} \sum_{i=1}^{n} \Exp[\abs{X_i}^q] \\
    & \leq \frac{m}{n} (2m)^{q-1} \sum_{i=1}^{n} \Exp[\abs{X_i}^q]
    = 2^{q-1} m^q \bar{L}_q,
\end{aligned}
\end{equation*}
where $\mathrm{(i)}$ follows the triangle inequality, and $\mathrm{(ii)}$ follows the Young's inequality as the map $x \mapsto x^q$ is convex for $q > 1$.
The upperbound for $\bar{\nu}'_q$ is derived similarly.   

Last, we prove Assumption~\eqref{assmp:min_ev} with $(\underline{\sigma}'_I, \underline{\sigma}')$ instead of $(\underline{\sigma}_I, \underline{\sigma})$. For any interval $I'$ in $\ints_{n'}$, 
\begin{equation*}
\begin{aligned}
    \underline{\sigma}'^2_{I'}
    & = \lambda_{\min}(\Var[Y'_{I'} | \mathscr{Y}'_{I'^\cmpl}])
    \overset{\mathrm{(i)}}{\geq} \lambda_{\min}(\Var[Y_{(\sqcup_{j \in I'} I_j)} | \mathscr{Y}_{(\sqcup_{j \in I'} I_j)^\cmpl}]) \\
    & \overset{\mathrm{(ii)}}{\geq} \underline{\sigma}^2 \cdot \max\{ \tsum_{j \in I'} \abs{I_j} - 2m, 0\}
    \geq \underline{\sigma}^2 \cdot \max\{ \abs{I'} m - 2m, 0 \}
    = \underline{\sigma}'^2 \cdot \max\{ \abs{I'} - 2, 0 \},
\end{aligned}
\end{equation*}
where $\mathrm{(i)}$ follows that $\mathscr{Y}'_{I'^\cmpl}$ is determined by $\mathscr{Y}_{(\sqcup_{j \in I'} I_j)^\cmpl}$, and $\mathrm{(ii)}$ follows the original Assumption~\eqref{assmp:min_ev} with $(\underline{\sigma}_I, \underline{\sigma})$. Assumption ~\eqref{assmp:min_var} with $(\sigma'_{\min,I}, \sigma'_{\min})$ instead of $(\sigma_{\min,I}, \sigma_{\min})$ is derived similarly, and Assumption~\eqref{assmp:var_ev} with $(\underline{\sigma}', \sigma'_{\min})$ instead of $(\underline{\sigma}, \sigma_{\min})$ is straightforward.

\subsection{Proof of Lemma~\ref{thm:kappa_comparison}}

For the first inequality in the first statement,
\begin{equation*}
\begin{aligned}
    \kappa_{(i_1,i_2)}(\delta) 
    & = \sup_{r\in\reals^p} \Pr[{X}_{(i_1,i_2)} \in A_{r,\delta}] \\
    & = \sup_{r\in\reals^p} \Exp\left[ \Pr[{X}_{(i_1,i_2)} \in A_{r,\delta} | \mathscr{X}_{\{i_2\}}] \right] \\
    & \leq \sup_{r\in\reals^p} \underset{\mathscr{X}_{\{i_2\}}}{\mathrm{esssup}}
    ~ \Pr[{X}_{(i_1,i_2)} \in A_{r,\delta} | \mathscr{X}_{\{i_2\}}] \\
    & = \kappa_{(i_1,i_2)|\{i_2\}}(\delta). \\
\end{aligned}
\end{equation*}
Similar arguments apply to the other inequalities in the first two statements.
For the first inequality in the third statement,
\begin{equation*}
\begin{aligned}
    \kappa_{(i_1,i_2)|\{i_2\}}(\delta) 
    & = \sup_{r\in\reals^p} \underset{\mathscr{X}_{\{i_2\}}}{\mathrm{esssup}}
    ~ \Pr[{X}_{(i_1,i_2)} \in A_{r,\delta} | \mathscr{X}_{\{i_2\}}] \\
    & = \sup_{r\in\reals^p} \underset{\mathscr{X}_{\{i_2\}}}{\mathrm{esssup}}
    ~ \Exp\left[ 
        \Pr[{X}_{(i_1,i_2-1)} \in A_{r - X_{i_2-1},\delta} 
        | \mathscr{X}_{\{i_2-1, i_2\}}] 
    \big| \mathscr{X}_{\{i_2\}} \right] \\
    & \overset{\text{(*)}}{=} \sup_{r\in\reals^p} \underset{\mathscr{X}_{\{i_2\}}}{\mathrm{esssup}}
    ~ \Exp\left[ 
        \Pr[{X}_{(i_1,i_2-1)} \in A_{r - X_{i_2-1},\delta} 
        | \mathscr{X}_{\{i_2-1\}}] 
    \big| \mathscr{X}_{\{i_2\}} \right] \\
    & \leq \sup_{r\in\reals^p} 
    \underset{\mathscr{X}_{\{i_2-1\}}}{\mathrm{esssup}}
    ~ \Pr[{X}_{(i_1,i_2-1)} \in A_{r - X_{i_2-1},\delta} 
    | \mathscr{X}_{\{i_2-1\}}]  \\
    & = \kappa_{(i_1,i_2-1)|\{i_2-1\}}(\delta), \\
\end{aligned}
\end{equation*}
where $\text{(*)}$ is due to $1$-dependence or $1$-ring dependence.
Similar arguments apply to the other inequalities in the third and fourth statements. For the first inequality in the last statement,
\begin{equation*}
\begin{aligned}
    \kappa_{(i_1,i_2)}(\delta) 
    & = \sup_{r\in\reals^p} \Pr[{X}_{(i_1,i_2)} \in A_{r,\delta}] \\
    & \leq \sup_{r\in\reals^p} \Pr[{X}_{(i_1,i_2)} \in A_{r,\delta'}] \\
    & = \kappa_{(i_1,i_2)}(\delta'). \\
\end{aligned}
\end{equation*}
Similar arguments apply to the other inequalities in the last statement.


\subsection{Proof of Lemma~\ref{thm:smoothing}} \label{sec:pf_smoothing} 

The smoothing lemma is the result of the serial application of the following two lemmas. \cref{thm:phi_smoothing} is a corollary of Theorem 2.1 in \citetalias{chernozhukov2020nearly}. We provide a standalone proof in \cref{sec:pf_phi_smoothing}.

\begin{lemma}[Lemma 1, \citetalias{kuchibhotla2020high}]
\label{thm:G_smoothing}
Suppose that $X$ is a $p$-dimensional random vector, and $Y \dist N(0,\Sigma)$ is a $p$-dimensional Gaussian random vector. Then, for any $\delta > 0$ and a standard Gaussian random vector $Z$, 
\begin{equation*}
    \mu(X, Y) \leq C \mu(X + \delta Z, Y + \delta Z) 
    + C \frac{\delta \log(ep)}{\sqrt{\min_{k \in [p]} \Sigma_{kk}}}.
\end{equation*}
\end{lemma}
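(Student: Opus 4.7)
The plan is a sandwich-and-swap argument that replaces the sharp indicator $\mathbf{1}\{\cdot \in A\}$ by the indicator of a slightly dilated or eroded hyperrectangle $A^{(\pm \eta)}$ (the coordinate-wise $\eta$-enlargement and $\eta$-erosion), and then routes every resulting geometric error through the Gaussian anti-concentration of the convolved random vector $Y + \delta Z$.

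The starting point is the pair of deterministic inclusions $\{x \in A,\, \|\delta z\|_\infty \le \eta\} \subseteq \{x + \delta z \in A^{(\eta)}\}$ and $\{x + \delta z \in A^{(-\eta)},\, \|\delta z\|_\infty \le \eta\} \subseteq \{x \in A\}$. Integrating each against an independent $Z \sim N(0, I_p)$ produces, for each $W \in \{X, Y\}$,
\begin{equation*}
P(W + \delta Z \in A^{(-\eta)}) - \tau \;\le\; P(W \in A) \;\le\; P(W + \delta Z \in A^{(\eta)}) + \tau,
\end{equation*}
where $\tau := P(\|\delta Z\|_\infty > \eta)$. Combining the upper bound for $W = X$ with the lower bound for $W = Y$ and inserting $\pm P(Y + \delta Z \in A^{(\eta)})$ gives
\begin{equation*}
|P(X \in A) - P(Y \in A)| \;\le\; \mu(X + \delta Z, Y + \delta Z) + P\bigl(Y + \delta Z \in A^{(\eta)} \setminus A^{(-\eta)}\bigr) + 2\tau,
\end{equation*}
since $A^{(\eta)} \in \mathcal{R}_p$.

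The middle term is a shell of $\ell_\infty$-width $2\eta$ around $\partial A$; because $Y + \delta Z$ is Gaussian with diagonal covariance entries $\Sigma_{ii} + \delta^2 \ge \Sigma_{ii}$, the hyperrectangle extension of Nazarov's inequality (Lemma~\ref{thm:G_anti_conc} applied in the doubled coordinates $(x,-x) \in \mathbb{R}^{2p}$) bounds it by $C \eta \sqrt{\log(ep)/\min_i \Sigma_{ii}}$. Choosing $\eta = c\,\delta\sqrt{\log(ep)}$ for a universal constant $c$ converts this into $C' \delta \log(ep) / \sqrt{\min_i \Sigma_{ii}}$, matching the target rate; taking the supremum over $A \in \mathcal{R}_p$ then yields the stated bound up to the stray tail $2\tau = 2 P(\|Z\|_\infty > c\sqrt{\log(ep)}) \le 4\, p^{1 - c^2/2}$.

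The main obstacle is absorbing this scale-free tail $\tau$, which does not vanish with $\delta$ and so cannot be bounded by $\delta \log(ep)/\sqrt{\min_i \Sigma_{ii}}$ uniformly in $(\delta, p)$. I would handle it by picking $c$ large enough that $\tau$ decays polynomially in $p$ and then splitting into regimes: when $\delta \log(ep)/\sqrt{\min_i \Sigma_{ii}} \gtrsim 1/p$ the tail is dominated by the target error, and otherwise the trivial estimate $\mu \le 1$ already covers the claim after inflating $C$. The conceptual point worth flagging is the asymmetric pairing of $A^{(\eta)}$ on one side of the chain and $A^{(-\eta)}$ on the other: this forces the geometric error to live on the Gaussian $Y + \delta Z$ rather than on $X$, so no anti-concentration inequality for the non-Gaussian $X$ is ever required --- precisely the assertion a Berry--Esseen bound is trying to make available in the first place.
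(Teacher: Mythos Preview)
The paper does not prove this lemma in-text; it is quoted from \citet{kuchibhotla2020high}. Your sandwich mechanism --- erode/dilate $A$ by $\eta$, pass to $X+\delta Z$ and $Y+\delta Z$, and route the resulting shell $A^{(\eta)}\setminus A^{(-\eta)}$ through Nazarov on the Gaussian $Y+\delta Z$ --- is exactly the standard device behind such smoothing inequalities, and every step up to and including the shell estimate is correct. The asymmetric pairing you highlight (placing the shell on the $Y$-side so that no anti-concentration of $X$ is ever invoked) is indeed the whole point.

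The gap is your disposal of $\tau$ in the second regime. Saying ``$\mu\le 1$ covers the claim after inflating $C$'' does not close it: when both $\mu(X+\delta Z,Y+\delta Z)$ and $\delta\log(ep)/\sqrt{\min_i\Sigma_{ii}}$ are of order $\tau/N$ for large $N$, the right-hand side of the lemma is of order $C\tau/N$, yet your chain only delivers $\mu\le \mu^\ast+O(\tau)$ --- so you would need $C\gtrsim N$, and $N$ is arbitrary. No universal constant survives this.

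The correct move is not a case split on the size of $\delta\log(ep)/\sqrt{\min_i\Sigma_{ii}}$ versus a \emph{fixed} $\tau$, but to let the truncation level absorb the slack. Your sandwich inequality holds for every $\eta>0$, so minimise: take $\eta=\delta\sqrt{2\log\bigl(2ep\cdot\max\{1,\sqrt{\min_i\Sigma_{ii}}/(\delta\log(ep))\}\bigr)}$, which forces $\tau\le\delta\log(ep)/\sqrt{\min_i\Sigma_{ii}}$ outright. The Nazarov term then carries an extra factor $\sqrt{1+\log_+\bigl(\sqrt{\min_i\Sigma_{ii}}/(\delta\log(ep))\bigr)/\log(ep)}$, which is $O(1)$ whenever $\delta\ge\sqrt{\min_i\Sigma_{ii}}/\mathrm{poly}(p)$ --- and since the bound is vacuous once $\delta\log(ep)/\sqrt{\min_i\Sigma_{ii}}\ge 1$, this covers every nontrivial choice of $\delta$ that the paper (or anyone) actually makes.
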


\begin{lemma}
\label{thm:phi_smoothing}
Suppose that $X$ is a $p$-dimensional random vector, and $Y \dist N(0,\Sigma)$ is a $p$-dimensional Gaussian random vector. Then, for any $\phi > 0$, 
\begin{equation*}
\begin{aligned}
    \mu\left(X, Y\right) 
    \leq \sup_{r \in \reals^p} \abs*{ \Exp[f_{r,\phi}(X)] - \Exp[f_{r,\phi}(Y)]}
    + \frac{C}{\phi} \sqrt{\frac{\log(ep)}{\min_{k \in [p]} \Sigma_{kk}}}.
\end{aligned}
\end{equation*}
\end{lemma}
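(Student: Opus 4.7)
The plan is to sandwich the indicator $\mathbf{1}\{x \in A_r\}$ between two shifted copies of $f_{r,\phi}$, then compare expectations against $X$ and $Y$ separately, collecting the slack into a Gaussian strip probability that Nazarov's inequality handles.

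First I would record the sandwich inequality
\begin{equation*}
\mathbf{1}\{x \preceq r - (1/\phi)\mathbf{1}\} \;\le\; f_{r - (1/\phi)\mathbf{1},\,\phi}(x) \;\le\; \mathbf{1}\{x \preceq r\} \;\le\; f_{r,\phi}(x) \;\le\; \mathbf{1}\{x \preceq r + (1/\phi)\mathbf{1}\},
\end{equation*}
which follows directly from the piecewise-linear definition of $f_{r,\phi}$ in Section~\ref{sec:mu_by_kappa_3N}. Applying expectations under $X$ and $Y$ yields the two chains
$\mathbb{E}[f_{r-(1/\phi)\mathbf{1},\phi}(X)] \le \Pr[X \in A_r] \le \mathbb{E}[f_{r,\phi}(X)]$ and the same for $Y$.

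Next I would subtract these sandwiches in the two possible orientations to bound $\Pr[X \in A_r] - \Pr[Y \in A_r]$ from above by $\mathbb{E}[f_{r,\phi}(X)] - \mathbb{E}[f_{r - (1/\phi)\mathbf{1},\phi}(Y)]$ and from below by $\mathbb{E}[f_{r - (1/\phi)\mathbf{1},\phi}(X)] - \mathbb{E}[f_{r,\phi}(Y)]$. In each case I add and subtract the matching Gaussian expectation, e.g.
\begin{equation*}
\mathbb{E}[f_{r,\phi}(X)] - \mathbb{E}[f_{r-(1/\phi)\mathbf{1},\phi}(Y)] = \bigl(\mathbb{E}[f_{r,\phi}(X)] - \mathbb{E}[f_{r,\phi}(Y)]\bigr) + \bigl(\mathbb{E}[f_{r,\phi}(Y)] - \mathbb{E}[f_{r-(1/\phi)\mathbf{1},\phi}(Y)]\bigr),
\end{equation*}
so that the first parenthesis is controlled by $\sup_{r'} |\mathbb{E}[f_{r',\phi}(X)] - \mathbb{E}[f_{r',\phi}(Y)]|$ and the second is bounded by $\Pr[Y \in A_{r+(1/\phi)\mathbf{1}}] - \Pr[Y \in A_{r-(1/\phi)\mathbf{1}}]$ via the sandwich again.

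Finally I would rewrite that slack probability as $\Pr[Y \in A_{r'', 1/\phi}]$ with $r'' = r$ in the notation $A_{r'',\delta}$ from the excerpt (a Gaussian hyper-rectangular strip of half-width $1/\phi$) and apply the Nazarov anti-concentration bound (Lemma~\ref{thm:G_anti_conc}) to obtain $\le C (1/\phi)\sqrt{\log(ep)/\min_i \Sigma_{ii}}$. Combining the upper and lower bounds and taking supremum over $r$ then gives the claim. The only delicate point is the bookkeeping of the $(1/\phi)\mathbf{1}$ shifts so that both sides of the sandwich reduce to evaluations of the same smoothing $f_{r',\phi}$ indexed over $r'$; no technical obstacle beyond that, since all probabilistic content is off-loaded to Nazarov's inequality.
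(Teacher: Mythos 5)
Your proposal is correct and follows essentially the same route as the paper's own proof: sandwich the rectangle indicator between shifted copies of $f_{r,\phi}$, transfer from $X$ to $Y$ through the $\sup_r$ smoothing discrepancy, and control the residual Gaussian strip of half-width $1/\phi$ by Nazarov's anti-concentration bound (Lemma~\ref{thm:G_anti_conc}). The only cosmetic difference is that you carry both halves of the sandwich simultaneously, whereas the paper treats the two directions $\Pr[X\in A_r]-\Pr[Y\in A_r]$ and $\Pr[Y\in A_r]-\Pr[X\in A_r]$ in two separate chains.
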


For any $\delta > 0$,
\begin{equation} \label{eq:G_smoothing}
    \mu\left(X, Y\right)
    \leq \mu\left(X + \delta Z, Y + \delta Z \right)
    + C \frac{\delta \log(ep)}{\sqrt{\min_{k \in [p]} \Sigma_{kk}}}.
\end{equation}
Then, for any $\delta > 0$ and $\phi > 0$, 
\begin{equation*}
\begin{aligned}
    & \mu\left(X + \delta Z, Y + \delta Z \right) \\
    & \leq C \sup_{r \in \reals^p} \abs*{ \Exp[f_{r,\phi}(X + \delta Z)] - \Exp[f_{r,\phi}(Y + \delta Z)]}
    + \frac{C}{\phi} \sqrt{\frac{\log(ep)}{\min_{k \in [p]} \Sigma_{kk}}} \\
    & \leq C \sup_{r \in \reals^p} \abs*{ \Exp[\rho_{r,\phi}^\delta(X)] - \Exp[\rho_{r,\phi}^\delta(Y)]}
    + \frac{C}{\phi} \sqrt{\frac{\log(ep)}{\min_{k \in [p]} \Sigma_{kk}}}. \\
\end{aligned}
\end{equation*}
In sum,
\begin{equation*}
    \mu(X, Y) 
    \leq C \sup_{r \in \reals^p} \abs*{ \Exp[\rho_{r,\phi}^\delta(X)] - \Exp[\rho_{r,\phi}^\delta(Y)]}
    + C \frac{\delta \log(ep) + \sqrt{\log(ep)}/\phi}{\sqrt{\min_{k \in [p]} \Sigma_{kk}}}.
\end{equation*}

\subsection{Proof of Lemmas~\ref{thm:assmp_induction_N} and \ref{thm:assmp_induction_P}}

Here we prove \cref{thm:assmp_induction_P}, which handles a more complicated dependency structure than that of \cref{thm:assmp_induction_N}. The same proof technique applies to the $1$-dependence cases.
To clarify the following arguments, we define $\tilde{X}_1 \equiv X_{i_1}$, $\tilde{X}_2 \equiv X_{i_1+1}$, \ldots, $\tilde{X}_{\tilde{n}} \equiv X_{i_2}$, where $\tilde{n} = i_2 - i_1 + 1$. Define $\tilde{Y}_{\tilde{i}}$, $\tilde\sigma^2_{\min,\tilde{I}}$ and $\underline{\tilde\sigma}_{\tilde{I}|\tilde{I}'}$ similarly for any $\tilde{i} \in [1,\tilde{n}]$ and $\tilde{I}, \tilde{I}' \subset [1, \tilde{n}]$. For notational ease, let $i$, $I$ and $I'$ be the counterpart in $[1, n]$ of $\tilde{i}$, $\tilde{I}$ and $\tilde{I}'$.

First, because $(X_1, \dots, X_n)$ is $1$-ring dependent, for any $i, i'$ such that $i_1 \leq i < i' \leq i_2 < i_1+n$, $i' - i > 1 \Longrightarrow X_{i} \indep X_{i'}$. Hence 
\begin{equation*}
    1 < \tilde{i}' - \tilde{i} < n-1 
    \Longrightarrow 1 < i' - i 
    \Longrightarrow X_i \indep X_{i'}
    \Longrightarrow \tilde{X}_{\tilde{i}} \indep \tilde{X}_{\tilde{i}'},
\end{equation*}
which proves the $1$-ring dependence of $(\tilde{X}_1, \dots, \tilde{X}_{\tilde{n}})$.
Next, for any $\tilde{I} \subset [1, \tilde{n}]$, 
\begin{equation*}
    \tilde{\sigma}^2_{\min, \tilde{I}} 
    = \min_{k \in [p]} \Var[\tilde{Y}_{\tilde{I}, k}]
    = \min_{k \in [p]} \Var[Y_{I,k}]
    = \sigma^2_{\min, I}
    \geq \sigma^2_{\min} \cdot \abs{I}
    = \sigma^2_{\min} \cdot \abs{\tilde{I}},
\end{equation*}
which proves Assumption~\eqref{assmp:min_var} for $(\tilde{X}_1, \dots, \tilde{X}_{\tilde{n}})$. Also, for any $\tilde{I}, \tilde{I}' \subset [1, \tilde{n}]$,
\begin{equation*}
    \underline{\tilde\sigma}^2_{\tilde{I} | \tilde{I}'} 
    = \lambda_{\min}(\Var[\tilde{Y}_{\tilde{I}} | \tilde{\mathscr{Y}}_{\tilde{I}'}])
    = \lambda_{\min}(\Var[Y_I | \mathscr{Y}_{I'}])
    \geq \underline{\sigma}^2 \cdot \abs{I \cap I'^{\indep}}
    = \underline{\sigma}^2 \cdot \abs{\tilde{I} \cap \tilde{I}'^{\indep}}.
\end{equation*}
Thus Assumption~\eqref{assmp:min_ev} extends to $(\tilde{X}_1, \dots, \tilde{X}_{\tilde{n}})$. Finally,
%
%
Assumption~\eqref{assmp:var_ev} is trivial for $(\tilde{X}_1, \dots, \tilde{X}_{\tilde{n}})$ because we use the same $\sigma_{\min}$ and $\underline{\sigma}$ as those for the original data.

\subsection{Proof of Lemma~\ref{thm:phi_smoothing}} \label{sec:pf_phi_smoothing}

By \cref{thm:G_smoothing},
\begin{equation*}
\begin{aligned}
    \Pr[X \in A_r]
    & \leq \Exp[f_{r,\phi}(X)]
    = \Exp[f_{r,\phi}(Y)] + \Exp[f_{r,\phi}(X)] - \Exp[f_{r,\phi}(Y)] \\
    & \leq \Pr[Y \in A_{r + \frac{1}{\phi}\mathbf{1}}]
    + \Exp[f_{r,\phi}(X)] - \Exp[f_{r,\phi}(Y)] \\
    & \leq \Pr[Y \in A_r] + \frac{C}{\phi}\sqrt{\frac{\log(ep)}{\min_{k \in [p]}\Sigma_{kk}}},
\end{aligned}
\end{equation*}
and similarly,
\begin{equation*}
\begin{aligned}
    \Pr[Y \in A_r]
    & \leq \Pr[Y \in A_{r - \frac{1}{\phi}\mathbf{1}}]
    + \frac{C}{\phi}\sqrt{\frac{\log(ep)}{\min_{k \in [p]}\Sigma_{kk}}} \\
    & \leq \Exp[f_{r - \frac{1}{\phi}\mathbf{1},\phi}(Y)]
    + \frac{C}{\phi}\sqrt{\frac{\log(ep)}{\min_{k \in [p]}\Sigma_{kk}}} \\
    & \leq \Exp[f_{r - \frac{1}{\phi}\mathbf{1},\phi}(X)]
    + \Exp[f_{r - \frac{1}{\phi}\mathbf{1},\phi}(Y)]
    - \Exp[f_{r - \frac{1}{\phi}\mathbf{1},\phi}(X)]
    + \frac{C}{\phi}\sqrt{\frac{\log(ep)}{\min_{k \in [p]}\Sigma_{kk}}} \\
    & \leq \Pr[X \in A_r]
    + \Exp[f_{r - \frac{1}{\phi}\mathbf{1},\phi}(Y)]
    - \Exp[f_{r - \frac{1}{\phi}\mathbf{1},\phi}(X)]
    + \frac{C}{\phi}\sqrt{\frac{\log(ep)}{\min_{k \in [p]}\Sigma_{kk}}}.
\end{aligned}
\end{equation*}
Hence,
\begin{equation*}
    \sup_{r \in \reals^p} \abs*{\Pr[X \in A_r] - \Pr[Y \in A_r]}
    \leq \sup_{r \in \reals^p} \abs*{\Exp[f_{r,\phi}(X)] - \Exp[f_{r,\phi}(Y)]} 
    + \frac{C}{\phi}\sqrt{\frac{\log(ep)}{\min_{k \in [p]}\Sigma_{kk}}}.
\end{equation*}

\subsection{Proof of Remainder Lemmas} \label{sec:pf_remainder_lemma}

We observe that all remainder terms are in forms of 
\begin{equation*}
\begin{aligned}
    \frac{1}{(\beta-1)!} \int_0^1 (1-t)^{\beta-1} \Exp\left[
        \inner*{\nabla^\alpha \rho_{r,\phi}^\delta(X_{J_1} + W^\noindep + Y_{J_2} + t W_J), (\otimes_{k=1}^{\alpha-\beta} W_{j_k}) \otimes W_J^{\otimes \beta}} 
    \right] dt,
\end{aligned}
\end{equation*}
where $1 \leq \beta \leq \alpha$ and $J_1, J_2$ and $J$ are subsets of $[1,n]$ satisfying $\mathscr{X}_{J_1}$, $\mathscr{Y}_{J_2}$ and $\mathscr{W}_{\{j_1,\dots,j_{\alpha-\beta}\} \cup J}$ are mutually independent. We also note that $\mathscr{X}_{J_1} \cup \mathscr{Y}_{J_2} \cup \{W^\noindep\}$ is independent from $\mathscr{W}_{\{j_1,\dots,j_{\alpha-\beta}\}}$. Let $J^{\noindep}$ be the index set corresponding to $W^\noindep$. Here we prove a succinct form of the remainder lemmas:

\begin{lemma} \label{thm:generalized_remainder_lemma}
There exists a universal constant $C > 0$ such that for any $n$, $(X_i \in \reals^p: i \in [1,n])$ with any dependence structure satisfying Assumption \eqref{assmp:min_ev},
$\gamma_1, \gamma_2 \in [0,1]$, $\eta > 0$, $\delta \geq \sigma_{\min}$ and $\phi \geq \frac{1}{\delta\log(ep)}$, remainder terms in the above form satisfy
\begin{equation*}
\begin{aligned}
    & \abs*{ \int_0^1 (1-t)^{\beta-1} \Exp\left[
        \inner*{\nabla^\alpha \rho_{r,\phi}^\delta(X_{J_1} + W^\noindep + Y_{J_2} + t W_J), (\otimes_{k=1}^{\alpha-\beta} W_{j_k}) \otimes W_J^{\otimes \beta}} 
    \right] dt } \\
    & \leq C \left[
        \frac{(\log(ep))^{\alpha/2}}{{\delta'}^{\alpha}}
        \min\left\{1, \kappa_{J_1|J^\noindep}\left(\delta^\circ\right) + \kappa^\circ_{\abs{J_1}}(\delta') \right\}
    \right] \\
    & \quad \times \left[ \begin{aligned}
        & \frac{\phi^{\gamma_1} {\delta'}^{\gamma_1}}{(\log(ep))^{\gamma_1/2} } 
        \norm*{\Exp\left[
            \abs*{(\otimes_{k=1}^{\alpha-\beta} W_{j_k}) \otimes W_J^{\otimes \beta}}
        \right]}_\infty \\
        & + \frac{\phi^{\gamma_2} {\delta'}^{\gamma_2-\eta}}{(\log(ep))^{(\gamma_2-\eta)/2}} 
        \Exp\left[ 
            \prod_{k=1}^{\alpha-\beta} \norm{W_{j_k}}_\infty \norm{W_J}_\infty^{\beta}
            (\norm{W_J}_\infty^{\eta} + \norm{W^\noindep}_\infty^{\eta}) 
        \right]
    \end{aligned} \right],
\end{aligned}
\end{equation*}
where the minimum eigenvalue of $\Var[Y_{J_2} | \mathscr{Y}_{J_2^\cmpl}]$ is at least $\underline{\sigma}_{J_2}^2$ for some $\underline{\sigma}^2_{J_2} > 0$, $\abs*{(\otimes_{k=1}^{\alpha-\beta} W_{j_k}) \otimes W_J^{\otimes \beta}}$ is the element-wise absolute operation, $\delta' \equiv \sqrt{\delta^2 + \underline{\sigma}_{J_2}^2}$, $\delta^\circ \equiv 12 \delta' \sqrt{\log(p\abs{J_1})}$, and $\kappa^\circ_{\abs{J_1}}(\delta') \equiv \frac{\delta' \log(ep)}{\sigma_{\min} \sqrt{\max\{\abs{J_1}, 1\}}}$.
\end{lemma}

We prove the above generalized remainder lemma here. Because $\Var[Y_{J_2} | \mathscr{Y}_{J_2^\cmpl}]$ has a minimum eigenvalue at least $\underline{\sigma}_{J_2}^2$, 
\begin{equation*}
    Y_{J_2} | \mathscr{Y}_{J_2^\cmpl} \overset{d}{=} Y^\circ_{J_2} + \underline{\sigma}_{J_2} \cdot Z, ~~\text{almost surely},
\end{equation*}
where $Y^\circ_{J_2}$ is the Gaussian random vector with mean $\Exp[Y_{J_2}|\mathscr{Y}_{J_2^\cmpl}]$ and variance $\Var[Y_{J_2} | \mathscr{Y}_{J_2^\cmpl}] - \underline{\sigma}_{J_2}^2 I_p$.
For brevity, let $W^\circ \equiv X_{J_1} + Y^\circ_{J_2}$.
Because $\delta' = \sqrt{\delta^2 + \underline{\sigma}_{J_2}^2}$, the remainder term is decomposed into
\begin{equation*}
\begin{aligned}
    & \int_0^1 (1-t)^{\beta-1} \Exp\left[
        \inner*{\nabla^\alpha \rho_{r,\phi}^\delta(X_{J_1} + W^\noindep + Y_{J_2} + t W_J), (\otimes_{k=1}^{\alpha-\beta} W_{j_k}) \otimes W_J^{\otimes \beta}} 
    \right] dt \\
    & = \int_0^1 (1-t)^{\beta-1} \Exp\left[ 
        \inner*{\nabla^\alpha \rho_{r,\phi}^{\delta'} (W^\circ + W^\noindep + t W_J), (\otimes_{k=1}^{\alpha-\beta} W_{j_k}) \otimes W_J^{\otimes \beta}} 
    \right] dt \\
    & = \int_0^1 (1-t)^{\beta-1} \Exp\left[
        \inner*{\nabla^\alpha \rho_{r,\phi}^{\delta'} (W^\circ + W^\noindep + t W_J), (\otimes_{k=1}^{\alpha-\beta} W_{j_k}) \otimes W_J^{\otimes \beta}} \Ind_{W_J, 1}
    \right] dt \\
    & \quad + \int_0^1 (1-t)^{\beta-1} \Exp\left[
        \inner*{\nabla^{\alpha-1} \rho_{r,\phi}^{\delta'} (W^\circ + W^\noindep + t W_J), (\otimes_{k=1}^{\alpha-\beta} W_{j_k}) \otimes W_J^{\otimes \beta}}
        \Ind_{W_J,2}
    \right] dt \\
    & =: \int_0^1 (1-t)^{\beta-1} T_{W_J,1}(t) dt 
    + \int_0^1 (1-t)^{\beta-1} T_{W_J,2}(t) dt, 
\end{aligned}
\end{equation*}
where
\begin{equation*}
\begin{aligned}
    \Ind_{W_J,1} & = \Ind_{\{\norm{W_J}_\infty \text{ and } \norm{W^{\noindep}}_\infty < \frac{\delta'}{\sqrt{\log(ep)}}\}}, \\
    \Ind_{W_J,2} & = \Ind_{\{\norm{W_J}_\infty \text{ or } \norm{W^{\noindep}}_\infty \geq \frac{\delta'}{\sqrt{\log(ep)}}\}}.
\end{aligned}
\end{equation*}
We upper-bound the terms $T_{W_J,1}(t)$ and $T_{W_J,2}(t)$ separately. First,
\begin{equation} \label{eq:decompose_T_1}
\begin{aligned}
    & \abs*{T_{W_J,1}(t)} \\
    & = \abs*{ \Exp\left[
        \inner*{\nabla^\alpha \rho_{r,\phi}^{\delta'} (W^\circ + W^\noindep + t W_J), 
        (\otimes_{k=1}^{\alpha-\beta} W_{j_k}) \otimes W_J^{\otimes \beta}} \Ind_{W_J, 1}
    \right] } \\
    & = \abs*{ \Exp\left[
        \sum_{i_1,\dots,i_\alpha} \nabla^{(i_1,\dots,i_\alpha)} \rho_{r,\phi}^{\delta'} (W^\circ + W^\noindep + t W_J)
        \cdot \prod_{k \leq \alpha-\beta} W_{j_k}^{(i_k)} 
        \prod_{k > \alpha-\beta} W_J^{(i_k)} \cdot \Ind_{W_J,1}\right] } \\
    & \leq \Exp\left[
        \sum_{i_1,\dots,i_\alpha} \sup_{z \in \mathcal{B}}\abs*{\nabla^{(i_1,\dots,i_\alpha)} \rho_{r,\phi}^{\delta'} (W^\circ + z)}
        \cdot \abs*{\prod_{k \leq \alpha-\beta} W_{j_k}^{(i_k)} 
        \prod_{k > \alpha-\beta} W_J^{(i_k)}} 
        \cdot \Ind_{W_J,1} 
    \right] \\
    & \leq \sum_{i_1,\dots,i_\alpha} \Exp\left[ 
        \sup_{z \in \mathcal{B}}\abs*{\nabla^{(i_1,\dots,i_\alpha)} \rho_{r,\phi}^{\delta'} (W^\circ + z)}
        \cdot \abs*{\prod_{k \leq \alpha-\beta} W_{j_k}^{(i_k)} 
        \prod_{k > \alpha-\beta} W_J^{(i_k)}} 
        \cdot \Ind_{W_J,1} 
    \right] \\
    & = \sum_{i_1,\dots,i_\alpha} \Exp\left[ 
        \sup_{z \in \mathcal{B}}\abs*{\nabla^{(i_1,\dots,i_\alpha)} \rho_{r,\phi}^{\delta'} (W^\circ + z)}
    \right]
    \Exp\left[
        \abs*{\prod_{k \leq \alpha-\beta} W_{j_k}^{(i_k)} 
        \prod_{k > \alpha-\beta} W_J^{(i_k)}} 
        \cdot \Ind_{W_J,1} 
    \right] \\
    & \leq \Exp\left[
        \sum_{i_1,\dots,i_\alpha} \sup_{z \in \mathcal{B}}
        \abs*{ \nabla^{(i_1,\dots,i_\alpha)} \rho_{r,\phi}^{\delta'}
        ( W^\circ_j + z ) } 
    \right]
    \norm*{\Exp\left[
        \abs*{(\otimes_{k=1}^{\alpha-\beta} W_{j_k}) \otimes W_J^{\otimes \beta}}
    \right]}_\infty,
\end{aligned}
\end{equation}
where $\mathcal{B} = \{z: \norm{z}_\infty \leq \frac{2\delta'}{\sqrt{\log(ep)}}\}$, and $\abs*{(\otimes_{k=1}^{\alpha-\beta} W_{j_k}) \otimes W_J^{\otimes \beta}}$ is the element-wise absolute operation. The fifth equality follows the independence between $W^\circ$ and $\mathscr{W}_{\{j_1,\dots,j_{\alpha-\beta}\} \cup J}$. We decompose the expectation term on the last line by
\begin{equation} \label{eq:decompose_third_derivative}
\begin{aligned}
     & \Exp\left[
        \sum_{i_1,\dots,i_\alpha} \sup_{z \in \mathcal{B}}
        \abs*{ \nabla^{(i_1,\dots,i_\alpha)} \rho_{r,\phi}^{\delta'}
        ( W^\circ_j + z ) } 
    \right] \\
    & \leq \Exp\left[ \begin{aligned}
        \sum_{i_1,\dots,i_\alpha} \sup_{z \in \mathcal{B}}
        \abs*{ \nabla^{(i_1,\dots,i_\alpha)} \rho_{r,\phi}^{\delta'}
        ( W^\circ_j + z ) } 
        \cdot \Ind_{W^\circ,1}
    \end{aligned} \right] \\
    & \quad + \Exp\left[ \begin{aligned}
        \sum_{i_1,\dots,i_\alpha} \sup_{z \in \mathcal{B}}
        \abs*{ \nabla^{(i_1,\dots,i_\alpha)} \rho_{r,\phi}^{\delta'}
        ( W^\circ_j + z ) } 
        \cdot \Ind_{W^\circ,2}
    \end{aligned} \right],
\end{aligned}
\end{equation}
where the value of $h > 0$ will be determined later, and
\begin{equation*}
\begin{aligned}
    \Ind_{W^\circ,1} & = \Ind_{\left\{
            \norm{W^\circ - \partial A_r}_\infty \leq 12 {\delta'} \sqrt{\log(ph)}
        \right\}}, \\
    \Ind_{W^\circ,2} & = \Ind_{\left\{
            \norm{W^\circ - \partial A_r}_\infty > 12 {\delta'} \sqrt{\log(ph)}
        \right\}}.
\end{aligned}
\end{equation*}
For the first term, based on Lemmas 6.1 and 6.2 of \citetalias{chernozhukov2020nearly},
\begin{equation*}
\begin{aligned}
    & \Exp\left[ \begin{aligned}
        \sum_{i_1,\dots,i_\alpha} \sup_{z \in \mathcal{B}}
        \abs*{ \nabla^{(i_1,\dots,i_\alpha)} \rho_{r,\phi}^{\delta'}
        ( W^\circ + z ) } 
        \cdot \Ind_{W^\circ,1}
    \end{aligned} \right]  \\
    & \leq \sup_{w \in \reals^p} \sum_{i_1,\dots,i_\alpha} \sup_{z \in \mathcal{B}}
        \abs*{ \nabla^{(i_1,\dots,i_\alpha)} \rho_{r,\phi}^{\delta'}
        ( w + z ) } 
    \cdot \Exp\left[ \Ind_{W^\circ,1} \right]  \\
    & \leq C \frac{\phi^{\gamma_1} (\log(ep))^{(\alpha-\gamma_1)/2}}{{\delta'}^{\alpha-\gamma_1}}
    \Pr\left[
        \norm{X_{J_1} - \partial A_{r-Y_{J_2}^\circ}}_\infty \leq 12 {\delta'} \sqrt{\log(ph)}
    \right] \\
    & \leq C \frac{\phi^{\gamma_1} (\log(ep))^{(\alpha-\gamma_1)/2}}{{\delta'}^{\alpha-\gamma_1}}
    \min\left\{1, \kappa_{J_1}\left(12 {\delta'} \sqrt{\log(ph)}\right) \right\},
\end{aligned}
\end{equation*}
for any $\gamma_1 \in [0,1]$.
For the last term, based on Lemma 10.5 of \citet{lopes2022central},
\begin{equation*}
\begin{aligned}
    \Exp\left[ \begin{aligned}
        \sum_{i_1,\dots,i_\alpha} \sup_{z \in \mathcal{B}}
        \abs*{ \nabla^{(i_1,\dots,i_\alpha)} \rho_{r,\phi}^{\delta'}
        ( W^\circ + z ) } 
        \cdot \Ind_{W^\circ,2}
    \end{aligned} \right]
    \leq C \frac{1}{{\delta'}^\alpha h}.
\end{aligned}
\end{equation*}
Plugging the last two results in \cref{eq:decompose_T_1,eq:decompose_third_derivative}, the resulting upper bound for $T_{W_J,1}(t)$ is
\begin{equation*}
\begin{aligned}
    \abs*{T_{W_J,1}(t)}
    & \leq C \left[
        \frac{\phi^{\gamma_1} (\log(ep))^{(\alpha-\gamma_1)/2}}{{\delta'}^{\alpha-\gamma_1}}
        \min\left\{1, \kappa_{J_1}\left(12 {\delta'} \sqrt{\log(ph)}\right) \right\}
        + \frac{1}{{\delta'}^\alpha h}
    \right] \\
    & \quad \times \norm*{\Exp\left[
        \abs*{(\otimes_{k=1}^{\alpha-\beta} W_{j_k}) \otimes W_J^{\otimes \beta}}
    \right]}_\infty,
\end{aligned}
\end{equation*}
for any $t \in [0,1]$, $\gamma_1 \in [0,1]$ and $h > 0$. 
Replacing the minimum with $1$ and minimizing over $h > 0$, we get
\begin{equation*}
\begin{aligned}
    \abs*{T_{W_J,1}(t)}
    \leq C \frac{\phi^{\gamma_1} (\log(ep))^{(\alpha-\gamma_1)/2}}{{\delta'}^{\alpha-\gamma_1}} 
    \norm*{\Exp\left[
        \abs*{(\otimes_{k=1}^{\alpha-\beta} W_{j_k}) \otimes W_J^{\otimes \beta}}
    \right]}_\infty.  
\end{aligned}
\end{equation*}
Moreover, plugging-in
\begin{equation*}
     h = \frac{\sigma_{\min}}{\phi^{\gamma_1} {\delta'}^{1+\gamma_1}} \sqrt{\frac{\abs{J_1}}{(\log(ep))^{\alpha-\gamma_1+2}}},
\end{equation*}
we obtain
\begin{equation*}
\begin{aligned}
    \abs*{T_{W_J,1}(t)} 
    \leq C \frac{\phi^{\gamma_1} (\log(ep))^{(\alpha-\gamma_1)/2}}{{\delta'}^{\alpha-\gamma_1}} 
    \norm*{\Exp\left[
        \abs*{(\otimes_{k=1}^{\alpha-\beta} W_{j_k}) \otimes W_J^{\otimes \beta}}
    \right]}_\infty 
    (\kappa_{J_1}(\delta^\circ) + \kappa^\circ_{\abs{J_1}}(\delta')),
\end{aligned}
\end{equation*}
where $\delta^\circ \equiv 12 \delta' \sqrt{\log(p\abs{J_1})}$ and $\kappa^\circ_{\abs{J_1}}(\delta') \equiv \frac{\delta'\log(ep)}{\sigma_{\min}\sqrt{\abs{J_1}}}$. In sum,
\begin{equation*}
\begin{aligned}
    & \abs*{T_{W_J,1}(t)} \\
    & \leq C \frac{\phi^{\gamma_1} (\log(ep))^{(\alpha-\gamma_1)/2}}{{\delta'}^{\alpha-\gamma_1}} 
    \norm*{\Exp\left[
        \abs*{(\otimes_{k=1}^{\alpha-\beta} W_{j_k}) \otimes W_J^{\otimes \beta}}
    \right]}_\infty 
    \min\{1, \kappa_{J_1}(\delta^\circ) + \kappa^\circ_{\abs{J_1}}(\delta')\},
\end{aligned}
\end{equation*}
for any $t \in [0,1]$, $\gamma_1 \in [0,1]$ as long as $\delta \geq \sigma_{\min}$ and $\phi\delta \geq \frac{1}{\log(ep)}$.
Now we bound
\begin{equation*}
\begin{aligned}
    \abs*{T_{W_J,2}(t)}
    = \abs*{ \Exp\left[
        \inner*{\nabla^\alpha \rho_{r,\phi}^{\delta'} (W^\circ + W^\noindep + t W_J), 
        (\otimes_{k=1}^{\alpha-\beta} W_{j_k}) \otimes W_J^{\otimes \beta}} \Ind_{W_J, 2}
    \right] }. \\
\end{aligned}
\end{equation*}
Conditional on $W^\noindep$ and $W_J$,
\begin{equation*}
\begin{aligned}
    & \Exp\left[\left.
        \norm*{\nabla^\alpha \rho_{r,\phi}^{\delta'}
        \left(
            W^\circ + W^{\noindep} + t W_J
        \right)}_1
        \right| W^{\noindep}, W_J
    \right] \\
    & \leq \Exp\left[\left.
        \norm*{\nabla^\alpha \rho_{r,\phi}^{\delta'}
        \left(
            W^\circ + W^{\noindep} + t W_J
        \right)}_1
        \Ind_{\{ \norm{W^\circ - \partial A_{r'}}_\infty \leq 10 \delta' \sqrt{\log(ph)} \}}
        \right| W^{\noindep}, W_J
    \right] \\
    & \quad + \Exp\left[\left.
        \norm*{\nabla^\alpha \rho_{r,\phi}^{\delta'}
        \left(
            W^\circ + W^{\noindep} + t W_J
        \right)}_1
        \Ind_{\{ \norm{W^\circ - \partial A_{r'}}_\infty > 10 \delta' \sqrt{\log(ph)} \}}
        \right| W^{\noindep}, W_J
    \right],
\end{aligned}
\end{equation*}
where $r' = r - W^\noindep - t W_J$ is deterministic given $W^\noindep$ and $W_J$. Applying Lemmas 6.1, 6.2 of \citetalias{chernozhukov2020nearly} and Lemma 10.5 of \citet{lopes2022central} to the two terms, respectively, 
\begin{equation*}
\begin{aligned}
    & \Exp\left[\left.
        \norm*{\nabla^\alpha \rho_{r,\phi}^{\delta'}
        \left(
            W^\circ + W^{\noindep} + t W_J
        \right)}_1
        \right| W^{\noindep}, W_J
    \right] \\
    & \leq C \frac{\phi^{\gamma_2} (\log(ep))^{(\alpha-\gamma_2)/2}}{{\delta'}^{\alpha-\gamma_2}}
    \Pr\left[\left. \norm{W^\circ - \partial A_{r'}}_\infty \leq 10 \delta' \sqrt{\log(ph)} \right| W^\noindep, W_J \right] + C \frac{1}{\delta'^\alpha h} \\
    & \leq C \frac{\phi^{\gamma_2} (\log(ep))^{(\alpha-\gamma_2)/2}}{{\delta'}^{\alpha-\gamma_2}}
    \min\left\{1, \kappa_{J_1|J^\noindep}\left(10\delta'\sqrt{\log(ph)}\right)\right\} + C \frac{1}{\delta'^\alpha h},
\end{aligned}
\end{equation*}
almost surely, for any $\gamma_2 \in [0,1]$ and $h > 0$. Putting the last two results together,
\begin{equation*}
\begin{aligned}
    & \abs*{T_{W_J,2}(t)} \\
    & \leq C \left(
        \frac{\phi^{\gamma_2} (\log(ep))^{(\alpha-\gamma_2)/2}}{{\delta'}^{\alpha-\gamma_2}}
        \min\left\{1, \kappa_{J_1|J^\noindep}\left(10\delta'\sqrt{\log(ph)}\right)\right\} 
        + \frac{1}{\delta'^\alpha h}
    \right) \\
    & \quad \times \Exp\left[\begin{aligned}
        \prod_{k=1}^{\alpha-\beta} \norm{W_{j_k}}_\infty \norm{W_J}_\infty^{\beta} \Ind_{W_J, 2}
    \end{aligned}\right], \\
\end{aligned}
\end{equation*}
for any $\gamma_2 \in [0,1]$ and $h > 0$.
Because $\Ind_{W_J,2} = \Ind_{\{\norm{W_J}_\infty \text{ or } \norm{W^{\noindep}}_\infty \geq \frac{\delta'}{\sqrt{\log(ep)}}\}} \geq \Ind_{\{\norm{W_J}_\infty > \frac{\delta'}{\sqrt{\log(ep)}}\}} + \Ind_{\{\norm{W^\noindep}_\infty > \frac{\delta'}{\sqrt{\log(ep)}}\}}$,
\begin{equation*}
\begin{aligned}
    & \Exp\left[ \prod_{k=1}^{\alpha-\beta} \norm{W_{j_k}}_\infty \norm{W_J}_\infty^{\beta} \Ind_{W_J, 2} \right] \\
    & \leq \Exp\left[ \prod_{k=1}^{\alpha-\beta} \norm{W_{j_k}}_\infty \norm{W_J}_\infty^{\beta} 
    \Ind_{\{\norm{W_J}_\infty > \frac{\delta'}{\sqrt{\log(ep)}}\}} \right]
    + \Exp\left[ \prod_{k=1}^{\alpha-\beta} \norm{W_{j_k}}_\infty \norm{W_J}_\infty^{\beta} 
    \Ind_{\{\norm{W^\noindep}_\infty > \frac{\delta'}{\sqrt{\log(ep)}}\}} \right] \\
    & \leq \frac{(\log(ep))^{\eta/2}}{\delta'^{\eta}} \Exp\left[ 
        \prod_{k=1}^{\alpha-\beta} \norm{W_{j_k}}_\infty \norm{W_J}_\infty^{\beta}
        (\norm{W_J}_\infty^{\eta} + \norm{W^\noindep}_\infty^{\eta}) 
    \right], \\
\end{aligned}
\end{equation*}
for any $\eta > 0$.
The resulting upper bound for $T_{W_J,2}(t)$ is
\begin{equation*}
\begin{aligned}
    & \abs*{T_{W_J,2}(t)} \\
    & \leq C \left[
        \frac{\phi^{\gamma_2} (\log(ep))^{(\alpha-\gamma_2)/2}}{{\delta'}^{\alpha-\gamma_2}}
        \min\left\{1, \kappa_{J_1|J^\noindep}\left(10\delta'\sqrt{\log(ph)}\right)\right\} + \frac{1}{\delta'^\alpha h}
    \right] \\
    & \quad \times \frac{(\log(ep))^{\eta/2}}{\delta'^{\eta}} \Exp\left[ 
            \prod_{k=1}^{\alpha-\beta} \norm{W_{j_k}}_\infty \norm{W_J}_\infty^{\beta}
            (\norm{W_J}_\infty^{\eta} + \norm{W^\noindep}_\infty^{\eta}) 
        \right],
\end{aligned}
\end{equation*}
for any $\gamma_2 \in [0,1]$, $\eta > 0$ and $h > 0$. By similar choices of $h$ with for $T_{W_J,1}(t)$, we obtain
\begin{equation*}
\begin{aligned}
    & \abs*{T_{W_J,2}(t)} \\
    & \leq C \left[
        \frac{\phi^{\gamma_2} (\log(ep))^{(\alpha-\gamma_2+\eta)/2}}{{\delta'}^{\alpha-\gamma_2+\eta}}
        \min\left\{1, \kappa_{J_1|J^\noindep}\left(\delta^\circ\right) + \kappa^\circ_{\abs{J_1}}(\delta') \right\}
    \right] \\
    & \quad \times \Exp\left[ 
        \prod_{k=1}^{\alpha-\beta} \norm{W_{j_k}}_\infty \norm{W_J}_\infty^{\beta}
        (\norm{W_J}_\infty^{\eta} + \norm{W^\noindep}_\infty^{\eta}) 
    \right],
\end{aligned}
\end{equation*}
for any $t \in [0,1]$ and $\eta > 0$ as long as $\delta \geq \sigma_{\min}$ and $\phi\delta \geq \frac{1}{\log(ep)}$.
Putting the upperbounds for $T_{W_J,1}(t)$ and $T_{W_J,2}(t)$ together, we obtain the following upper bound for the entire remainder term:
\begin{equation*}
\begin{aligned}
    & \abs*{ \int_0^1 (1-t)^{\beta-1} \Exp\left[
        \inner*{\nabla^\alpha \rho_{r,\phi}^\delta(X_{J_1} + W^\noindep + Y_{J_2} + t W_J), (\otimes_{k=1}^{\alpha-\beta} W_{j_k}) \otimes W_J^{\otimes \beta}} 
    \right] dt } \\
    & \leq C \left[
        \frac{(\log(ep))^{\alpha/2}}{{\delta'}^{\alpha}}
        \min\left\{1, \kappa_{J_1|J^\noindep}\left(\delta^\circ\right) + \kappa^\circ_{\abs{J_1}}(\delta') \right\}
    \right] \\
    & \quad \times \left[ \begin{aligned}
        & \frac{\phi^{\gamma_1} {\delta'}^{\gamma_1}}{(\log(ep))^{\gamma_1/2} } 
        \norm*{\Exp\left[
            \abs*{(\otimes_{k=1}^{\alpha-\beta} W_{j_k}) \otimes W_J^{\otimes \beta}}
        \right]}_\infty \\
        & + \frac{\phi^{\gamma_2} {\delta'}^{\gamma_2-\eta}}{(\log(ep))^{(\gamma_2-\eta)/2}} 
        \Exp\left[ 
            \prod_{k=1}^{\alpha-\beta} \norm{W_{j_k}}_\infty \norm{W_J}_\infty^{\beta}
            (\norm{W_J}_\infty^{\eta} + \norm{W^\noindep}_\infty^{\eta}) 
        \right]
    \end{aligned} \right],
\end{aligned}
\end{equation*}
for any $\gamma_1, \gamma_2 \in [0,1]$ and $\eta > 0$ as long as $\delta \geq \sigma_{\min}$ and $\phi\delta \geq \frac{1}{\log(ep)}$.
This proves \cref{thm:generalized_remainder_lemma}.

\cref{thm:generalized_remainder_lemma} implies all the remainder theorems in the main text and appendices of this paper. For example, in \cref{thm:remainder_lemma_3N}, one of the third-order remainder terms in $\mathfrak{R}^{(3,1)}_{W_j}$ is
\begin{equation*}
\begin{aligned}
    & \Exp\left[ \int_0^1 (1-t)^2 \inner*{
        \nabla^3 \varphi_r^\vareps\left(
            W^\cmpl_{[j,j]} + t W_j
        \right),
        X_j^{\otimes 3}} ~dt 
    \right] \\
    & = \Exp\left[ \int_0^1 (1-t)^2 \inner*{
        \nabla^3 \varphi_r^\vareps\left(
            X_{[1,j-1)} + W^\noindep + Y_{(j+1,n]} + t W_j
        \right),
        X_j^{\otimes 3}} ~dt
    \right],
\end{aligned}
\end{equation*}
where $W_{[j_1,j_2]}^\cmpl \equiv X_{[1,j_1)} + Y_{(j_2,n]}$ and $W^\noindep \equiv X_{j-1} + Y_{j+1}$. Because of Assumption \eqref{assmp:min_ev}, the minimum eigenvalue of $\Var[Y_{(j+1,n]} | \mathscr{Y}_{[1,j+1]}]$ is at least $\underline{\sigma}^2 \max\{n-j-2,0\}$.
Let $\underline{\sigma}_{j}^2 \equiv \underline{\sigma}^2 \max\{j, 0\}$ and $\delta_{j}^2 \equiv \sqrt{\delta^2 + \underline{\sigma}_{j}^2}$. 
Applying \cref{thm:generalized_remainder_lemma} to this term with $\gamma_1 = 0$, $\gamma_2 = \min\{1, q-3\}$ and $\eta = q-3$, we obtain for $q \geq 3$, $\delta \geq \sigma_{\min}$ and $\phi \geq \frac{1}{\delta\log(ep)}$,
\begin{equation*}
\begin{aligned}
    & \abs*{ \Exp\left[ \int_0^1 (1-t)^2 \inner*{
        \nabla^3 \varphi_r^\vareps\left(
            X_{[1,j-1)} + W^\noindep + Y_{(j+1,n]} + t W_j
        \right),
        W_j^{\otimes 3}} ~dt
    \right] } \\
    & \leq C \frac{(\log(ep))^{3/2}}{\delta_{n-j-2}^3} 
        \min\{1, \kappa_{[1,j-2]|\{j-1\}}(\delta_{n-j-2}^\circ) + \kappa^\circ_{j-2}(\delta_{n-j-2})\} \\
    & \quad \times \left[ 
        \norm*{\Exp\left[
            \abs*{W_j^{\otimes 3}}
        \right]}_\infty
        + \frac{\phi^{\min\{1,q-3\}} \delta_{n-j-2}^{\min\{0,4-q\}}}{(\log(ep))^{\min\{0,4-q\}/2}} \Exp\left[ 
            \norm{W_j}_\infty^3
            (\norm{W_j}_\infty^{q-3} + \norm{X_{j-1}+Y_{j+1}}_\infty^{q-3}) 
        \right]
    \right] \\
    & \leq C \frac{(\log(ep))^{3/2}}{\delta_{n-j-2}^3} 
    \min\{1, \kappa_{[1,j-2]|\{j-1\}}(\delta_{n-j-2}^\circ) + \kappa^\circ_{j-2}(\delta_{n-j-2})\} \\
    & \quad \times \left[ 
        L_{3,j}
        + 2^{q-3} \phi^{\min\{1,q-3\}} \frac{(\log(ep))^{\max\{0,q-4\}/2}}{\delta^{\max\{0,q-4\}}_{n-j-2}} (\nu_{q,j-1} + \nu_{q,j} + \nu_{q,j+1})
    \right],
\end{aligned}
\end{equation*}
where $\delta^\circ_{j} \equiv 12 \delta_{j} \sqrt{\log(pn)}$ and $\kappa^\circ_{j}(\delta) \equiv \frac{\delta \log(ep)}{\sigma_{\min} \sqrt{\max\{j, 1\}}}$.
Based on similar applications of \cref{thm:generalized_remainder_lemma} to the other terms of $\mathfrak{R}^{(3,1)}_{X_j}$, 
\begin{equation*}
\begin{aligned}
    \abs*{\Exp \left[ \mathfrak{R}_{W_j}^{(3,1)} \right]} 
    & \leq C \frac{(\log(ep))^{3/2}}{\delta_{n-j-4}^3} \left[ 
        \sum_{j'=j-3}^{j+3} L_{3,j'} 
        + 2^{q-3} \phi^{\min\{1,q-3\}} \sum_{j'=j-3}^{j+3} \nu_{q,j'} \frac{(\log(ep))^{\max\{0,q-4\}/2}}{\delta^{\max\{0,q-4\}}_{n-j-4}}
    \right] \\
    & \quad \times \min\{\kappa_{[1,j-4]|\{j-3\}}(\delta_{n-j}^\circ) + \kappa^\circ_{j-4}(\delta_{n-j}), 1\} \\
    & \leq C \frac{(\log(ep))^{3/2}}{\delta_{n-j}^3} \left[ 
        \sum_{j'=j-3}^{j+3} L_{3,j'} 
        + \phi^{\min\{1,q-3\}} \sum_{j'=j-3}^{j+3} \nu_{q,j'} \frac{(\log(ep))^{\max\{0,q-4\}/2}}{(\delta_{n-j}/\alpha)^{\max\{0,q-4\}}}
    \right] \\
    & \quad \times \min\{\kappa_{[1,j-4]|\{j-3\}}(\delta_{n-j}^\circ) + \kappa^\circ_{j}(\delta_{n-j}), 1\},
\end{aligned}
\end{equation*}
where $C > 0$ and $\alpha > 0$ are universal constants.
The last inequality follows the monotonocity of $\kappa$ (\cref{thm:kappa_comparison}) and the choice of $\alpha$ such that $\delta_{n-j-4}/2 \geq \delta_{n-j}/\alpha$.
Similar arguments and bounds apply to the third-order remainder terms under $1$-ring dependence.

For \cref{thm:remainder_lemma_4N}, we consider the first term of $\mathfrak{R}_{W_j}^{(4,1)}$:
\begin{equation*}
\begin{aligned}
    & \Exp\left[ \int_0^1 (1-t)^3 \inner*{\nabla^4 \rho_{r,\phi}^\delta(W^\cmpl_{[j,j]} + t X_j), X_j^{\otimes 4}} ~dt \right] \\
    & = \Exp\left[ \int_0^1 (1-t)^3 \inner*{\nabla^4 \rho_{r,\phi}^\delta(X_{[1,j-1)} + W^\noindep + Y_{(j+1,n]} + t X_j), X_j^{\otimes 4}} ~dt \right].
\end{aligned}
\end{equation*}
Applying \cref{thm:generalized_remainder_lemma} to this term with $\gamma_1 = \gamma_2 = 1$ and $\eta = q-4$, we obtain for $q \geq 4$, $\delta \geq \sigma_{\min}$ and $\phi\delta \geq \frac{1}{\log(ep)}$,
\begin{equation*}
\begin{aligned}
    & \abs*{ \Exp\left[ 
        \int_0^1 (1-t)^3 \inner*{
        \nabla^4 \varphi_r^\vareps\left(
            X_{[1,j-1)} + W^\noindep + Y_{(j+1,n]} + t W_j
        \right),
        W_j^{\otimes 4}} ~dt
    \right] } \\
    & \leq C \phi 
        \min\{1, \kappa_{[1,j-2]|\{j-1\}}(\delta_{n-j-2}^\circ) + \kappa^\circ_{j-2}(\delta_{n-j-2})\} \\
    & \quad \times \left[ 
        \frac{(\log(ep))^{3/2}}{\delta_{n-j-2}^3} 
        \norm*{\Exp\left[
            \abs*{W_j^{\otimes 4}}
        \right]}_\infty
        + \frac{(\log(ep))^{(q-1)/2}}{(\delta_{n-j-2}/2)^{q-1}} 
        \Exp\left[ 
            \norm{W_j}_\infty^4
            (\norm{W_j}_\infty^{q-4} + \norm{X_{j-1}+Y_{j+1}}_\infty^{q-4}) 
        \right]
    \right] \\
    & \leq C \phi 
    \min\{1, \kappa_{[1,j-2]|\{j-1\}}(\delta_{n-j-2}^\circ) + \kappa^\circ_{j-2}(\delta_{n-j-2})\} \\
    & \quad \times \left[ 
        \frac{(\log(ep))^{3/2}}{\delta_{n-j-2}^3} L_{4,j}
        + 2^{q-4} \frac{(\log(ep))^{(q-1)/2}}{\delta_{n-j-2}^{q-1}} (\nu_{q,j-1} + \nu_{q,j} + \nu_{q,j+1})
    \right].
\end{aligned}
\end{equation*}
Based on similar applications of \cref{thm:generalized_remainder_lemma} to the other terms of $\mathfrak{R}^{(4,1)}_{X_j}$, 
\begin{equation*}
\begin{aligned}
    \abs*{\Exp \left[ \mathfrak{R}_{W_j}^{(4,1)} \right]} 
    & \leq C \phi \left[ 
        \frac{(\log(ep))^{3/2}}{\delta_{n-j-5}^3} \sum_{j'=j-4}^{j+4} L_{4,j'} 
        + 2^{q-4} \frac{(\log(ep))^{(q-1)/2}}{\delta_{n-j-5}^{q-1}} \sum_{j'=j-4}^{j+4} \nu_{q,j'}
    \right] \\
    & \quad \times \min\{\kappa_{[1,j-5]|\{j-4\}}(\delta_{n-j}^\circ) + \kappa^\circ_{j-5}(\delta_{n-j}), 1\} \\
    & \leq C \phi \left[ 
        \frac{(\log(ep))^{3/2}}{\delta_{n-j}^3} \sum_{j'=j-4}^{j+4} L_{4,j'} 
        + \frac{(\log(ep))^{(q-1)/2}}{(\delta_{n-j}/\alpha)^{q-1}} \sum_{j'=j-4}^{j+4} \nu_{q,j'}
    \right] \\
    & \quad \times \min\{\kappa_{[1,j-5]|\{j-4\}}(\delta_{n-j}^\circ) + \kappa^\circ_{j-5}(\delta_{n-j}), 1\},
\end{aligned}
\end{equation*}
where $C > 0$ and $\alpha > 0$ are universal constants.
The last inequality follows the monotonocity of $\kappa$ (\cref{thm:kappa_comparison}) and the choice of $\alpha$ such that $\delta_{n-j-5}/2 \geq \delta_{n-j}/\alpha$.
Similar arguments and bounds apply to the other fourth-order terms under $1$-ring dependence.

Finally, for the sixth-order remainder terms, we consider the first term of $\mathfrak{R}_{X_j, W_k}^{(6,1,1)}$:
\begin{equation*}
\begin{aligned}
    & \Exp\left[ 
        \int_0^1 (1-t)^2 \inner*{
            \nabla^6 \rho_{r,\phi}^\vareps(X_{[1,k)} + t X_k + Y_{(k,j-1) \cup (j+1,n]}) ,
            \Exp[X_j^{\otimes 3}] \otimes W_k^{\otimes 3}
        } ~dt
    \right] \\
    & = \Exp\left[ 
        \int_0^1 (1-t)^2 \inner*{
            \nabla^6 \rho_{r,\phi}^\vareps(X_{[1,k-1)} + W^\noindep + Y_{(k+1,j-1) \cup (j+1,n]}) + t X_k,
            \Exp[X_j^{\otimes 3}] \otimes W_k^{\otimes 3}
        } ~dt
    \right], \\
\end{aligned}
\end{equation*}
where $W^\noindep \equiv X_{k-1} + Y_{k+1}$. Applying \cref{thm:generalized_remainder_lemma} to this term with $\gamma_1 = \gamma_2 = 1$ and $\eta = q-3$, we obtain for $q \geq 4$,
\begin{equation*}
\begin{aligned}
    & \abs*{ \Exp\left[ 
        \int_0^1 (1-t)^2 \inner*{
            \nabla^6 \rho_{r,\phi}^\vareps(X_{[1,k)} + t X_k + Y_{(k,j-1) \cup (j+1,n]}) ,
            \Exp[X_j^{\otimes 3}] \otimes W_k^{\otimes 3}
        } ~dt
    \right] } \\
    & \leq C \phi 
        \min\{1, \kappa_{[1,k-2]|\{k-1\}}(\delta_{n-k-5}^\circ) + \kappa^\circ_{k-2}(\delta_{n-k-5})\} \\
    & \quad \times \left[ \begin{aligned}
        & \frac{(\log(ep))^{3/2}}{\delta_{n-k-5}^3} 
        \norm*{\Exp\left[
            \abs*{X_j^{\otimes 3}}
        \right]
        \Exp\left[
            \abs*{W_k^{\otimes 3}}
        \right]}_\infty \\
        & + \frac{(\log(ep))^{(q-1)/2}}{(\delta_{n-k-5}/2)^{q-1}} 
        \norm*{\Exp\left[
            \abs*{X_j^{\otimes 3}}
        \right]}_\infty
        \Exp\left[ 
             \norm{W_k}_\infty^3
            (\norm{W_k}_\infty^{q-3} + \norm{X_{k-1}+Y_{k+1}}_\infty^{q-3}) 
        \right]
    \end{aligned} \right] \\
    & \leq C \phi L_{3,j} \left( 
        L_{3,k} \frac{(\log(ep))^{5/2}}{\delta_{n-k-5}^5}
        + 2^{q-3} (\nu_{q,k-1} + \nu_{q,k} + \nu_{q,k+1}) \frac{{(\log(ep))}^{(q+2)/2}}{\delta_{n-k-5}^{q+2}}
    \right) \\
    & \quad \times \min\{\kappa_{[1,k-2]|\{k-1\}}(\delta_{n-k-5}^\circ) + \kappa^\circ_{k-2}(\delta_{n-k-5}), 1\},
\end{aligned}
\end{equation*}
as long as $\delta \geq \sigma_{\min}$ and $\phi\delta \geq \frac{1}{\log(ep)}$.
Similarly, we get for $q \geq 4$, $\delta \geq \sigma_{\min}$ and $\phi\delta \geq \frac{1}{\log(ep)}$,
\begin{equation*}
\begin{aligned}
    \abs*{\Exp \left[ \mathfrak{R}_{X_j,W_k}^{(6,1,1)} \right]} 
    & \leq C \phi L_{3,j} \left( \begin{aligned}
        \sum_{k'=k-2}^{k+2} L_{3,k'} \frac{(\log(ep))^{5/2}}{\delta_{n-k-7}^5}
        + 2^{q-3} \sum_{k'=k-3}^{k+3} \nu_{q,k'} \frac{ {(\log(ep))}^{(q+2)/2}}{\delta_{n-k-7}^{q+2}}
    \end{aligned} \right) \\
    & \quad \times \min\{\kappa_{[1,k-4]|\{k-3\}}(\delta_{n-k}^\circ) + \kappa^\circ_{k-4}(\delta_{n-k}), 1\} \\
    & \leq C \phi L_{3,j} \left( \begin{aligned}
        \sum_{k'=k-2}^{k+2} L_{3,k'} \frac{(\log(ep))^{5/2}}{\delta_{n-k}^5}
        + \sum_{k'=k-3}^{k+3} \nu_{q,k'} \frac{ {(\log(ep))}^{(q+2)/2}}{(\delta_{n-k}/\alpha)^{q+2}}
    \end{aligned} \right) \\
    & \quad \times \min\{\kappa_{[1,k-4]|\{k-3\}}(\delta_{n-k}^\circ) + \kappa^\circ_{k-4}(\delta_{n-k}), 1\},
\end{aligned}
\end{equation*}
where $C > 0$ and $\alpha > 0$ are universal constants.
The last inequality follows the monotonocity of $\kappa$ (\cref{thm:kappa_comparison}) and the choice of $\alpha$ such that $\delta_{n-k-7}/2 \geq \delta_{n-k}/\alpha$.
Similar arguments and bounds apply to the other sixth-order remainder terms.

\subsection{Proof of Anti-concentration Lemma} \label{sec:pf_anti_concentration}

%
%

In \cref{sec:pf_sketch,sec:pf_1_dep_4N}, we proved the Berry--Esseen bound where the observations $(X_1, \dots, X_n)$ are $1$‑dependent. The proof requires only an anti‑concentration bound for $X_{[1,i]}$ conditional on $X_{i+1}$.
We begin by treating this simpler situation.  
Subsequently, we extend the argument to the general setting in which $X_{[1,n]}$ is $1$‑ring dependent. In that broader scenario we need an anti‑concentration inequality for $X_I$ conditional on $\mathscr{X}_{I^{\cmpl}}$, where $I\subseteq\ints_n$ is any interval.

\medskip
{\bf Case of $1$-dependence:}
We use a similar approach with \cref{sec:pf_remainder_lemma} to upper bound $\Exp[ \mathfrak{R}_{X_{i-1}}^{(1)} | X_{i+1} ]$. First, we decompose it by
\begin{equation*}
\begin{aligned}
    & \Exp[ \mathfrak{R}_{X_{i-1}}^{(1)} | X_{i+1} ] \\ 
    & = \Exp\left[ \left. \int_0^1 \inner*{
            \nabla \varphi_{r,\delta+\vareps^\circ}^\vareps(X_{[1,i]} - t X_{i-1}),
            X_{i-1}
        } dt \right| X_{i+1} \right] \\
    & = \Exp\left[ \left. \int_0^1 \inner*{
            \nabla \varphi_{r,\delta+\vareps^\circ}^\vareps(X_{[1,i]} - t X_{i-1}),
            X_{i-1}
        } \Ind_\leq(t) dt \right| X_{i+1} \right] \\
    & \quad + \Exp\left[ \left. \int_0^1 \inner*{
            \nabla \varphi_{r,\delta+\vareps^\circ}^\vareps(X_{[1,i]} - t X_{i-1}),
            X_{i-1}
        } \Ind_>(t) dt \right| X_{i+1} \right], 
\end{aligned}
\end{equation*}
where $\Ind_\leq(t) = \Ind\{\norm{X_{[1,i]} - t X_{i-1} - \partial A_{r,\delta+\vareps^\circ}}_\infty \leq \vareps^\circ\}$, $\Ind_>(t) = \Ind\{\norm{X_{[1,i]} - t X_{i-1} - \partial A_{r,\delta+\vareps^\circ}}_\infty > \vareps^\circ\}$.
Based on Lemma 2.3 of \citet{fang2020large}, 
\begin{equation*}
\begin{aligned}
    & \Exp\left[ \left. \int_0^1 
        \inner*{
            \nabla \varphi_{r,\delta+\vareps^\circ}^\vareps(X_{[1,i]} - t X_{i-1}),
            X_{i-1}
        } 
        \Ind_\leq(t) 
    dt \right| X_{i+1} \right] \\
    & \leq \Exp\left[ \left. \int_0^1 
        \norm{\nabla \varphi_{r,\delta+\vareps^\circ}^\vareps(X_{[1,i]} - t X_{i-1})}_1 \Ind_\leq(t)
        \norm{X_{i-1}}_\infty 
    dt \right| X_{i+1} \right] \\
    & \leq C \frac{\sqrt{\log(ep)}}{\vareps} \Exp\left[ \left. \int_0^1 
        \Ind_\leq(t)
        \norm{X_{i-1}}_\infty 
    dt \right| X_{i+1} \right] \\
\end{aligned}
\end{equation*}
We use Tonelli's theorem to switch the order between the integration and the expectation conditional on $\mathscr{X}_{[i-1, i+1]}$:
\begin{equation*}
\begin{aligned}
    & \Exp\left[ \left. \int_0^1 
        \Ind_\leq(t)
        \norm{X_{i-1}}_\infty 
    dt \right| X_{i+1} \right] \\
    & \leq \Exp\left[ \left. \int_0^1 \Exp\left[ \left. 
        \Ind_\leq(t)
        \norm{X_{i-1}}_\infty \right| | \mathscr{X}_{[i-1, i+1]} \right] 
    dt \right| X_{i+1} \right] \\
    & = \Exp\left[ \left. \int_0^1 \left(
    \begin{aligned}
        & \Pr[X_{[1,i-2]} \in A_{r_1, \vareps^\circ} | \mathscr{X}_{[i-1, i+1]}] \\
        & + \Pr[X_{[1,i-2]} \in A_{r_2, \vareps^\circ} | \mathscr{X}_{[i-1, i+1]}]
    \end{aligned} \right)
    \norm{X_{i-1}}_\infty dt \right| X_{i+1} \right] \\
    & \leq \kappa_{[1,i-2]|\{i-1\}}(\vareps^\circ)
    \Exp[\norm{X_{i-1}}_\infty | X_{i+1}] 
    = \kappa_{[1,i-2]|\{i-1\}}(\vareps^\circ) \cdot \nu_{1,i-1},
\end{aligned}
\end{equation*}
where $r_1 = r - (1-t) X_{i-1} - X_{i} - (\delta+\vareps^\circ)\mathbf{1}$ and $r_2 = r - (1-t) X_{i-1} - X_{i} + (\delta+\vareps^\circ)\mathbf{1}$ are Borel measurable functions with respect to $\mathscr{X}_{[i-1, i+1]}$. On the other hand, based on Lemma 10.5 of \citet{lopes2022central},
\begin{equation*}
\begin{aligned}
    &\Exp\left[ \left. \int_0^1 \inner*{
            \nabla \varphi_{r,\delta+\vareps^\circ}^\vareps(X_{[1,i]} - t X_{i-1}),
            X_{i-1}
        } \Ind_>(t) 
    dt \right| X_{i+1} \right] \\
    & \leq \Exp\left[ \left. \int_0^1 
        \norm{\nabla \varphi_{r,\delta+\vareps^\circ}^\vareps(X_{[1,i]} - t X_{i-1})}_1 \Ind_>(t)
        \norm{X_{i-1}}_\infty 
    dt \right| X_{i+1} \right] \\
    & = \frac{1}{\vareps h^4} \Exp\left[ \left. \int_0^1 \Exp\left[ \left. 
        \norm{X_{i-1}}_\infty \right| \mathscr{X}_{[i-1, i+1]} \right] 
    dt \right| X_{i+1} \right] \\
    & = \frac{1}{\vareps h^4} \Exp[\norm{X_{i-1}}_\infty | X_{i+1}] 
    = \frac{1}{\vareps h^4} \cdot \nu_{1,i-1}.
\end{aligned}
\end{equation*}
Combining the two terms, we get
\begin{equation*}
\begin{aligned}
    \Exp[ \mathfrak{R}_{X_{i-1}}^{(1)} | X_{i+1} ] 
    \leq \nu_{1,i-1} \left( 
        C \frac{\sqrt{\log(ep)}}{\vareps} 
        \max\left\{1, \kappa_{[1,i-2]|\{i-1\}}(\vareps^\circ) \right\}
        + \frac{1}{\vareps h^4}
    \right).
\end{aligned}    
\end{equation*}
Putting the above result back to \cref{eq:anti_concentration_Taylor_N}, we get for any $h > 0$,
\begin{equation*}
\begin{aligned}
    & \Pr[X_{[1,i]} \in A_{r,\delta} | X_{i+1}] \\
    & \leq \nu_{1,i-1} \left( 
        C \frac{\sqrt{\log(ep)}}{\vareps} 
        \max\left\{1, \kappa_{[1,i-2]|\{i-1\}}(\vareps^\circ) \right\}
        + \frac{1}{\vareps h^4}
    \right)
    + C\frac{\delta + 2\vareps^\circ}{\sigma_{\min}} \sqrt{\frac{\log(ep)}{i-2}} + 2\mu_{[1,i-2]}.
\end{aligned} 
\end{equation*}
Plugging in $h = \left(\frac{\sigma_{\min}}{\vareps} \frac{\sqrt{i-2}}{\log(ep)} \right)^{1/4}$,
\begin{equation*}
\begin{aligned}
    & \Pr[X_{[1,i]} \in A_{r,\delta} | X_{i+1}] \\
    & \leq C \nu_{1,i-1}
    \frac{\sqrt{\log(ep)}}{\vareps} 
    \max\left\{1, \kappa_{[1,i-2]|\{i-1\}}(\vareps^\circ) + \kappa^\circ_{i-2} \right\}
    + C\frac{\delta + 2\vareps^\circ}{\sigma_{\min}} \sqrt{\frac{\log(ep)}{i-2}} + 2\mu_{[1,i-2]},
\end{aligned}
\end{equation*}
where $\vareps^\circ = 10\vareps\sqrt{\log(p\max\{i_0-2,1\})}$ and $\kappa^\circ_j = \frac{\vareps}{\sigma_{\min}} \sqrt{\frac{\log(ep)}{\max\{j,1\}}}$, as long as $\vareps \geq \sigma_{\min}$.
%
Because the righthand side is not dependent on $r$, for any $\vareps \geq \sigma_{\min}, \delta > 0$, 
\begin{equation*} 
\begin{aligned}
    & \kappa_{[1,i]|\{i+1\}}(\delta) \\
    & \leq C \left( 
        \nu_{1,\max} \frac{\sqrt{\log(ep)}}{\vareps} \kappa_{[1,i-2]|\{i-1\}}(\vareps^\circ)
        + \mu_{[1,i-2]} 
    + \frac{\delta + 2\vareps^\circ}{\sigma_{\min}} \sqrt{\frac{\log(ep)}{i-2}}
    + \frac{\nu_{1,\max}}{\sigma_{\min}} \frac{\log(ep)}{\sqrt{i-2}}
    \right).
\end{aligned}
\end{equation*}

\medskip
{\bf Case of general settings:} We consider the case of $j = 0$, which is to bound $\Exp[\mathfrak{R}^{(1)}_{X_{\{2, i^\circ-1\}}} | \mathscr{X}_{\{0, i^\circ+1\}}]$.
First, we decompose the remainder term by
\begin{equation*}
\begin{aligned}
    & \Exp\left[ \mathfrak{R}^{(1)}_{X_{\{2,i^\circ-1\}}} | \mathscr{X}_{\{0, i^\circ+1\}} \right] \\ 
    & = \Exp\left[ \left. \int_0^1 \inner*{
            \nabla \varphi_{r,\delta+\vareps^\circ}^\vareps(X_{[1,i^\circ]} - t X_{\{2,i^\circ-1\}}),
            X_{\{2,i^\circ-1\}}
        } dt \right| \mathscr{X}_{\{0, i^\circ+1\}} \right] \\
    & = \Exp\left[ \left. \int_0^1 \inner*{
            \nabla \varphi_{r,\delta+\vareps^\circ}^\vareps(X_{[1,i^\circ]} - t X_{\{2,i^\circ-1\}}),
            X_{\{2,i^\circ-1\}}
        } \Ind_\leq(t) dt \right| \mathscr{X}_{\{0, i^\circ+1\}} \right] \\
    & \quad + \Exp\left[ \left. \int_0^1 \inner*{
            \nabla \varphi_{r,\delta+\vareps^\circ}^\vareps(X_{[1,i^\circ]} - t X_{\{2,i^\circ-1\}}),
            X_{\{2,i^\circ-1\}}
        } \Ind_>(t) dt \right| \mathscr{X}_{\{0, i^\circ+1\}} \right], 
\end{aligned}
\end{equation*}
where $\Ind_\leq(t) = \Ind\{\norm{X_{[1,i^\circ]} - t X_{\{2,i^\circ-1\}} - \partial A_{r,\delta+\vareps^\circ}}_\infty \leq \vareps^\circ\}$, $\Ind_>(t) = \Ind\{\norm{X_{[1,i^\circ]} - t X_{\{2,i^\circ-1\}} - \partial A_{r,\delta+\vareps^\circ}}_\infty > \vareps^\circ\}$.
Based on Lemma 2.3 of \citet{fang2020large}, 
\begin{equation*}
\begin{aligned}
    & \Exp\left[ \left. \int_0^1 
        \inner*{
            \nabla \varphi_{r,\delta+\vareps^\circ}^\vareps(X_{[1,i^\circ]} - t X_{\{2,i^\circ-1\}}),
            X_{\{2,i^\circ-1\}}
        } 
        \Ind_\leq(t) 
    dt \right| \mathscr{X}_{\{0, i^\circ+1\}} \right] \\
    & \leq \Exp\left[ \left. \int_0^1 
        \norm{\nabla \varphi_{r,\delta+\vareps^\circ}^\vareps(X_{[1,i^\circ]} - t X_{\{2,i^\circ-1\}})}_1 \Ind_\leq(t)
        \norm{X_{\{2,i^\circ-1\}}}_\infty 
    dt \right| \mathscr{X}_{\{0, i^\circ+1\}} \right] \\
    & \leq C \frac{\sqrt{\log(ep)}}{\vareps} \Exp\left[ \left. \int_0^1 
        \Ind_\leq(t)
        \norm{X_{\{2,i^\circ-1\}}}_\infty 
    dt \right| \mathscr{X}_{\{0, i^\circ+1\}} \right] \\
\end{aligned}
\end{equation*}
We use Tonelli's theorem to switch the order between the integration and the expectation conditional on $\mathscr{X}_{(i-2,3)}$:
\begin{equation*}
\begin{aligned}
    & \Exp\left[ \left. \int_0^1 
        \Ind_\leq(t)
        \norm{X_{\{2,i^\circ-1\}}}_\infty 
    dt \right| \mathscr{X}_{\{0, i^\circ+1\}} \right] \\
    & \leq \Exp\left[ \left. \int_0^1 \Exp\left[ \left. 
        \Ind_\leq(t)
        \norm{X_{\{2,i^\circ-1\}}}_\infty \right| | \mathscr{X}_{\{2,i^\circ-1\}} \right] 
    dt \right| \mathscr{X}_{\{0, i^\circ+1\}} \right] \\
    & = \Exp\left[ \left. \int_0^1 \left(
    \begin{aligned}
        & \Pr[X_{[3,i^\circ-2]} \in A_{r_1, \vareps^\circ} | \mathscr{X}_{\{2,i^\circ-1\}}] \\
        & + \Pr[X_{[3,i^\circ-2]} \in A_{r_2, \vareps^\circ} | \mathscr{X}_{\{2,i^\circ-1\}}]
    \end{aligned} \right)
    \norm{X_{\{2,i^\circ-1\}}}_\infty dt \right| \mathscr{X}_{\{0, i^\circ+1\}} \right] \\
    & \leq \kappa_{[3,i^\circ-2]|\{2,i^\circ-1\}}(\vareps^\circ)
    \Exp[\norm{X_{\{2,i^\circ-1\}}}_\infty | \mathscr{X}_{\{0, i^\circ+1\}}] 
    = \kappa_{[3,i^\circ-2]|\{2,i^\circ-1\}}(\vareps^\circ)
    (\nu_{1,2} + \nu_{1,i^\circ-1}),
\end{aligned}
\end{equation*}
where $r_1 = r - (1-t) X_{\{2,i^\circ-1\}} - X_{1} - X_{i} - (\delta+\vareps^\circ)\mathbf{1}$ and $r_2 = r - (1-t) X_{\{2,i^\circ-1\}} - X_{1} - X_{i} + (\delta+\vareps^\circ)\mathbf{1}$ are Borel measurable functions with respect to $X_{\{2,i^\circ-1\}}$. On the other hand, based on Lemma 10.5 of \citet{lopes2022central},
\begin{equation*}
\begin{aligned}
    &\Exp\left[ \left. \int_0^1 \inner*{
            \nabla \varphi_{r,\delta+\vareps^\circ}^\vareps(X_{[1,i^\circ]} - t X_{\{2,i^\circ-1\}}),
            X_{\{2,i^\circ-1\}}
        } \Ind_>(t) 
    dt \right| \mathscr{X}_{\{0, i^\circ+1\}} \right] \\
    & \leq \Exp\left[ \left. \int_0^1 
        \norm{\nabla \varphi_{r,\delta+\vareps^\circ}^\vareps(X_{[1,i^\circ]} - t X_{\{2,i^\circ-1\}})}_1 \Ind_>(t)
        \norm{X_{\{2,i^\circ-1\}}}_\infty 
    dt \right| \mathscr{X}_{\{0, i^\circ+1\}} \right] \\
    & = \frac{1}{\vareps h^4} \Exp\left[ \left. \int_0^1 \Exp\left[ \left. 
        \norm{X_{\{2,i^\circ-1\}}}_\infty \right| \mathscr{X}_{\{2,i^\circ-1\}} \right] 
    dt \right| \mathscr{X}_{\{0, i^\circ+1\}} \right] \\
    & = \frac{1}{\vareps h^4} \Exp[\norm{X_{\{2,i^\circ-1\}}}_\infty | \mathscr{X}_{\{0, i^\circ+1\}}] 
    = \frac{1}{\vareps h^4} (\nu_{1,2} + \nu_{1,i^\circ-1}).
\end{aligned}
\end{equation*}
Combining the two terms, we get
\begin{equation*}
\begin{aligned}
    \Exp[ \mathfrak{R}_{X_{\{2,i^\circ-1\}}}^{(1)} | \mathscr{X}_{\{0,i^\circ+1\}} ] 
    \leq (\nu_{1,2} + \nu_{1,i^\circ-1}) \left( 
        C \frac{\sqrt{\log(ep)}}{\vareps} 
        \max\left\{1, \kappa_{[3,i^\circ-2]|\{2,i^\circ-1\}}(\vareps^\circ) \right\}
        + \frac{1}{\vareps h^4}
    \right).
\end{aligned}    
\end{equation*}
Putting the above result back to \cref{eq:anti_concentration_Taylor_P}, we get for any $h > 0$,
\begin{equation*}
\begin{aligned}
    & \Pr[X_{[1,i^\circ]} \in A_{r,\delta} | \mathscr{X}_{\{0, i^\circ+1\}}] \\
    & \leq (\nu_{1,2} + \nu_{1,i^\circ-1}) \left( 
        C \frac{\sqrt{\log(ep)}}{\vareps} 
        \max\left\{1, \kappa_{[3,i^\circ-2]}(\vareps^\circ) \right\}
        + \frac{1}{\vareps h^4}
    \right)
    \\
    & \quad + C\frac{\delta + 2\vareps^\circ}{\sigma_{\min}} \sqrt{\frac{\log(ep)}{i^\circ-4}} + 2\mu_{[3,i^\circ-2]}.
\end{aligned}
\end{equation*}
Plugging in $h = \left(\frac{\sigma_{\min}}{\vareps} \frac{\sqrt{i^\circ-4}}{\log(ep)} \right)^{1/4}$,
\begin{equation*}
\begin{aligned}
    & \Pr[X_{[1,i^\circ]} \in A_{r,\delta} | \mathscr{X}_{\{0, i^\circ+1\}}] \\
    & \leq C (\nu_{1,2} + \nu_{1,i^\circ-1}) 
    \frac{\sqrt{\log(ep)}}{\vareps} 
    \max\left\{1, \kappa_{[3,i^\circ-2]}(\vareps^\circ) + \kappa^\circ_{i^\circ-4} \right\}
    \\
    & \quad + C\frac{\delta + 2\vareps^\circ}{\sigma_{\min}} \sqrt{\frac{\log(ep)}{i^\circ-4}} + 2\mu_{[3,i^\circ-2]},
\end{aligned}
\end{equation*}
where $\vareps^\circ = 10\vareps\sqrt{\log(p\max\{i_0-2,1\})}$ and $\kappa^\circ_j = \frac{\vareps}{\sigma_{\min}} \sqrt{\frac{\log(ep)}{\max\{j,1\}}}$, as long as $\vareps \geq \sigma_{\min}$.
%
Because the righthand side is not dependent on $r$, for any $\vareps \geq \sigma_{\min}, \delta > 0$, 
\begin{equation*} 
\begin{aligned}
    & \kappa_{[1,i^\circ]|\{0,i^\circ+1\}}(\delta) \\
    & \leq C \left( 
        (\nu_{1,2} + \nu_{1,i^\circ-1}) \frac{\sqrt{\log(ep)}}{\vareps} \kappa_{[3,i^\circ-2]|\{2,i^\circ-1\}}(\vareps^\circ)
        + \mu_{[3,i^\circ-2]} 
    \right)\\
    & \quad + C\frac{\delta + 2\vareps^\circ}{\sigma_{\min}} \sqrt{\frac{\log(ep)}{i^\circ-4}}
    + C\frac{\bar\nu_{1,(1,i)}}{\sigma_{\min}} \frac{\log(ep)}{\sqrt{i^\circ-4}}.
\end{aligned}
\end{equation*}

\subsection{Proof of Lemma~\ref{thm:jensen}}

Suppose that $j, k \geq \frac{n}{2}$ and $\alpha \leq 1$. Because $x \mapsto x^\alpha$ is concave, by Jensen's inequality, for any $q_1, q_2 > 0$,
\begin{equation*}   
\begin{aligned}
    \frac{1}{n} \sum_{i=1}^{n} L_{q_1,i+j} \bar{L}_{q_2,(i,i+j)}^\alpha
    & \leq \bar{L}_{q_1} \left( 
        \frac{\frac{1}{n} \sum_{i=1}^{n} L_{q_1,i+j} \bar{L}_{q_2,(i,i+j)}}{\bar{L}_{q_1}} 
    \right)^\alpha \\
    & \leq \bar{L}_{q_1} \left( 
        \frac{\frac{1}{n (j-1)} \sum_{i=1}^{n} L_{q_1,i} \sum_{i=1}^n L_{q_2,i}}{\bar{L}_{q_1}} 
    \right)^\alpha\\ 
    & \leq \bar{L}_{q_1} \left(
        \frac{n}{j-1} \bar{L}_{q_2}
    \right)^\alpha 
    \leq 2^\alpha \bar{L}_{q_1} \bar{L}_{q_2}.
\end{aligned}
\end{equation*}
Similarly, for any $q_1, q_2, q_3 > 0$,
\begin{equation*}
\begin{aligned}
    & \frac{1}{n} \sum_{i=1}^{n} \frac{L_{q_1,i+j}}{j-1} 
    \sum_{l=1}^{j-1} L_{q_2,i+[k+l]_{j-1}} \bar{L}_{q_3, i+(l,k+l)_{j-1}}^\alpha \\
    & \leq \frac{1}{n} \sum_{i=1}^{n} \frac{L_{q_1,i+j}}{j-1} \sum_{l=1}^{j-1} L_{q_2,i+[k+l]_{j-1}} \left(
        \frac{\frac{1}{n} \sum_{i=1}^{n} \frac{L_{q_1,i+j}}{j-1} 
        \sum_{l=1}^{j-1} L_{q_2,i+[k+l]_{j-1}} \bar{L}_{q_3, i+(l,k+l)_{j-1}}} 
        {\frac{1}{n} \sum_{i=1}^{n} \frac{L_{q_1,i+j}}{j-1} \sum_{l=1}^{j-1} L_{q_2,i+[k+l]_{j-1}}}
    \right)^\alpha \\
    & \leq \frac{1}{n} \sum_{i=1}^{n} L_{q_1,i+j} \bar{L}_{q_2,(i,i+j)} \left(
        \frac{\frac{1}{j-1} \sum_{i=1}^{n} \frac{L_{q_1,i+j}}{j-1} 
        \sum_{l=1}^{j-1} L_{q_2,i+[k+l]_{j-1}} \bar{L}_{q_3}} 
        {\frac{1}{n} \sum_{i=1}^{n} \frac{L_{q_1,i+j}}{j-1} \sum_{l=1}^{j-1} L_{q_2,i+[k+l]_{j-1}}}
    \right)^\alpha \\
    & \leq \frac{1}{n} \sum_{i=1}^{n} L_{q_1,i+j} \bar{L}_{q_2,(i,i+j)} \left(
        \frac{n}{j-1} \bar{L}_{q_3}
    \right)^\alpha \\
    & \leq \frac{2^\alpha}{n} \sum_{i=1}^{n} L_{q_1,i+j} \bar{L}_{q_2,(i,i+j)} \bar{L}_{q_3}^\alpha \\
    & \leq 2^{\alpha+1} \bar{L}_{q_1} \bar{L}_{q_2} \sum_{l=1}^{j-1} L_{q_2,i+[k+l]_{j-1}} \bar{L}_{q_3}^\alpha. \\
\end{aligned}
\end{equation*}

\subsection{Proof of Lemma~\ref{thm:conc_ineq_q}} \label{sec:pf_conc_ineq_q}

For any $B > 0$, we divide the target of the bound as follows
\begin{equation*}
\begin{aligned}
    \norm*{\frac{1}{n}\sum_{i=1}^n X_i}_\infty
    & \leq \norm*{ \frac{1}{n} \sum_{i=1}^n \Big( 
        X_i \mathbf{1}\{\norm{X_i}_\infty \leq B\} 
        - \Exp[X_i \mathbf{1}\{\norm{X_i}_\infty \leq B\} ] 
    \Big) }_\infty \\
    & \quad + \norm*{ \frac{1}{n} \sum_{i=1}^n \Big( 
        X_i \mathbf{1}\{\norm{X_i}_\infty > B\} 
        - \Exp[X_i \mathbf{1}\{\norm{X_i}_\infty > B\}] 
    \Big) }_\infty \\
    & \equiv \mathfrak{T}_1 + \mathfrak{T}_2.
\end{aligned}
\end{equation*}
With bounded $q$-norms, each element of $X_i \mathbf{1}\{\norm{X_i}_\infty \leq B\}$ has variance upperbounded by $L_{\min\{2, q\},i} B^{\max\{0, 2-q\}}$. By Bernstein's inequality, for any $\delta \in [0,1]$, with probability at least $1 - \delta/2$,  
\begin{equation*}
\begin{aligned}
    \mathfrak{T}_1 
    \leq \sqrt{ C \bar{L}_{\min\{2, q\}} B^{\max\{0, 2-q\}} \frac{\log(2p/\delta)}{n} }
    + C B \frac{\log(2p/\delta)}{n},
\end{aligned}
\end{equation*}
where $C$ is a universal constant. On the other hand,
\begin{equation*}
\begin{aligned}
    \Pr\left[ \mathfrak{T}_2 > \gamma \right]
    & \leq \frac{2 \Exp[ \frac{1}{n} \sum_{i=1}^n \norm{X_i}_\infty \mathbf{1}\{\norm{X_i}_\infty > B \}]}{\gamma} 
    \leq \frac{2 \bar{\nu}_{q}}{B^{q-1} \gamma}.
\end{aligned}
\end{equation*}
Combining the two bounds, we get that, with probability at least $1 - \delta$, 
\begin{equation*}
\begin{aligned}
    \norm*{\frac{1}{n}\sum_{i=1}^n X_i}_\infty
    \leq \sqrt{ C \bar{L}_{\min\{2, q\}} B^{\max\{0, 2-q\}} \frac{\log(2p/\delta)}{n} }
    + C B \frac{\log(2p/\delta)}{n}
    + \frac{4 \bar{\nu}_{q} }{B^{q-1} \delta}.
\end{aligned}
\end{equation*}
Minimizing the right hand side with respect to $B$ (i.e., $B = (\frac{n \bar{\nu}_q}{\delta \log(2p/\delta)})^{1/q}$), we obtain
\begin{equation*}
\begin{aligned}
    \norm*{\frac{1}{n}\sum_{i=1}^n X_i}_\infty
    & \leq C \bar{L}_{\min\{2, q\}}^{1/2} \left(\frac{\bar{\nu}_q}{\delta} \right)^{\max\{1/q-1/2,0\}}
    \left(\frac{\log(2p/\delta)}{n}\right)^{\min\{1-1/q, 1/2\}} \\
    & \quad + C \left(\frac{\bar{\nu}_q}{\delta} \right)^{1/q} \left(\frac{\log(2p/\delta)}{n}\right)^{1-1/q}.
\end{aligned}
\end{equation*}

\section{Details of the Taylor expansions for \texorpdfstring{$m=1$}{m=1}}
\label{sec:Taylor_expansion}

\subsection{Breaking the ring} \label{sec:breaking_ring}

We apply the Taylor expansion to $\rho_{r,\phi}^{\delta}(X_{[1,n]})$ centered at $\rho_{r,\phi}^\delta(X_{[1,n)})$ as follows:
\begin{equation*}
\begin{aligned}
    & \rho_{r,\phi}^{\delta}(X_{[1,n]}) - \rho_{r,\phi}^\delta(X_{[1,n)}) \\
    & = \inner*{\nabla \rho_{r,\phi}^\delta(X_{[1,n)}), X_n} 
    + \frac{1}{2} \inner*{\nabla^2 \rho_{r,\phi}^\delta(X_{[1,n)}), X_n^{\otimes 2}} \\
    & \quad + \frac{1}{2} \int_0^1 (1-t)^2 \inner*{
    \nabla^3 \rho_{r,\phi}^\delta(X_{[1,n)} + t X_n), X_n^{\otimes 3}} ~dt.
\end{aligned}
\end{equation*}
Because $X_{[1,n)}$ and $X_n$ are dependent via $X_1$ and $X_{n-1}$ due to $1$-dependency, we re-apply the Taylor expansion to $\nabla \rho_{r,\phi}^\delta(X_{[1,n)})$ and $\nabla^2 \rho_{r,\phi}^\delta(X_{[1,n)})$ centered at $X_{(1,n-1)}$:
\begin{equation*}
\begin{aligned}
    & \inner*{ \nabla \rho_{r,\phi}^\delta(X_{[1,n)}), X_n } \\
    & = \inner*{ \nabla \rho_{r,\phi}^\delta(X_{(1,n-1)}), X_n }
    + \inner*{\nabla^2 \rho_{r,\phi}^\delta(X_{(1,n-1)}), X_n \otimes (X_1 + X_{n-1})} \\
    & \quad + \int_0^1 (1-t) 
    \inner*{ \nabla^3 \rho_{r,\phi}^\delta(X_{(1,n-1)} + t (X_1 + X_{n-1})),
    X_n \otimes (X_1 + X_{n-1})^{\otimes 2}} ~dt, \textand
\end{aligned}
\end{equation*}
\begin{equation*}
\begin{aligned}
    & \inner*{ \nabla^2 \rho_{r,\phi}^\delta(X_{[1,n)}), X_n^{\otimes 2} } \\
    & = \inner*{ \nabla^2 \rho_{r,\phi}^\delta(X_{(1,n-1)}), X_n^{\otimes 2} } \\
    & \quad + \int_0^1 
    \inner*{ \nabla^3 \rho_{r,\phi}^\delta(X_{(1,n-1)} + t (X_1 + X_{n-1})),
    X_n^{\otimes 2} \otimes (X_1 + X_{n-1})} ~dt.
\end{aligned}
\end{equation*}
Similarly,
\begin{equation*}
\begin{aligned}
    & \inner*{\nabla^2 \rho_{r,\phi}^\delta(X_{(1,n-1)}), X_n \otimes X_1} \\
    & = \inner*{\nabla^2 \rho_{r,\phi}^\delta(X_{(2,n-1)}), X_n \otimes X_1} \\
    & \quad + \int_0^1 
    \inner*{ \nabla^3 \rho_{r,\phi}^\delta(X_{(2,n-1)} + t X_2),
    X_n \otimes X_1 \otimes X_2} ~dt, \textand
\end{aligned}
\end{equation*}
\begin{equation*}
\begin{aligned}
    & \inner*{\nabla^2 \rho_{r,\phi}^\delta(X_{(1,n-1)}), X_n \otimes X_{n-1}} \\
    & = \inner*{\nabla^2 \rho_{r,\phi}^\delta(X_{(1,n-2)}), X_n \otimes X_{n-1}} \\
    & \quad + \int_0^1 
    \inner*{ \nabla^3 \rho_{r,\phi}^\delta(X_{(1,n-2)} + t X_2),
    X_n \otimes X_{n-1} \otimes X_{n-2}} ~dt.
\end{aligned}
\end{equation*}
In sum, because $\Exp[X_j] = 0$,
\begin{equation*}
\begin{aligned}
    & \Exp\left[ \rho_{r,\phi}^{\delta}(X_{[1,n]}) - \rho_{r,\phi}^\delta(X_{[1,n)}) \right] \\
    & = \frac{1}{2} \inner*{
        \Exp[\nabla^2 \rho_{r,\phi}^\delta(X_{(1,n-1)})], 
        \Exp[X_n^{\otimes 2}]
    }
    + \inner*{
        \Exp[\nabla^2 \rho_{r,\phi}^\delta(X_{(2,n-1)})], 
        \Exp[X_n \otimes X_1]
    } \\
    & \quad + \inner*{
        \Exp[\nabla^2 \rho_{r,\phi}^\delta(X_{(1,n-2)})], 
        \Exp[X_{n-1} \otimes X_n]
    } 
    + \Exp\left[ \mathfrak{R}_X^{(3)} \right],
\end{aligned}
\end{equation*}
where $\mathfrak{R}_X^{(3)}$ is the summation of all the above third-order remainder terms. That is,
\begin{equation*}
\begin{aligned}
    & \mathfrak{R}_X^{(3)} \\
    & = \frac{1}{2} \int_0^1 (1-t)^2 \inner*{
    \nabla^3 \rho_{r,\phi}^\delta(X_{[1,n)} + t X_n), X_n^{\otimes 3}} ~dt \\
    & \quad + \int_0^1 (1-t) 
    \inner*{ \nabla^3 \rho_{r,\phi}^\delta(X_{(1,n-1)} + t (X_1 + X_{n-1})),
    X_n \otimes (X_1 + X_{n-1})^{\otimes 2}} ~dt \\
    & \quad + \frac{1}{2} \int_0^1 
    \inner*{ \nabla^3 \rho_{r,\phi}^\delta(X_{(1,n-1)} + t (X_1 + X_{n-1})),
    X_n^{\otimes 2} \otimes (X_1 + X_{n-1})} ~dt \\
    & \quad + \int_0^1 
    \inner*{ \nabla^3 \rho_{r,\phi}^\delta(X_{(2,n-1)} + t X_2),
    X_n \otimes X_1 \otimes X_2} ~dt \\
    & \quad + \int_0^1 
    \inner*{ \nabla^3 \rho_{r,\phi}^\delta(X_{(1,n-2)} + t X_2),
    X_n \otimes X_{n-1} \otimes X_{n-2}} ~dt    
\end{aligned}
\end{equation*}

To further decompose $\mathfrak{R}_X^{(3)}$, we apply the Taylor expansion up to order $4$. For example, 
\begin{equation*}
\begin{aligned}
    & \frac{1}{2} \int_0^1 (1-t)^2 \inner*{
    \nabla^3 \rho_{r,\phi}^\delta(X_{[1,n)} + t X_n), X_n^{\otimes 3}} ~dt \\
    & = \frac{1}{6} \inner*{ \nabla^3 \rho_{r,\phi}^\delta(X_{[1,n)}), X_n^{\otimes 3}}
    + \frac{1}{6} \int_0^1 (1-t)^3 \inner*{
    \nabla^4 \rho_{r,\phi}^\delta(X_{[1,n)} + t X_n), X_n^{\otimes 4}} ~dt.
\end{aligned}
\end{equation*}
Again, to break the dependency between $\nabla^3 \rho_{r,\phi}^\delta(X_{[1,n)})$ and $X_n^{\otimes 3}$, we re-apply the Taylor expansion centered at $X_1 + X_{n-1}$.
\begin{equation*}
\begin{aligned}
    & \inner*{ \nabla^3 \rho_{r,\phi}^\delta(X_{[1,n)}), X_n^{\otimes 3}} \\
    & = \inner*{ \nabla^3 \rho_{r,\phi}^\delta(X_{(1,n-1)}), X_n^{\otimes 3}} \\
    & \quad + \int_0^1 \inner*{
    \nabla^4 \rho_{r,\phi}^\delta(X_{(1,n-1)} + t (X_1+X_{n-1})), X_n^{\otimes 3} \otimes (X_1 + X_{n-1})} ~dt.
\end{aligned}
\end{equation*}
Repeating this to the other third-order remainder terms, we get
\begin{equation*}
\begin{aligned}
    & \mathfrak{R}_X^{(3)} \\
    & = \frac{1}{6} \inner*{\nabla^3 \rho_{r,\phi}^\delta(X_{(1,n-1)}), X_n^{\otimes 3}}
    + \inner*{\nabla^3 \rho_{r,\phi}^\delta(X_{(1,n-3)}), X_{n-2} \otimes X_{n-1} \otimes X_n} \\
    & \quad + \inner*{\nabla^3 \rho_{r,\phi}^\delta(X_{(2,n-2)}),  X_{n-1} \otimes X_{n} \otimes X_1} 
    + \inner*{\nabla^3 \rho_{r,\phi}^\delta(X_{(3,n-1)}),  X_{n} \otimes X_{1} \otimes X_2} \\
    & \quad + \frac{1}{2} \inner*{
        \nabla^3 \rho_{r,\phi}^\delta(X_{(1,n-2)}), 
        X_{n-1} \otimes X_n \otimes (X_{n-1} + X_n)
    } \\
    & \quad + \frac{1}{2} \inner*{
        \nabla^3 \rho_{r,\phi}^\delta(X_{(2,n-1)}), 
        X_{n} \otimes X_1 \otimes (X_1 + X_n)
    } \\
    & \quad + \mathfrak{R}_X^{(4)},
\end{aligned}
\end{equation*}
where 
\begin{equation*}
\begin{aligned}
    & \mathfrak{R}_X^{(4)} \\
    & = \frac{1}{6} \int_0^1 (1-t)^3 \inner*{
        \nabla^4 \rho_{r,\phi}^\delta(X_{[1,n)} + t X_n), 
        X_n^{\otimes 4}} ~dt \\
    & \quad + \frac{1}{2} \int_0^1 (1-t)^2 \inner*{
        \nabla^4 \rho_{r,\phi}^\delta(X_{(1,n-1)} + t (X_1+X_{n-1})), 
        X_n \otimes (X_1 + X_{n-1})^{\otimes 3}} ~dt \\
    & \quad + \frac{1}{2} \int_0^1 (1-t) \inner*{
        \nabla^4 \rho_{r,\phi}^\delta(X_{(1,n-1)} + t (X_1+X_{n-1})), 
        X_n^{\otimes 2} \otimes (X_1 + X_{n-1})^{\otimes 2}} ~dt \\
    & \quad + \frac{1}{6} \int_0^1 \inner*{
        \nabla^4 \rho_{r,\phi}^\delta(X_{(1,n-1)} + t (X_1+X_{n-1})), 
        X_n^{\otimes 3} \otimes (X_1 + X_{n-1})} ~dt \\
    & \quad + \int_0^1 (1-t) \inner*{
        \nabla^4 \rho_{r,\phi}^\delta(X_{(2,n-1)} + t X_2), 
        X_n \otimes X_1 \otimes X_2^{\otimes 2}} ~dt \\
    & \quad + \int_0^1 (1-t) \inner*{
        \nabla^4 \rho_{r,\phi}^\delta(X_{(1,n-2)} + t X_{n-2}), 
        X_n \otimes X_{n-1} \otimes X_{n-2}^{\otimes 2}} ~dt \\
    & \quad + \frac{1}{2} \int_0^1 \inner*{
        \nabla^4 \rho_{r,\phi}^\delta(X_{(2,n-1)} + t X_2), 
        X_n \otimes X_1^{\otimes 2} \otimes X_2} ~dt \\
    & \quad + \frac{1}{2} \int_0^1 \inner*{
        \nabla^4 \rho_{r,\phi}^\delta(X_{(1,n-2)} + t X_{n-2}), 
        X_n \otimes X_{n-1}^{\otimes 2} \otimes X_{n-2}} ~dt \\
    & \quad + \frac{1}{2} \int_0^1 \inner*{
        \nabla^4 \rho_{r,\phi}^\delta(X_{(2,n-1)} + t X_2), 
        X_n^{\otimes 2} \otimes X_1 \otimes X_2} ~dt \\
    & \quad + \frac{1}{2} \int_0^1 \inner*{
        \nabla^4 \rho_{r,\phi}^\delta(X_{(1,n-2)} + t X_{n-2}), 
        X_n^{\otimes 2} \otimes X_{n-1} \otimes X_{n-2}} ~dt \\
    & \quad + \int_0^1 \inner*{
        \nabla^4 \rho_{r,\phi}^\delta(X_{(3,n-1)} + t X_3), 
        X_n \otimes X_1 \otimes X_2 \otimes X_3} ~dt \\
    & \quad + \int_0^1 \inner*{
        \nabla^4 \rho_{r,\phi}^\delta(X_{(2,n-2)} + t (X_2 + X_{n-2})), 
        X_n \otimes X_{n-1} \otimes X_1 \otimes (X_2 + X_{n-2}) } ~dt \\
    & \quad + \int_0^1 \inner*{
        \nabla^4 \rho_{r,\phi}^\delta(X_{(1,n-3)} + t X_{n-3}), 
        X_n \otimes X_{n-1} \otimes X_{n-2} \otimes X_{n-3}} ~dt. \\
\end{aligned}
\end{equation*}
The third-order moments in \cref{eq:decompose_third_remainder_P} are further decomposed as in \cref{sec:third_lindeberg_swapping}. As a result,
\begin{equation*}
\begin{aligned}
    & \inner*{ \Exp\left[ \nabla^3 \rho_{r,\phi}^\delta(X_{(1,n-1)}) \right], \Exp[ X_n^{\otimes 3} ]} \\
    & = \inner*{ \Exp\left[ \nabla^3 \rho_{r,\phi}^\delta(Y_{(1,n-1)}) \right], \Exp[ X_n^{\otimes 3} ]}
    + \sum_{k=2}^{n-2} \Exp \left[ 
        \mathfrak{R}_{X,X_k}^{(6,0,1)}
        - \mathfrak{R}_{X,Y_k}^{(6,0,1)}
    \right], \\
\end{aligned}
\end{equation*}
\begin{equation*}
\begin{aligned}
    & \inner*{ \Exp\left[ \nabla^3 \rho_{r,\phi}^\delta(X_{(1,n-3)}) \right], \Exp[ X_{n-2} \otimes X_{n-1} \otimes X_n ]} \\
    & = \inner*{ \Exp\left[ \nabla^3 \rho_{r,\phi}^\delta(Y_{(1,n-3)}) \right], \Exp[ X_{n-2} \otimes X_{n-1} \otimes X_n ]}
    + \sum_{k=2}^{n-4} \Exp \left[ 
        \mathfrak{R}_{X,X_k}^{(6,0,2)}
        - \mathfrak{R}_{X,Y_k}^{(6,0,2)}
    \right], \\
\end{aligned}
\end{equation*}
\begin{equation*}
\begin{aligned}
    & \inner*{ \Exp\left[ \nabla^3 \rho_{r,\phi}^\delta(X_{(2,n-2)}) \right], \Exp[ X_{n-1} \otimes X_{n} \otimes X_1 ]} \\
    & = \inner*{ \Exp\left[ \nabla^3 \rho_{r,\phi}^\delta(Y_{(2,n-2)}) \right], \Exp[ X_{n-1} \otimes X_{n} \otimes X_1 ]}
    + \sum_{k=3}^{n-3} \Exp \left[ 
        \mathfrak{R}_{X,X_k}^{(6,0,3)}
        - \mathfrak{R}_{X,Y_k}^{(6,0,3)}
    \right], \\
\end{aligned}
\end{equation*}
\begin{equation*}
\begin{aligned}
    & \inner*{ \Exp\left[ \nabla^3 \rho_{r,\phi}^\delta(X_{(3,n-1)}) \right], \Exp[ X_{n} \otimes X_{1} \otimes X_2 ]} \\
    & = \inner*{ \Exp\left[ \nabla^3 \rho_{r,\phi}^\delta(Y_{(3,n-1)}) \right], \Exp[ X_{n} \otimes X_{1} \otimes X_2 ]}
    + \sum_{k=4}^{n-2} \Exp \left[ 
        \mathfrak{R}_{X,X_k}^{(6,0,4)}
        - \mathfrak{R}_{X,Y_k}^{(6,0,4)}
    \right], \\
\end{aligned}
\end{equation*}
\begin{equation*}
\begin{aligned}
    & \inner*{ \Exp\left[ \nabla^3 \rho_{r,\phi}^\delta(X_{(1,n-2)}) \right], \Exp[ X_{n-1} \otimes X_n \otimes (X_{n-1} + X_n) ]} \\
    & = \inner*{ \Exp\left[ \nabla^3 \rho_{r,\phi}^\delta(Y_{(1,n-2)}) \right], \Exp[ X_{n-1} \otimes X_n \otimes (X_{n-1} + X_n) ]} 
    + \sum_{k=2}^{n-3} \Exp \left[ 
        \mathfrak{R}_{X,X_k}^{(6,0,5)}
        - \mathfrak{R}_{X,Y_k}^{(6,0,5)}
    \right], \textand \\
\end{aligned}
\end{equation*}
\begin{equation*}
\begin{aligned}
    & \inner*{ \Exp\left[ \nabla^3 \rho_{r,\phi}^\delta(X_{(2,n-1)}) \right], \Exp[ X_{n} \otimes X_1 \otimes (X_{n-1} + X_n) ]} \\
    & = \inner*{ \Exp\left[ \nabla^3 \rho_{r,\phi}^\delta(Y_{(2,n-1)}) \right], \Exp[ X_{n} \otimes X_1 \otimes (X_{1} + X_n) ]} 
    + \sum_{k=3}^{n-2} \Exp \left[ 
        \mathfrak{R}_{X,X_k}^{(6,0,6)}
        - \mathfrak{R}_{X,Y_k}^{(6,0,6)}
    \right]. \\
\end{aligned}
\end{equation*}
Because $Y$ is Gaussian, by Lemma 6.2 in \citetalias{chernozhukov2020nearly} and Assumption (VAR-EV),
\begin{equation*}
\begin{aligned}
    \abs*{ \inner*{ 
        \Exp\left[ \nabla^3 \rho_{r,\phi}^\vareps(Y_{(1,n-1)}) \right], 
        \Exp[X_n^{\otimes 3}] } }
    & \leq \frac{C}{n^{3/2}} L_{3,n} \frac{(\log(ep))^{3/2}}{\underline{\sigma}^3} \\
    & \leq \frac{C}{n^{3/2}} L_{3,n} \frac{(\log(ep))^2}{\underline{\sigma}^2 \sigma_{\min}}.
\end{aligned}
\end{equation*} 
Putting all the terms together,
\begin{equation*}
\begin{aligned}
    & \abs*{ \mathfrak{R}_X^{(3)} - \mathfrak{R}_Y^{(3)} } 
    \leq \frac{C}{\sqrt{n}} \bar{L}_{3} \frac{(\log(ep))^2}{\underline\sigma^2 \sigma_{\min}}
    + \abs*{ \mathfrak{R}_X^{(4)} - \mathfrak{R}_Y^{(4)} }
    + \sum_{k=2}^{n-2} \abs*{\Exp \left[ 
        \mathfrak{R}_{X,X_k}^{(6)}
        - \mathfrak{R}_{X,Y_k}^{(6)}
    \right] },
\end{aligned}
\end{equation*}
where $\mathfrak{R}_{X,W_k}^{(6)} = \frac{1}{6} \mathfrak{R}_{X,W_k}^{(6,0,1)} + \mathfrak{R}_{X,W_k}^{(6,0,2)} + \mathfrak{R}_{X,W_k}^{(6,0,3)} + \mathfrak{R}_{X,W_k}^{(6,0,4)} + \frac{1}{2} \mathfrak{R}_{X,W_k}^{(6,0,5)} + \frac{1}{2} \mathfrak{R}_{X,W_k}^{(6,0,6)}$.

\subsection{First Lindeberg swapping under \texorpdfstring{$1$}{1}-dependence} \label{sec:first_lindeberg_swapping}

We apply Taylor's expansion to each term with $\varphi_r^\vareps$ of the Lindeberg swapping in Eq. (15). We only show the expansions for $j = 3, \dots, n-2$ here, but the calculations for $j= 1, 2, n-1$ and $n$ are similar. We recall the notations
\begin{equation*}
    W_{[i,j]}^\cmpl \equiv X_{[1,i)} + Y_{(j,n]} \textand
    W_{[i,j]}^{\indep} \equiv W_{[i-1,j+1]}^\cmpl.
\end{equation*}
First,
\begin{equation*}
\begin{aligned}
    & \varphi_r^\vareps(W^\cmpl_{[j,j]} + X_j)\\
    & = \varphi_r^\vareps(W^\cmpl_{[j,j]})
    + \inner*{ \nabla \varphi_r^\vareps(W^\cmpl_{[j,j]}), X_j}
    + \frac{1}{2} \inner*{ 
    \nabla^2 \varphi_r^\vareps(W^\cmpl_{[j,j]}), X_j^{\otimes 2}} \\
    & \quad + \frac{1}{2} \int_0^1 (1-t)^2 \inner*{
    \nabla^3 \varphi_r^\vareps(W^\cmpl_{[j,j]} + t X_j), X_j^{\otimes 3}} ~dt.
\end{aligned}
\end{equation*}
We further apply Taylor's expansion to the second and third terms:
\begin{equation*}
\begin{aligned}
    & \inner*{ \nabla \varphi_r^\vareps(W^\cmpl_{[j,j]}), X_j} \\
    & = \inner*{ \nabla \varphi_r^\vareps(W_{[j,j]}^{\indep}), X_j} 
    + \inner*{ 
    \nabla^2 \varphi_r^\vareps(W_{[j,j]}^{\indep}),   X_j 
    \otimes (X_{j-1} + Y_{j+1})} \\
    & \quad + \int_0^1 (1-t) \left\langle
        \nabla^3 \varphi_r^\vareps\left(
            W_{[j,j]}^{\indep} + t (X_{j-1} + Y_{j+1}) 
        \right), 
        X_j \otimes (X_{j-1} + Y_{j+1})^{\otimes 2}
    \right\rangle ~dt,
\end{aligned}
\end{equation*}
\begin{equation*}
\begin{aligned}
    & \inner*{ 
        \nabla^2 \varphi_r^\vareps(W^\cmpl_{[j,j]}), X_j^{\otimes 2}} \\
    & = \inner*{\nabla^2 \varphi_r^\vareps(W_{[j,j]}^{\indep}), X_j^{\otimes 2}} \\
    & + \int_0^1 \left\langle
        \nabla^3 \varphi_r^\vareps\left(
              W_{[j,j]}^{\indep} + t (X_{j-1} + Y_{j+1})
        \right), 
        X_j^{\otimes 2} \otimes (X_{j-1} + Y_{j+1})\right\rangle ~dt.
\end{aligned}
\end{equation*}

Last, 
\begin{equation*}
\begin{aligned}
    & \inner*{ 
        \nabla^2 \varphi_r^\vareps(W_{[j,j]}^{\indep}), 
        X_j \otimes (X_{j-1}+Y_{j+1})} \\
    & = \inner*{ 
        \nabla^2 \varphi_r^\vareps(W_{[j,j]}^{\indep}),   
        X_j \otimes X_{j-1}} 
    + \inner*{ 
        \nabla^2 \varphi_r^\vareps(W_{[j,j]}^{\indep}), 
        X_j \otimes Y_{j+1}} \\
    & = \inner*{ 
        \nabla^2 \varphi_r^\vareps(W_{[j-1,j]}^{\indep}), 
        X_j \otimes X_{j-1}}
    + \inner*{ 
        \nabla^2 \varphi_r^\vareps(W_{[j,j+1]}^{\indep}),   
        X_j \otimes Y_{j+1}} \\
    & + \int_0^1 \inner*{
        \nabla^3 \varphi_r^\vareps\left(
            W_{[j-1,j]}^{\indep} + t X_{j-2}
        \right), 
        X_j \otimes X_{j-1} \otimes X_{j-2}} ~dt \\
    & + \int_0^1 \inner*{
        \nabla^3 \varphi_r^\vareps\left(
            W_{[j,j+1]}^{\indep}+ t Y_{j+2}
        \right),
        X_j \otimes Y_{j+1} \otimes Y_{j+2})} ~dt.
\end{aligned}
\end{equation*}
In sum,
\begin{equation*}
\begin{aligned}
    & \Exp \left[
        \varphi_r^\vareps(W^\cmpl_{[j,j]} + X_j)\right] \\
    & = \Exp \left[
    \begin{aligned}
        & \varphi_r^\vareps(W^\cmpl_{[j,j]})
        + \inner*{ \nabla \varphi_r^\vareps(W_{[j,j]}^{\indep}), X_j } 
        + \frac{1}{2} 
        \inner*{\nabla^2 \varphi_r^\vareps(W_{[j,j]}^{\indep}), X_j^{\otimes 2}} \\
        & + \inner*{ 
            \nabla^2 \varphi_r^\vareps(W_{[j-1,j]}^{\indep}),  
            X_j \otimes X_{j-1}} 
        + \inner*{ 
            \nabla^2 \varphi_r^\vareps(W_{[j,j+1]}^{\indep}),
            X_j \otimes Y_{j+1}} 
        + \mathfrak{R}_{X_j}^{(3,1)}
    \end{aligned} \right] \\
    & = \Exp\left[ \varphi_r^\vareps(W^\cmpl_{[j,j]}) \right]
    + \inner*{ \Exp\left[ \nabla \varphi_r^\vareps(W_{[j,j]}^{\indep}) \right], 
        \Exp\left[ X_j \right]}
    +
        \frac{1}{2} 
        \inner*{ \Exp\left[ \nabla^2 \varphi_r^\vareps(W_{[j,j]}^{\indep}) \right], 
        \Exp\left[ X_j^{\otimes 2}\right]} \\
    & \quad + \inner*{ 
        \Exp\left[ \nabla^2 \varphi_r^\vareps(W_{[j-1,j]}^{\indep}) \right], 
        \Exp\left[ X_j \otimes X_{j-1} \right]}
    + \Exp[\mathfrak{R}_{X_j}^{(3,1)}],
\end{aligned}
\end{equation*}
where 
\begin{equation} \label{eq:R_3_1}
\begin{aligned}
    \mathfrak{R}_{X_j}^{(3,1)}
    & = \frac{1}{2} \int_0^1 (1-t)^2 \inner*{
        \nabla^3 \varphi_r^\vareps\left(
            W^\cmpl_{[j,j]} + t X_j
        \right),
        X_j^{\otimes 3}} ~dt \\
    & \quad + \int_0^1 (1-t) \left\langle
        \nabla^3 \varphi_r^\vareps\left(
            W_{[j,j]}^{\indep} + t (X_{j-1} + Y_{j+1}) 
        \right), 
        X_j \otimes (X_{j-1} + Y_{j+1})^{\otimes 2}
    \right\rangle ~dt \\
    & \quad + \int_0^1 \left\langle
        \nabla^3 \varphi_r^\vareps\left(
              W_{[j,j]}^{\indep} + t (X_{j-1} + Y_{j+1})
        \right), 
        X_j^{\otimes 2} \otimes (X_{j-1} + Y_{j+1})\right\rangle ~dt \\
    & \quad + \int_0^1 \inner*{
        \nabla^3 \varphi_r^\vareps\left(
            W_{[j-1,j]}^{\indep} + t X_{j-2}
        \right), 
        X_j \otimes X_{j-1} \otimes X_{j-2}} ~dt \\
    & \quad + \int_0^1 \inner*{
        \nabla^3 \varphi_r^\vareps\left(
            W_{[j,j+1]}^{\indep}+ t Y_{j+2}
        \right),
        X_j \otimes Y_{j+1} \otimes Y_{j+2})} ~dt.
\end{aligned}
\end{equation}
$\mathfrak{R}_{Y_j}^{(3,1)}$ is similarly derived. For $j = 1, 2, n-1$ and $n$, $\mathfrak{R}_{W_j}^{(3,1)}$ is the same as \cref{eq:R_3_1} but with zero in place of non-existing terms. 
Summing over $j = 1, \dots, n$,
\begin{equation*}
\begin{aligned}
    & \sum_{j=1}^n 
        \Exp \left[
        \rho_{r,\phi}^\delta(W^\cmpl_{[j,j]} + X_j)
        - \rho_{r,\phi}^\delta(W^\cmpl_{[j,j]} + Y_j)
    \right] 
    = \sum_{j=1}^n \Exp\left[\mathfrak{R}_{X_j}^{(3,1)}\right]
\end{aligned}
\end{equation*}

To further decompose $\mathfrak{R}_{X_j}^{(3,1)}$, we apply the Taylor expansion up to order $4$. For example, where $j=4,\dots,n-3$, 
\begin{equation*}
\begin{aligned}
    & \frac{1}{2} \int_0^1 (1-t)^2 \inner*{
        \nabla^3 \varphi_r^\vareps\left(
            W^\cmpl_{[j,j]} + t X_j
        \right),
        X_j^{\otimes 3}} ~dt \\
    & = \frac{1}{6} \inner*{ 
        \nabla^3 \rho_{r,\phi}^\delta(W^\cmpl_{[j,j]} ), 
        X_j^{\otimes 3}}
    + \frac{1}{6} \int_0^1 (1-t)^3 \inner*{
    \nabla^4 \rho_{r,\phi}^\delta(W^\cmpl_{[j,j]}  + t X_j), X_j^{\otimes 4}} ~dt.
\end{aligned}
\end{equation*}
Again, to break the dependency between $\nabla^3 \rho_{r,\phi}^\delta(X_{[1,n)})$ and $X_n^{\otimes 3}$, we re-apply the Taylor expansion centered at $X_1 + X_{n-1}$.
\begin{equation*}
\begin{aligned}
    & \inner*{ \nabla^3 \rho_{r,\phi}^\delta(W^\cmpl_{[j,j]}), X_j^{\otimes 3}} \\
    & = \inner*{ \nabla^3 \rho_{r,\phi}^\delta(W^\indep_{[j,j]}), X_j^{\otimes 3}} \\
    & \quad + \int_0^1 \inner*{
    \nabla^4 \rho_{r,\phi}^\delta(W^\indep_{[j,j]} + t (X_{j-1}+Y_{j+1})), X_n^{\otimes 3} \otimes (X_{j-1}+Y_{j+1})} ~dt.
\end{aligned}
\end{equation*}
Repeating this to the other third-order remainder terms, we get
\begin{equation*}
\begin{aligned}
    & \Exp\left[ \mathfrak{R}_{X_j}^{(3,1)} \right] \\
    & = \frac{1}{6} 
        \inner*{ \Exp\left[ \nabla^3 \rho_{r,\phi}^\delta(X_{[1,j-1)} + Y_{(j+1,n]}) \right], \Exp[ X_j^{\otimes 3} ]} \\
    & \quad + \frac{1}{2} \inner*{ 
        \Exp\left[ \nabla^3 \rho_{r,\phi}^\delta(X_{[1,j-2)} + Y_{(j+1,n]}) \right], 
        \Exp[ X_{j-1}^{\otimes 2} \otimes X_j ] + \Exp[ X_{j-1} \otimes X_j^{\otimes 2}]} \\
    & \quad + \inner*{ 
        \Exp\left[ \nabla^3 \rho_{r,\phi}^\delta(X_{[1,j-3)} + Y_{(j+1,n]}) \right], 
        \Exp\left[ X_{j-2} \otimes X_{j-1} \otimes X_{j} \right]} \\
    & \quad + \Exp\left[ \mathfrak{R}_{X_j}^{(4,1)} \right],
\end{aligned}
\end{equation*}
where 
\begin{equation} \label{eq:R_4_1}
\begin{aligned}
    \mathfrak{R}_{X_j}^{(4,1)}
    & = \frac{1}{6} \int_0^1 (1-t)^3 \inner*{
    \nabla^4 \rho_{r,\phi}^\delta(W^\cmpl_{[j,j]} + t X_j), X_j^{\otimes 4}} ~dt \\
    & \quad + \frac{1}{2} \int_0^1 (1-t)^2 \left\langle
        \nabla^4 \rho_{r,\phi}^\delta\left(
            W_{[j,j]}^{\indep} + t (X_{j-1} + Y_{j+1}) 
        \right), 
        X_j \otimes (X_{j-1} + Y_{j+1})^{\otimes 3}
    \right\rangle ~dt \\
    & \quad + \frac{1}{2} \int_0^1 (1-t) \left\langle
        \nabla^4 \rho_{r,\phi}^\delta\left(
              W_{[j,j]}^{\indep} + t (X_{j-1} + Y_{j+1})
        \right), 
        X_j^{\otimes 2} \otimes (X_{j-1} + Y_{j+1})^{\otimes 2}\right\rangle ~dt \\
    & \quad + \frac{1}{6} \int_0^1 \left\langle
        \nabla^4 \rho_{r,\phi}^\delta\left(
              W_{[j,j]}^{\indep} + t (X_{j-1} + Y_{j+1})
        \right), 
        X_j^{\otimes 3} \otimes (X_{j-1} + Y_{j+1})
    \right\rangle ~dt \\
    & \quad + \int_0^1 (1-t) \inner*{
        \nabla^4 \rho_{r,\phi}^\delta\left(
            W_{[j-1,j]}^{\indep} + t X_{j-2}
        \right), 
        X_j \otimes X_{j-1} \otimes X_{j-2}^{\otimes 2}} ~dt \\
    & \quad + \int_0^1 (1-t) \inner*{
        \nabla^4 \rho_{r,\phi}^\delta\left(
            W_{[j,j+1]}^{\indep}+ t Y_{j+2}
        \right),
        X_j \otimes Y_{j+1} \otimes Y_{j+2}^{\otimes 2}} ~dt \\
    & \quad + \frac{1}{2} \int_0^1 \inner*{
        \nabla^4 \rho_{r,\phi}^\delta\left(
            W_{[j-1,j]}^{\indep} + t X_{j-2}
        \right), 
        X_j \otimes X_{j-1}^{\otimes 2} \otimes X_{j-2}} ~dt \\
    & \quad + \frac{1}{2} \int_0^1 \inner*{
        \nabla^4 \rho_{r,\phi}^\delta\left(
            W_{[j,j+1]}^{\indep}+ t Y_{j+2}
        \right),
        X_j \otimes Y_{j+1}^{\otimes 2} \otimes Y_{j+2}} ~dt \\
    & \quad + \int_0^1 \inner*{
        \nabla^4 \rho_{r,\phi}^\delta\left(
            W_{[j-1,j+1]}^{\indep}+ t (X_{j-2} + Y_{j+2})
        \right),
        X_{j-1} \otimes X_j \otimes Y_{j+1} \otimes X_{j-2}} ~dt \\
    & \quad + \int_0^1 \inner*{
        \nabla^4 \rho_{r,\phi}^\delta\left(
            W_{[j-1,j+1]}^{\indep}+ t (X_{j-2} + Y_{j+2})
        \right),
        X_{j-1} \otimes X_j \otimes Y_{j+1} \otimes Y_{j+2}} ~dt \\
    & \quad + \frac{1}{2} \int_0^1 \inner*{
        \nabla^4 \rho_{r,\phi}^\delta\left(
            W_{[j-1,j]}^{\indep} + t X_{j-2}
        \right), 
        X_j^{\otimes 2} \otimes X_{j-1} \otimes X_{j-2}} ~dt \\
    & \quad + \frac{1}{2}  \int_0^1 \inner*{
        \nabla^4 \rho_{r,\phi}^\delta\left(
            W_{[j,j+1]}^{\indep}+ t Y_{j+2}
        \right),
        X_j^{\otimes 2} \otimes Y_{j+1} \otimes Y_{j+2}} ~dt \\
    & \quad + \int_0^1 \inner*{
        \nabla^4 \rho_{r,\phi}^\delta\left(
            W_{[j-2,j]}^{\indep} + t X_{j-3}
        \right), 
        X_j \otimes X_{j-1} \otimes X_{j-2} \otimes X_{j-3}} ~dt \\
    & \quad + \int_0^1 \inner*{
        \nabla^4 \rho_{r,\phi}^\delta\left(
            W_{[j,j+2]}^{\indep}+ t Y_{j+3}
        \right),
        X_j \otimes Y_{j+1} \otimes Y_{j+2} \otimes Y_{j+3}} ~dt.
\end{aligned}
\end{equation}
$\mathfrak{R}_{Y_j}^{(4,1)}$ is similarly derived. For $j = 1, 2, 3, n-2, n-1$ and $n$, $\mathfrak{R}_{W_j}^{(4,1)}$ is the same as \cref{eq:R_4_1} but with zero in place of proper terms.

\subsection{Second Lindeberg swapping} \label{sec:second_lindeberg_swapping}
For each $j = 2, \dots, n-2$,
\begin{equation*}
\begin{aligned}
    & \inner*{
            \nabla^2 \rho_{r,\phi}^\delta\left(
              X_{(1,j)} + X_j + Y_{(j,n-1)}
            \right),
            X_n^{\otimes 2}
        } \\
    & = \inner*{
            \nabla^2 \rho_{r,\phi}^\delta\left(
              X_{(1,j)} + Y_{(j,n-1)}
            \right),
            X_n^{\otimes 2}
        } 
    + \mathfrak{R}_{X_j}^{(3,2,1)} \\
\end{aligned}
\end{equation*}
where $\mathfrak{R}_{X_j}^{(3,2,1)} = \int_0^1 \inner{
    \nabla^3 \rho_{r,\phi}^\delta\left(
        X_{(1,j)} + t X_{j} + Y_{(j, n-1)}
    \right),
    X_n^{\otimes 2} \otimes X_j
} ~dt$. Furthermore,
\begin{equation*}
\begin{aligned}
    & \mathfrak{R}_{X_j}^{(3,2,1)} \\
    & = \inner*{
            \nabla^3 \rho_{r,\phi}^\delta\left(
              X_{(1,j)} + Y_{(j,n-1)}
            \right),
            X_n^{\otimes 2} \otimes X_j
        } \\
    & \quad + \int_0^1 (1-t) \inner*{
        \nabla^4 \rho_{r,\phi}^\delta\left(
            X_{(1,j)} + t X_{j} + Y_{(j, n-1)}
        \right),
        X_n^{\otimes 2} \otimes X_j^{\otimes 2}
    } ~dt \\
    & = \inner*{
            \nabla^3 \rho_{r,\phi}^\delta\left(
              X_{(1,j-1)} + Y_{(j+1,n-1)}
            \right),
            X_n^{\otimes 2} \otimes X_j
        } \\
    & \quad + \int_0^1 (1-t) \inner*{
        \nabla^4 \rho_{r,\phi}^\delta\left(
            X_{(1,j)} + t X_{j} + Y_{(j, n-1)}
        \right),
        X_n^{\otimes 2} \otimes X_j^{\otimes 2}
    } ~dt \\
    & \quad + \int_0^1 \inner*{
        \nabla^4 \rho_{r,\phi}^\delta\left(
            X_{(1,j-1)} + t (X_{j-1} + Y_{j+1}) + Y_{(j+1, n-1)}
        \right),
        X_n^{\otimes 2} \otimes X_j \otimes (X_{j-1} + Y_{j+1})
    } ~dt.
\end{aligned}
\end{equation*}
Because $\Exp[X_n^{\otimes 2} \otimes X_j] = \Exp[X_n^{\otimes 2} \otimes Y_j] = 0$, 
\begin{equation*}
\begin{aligned}
    & \inner*{ 
        \Exp\left[ \nabla^2 \rho_{r,\phi}^\delta(X_{(1,n-1)}) \right], 
        \Exp\left[ X_n^{\otimes 2} \right]}
    - \inner*{ 
        \Exp\left[ \nabla^2 \rho_{r,\phi}^\delta(Y_{(1,n-1)}) \right], 
        \Exp\left[ X_n^{\otimes 2} \right]} \\
    & = \sum_{j=2}^{n-2} \Exp\left[ \mathfrak{R}_{X_j}^{(3,2,1)} - \mathfrak{R}_{Y_j}^{(3,2,1)} \right] 
    = \sum_{j=2}^{n-2} \Exp\left[ \mathfrak{R}_{X_j}^{(4,2,1)} - \mathfrak{R}_{Y_j}^{(4,2,1)} \right],
\end{aligned}
\end{equation*}
where $\mathfrak{R}_{X_j}^{(4,2,1)}$ is the fourth-order remainder term above. Similarly,
\begin{equation*}
\begin{aligned}
    & \inner*{ 
        \Exp\left[ \nabla^2 \rho_{r,\phi}^\delta(X_{(2,n-1)}) \right], 
        \Exp\left[ X_n \otimes X_1 \right]}
    - \inner*{ 
        \Exp\left[ \nabla^2 \rho_{r,\phi}^\delta(Y_{(2,n-1)}) \right], 
        \Exp\left[ X_n \otimes X_1 \right]} \\
    & = \sum_{j=3}^{n-2} \Exp\left[ \mathfrak{R}_{X_j}^{(3,2,2)} - \mathfrak{R}_{Y_j}^{(3,2,2)} \right] 
    = \sum_{j=3}^{n-2} \Exp\left[ \mathfrak{R}_{X_j}^{(4,2,2)} - \mathfrak{R}_{Y_j}^{(4,2,2)} \right], \textand
\end{aligned}
\end{equation*}
\begin{equation*}
\begin{aligned}
    & \inner*{ 
        \Exp\left[ \nabla^2 \rho_{r,\phi}^\delta(X_{(1,n-2)}) \right], 
        \Exp\left[ X_n \otimes X_{n-1} \right]}
    - \inner*{ 
        \Exp\left[ \nabla^2 \rho_{r,\phi}^\delta(Y_{(1,n-2)}) \right], 
        \Exp\left[ X_n \otimes X_{n-1} \right]} \\
    & = \sum_{j=2}^{n-3} \Exp\left[ \mathfrak{R}_{X_j}^{(3,2,3)} - \mathfrak{R}_{Y_j}^{(3,2,3)} \right] 
    = \sum_{j=2}^{n-3} \Exp\left[ \mathfrak{R}_{X_j}^{(4,2,3)} - \mathfrak{R}_{Y_j}^{(4,2,3)} \right].
\end{aligned}
\end{equation*}
We define $\mathfrak{R}_{W_j}^{(3,2)} = \frac{1}{2} \mathfrak{R}_{W_j}^{(3,2,1)} + \mathfrak{R}_{W_j}^{(3,2,2)} + \mathfrak{R}_{W_j}^{(3,2,3)}$ and $\mathfrak{R}_{W_j}^{(4,2)} = \frac{1}{2} \mathfrak{R}_{W_j}^{(4,2,1)} + \mathfrak{R}_{W_j}^{(4,2,2)} + \mathfrak{R}_{W_j}^{(4,2,3)}$.

\subsection{Third Lindeberg swapping} \label{sec:third_lindeberg_swapping}

For $j = 3, \dots, n-1$,
\begin{equation*}
\begin{aligned}
    & \Big\langle 
        \Exp\left[ \nabla^3 \rho_{r,\phi}^\vareps(X_{[1,j-1)} + Y_{(j+1,n)}) \right] 
    - \Exp\left[ \nabla^3  \rho_{r,\phi}^\vareps(Y_{[1,j-1)} + Y_{(j+1,n)}) \right], \Exp[ X_j^{\otimes 3} ]\Big\rangle. \\
    & = \sum_{k=1}^{j-2} \Big\langle
        \Exp\left[ \nabla^3 \rho_{r,\phi}^\vareps(X_{[1,k)} + X_k + Y_{(k,j-1) \cup (j+1,n)}) \right] \\ 
    & \omit{\hfill $- \Exp\left[ \nabla^3   \rho_{r,\phi}^\vareps(X_{[1,k)} + Y_k + Y_{(k,j-1) \cup (j+1,n)}) \right], 
        \Exp[ X_j^{\otimes 3} ]
    \Big\rangle.$} \\
\end{aligned}
\end{equation*}
We first consider the case with $6 \leq j \leq n-1$.
For $3 \leq k \leq j-4$, the Taylor expansion centered at $X_{[1,k)} + Y_{(k,j-1) \cup (j+1,n)}$ implies
\begin{equation*}
\begin{aligned}
    & \inner*{
        \Exp\left[ \nabla^3 \rho_{r,\phi}^\vareps(X_{[1,k)} + X_k + Y_{(k,j-1) \cup (j+1,n)}) \right],
        \Exp[X_j^{\otimes 3}]
    } \\
    & = \inner*{
        \Exp[ \nabla^3 \rho_{r,\phi}^\vareps(X_{[1,k)} + Y_{(k,j-1) \cup (j+1,n)}) ],
        \Exp[X_j^{\otimes 3}] 
    } \\
    & \quad + \Exp\left[ \inner*{
        \nabla^4 \rho_{r,\phi}^\vareps(X_{[1,k)} + Y_{(k,j-1) \cup (j+1,n)}),
        \Exp[X_j^{\otimes 3}] \otimes X_k
    } \right] \\
    & \quad + \frac{1}{2} \Exp\left[ \inner*{
        \nabla^5 \rho_{r,\phi}^\vareps(X_{[1,k)} + Y_{(k,j-1) \cup (j+1,n)}),
        \Exp[X_j^{\otimes 3}] \otimes X_k^{\otimes 2}
    } \right] \\
    & \quad + \frac{1}{2} \Exp\left[ \int_0^1 (1-t)^2 \inner*{
        \nabla^6 \rho_{r,\phi}^\vareps(X_{[1,k)} + t X_k + Y_{(k,j-1) \cup (j+1,n)}) ,
        \Exp[X_j^{\otimes 3}] \otimes X_k^{\otimes 3}
    } ~dt \right].
\end{aligned}
\end{equation*}
For the inner product terms with dependent factors, we repeat the Taylor expansion:
\begin{enumerate}
\item \begin{equation*}
\begin{aligned}
    & \Exp\left[ \inner*{
        \nabla^4 \rho_{r,\phi}^\vareps(X_{[1,k)} + Y_{(k,j-1) \cup (j+1,n)}),
        \Exp[X_j^{\otimes 3}] \otimes X_k
    } \right] \\
    & = \Exp\left[ \inner*{
        \nabla^5 \rho_{r,\phi}^\vareps(X_{[1,k-1)} + Y_{(k+1,j-1) \cup (j+1,n)}),
        \Exp[X_j^{\otimes 3}] \otimes X_k \otimes(X_{k-1} + Y_{k+1})
    } \right] \\
    & \quad + \Exp\Big[ \int_0^1 (1-t) \Big\langle
        \nabla^6 \rho_{r,\phi}^\vareps(X_{[1,k-1)} + t (X_{k-1} + Y_{k+1}) + Y_{(k+1,j-1) \cup (j+1,n)}), \\
    & \omit{\hfill $\Exp[X_j^{\otimes 3}] \otimes X_k \otimes (X_{k-1} + Y_{k+1})^{\otimes 2}
    \Big\rangle dt \Big]$},
\end{aligned}
\end{equation*}

\item \begin{equation*}
\begin{aligned}
    & \Exp\left[ \inner*{
        \nabla^5 \rho_{r,\phi}^\vareps(X_{[1,k)} + Y_{(k,j-1) \cup (j+1,n)}),
        \Exp[X_j^{\otimes 3}] \otimes X_k^{\otimes 2}
    } \right] \\
    & = \inner*{
        \Exp\left[ \nabla^5 \rho_{r,\phi}^\vareps(X_{[1,k-1)} + Y_{(k+1,j-1) \cup (j+1,n)}) \right],
        \Exp[X_j^{\otimes 3}] \otimes \Exp\left[ X_k^{\otimes 2} \right]
    }  \\
    & \quad + \Exp\Big[ \int_0^1 \Big\langle
        \nabla^6 \rho_{r,\phi}^\vareps(X_{[1,k-1)} + t (X_{k-1} + Y_{k+1}) + Y_{(k+1,j-1) \cup (j+1,n)}), \\
    & \omit{\hfill $\Exp[X_j^{\otimes 3}] \otimes X_k^{\otimes 2} \otimes (X_{k-1} + Y_{k+1})
    \Big\rangle ~dt \Big],$}
\end{aligned}
\end{equation*}

\item \begin{equation*}
\begin{aligned}
    & \Exp\left[ \inner*{
        \nabla^5 \rho_{r,\phi}^\vareps(X_{[1,k-1)} + Y_{(k+1,j-1) \cup (j+1,n)}),
        \Exp[X_j^{\otimes 3}] \otimes X_k \otimes(X_{k-1} + Y_{k-1})
    } \right] \\
    & = \inner*{
        \Exp[ \nabla^5 \rho_{r,\phi}^\vareps(X_{[1,k-2)} + Y_{(k+1,j-1) \cup (j+1,n)}) ],
        \Exp[ X_j^{\otimes 3} ] \otimes \Exp[ X_k \otimes X_{k-1} ]
    } \\
    & \quad + \Exp\Big[ \int_0^1 \Big\langle
        \nabla^6 \rho_{r,\phi}^\vareps(X_{[1,k-2)} + t X_{k-2} + Y_{(k+1,j-1) \cup (j+1,n)}), 
        \Exp[X_j^{\otimes 3}] \otimes X_k \otimes X_{k-1} \otimes X_{k-2}
    \Big\rangle ~dt \Big] \\
    & \quad + \Exp\Big[ \int_0^1 \Big\langle
        \nabla^6 \rho_{r,\phi}^\vareps(X_{[1,k-1)} + t Y_{k+2} + Y_{(k+2,j-1) \cup (j+1,n)}), 
        \Exp[X_j^{\otimes 3}] \otimes X_k \otimes Y_{k+1} \otimes Y_{k+2}
    \Big\rangle ~dt \Big].
\end{aligned}
\end{equation*}
\end{enumerate}
For $\inner*{
    \Exp\left[ \nabla^3 \rho_{r,\phi}^\vareps(X_{[1,k)} + Y_k + Y_{(k,j-1) \cup (j+1,n)}) \right],
    \Exp[X_j^{\otimes 3}]
}$, the calculation is the same but with $Y_k$ in place of $X_k$. By the second moment matching,
\begin{equation*}
\begin{aligned}
    & \Big\langle
        \Exp\left[ \nabla^3 \rho_{r,\phi}^\vareps(X_{[1,k)} + X_k + Y_{(k,j-1) \cup (j+1,n)}) \right] \\ 
    & \omit{\hfill $- \Exp\left[ \nabla^3   \rho_{r,\phi}^\vareps(X_{[1,k)} + Y_k + Y_{(k,j-1) \cup (j+1,n)}) \right], 
        \Exp[ X_j^{\otimes 3} ]
    \Big\rangle$} \\
    & = \inner*{
        \Exp[ \nabla^5 \rho_{r,\phi}^\vareps(X_{[1,k-2)} + Y_{(k+1,j-1) \cup (j+1,n)}) ],
        \Exp[ X_j^{\otimes 3} ] \otimes \Exp[ X_k \otimes X_{k-1} ]
    } \\
    & \quad - \inner*{
        \Exp[ \nabla^5 \rho_{r,\phi}^\vareps(X_{[1,k-1)} + Y_{(k+2,j-1) \cup (j+1,n)}) ],
        \Exp[ X_j^{\otimes 3} ] \otimes \Exp[ Y_k \otimes Y_{k+1} ]
    } \\
    & \quad + \Exp\left[ \mathfrak{R}_{X_j,X_k}^{(6,1,1)} - \mathfrak{R}_{X_j,Y_k}^{(6,1,1)} \right], \\
\end{aligned}
\end{equation*}
where for $W_k = X_k$ or $Y_k$,
\begin{equation} \label{eq:R_6_1}
\begin{aligned}
    & \mathfrak{R}_{X_j,W_k}^{(6,1,1)} \\
    & = \frac{1}{2} \int_0^1 (1-t)^2 \inner*{
        \nabla^6 \rho_{r,\phi}^\vareps(X_{[1,k)} + t W_k + Y_{(k,j-1) \cup (j+1,n)}) ,
        X_j^{\otimes 3} \otimes W_k^{\otimes 3}
    } ~dt \\
    & \quad + \int_0^1 (1-t) \Big\langle
        \nabla^6 \rho_{r,\phi}^\vareps(X_{[1,k-1)} + t (X_{k-1} + Y_{k+1}) + Y_{(k+1,j-1) \cup (j+1,n)}), \\
    & \omit{\hfill $X_j^{\otimes 3} \otimes W_k \otimes (X_{k-1} + Y_{k+1})^{\otimes 2}
    \Big\rangle dt$} \\
    & \quad + \frac{1}{2} \int_0^1 \Big\langle
        \nabla^6 \rho_{r,\phi}^\vareps(X_{[1,k-1)} + t (X_{k-1} + Y_{k+1}) + Y_{(k+1,j-1) \cup (j+1,n)}), \\
    & \omit{\hfill $X_j^{\otimes 3} \otimes W_k^{\otimes 2} \otimes (X_{k-1} + Y_{k+1})
    \Big\rangle ~dt$} \\
    & \quad + \int_0^1 \Big\langle
        \nabla^6 \rho_{r,\phi}^\vareps(X_{[1,k-2)} + t X_{k-2} + Y_{(k+1,j-1) \cup (j+1,n)}), 
        X_j^{\otimes 3} \otimes W_k \otimes X_{k-1} \otimes X_{k-2}
    \Big\rangle ~dt \\
    & \quad + \int_0^1 \Big\langle
        \nabla^6 \rho_{r,\phi}^\vareps(X_{[1,k-1)} + t Y_{k+2} + Y_{(k+2,j-1) \cup (j+1,n)}), 
        X_j^{\otimes 3} \otimes W_k \otimes Y_{k+1} \otimes Y_{k+2}
    \Big\rangle ~dt.
\end{aligned}
\end{equation}
For $k = 1$,
\begin{equation*}
\begin{aligned}
    & \Big\langle
        \Exp\left[ \nabla^3 \rho_{r,\phi}^\vareps(X_1 + Y_{(1,j-1) \cup (j+1,n)}) \right] 
    - \Exp\left[ \nabla^3   \rho_{r,\phi}^\vareps(Y_1 + Y_{(1,j-1) \cup (j+1,n)}) \right], 
        \Exp[ X_j^{\otimes 3} ]
    \Big\rangle \\
    & = - \inner*{
        \Exp[ \nabla^5 \rho_{r,\phi}^\vareps(Y_{(3,j-1) \cup (j+1,n)}) ],
        \Exp[ X_j^{\otimes 3} ] \otimes \Exp[ Y_1 \otimes Y_2 ]
    } \\
    & \quad + \Exp\left[ \mathfrak{R}_{X_j,X_1}^{(6,1,1)} - \mathfrak{R}_{X_j,Y_1}^{(6,1,1)} \right], \\
\end{aligned}
\end{equation*}
where $\mathfrak{R}_{X_j,W_1}^{(6,1,1)}$ is the same as \cref{eq:R_6_1} but with $Y_n$ and $Y_{n-1}$ in place of $X_{k-1}$ and $X_{k-2}$, respectively.
For $k = 2$,
\begin{equation*}
\begin{aligned}
    & \Big\langle
        \Exp\left[ \nabla^3 \rho_{r,\phi}^\vareps(X_1 + X_2 + Y_{(2,j-1) \cup (j+1,n)}) \right] 
    - \Exp\left[ \nabla^3   \rho_{r,\phi}^\vareps(X_1 + Y_2 + Y_{(2,j-1) \cup (j+1,n)}) \right], 
        \Exp[ X_j^{\otimes 3} ]
    \Big\rangle \\
    & = \inner*{
        \Exp[ \nabla^5 \rho_{r,\phi}^\vareps(Y_{(3,j-1) \cup (j+1,n)}) ],
        \Exp[ X_j^{\otimes 3} ] \otimes \Exp[ X_2 \otimes X_1 ]
    } \\
    & \quad - \inner*{
        \Exp[ \nabla^5 \rho_{r,\phi}^\vareps(Y_{(4,j-1) \cup (j+1,n)}) ],
        \Exp[ X_j^{\otimes 3} ] \otimes \Exp[ Y_2 \otimes Y_3 ]
    } \\
    & \quad + \Exp\left[ \mathfrak{R}_{X_j,X_2}^{(6,1,1)} - \mathfrak{R}_{X_j,Y_2}^{(6,1,1)} \right], \\
\end{aligned}
\end{equation*}
where $\mathfrak{R}_{X_j,W_2}^{(6,1,1)}$ is the same as \cref{eq:R_6_1} but with $Y_n$ in place of $X_{k-2}$.
For $k = j-3$,
\begin{equation*}
\begin{aligned}
    & \Big\langle
        \Exp\left[ \nabla^3 \rho_{r,\phi}^\vareps(X_{[1,j-3)} + X_{j-3} + Y_{\{j-2\} \cup (j+1,n)}) \right] \\ 
    & \omit{\hfill $- \Exp\left[ \nabla^3   \rho_{r,\phi}^\vareps(X_{[1,j-3)} + Y_{j-3} + Y_{\{j-2\} \cup (j+1,n)}) \right], 
        \Exp[ X_j^{\otimes 3} ]
    \Big\rangle$} \\
    & = \inner*{
        \Exp[ \nabla^5 \rho_{r,\phi}^\vareps(X_{[1,j-5)} + Y_{(j+1,n)}) ],
        \Exp[ X_j^{\otimes 3} ] \otimes \Exp[ X_{j-3} \otimes X_{j-4} ]
    } \\
    & \quad - \inner*{
        \Exp[ \nabla^5 \rho_{r,\phi}^\vareps(X_{[1,j-4)} + Y_{(j+1,n)}) ],
        \Exp[ X_j^{\otimes 3} ] \otimes \Exp[ Y_{j-3} \otimes Y_{j-2} ]
    } \\
    & \quad + \Exp\left[ \mathfrak{R}_{X_j,X_{j-3}}^{(6,1,1)} - \mathfrak{R}_{X_j,Y_{j-3}}^{(6,1,1)} \right], \\
\end{aligned}
\end{equation*}
where $\mathfrak{R}_{X_j,W_2}^{(6,1,1)}$ is the same as \cref{eq:R_6_1} but with $0$ in place of $Y_{k+2}$.
For $k = j-2$,
\begin{equation*}
\begin{aligned}
    & \Big\langle
        \Exp\left[ \nabla^3 \rho_{r,\phi}^\vareps(X_{[1,j-2)} + X_{j-2} + Y_{(j+1,n)}) \right]
    - \Exp\left[ \nabla^3   \rho_{r,\phi}^\vareps(X_{[1,j-2)} + Y_{j-2} + Y_{(j+1,n)}) \right], 
        \Exp[ X_j^{\otimes 3} ]
    \Big\rangle \\
    & = \inner*{
        \Exp[ \nabla^5 \rho_{r,\phi}^\vareps(X_{[1,j-4)} + Y_{(j+1,n)}) ],
        \Exp[ X_j^{\otimes 3} ] \otimes \Exp[ X_{j-2} \otimes X_{j-3} ]
    } \\
    & \quad + \Exp\left[ \mathfrak{R}_{X_j,X_{j-2}}^{(6,1,1)} - \mathfrak{R}_{X_j,Y_{j-2}}^{(6,1,1)} \right], \\
\end{aligned}
\end{equation*}
where $\mathfrak{R}_{X_j,W_2}^{(6,1,1)}$ is the same as \cref{eq:R_6_1} but with $0$ in place of $Y_{k+1}$ and $Y_{k+2}$.
By the second moment matching, 
\begin{equation*}
\begin{aligned}
    & \inner*{ 
        \Exp\left[ \nabla^3 \rho_{r,\phi}^\vareps(X_{[1,j-1)} + Y_{(j+1,n)}) 
        - \nabla^3 \rho_{r,\phi}^\vareps(Y_{[1,j-1)} + Y_{(j+1,n)}) \right], 
        \Exp[ X_j^{\otimes 3} ]
    } \\
    & = \sum_{k=1}^{j-2} \Exp \left[ 
        \mathfrak{R}_{X_j,X_k}^{(6,1,1)}
        - \mathfrak{R}_{X_j,Y_k}^{(6,1,1)}
    \right]. \\
\end{aligned}
\end{equation*}
This is also the same for $j \in [1, 5]$, where some terms in $\mathfrak{R}_{X_j,W_k}^{(6,1,1)}$ are zero when appropriate. 
In sum, 
\begin{equation*}
\begin{aligned}
    & \sum_{j=1}^n  
        \inner*{ \Exp\left[ \nabla^3 \rho_{r,\phi}^\delta(X_{[1,j-1)} + Y_{(j+1,n)}) \right], \Exp[ X_j^{\otimes 3} ]} \\
    & = \sum_{j=1}^n  
        \inner*{ \Exp\left[ \nabla^3 \rho_{r,\phi}^\delta(Y_{[1,j-1)} + Y_{(j+1,n)}) \right], \Exp[ X_j^{\otimes 3} ]}
    + \sum_{j=3}^{n} \sum_{k=1}^{j-2} \Exp \left[ 
        \mathfrak{R}_{X_j,X_k}^{(6,1,1)}
        - \mathfrak{R}_{X_j,Y_k}^{(6,1,1)}
    \right]. \\
\end{aligned}
\end{equation*}
Similarly,
\begin{equation*}
\begin{aligned}
    & \sum_{j=1}^n \inner*{ 
        \Exp\left[ \nabla^3 \rho_{r,\phi}^\delta(X_{[1,j-1)} + Y_{(j+2,n)}) \right], 
        \Exp[ X_{j}^{\otimes 2} \otimes X_{j+1} ] 
        + \Exp[ X_{j} \otimes X_{j+1}^{\otimes 2}]} \\
    & = \sum_{j=1}^n \inner*{ 
        \Exp\left[ \nabla^3 \rho_{r,\phi}^\delta(Y_{[1,j-1)} + Y_{(j+2,n)}) \right], 
        \Exp[ X_{j}^{\otimes 2} \otimes X_{j+1} ] 
        + \Exp[ X_{j} \otimes X_{j+1}^{\otimes 2}]} \\
    & \quad + \sum_{j=4}^{n} \sum_{k=1}^{j-3} \Exp \left[ 
        \mathfrak{R}_{X_j,X_k}^{(6,1,2)}
        - \mathfrak{R}_{X_j,Y_k}^{(6,1,2)}
    \right], \textand 
    \\
    & \sum_{j=1}^n \inner*{ 
        \Exp\left[ \nabla^3 \rho_{r,\phi}^\delta(X_{[1,j-1)} + Y_{(j+3,n)}) \right], 
        \Exp\left[ X_{j} \otimes X_{j+1} \otimes X_{j+2} \right]} \\
    & = \sum_{j=1}^n \inner*{ 
        \Exp\left[ \nabla^3 \rho_{r,\phi}^\delta(Y_{[1,j-1)} + Y_{(j+3,n)}) \right], 
        \Exp\left[ X_{j} \otimes X_{j+1} \otimes X_{j+2} \right]} \\
    & \quad + \sum_{j=5}^{n} \sum_{k=1}^{j-4} \Exp \left[ 
        \mathfrak{R}_{X_j,X_k}^{(6,1,3)}
        - \mathfrak{R}_{X_j,Y_k}^{(6,1,3)}
    \right],
\end{aligned}
\end{equation*}
where $\mathfrak{R}_{X_j,W_k}^{(6,1,2)}$ and $\mathfrak{R}_{X_j,W_k}^{(6,1,3)}$ are similarly derived as in \cref{eq:R_6_1}.

\end{appendix}

\end{document}